\renewcommand{\subset}{\subseteq}
\newcommand{\End}{\mathrm{End}}
\newcommand{\Der}{\mathrm{Der}}
\newcommand{\rk}{\mathrm{rk}}
\newcommand{\inv}{{}^{-1}}
\newcommand{\dd}{\textup{d}}
\newcommand{\lhk}{\left(}
\newcommand{\rhk}{\right)}
\newcommand{\mc}{\mathcal}
\newcommand{\mf}{\mathfrak}
\newcommand{\mb}{\mathbb}
\newcommand{\pairing}[2]{\langle#1,#2\rangle}
\newcommand{\cb}[2]{[\![#1,#2]\!]}
\newcommand{\nrt}[1]{\textcolor{black}{#1}}
\newcommand{\opnorm}{\@ifstar\@opnorms\@opnorm}
\pgfplotsset{compat=1.10}
\newcommand{\Hom}{\mathrm{Hom}}
\newcommand{\tweedrie}[6]{\lhk \begin{matrix} #1 & #2\\#3 & #4 \\#5&#6\end{matrix} \rhk}
\newcommand{\kolomtwee}[2]{\left ( \begin{matrix} #1\cr #2 \end{matrix} \right)}
\newcommand{\kolomdrie}[3]{\left ( \begin{matrix} #1\cr #2\cr #3 \end{matrix} \right)}
\newcommand{\Addresses}{{
\bigskip
\footnotesize

\textsc{KU Leuven, Department of Mathematics, Celestijnenlaan 200B box 2400, 3001 Leuven, Belgium} \par \nopagebreak

\bigskip
\footnotesize

Revised at:\\ \textsc{Max Planck Institute for Mathematics, Vivatsgasse 7, 53111 Bonn, Germany} \par \nopagebreak

\bigskip
Current address:\\ \textsc{University of W\"urzburg, Emil-Fischer-Strasse 31, 97074 W\"urzburg, Germany}
\textit{E-mail address}:\href{mailto:ksingh@mpim-bonn.mpg.de}{ksingh@mpim-bonn.mpg.de}
}}
\theoremstyle{plain}
\newtheorem{thm}{Theorem}[section]
\newtheorem*{thm*}{Theorem}
\newtheorem{lem}[thm]{Lemma}
\newtheorem{prop}[thm]{Proposition}
\newtheorem{cor}[thm]{Corollary}
\theoremstyle{definition}
\newtheorem{defn}[thm]{Definition} 
\newtheorem{exmp}[thm]{Example}
\newtheorem{rmk}[thm]{Remark}
\newtheorem{conv}[thm]{Convention}
\newtheorem*{conv*}{Convention}
\newtheorem*{prob*}{Problem}
\newtheorem*{claim}{Claim}
\newtheorem{assumptions}[thm]{Assumptions}
\title{Stability of fixed points in Poisson geometry and higher Lie theory}
\date{ \null}
\author{Karandeep J. Singh}
\begin{document} 
\maketitle

\abstract{We provide a uniform approach to obtain sufficient criteria for a (higher order) fixed point of a given bracket structure on a manifold to be stable under perturbations. Examples of bracket structures include Lie algebroids, Lie $n$-algebroids, singular foliations, Lie bialgebroids, Courant algebroids and Dirac structures in split Courant algebroids admitting a Dirac complement. We in particular recover stability results of Crainic-Fernandes for zero-dimensional leaves, as well as the stability results of higher order singularities of Dufour-Wade.

These stability problems can all be shown to be specific instances of the following problem: given a differential graded Lie algebra $\mf g$, a differential graded Lie subalgebra $\mf h$ of finite codimension in $\mf g$ and a Maurer-Cartan element $Q\in \mf h^1$, when are Maurer-Cartan elements near $Q$ in $\mf g$ gauge equivalent to elements of $\mf h^1$? 

We show that the vanishing of a finite-dimensional cohomology group associated to $\mf g,\mf h$ and $Q$ implies a positive answer to the question above, and therefore implies stability of fixed points of the geometric structures described above.
}

\tableofcontents

\section{Introduction}
In differential geometry, there are various structures of infinitesimal nature on a manifold $M$ which induce a partition of $M$ into immersed submanifolds called leaves. Examples include Lie algebra actions, involutive distributions and Poisson structures. These are all examples of Lie algebroids.
A Lie algebroid over a manifold $M$ is a vector bundle $A\to M$, equipped with a bundle map $$\rho : A\to TM$$ covering the identity on $M$, together with a Lie bracket on the space of sections of $A$ such that for every section $x,y\in\Gamma(A)$ and $f\in C^\infty(M)$, we have the equality
$$
[x,fy]_A = \rho(x)(f) y + f[x,y]_A.
$$
This property together with the Jacobi identity for $[-,-]_A$ implies that $\rho:\Gamma(A) \to \mf X(M)$ is a Lie algebra map, therefore the image of $\rho$ defines a singular foliation on $M$. As the space of Lie algebroid structures on a  vector bundle $A$ carries a topology, one can ask when a leaf $L$ of a given Lie algebroid structure $(\rho,[-,-]_A)$ is stable when perturbing the Lie algebroid structure. More precisely:

\emph{Given a leaf $L \subset M$, when is it the case that every Lie algebroid structure near $(\rho,[-,-]_A)$ has a leaf near $L$ which is diffeomorphic to $L$}?

For compact leaves $L$, this question was first answered in \cite{CrFe}. Here it was shown that if the first cohomology of the Lie algebroid restricted to $L$ with values in a certain representation vanishes, then for every Lie algebroid structure near the original one, there exists a \emph{family of leaves} diffeomorphic to $L$. Moreover, when $A = T^\ast M$, the cotangent bundle of $M$, the authors prove a stronger result for when the same conclusion holds when restricting the class of Lie algebroid structures to only those coming from Poisson structures, as well as a separate result which guarantees the existence of a family of leaves \emph{symplectomorphic} to a given one.\\
The results of \cite{CrFe} only depend on the first order approximation of $(\rho,[-,-]_A)$ around the leaf $L$. Let $L = \{p\}\subset M$ be a fixed point, that is, a zero-dimensional leaf, and assume that $\rho$ has a higher order of vanishing in $p$. Then the cohomological assumptions of \cite{CrFe} will not be satisfied, hence nothing can be concluded about the stability of $p$. In this case the result was extended to give a criterion for stability of higher order fixed points of Lie algebroids and Poisson structures in \cite{dufour2005stability}, which guarantees the existence of a family of \emph{higher order} fixed points near $p$.
\subsection*{Main results}
In this article we focus on the case of fixed points. In Section \ref{sec:directproof} we first recall the stability result of \cite{CrFe} and \cite{dufour2005stability} for first order fixed points of Lie algebroids, and prove it directly in terms of Lie algebroid data, instead of passing through the identification with fiberwise linear multivector fields on the dual vector bundle. In Section \ref{sec:dictionary} we show that the stability question for zero-dimensional leaves is equivalent to a question about the comparison of Maurer-Cartan elements of a differential graded Lie algebra and Maurer-Cartan elements of a chosen differential graded Lie subalgebra as explained below\nrt{, highlighting the deformation theoretic character of the problem following Deligne's principle \cite{delignedgla}}. Using this reformulation as motivation, we state and prove the main theorem, which gives a sufficient condition for the inclusion of the differential graded Lie subalgebra to be locally surjective on equivalence classes of Maurer-Cartan elements (Theorem \ref{thm:mainthm}). The rest of the article is dedicated to applying \nrt{the theorem} in various situations by making appropriate choices for the differential graded Lie algebras involved. In Section \ref{sec:applications} we apply the main theorem to obtain:
\begin{itemize}
    \item[-] the general result of \cite{dufour2005stability} for Lie algebroids (Theorem \ref{thm:liealgdk}),
    \item[-] a stability result for (higher order) fixed points of Lie $n$-algebroids (Theorems \ref{thm:lienalg1} and \ref{thm:lienalgk}), with an application to singular foliations (Section \ref{sec:singfolapplic}).
\end{itemize} In Section \ref{sec:additionalstr} we apply the main Theorem to obtain stability results for fixed points within a subclass of structures, such as:
\begin{itemize}
    \item[-] (higher order) fixed points of Lie algebroid structures on a vector bundle $A^\ast$, which form a Lie bialgebroid pair with a \emph{given} Lie algebroid structure on $A$ (Theorem \ref{thm:liebialg}), 
    \item[-] (higher order) fixed points of Poisson structures compatible with a fixed Nijenhuis tensor (Theorem \ref{thm:PN}),
    \item[-] (higher order) fixed points of Courant algebroids (Theorem \ref{thm:courant}),
    \item[-] fixed points of Dirac structures in split Courant algebroids admitting a Dirac complement (Theorem \ref{thm:Dirac}).
\end{itemize}
The results are all of the following form: given a structure $Q$ as above, and a (higher order) fixed point $p\in M$ of this structure, there will be a finite-dimensional cohomology group associated to this structure and the (higher order) fixed point. If this cohomology group vanishes, then for every neighborhood $V$ of $p\in M$, there exists a $C^s$-neighborhood $\mc U$ of the structure $Q$ such that every $Q'\in \mc U$ has a fixed point of the same kind in $V$. The precise value of $s\in \mb Z_{\geq 0}$ will depend on the type of structure and the order of the fixed point.

We approach these questions using the main theorem as follows. Let $p \in M$. 
\begin{itemize}
    \item[-] The relevant structures are identified with Maurer-Cartan elements of a certain differential graded Lie algebra $(\mf g,\partial,[-,-])$ given by the sections of some vector bundle.
    \item[-]  In here, we identify a differential graded Lie subalgebra $\mf h \subset \mf g$, for which the Maurer-Cartan elements are those Maurer-Cartan elements of $\mf g$ for which some \emph{given} $p\in M$ is a fixed point of desired type. Let $Q\in \mf h^1$ denote such a structure.
    \item[-]  $\mf g^0$ acts on $\mf g^1$ by the gauge action, as can be found in \cite{manettidefcomp} for instance. In our examples, this action is by vector bundle automorphisms $\Phi$ covering a diffeomorphism $\phi$ of $M$. In particular, $p\in M$ is a fixed point of a Maurer-Cartan element $Q'$ gauge transformed by $X\in \mf g^0$ if and only if $\phi(p)\in M$ is a fixed point of the untransformed $Q'$.
\end{itemize} 
The question of stability of $p$ can now roughly be formulated as follows:

\emph{Given a Maurer-Cartan element $Q$ of $\mf h$, when is it true that any Maurer-Cartan element of $\mf g$ near $Q$ is gauge equivalent to a Maurer-Cartan element of $\mf h$?}

 Under some conditions on $\mf g$, $\mf h$ and the gauge action, the most restrictive one being that $\mf g^i/\mf h^i$ is finite-dimensional for $i = 0,1,2$, we show that a sufficient condition for a positive answer is that $$H^1(\mf g/\mf h,\overline{\partial + [Q,-]}) = 0.$$ Here $\overline{\partial+[Q,-]}$ is the induced map on the quotients. In several examples, this recovers known cohomology groups, and in all examples the chain complexes will consist of finite-dimensional vector spaces.\\
Where possible, we also point out relations between various structures and the cohomology groups that arise from them.\\
In a future work, we plan to explore the extent to which these results can be generalised so that they may be applied to higher-dimensional leaves.
\paragraph{\bf Acknowledgements} 
We thank Marius Crainic and Marco Zambon for fruitful discussions and helpful comments. We thank Camille Laurent-Gengoux and Sylvain Lavau for sharing their preprint and providing an appropriate framework to apply the results to singular foliations. We acknowledge the FWO and FNRS under EOS projects G0H4518N and G0I2222N. We also thank Utrecht University and the Max Planck Institute for Mathematics for their hospitality and financial support. 

\paragraph{\bf Conflicts of interest} 
We state that there is no conflict of interest.

\section{A direct proof of stability of fixed points of Lie algebroids}\label{sec:directproof}
In this section we reprove the stability result of (first order) fixed points of Lie algebroids using the approach of \cite{dufour2005stability}, writing it directly in Lie algebroid data instead of passing through the identification with linear Poisson structures on the dual vector bundle. This proof already contains the key arguments to prove the main theorem of this article. We first give the definition of a Lie algebroid.
\begin{defn}\label{def: Classical Lie algebroid}
Let $M$ be a smooth manifold. A \textit{Lie algebroid over $M$} is a triple $(A,\rho,[-,-])$, where
\begin{itemize}
    \item[i)] $A\to M$ is a vector bundle, 
    \item[ii)] $\rho:A \to TM$ is a vector bundle map \nrt{covering $\text{id}_M:M\to M$}, 
    \item[iii)] $[-,-]:\Gamma(A) \times \Gamma(A) \to \Gamma(A)$ is an $\mb R$-bilinear skew-symmetric map,
\end{itemize}
such that
\begin{itemize}
    \item[a)] for all $x,y \in \Gamma(A)$, $f\in C^\infty(M)$, we have
    $$
    [x,fy] = \rho(x)(f)y + f[x,y].
    $$
    \item[b)] for all $x,y,z \in \Gamma(A)$, 
    $$
    [x,[y,z]] + [y,[z,x]] + [z,[x,y]] = 0.
    $$
\end{itemize}
\end{defn}
\begin{lem}\label{lem:anchorbrackpres}
Property a) and b) imply that
$$
\rho([x,y]) = [\rho(x),\rho(y)]
$$
for all $x,y\in \Gamma(A)$.
\end{lem}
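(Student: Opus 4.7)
The plan is to exploit the Jacobi identity (b) after inserting a smooth function, and to expand both sides with the Leibniz rule (a); the ``second-order'' terms in the function will force the anchor to preserve brackets.

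More concretely, I would first rewrite (b) using skew-symmetry of the bracket in the equivalent form
$$
[x,[y,z]] - [y,[x,z]] = [[x,y],z] \quad \text{for all } x,y,z\in\Gamma(A),
$$
and then substitute $fz$ in place of $z$, where $f\in C^\infty(M)$. Using (a) on each of the three brackets separately, I would expand the left-hand side into a sum of four terms for each of $[x,[y,fz]]$ and $[y,[x,fz]]$: two of them carry $\rho(\cdot)(f)[\cdot,z]$-type factors, one carries $f[x,[y,z]]$ (respectively $f[y,[x,z]]$), and one carries $\rho(x)(\rho(y)(f))z$ (respectively $\rho(y)(\rho(x)(f))z$). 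When I subtract, the mixed terms $\rho(x)(f)[y,z]$ and $\rho(y)(f)[x,z]$ cancel in pairs, the surviving $f$-linear terms recombine into $f[[x,y],z]$ by another application of the rewritten Jacobi identity, and the doubly-differentiated terms give exactly $[\rho(x),\rho(y)](f)\,z$. Expanding the right-hand side $[[x,y],fz]$ by (a) yields $\rho([x,y])(f)\,z + f[[x,y],z]$.

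Equating the two expressions and cancelling the common summand $f[[x,y],z]$ leaves
$$
[\rho(x),\rho(y)](f)\,z = \rho([x,y])(f)\,z \quad \text{for all } f\in C^\infty(M),\ z\in\Gamma(A).
$$
To conclude I would fix an arbitrary point $p\in M$ and choose a local section $z$ with $z(p)\neq 0$; evaluating at $p$ then shows that the vector fields $[\rho(x),\rho(y)]$ and $\rho([x,y])$ act identically on every $f\in C^\infty(M)$ at $p$, hence agree pointwise, and the claim follows.

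The only real obstacle is the bookkeeping of Leibniz expansions and the careful use of skew-symmetry to package the Jacobi identity in a form where the $f$-linear terms on each side cancel; no deeper input (smoothness of $\rho$ as a bundle map, local triviality of $A$, etc.) is needed beyond the ability to pick, at any given point, a section of $A$ that is nonzero there.
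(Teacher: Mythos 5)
Your argument is correct and is the standard derivation: the paper itself states this lemma without proof, and your computation (substituting $fz$ into the Jacobi identity rewritten as $[x,[y,z]]-[y,[x,z]]=[[x,y],z]$, expanding via the Leibniz rule, cancelling the $f$-linear terms, and comparing the second-order terms $\rho(x)(\rho(y)(f))z-\rho(y)(\rho(x)(f))z$ with $\rho([x,y])(f)z$) is exactly how one fills that gap. The only cosmetic caveat is the final step: your chosen $z$ with $z(p)\neq 0$ should be (extendable to) a global section, e.g.\ a local frame element cut off by a bump function, and in the trivial case $\rk(A)=0$ the statement is vacuous; neither point affects the validity of the proof.
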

Recall that a Lie algebroid induces a partition of $M$ in connected immersed submanifolds called \emph{leaves}, such that the tangent space to the leaf through a point $p$ coincides with the image of $\rho$ at $p$.
\begin{defn}\label{def:singpoint}
Let $M$ be a smooth manifold and let $(A,\rho,[-,-])$ be a Lie algebroid \nrt{over $M$}. A point $p\in M$ is a \textit{fixed point of $(\rho,[-,-])$} if $\rho_p = 0$. 
\end{defn}
Note that fixed points are exactly zero-dimensional leaves. \\
Above any point $p\in M$, the Lie algebra structure of $\Gamma(A)$ induces the structure of a Lie algebra on $\ker(\rho_p)$ called the \emph{isotropy Lie algebra}.
\begin{lem} \label{lem:isolie}
Let $(A,\rho,[-,-])$ be a Lie algebroid over $M$, and let $p \in M$. Let\\
$x,y \in \ker(\rho_p:A_p \to T_p M)$, and let $\tilde{x},\tilde{y} \in \Gamma(A)$ be extensions of $x,y$ respectively. Then the element
$$
\mu_p(x,y):=[\tilde{x},\tilde{y}](p)
$$
is well-defined and lies in $\ker(\rho_p)$. Moreover, the map
$$
\mu_p:\ker(\rho_p) \times \ker(\rho_p) \to \ker(\rho_p)
$$
satisfies the Jacobi identity, equipping $\ker(\rho_p)$ with a Lie algebra structure.
\end{lem}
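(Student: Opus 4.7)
The plan is to verify the three assertions in the stated order: well-definedness of $\mu_p(x,y)$, that $\mu_p(x,y)$ lies in $\ker(\rho_p)$, and the Jacobi identity; the last two will be almost immediate once the first is in place.

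For well-definedness, I would first fix $\tilde y$ and show that $[\tilde x, \tilde y](p)$ is independent of the extension $\tilde x$ of $x$. Given another extension $\tilde x'$, the section $\tilde x - \tilde x'$ vanishes at $p$, so locally it can be written as $\sum_i f_i s_i$ with $s_i \in \Gamma(A)$ and $f_i \in C^\infty(M)$ satisfying $f_i(p)=0$. Combining property a) of Definition \ref{def: Classical Lie algebroid} with skew-symmetry gives the identity
$$
[f s, \tilde y] \;=\; f[s,\tilde y] \;-\; \rho(\tilde y)(f)\, s.
$$
Evaluating at $p$, the first term vanishes because $f_i(p)=0$, while the second vanishes because $\rho(\tilde y)(p) = \rho_p(y) = 0$, so $\rho(\tilde y)(f_i)(p)=0$. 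Hence $[\tilde x - \tilde x', \tilde y](p)=0$. Independence of the choice of $\tilde y$ follows by the symmetric argument, using that $x \in \ker(\rho_p)$.

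That $\mu_p(x,y) \in \ker(\rho_p)$ is immediate from Lemma \ref{lem:anchorbrackpres}: the equality $\rho([\tilde x, \tilde y]) = [\rho(\tilde x), \rho(\tilde y)]$ of vector fields, evaluated at $p$, gives $\rho_p(\mu_p(x,y)) = [\rho(\tilde x), \rho(\tilde y)]_p$, and the Lie bracket of two vector fields at a common zero vanishes. For the Jacobi identity, choose extensions $\tilde x, \tilde y, \tilde z$ of $x,y,z \in \ker(\rho_p)$. Since $[\tilde y, \tilde z]$ is an extension of $\mu_p(y,z)$, which lies in $\ker(\rho_p)$ by the previous step, well-definedness yields $[\tilde x, [\tilde y, \tilde z]](p) = \mu_p(x, \mu_p(y,z))$, and similarly for the cyclic permutations. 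Evaluating property b) of Definition \ref{def: Classical Lie algebroid} at $p$ then gives the Jacobi identity for $\mu_p$.

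The only substantive step is well-definedness, and the main obstacle there is controlling the non-$C^\infty(M)$-linear correction term $-\rho(\tilde y)(f)s$ that appears in the Leibniz-type computation. This correction is precisely the reason the construction works only when \emph{both} arguments lie in $\ker(\rho_p)$: the anchor of the extension annihilates any function vanishing at $p$ to first order, which is exactly the form that the difference of two extensions takes.
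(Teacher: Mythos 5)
Your proof is correct. The paper states this lemma without proof, so there is no argument of the author's to compare against; yours is the standard one: well-definedness from the Leibniz property a) combined with $\rho_p(x)=\rho_p(y)=0$, membership of $\mu_p(x,y)$ in $\ker(\rho_p)$ from Lemma \ref{lem:anchorbrackpres} together with the fact that the Lie bracket of two vector fields vanishing at a common point vanishes there, and the Jacobi identity by evaluating the sections-level identity b) at $p$, using well-definedness to identify $[\tilde x,[\tilde y,\tilde z]](p)$ with $\mu_p(x,\mu_p(y,z))$. The only step you pass over silently is the locality of the bracket, needed to justify computing $[\tilde x-\tilde x',\tilde y](p)$ from a local-frame expression $\sum_i f_i s_i$; this follows from the same Leibniz argument applied to a bump function and is routine, and likewise the ($\mathbb{R}$-bilinearity and) skew-symmetry of $\mu_p$, which are immediate.
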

For the rest of this section, assume that $p \in M$ is a fixed point of the Lie algebroid $(A,\rho,[-,-])$. Denote by $\mf g_p$ the isotropy Lie algebra at $p$. As a vector space it is just $A_p$, while the Lie bracket is given as in Lemma \ref{lem:isolie}.\\
The isotropy Lie algebra has a natural representation $\tau:\mf g_p \to \End(T_pM)$ on $T_pM$, called the Bott representation or the linear holonomy representation. For $x\in \mf g_p, v\in T_pM$, it is defined by
\begin{equation}\label{eq:linholrep}
\tau(x)(v) = [\rho(\tilde{x}),\tilde{v}](p) \in T_pM,
\end{equation}
where $\tilde{x}\in \Gamma(A), \tilde{v} \in \mf X(M)$ are extensions of $x$ and $v$ respectively.\\
The cohomology of the isotropy Lie algebra $\mf g_p$ with values in the linear holonomy representation $\tau$ on $T_pM$ plays a vital role in Theorem \ref{thm:liealgd1} and will return several times throughout the article, so we recall the definition of Lie algebra cohomology.
\begin{defn}\label{def:cecomplex}
Let $\mf g$ be a Lie algebra, and let $\sigma:\mf g\to \End(V)$ be a representation on a vector space $V$. 
\begin{itemize}
    \item [i)] The \textit{Chevalley-Eilenberg complex} is the cochain complex $$(S(\mf g^\ast[-1])\otimes V,d^\sigma_{CE}),$$ where for $\alpha \in S^k(\mf g^\ast[-1])\otimes V$, $v_0,\dots, v_k \in \mf g$, we have
\begin{align*}
d^\sigma_{CE}(\alpha)(v_0,\dots,v_k) =& \sum_{i=0}^k (-1)^{i+k} \sigma(v_i)(\alpha(v_0,\dots, \widehat{v_i},\dots, v_k)) \\&+ \sum_{0\leq i<j\leq k} (-1)^{i+j+k}\alpha([v_i,v_j],v_0,\dots, \widehat{v_i},\dots,\widehat{v_j},\dots, v_k).
\end{align*}
\item[ii)] When $V = \mb R$ and $\sigma = 0$, we denote by $$(S(\mf g^\ast[-1]), d_{CE})$$ the corresponding complex.
\end{itemize}
\end{defn}
The following lemma is easy to show:
\begin{lem}
$$
(S(\mf g^\ast[-1]),d_{CE})
$$
is a differential graded commutative algebra, and for any representation $\sigma:\mf g\to \End(V)$, the Chevalley-Eilenberg complex $$(S(\mf g^\ast[-1])\otimes V, d_{CE}^\sigma)$$ defines a differential graded module over this algebra.
\end{lem}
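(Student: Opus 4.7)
The plan is to verify the two claims by direct computation, mostly a matter of sign bookkeeping. First I would set up notation: with the degree shift, $S^k(\mf g^\vee[-1])$ is concentrated in degree $k$ and, as a graded vector space with the standard graded-symmetric convention, is naturally isomorphic to $\Lambda^k \mf g^\vee$; under this identification the product on $S(\mf g^\vee[-1])$ becomes the wedge product, so the underlying graded algebra is graded commutative. It remains to prove that $d_{CE}$ is a degree $+1$ derivation squaring to zero, and then that $d_{CE}^\sigma$ is a degree $+1$ derivation of $d_{CE}$ in the module sense.

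For the derivation property I would argue as follows. Take $\alpha\in S^p(\mf g^\vee[-1])$ and $\beta\in S^q(\mf g^\vee[-1])$ and evaluate both sides of
\[
d_{CE}(\alpha\cdot\beta) = d_{CE}(\alpha)\cdot\beta + (-1)^p \alpha\cdot d_{CE}(\beta)
\]
on $(v_0,\dots, v_{p+q})$, expanding the wedge/symmetric product as a signed sum over $(p+1,q)$- and $(p,q+1)$-shuffles. The first sum in the definition of $d_{CE}$ is absent (the trivial representation acts by zero), so only the bracket term contributes. A standard shuffle manipulation then groups the terms according to whether the pair $\{v_i,v_j\}$ lies entirely in the $\alpha$-slots, entirely in the $\beta$-slots, or is split; the split contributions cancel after resumming, and the remaining terms reproduce the right-hand side. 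The same computation, done in parallel for $\alpha\in S^p(\mf g^\vee[-1])$ and $m\in S^q(\mf g^\vee[-1])\otimes V$, gives the module derivation rule, since the only new terms produced by the $\sigma(v_i)$ part of $d_{CE}^\sigma$ come from the $m$-slots and assemble into $(-1)^p\alpha\cdot d_{CE}^\sigma(m)$.

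For $d_{CE}^2=0$ (and, on the module, $(d_{CE}^\sigma)^2=0$, which together with the derivation rule implies compatibility with the algebra differential), I would expand $d_{CE}^\sigma(d_{CE}^\sigma(\alpha))(v_0,\dots,v_{k+1})$ and sort the resulting terms into four types: (i) two bracket terms, (ii) two $\sigma$ terms, (iii) bracket-then-$\sigma$, (iv) $\sigma$-then-bracket. Type (i) cancels by the Jacobi identity of $\mf g$; type (ii) cancels because $\sigma$ is a Lie algebra homomorphism, so the symmetric part $\sigma(v_i)\sigma(v_j)+\sigma(v_j)\sigma(v_i)$ drops out and the antisymmetric part combines with $\sigma([v_i,v_j])$ arising from type (iv); type (iii) cancels in pairs by the antisymmetry of $\alpha$ in its bracket slot.

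The main obstacle is purely organisational: the formula in Definition \ref{def:cecomplex} carries an extra $(-1)^k$ that must be tracked carefully through the shuffle identity and the double-sum cancellations, so the bookkeeping has to be done once and uniformly. Once one fixes the shuffle/sign conventions, every step is routine, and the two statements of the lemma follow from the same calculation applied to $V=\mb R$ and to general $V$.
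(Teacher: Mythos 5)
The paper gives no argument for this lemma (it is stated as ``easy to show''), so there is nothing to compare against; your plan is the standard direct verification and is the right route, and the $d^2=0$ part (Jacobi for the double-bracket terms, the representation property $\sigma([v_i,v_j])=[\sigma(v_i),\sigma(v_j)]$ for the mixed terms, pairwise cancellation for the rest) is exactly the textbook computation and goes through verbatim for the module as well.

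The one place where your write-up, taken literally, would fail is the Leibniz identity. You propose to identify $S^k(\mf g^\vee[-1])$ with $\Lambda^k\mf g^\vee$ ``naturally'', i.e.\ keeping the same underlying alternating multilinear maps and letting the product be the usual shuffle wedge, and then to prove $d_{CE}(\alpha\cdot\beta)=d_{CE}(\alpha)\cdot\beta+(-1)^{p}\alpha\cdot d_{CE}(\beta)$. But under that naive identification the paper's differential is $(-1)^k d^{\mathrm{std}}_{CE}$ on $k$-cochains (remark \ref{rmk:cecohom}), and an operator of the form $(-1)^{\deg}d^{\mathrm{std}}_{CE}$ satisfies instead the ``right-handed'' rule $d_{CE}(\alpha\cdot\beta)=(-1)^{q}d_{CE}(\alpha)\cdot\beta+\alpha\cdot d_{CE}(\beta)$ for $\beta$ of degree $q$; for $p$ even and $q$ odd the two identities differ by $2\,d_{CE}(\alpha)\cdot\beta$, so the left-handed formula you state is simply false in that picture. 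The resolution is precisely the d\'ecalage bookkeeping you allude to but do not carry out: the isomorphism $S^k(\mf g^\vee[-1])\cong\Lambda^k\mf g^\vee$ compatible with the shifted-symmetric product is not the identity on multilinear maps but carries Koszul signs coming from the shift, and it is only after transporting the product (equivalently, after fixing the evaluation convention for products of shifted elements) that the paper's $d_{CE}$ becomes a genuine left derivation of degree $1$. Alternatively, you can verify the right-handed rule in the naive picture and observe that it is equivalent to the left rule for $(-1)^{\deg}d_{CE}=d^{\mathrm{std}}_{CE}$, hence to the dgca statement with consistent conventions. Either way the fix is routine, but as written the asserted identity and the asserted identification are mutually inconsistent, so the sign convention must be pinned down before the shuffle computation, not after. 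The same caveat applies to the module Leibniz rule; the $(d^\sigma_{CE})^2=0$ claim is unaffected, since an overall degreewise sign does not change whether an operator squares to zero.
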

For our purposes, the relevant part of the complex is in degrees 0, 1 and 2. In these degrees, we denote the complex by
\[
\begin{tikzcd}
V \arrow{r}{d_0} & \mf g^\ast[-1] \otimes V  \arrow{r}{d_1} &S^2(\mf g^\ast[-1]) \otimes V,
\end{tikzcd}
\]
with 
\begin{equation}\label{eq:diff1}
    d_0(v)(x) = \sigma(x)(v),
\end{equation}
and
\begin{equation}\label{eq:diff2}
d_1(\alpha)(x,y) = -\sigma(x)(\alpha(y)) + \sigma(y)(\alpha(x)) + \alpha([x,y])
\end{equation}
for $v \in V, \alpha \in \mf g^\ast[-1] \otimes V, x,y \in \mf g[1]$. 

Denote the corresponding cohomology groups by $H^i_{CE}(\mf g,V)$ for $i = 0,1$. 
\begin{rmk}\label{rmk:cecohom}
\begin{itemize}
\item[]
\item[i)]This definition differs from the one which is mostly used by an overall factor of $(-1)^k$ when $\alpha \in S^k(\mf g^\ast[-1])\otimes V$. This is because we choose to write the complex using the shifted symmetric product rather than the unshifted wedge product. 
\item[ii)] Note that for $H^i_{CE}(\mf g,V)$ to be defined for $i = 0,1$, it is not necessary that the bracket of $\mf g$ satisfies the Jacobi identity. It is only needed that there is an $\mb R$-bilinear map $[-,-]:\mf g\times \mf g \to \mf g$, and that 
$$
\sigma([x,y]) = \sigma(x)\sigma(y) - \sigma(y)\sigma(x)
$$
for $x,y \in \mf g$.
\end{itemize}
\end{rmk}
The last thing we need to formulate the stability result for fixed points of Lie algebroids is a topology on the space of Lie algebroid structures. It will be shown in Section \ref{sec:exampleLiealg} that Lie algebroid structures on $A$ are contained in the sections of some vector bundle $E$, hence can be equipped with the weak $C^k$-topology induced by the one on $\Gamma(E)$. An element $Q \in \Gamma(E)$ can be seen as a pair $(\rho,[-,-])$, where $\rho$ and $[-,-]$ are as in Definition \ref{def: Classical Lie algebroid}, without requirement b). This is the description in terms of multiderivations as in \cite{CrMo08}.
\begin{thm}[\cite{CrFe},\cite{dufour2005stability}]\label{thm:liealgd1}
Let $M$ be a manifold and $(A,\rho,[-,-])$ be a Lie algebroid. Let $p \in M$ be a fixed point of $(A,\rho,[-,-])$. Let $\mf g_p$ be the isotropy Lie algebra at $p$. Assume that
$$H^1_{CE}(\mf g_p,T_pM) =0.$$ 
Then for any open neighborhood $U\subset M$ of $p$, there is a $C^1$-neighborhood $\mc U$ of $(\rho,[-,-])$ in the space of Lie algebroid structures such that for any $(\rho',[-,-]') \in \mc U$ there exists a family $I\subset U$ of fixed points of $(\rho',[-,-]')$ parametrized by an open neighborhood of the origin of $H^0_{CE}(\mf g_p,T_pM)$.
\end{thm}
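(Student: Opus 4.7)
The plan is to adapt the Newton iteration argument of \cite{dufour2005stability} directly in Lie algebroid data. First I choose coordinates on $M$ near $p$ identifying $p$ with $0\in T_pM$, together with a trivialization of $A$ over this neighborhood identifying $A$ with $U\times \mf g_p$. Under these identifications, the anchor of any Lie algebroid structure is a smooth map $U\to \Hom(\mf g_p,T_pM)$, and the fixed-point condition for $(\rho',[-,-]')$ reads $\rho'(q)=0$. The target $\Hom(\mf g_p,T_pM)$ is canonically identified with the space $C^1_{CE}(\mf g_p,T_pM)$ of Chevalley-Eilenberg 1-cochains, and under the trivialization the derivative $d_p\rho$ coincides (up to sign) with the coboundary $d_0$ of \eqref{eq:diff1}, by the very definition \eqref{eq:linholrep} of the Bott representation; thus $\im(d_p\rho) = B^1$, the space of 1-coboundaries.

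The crucial observation is that the Lie algebroid identity $\rho'([x,y]') = [\rho'(x),\rho'(y)]$, applied to constant local sections $x,y$ and evaluated at $p$, yields the 1-cocycle identity \eqref{eq:diff2} for $\rho'(p)\in \Hom(\mf g_p,T_pM)$ up to an error quadratic in $\|\rho'(p)\|$: the right-hand side is a Lie bracket of two vector fields each of value $O(\|\rho'(p)\|)$ at $p$, while Taylor-expanding the left-hand side produces precisely the cocycle expression (via the first-order datum $d_p\rho$) plus an $O(\|\rho'(p)\|^2)$ remainder. Combined with $H^1_{CE}(\mf g_p,T_pM)=0$, which forces $Z^1=B^1$, this lets me choose once and for all a bounded linear right inverse $T:\Hom(\mf g_p,T_pM)\to T_pM$ to $d_0$ on $Z^1$, vanishing on a fixed complement of $Z^1$ and with image inside a fixed complement of $H^0_{CE}(\mf g_p,T_pM)=\ker d_0$.

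I then run the Newton iteration $q_0:=p$, $q_{k+1}:=q_k - T(\rho'(q_k))$. Taylor expansion gives
\[
\rho'(q_{k+1}) = \rho'(q_k) - (d_{q_k}\rho')(T(\rho'(q_k))) + O(\|T(\rho'(q_k))\|^2),
\]
and using $C^1$-closeness of $\rho'$ to $\rho$ (so that $d_{q_k}\rho'\approx d_p\rho$) together with the approximate cocycle property of $\rho'(q_k)$, the cancellation yields $\|\rho'(q_{k+1})\|\le C\|\rho'(q_k)\|^2$. Shrinking the $C^1$-neighborhood $\mc U$ so that $\|\rho'(p)\|$ is small enough, the iterates remain in $U$ and converge quadratically to a limit $q_\infty$ with $\rho'(q_\infty)=0$. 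The family of fixed points is produced by rerunning the iteration with initial guess $p+w$ for $w$ in a small neighborhood of $0$ inside $H^0_{CE}(\mf g_p,T_pM)\subset T_pM$: since $w\in H^0$ satisfies $\tau(-)(w)=0$, one has $\rho'(p+w)= \rho'(p) + O(\|w\|^2)$, still small, so the same iteration converges and produces $q_\infty(w)$. Transversality of the iteration scheme to $H^0$ (the corrections lie in a fixed complement of $H^0$) guarantees that $w\mapsto q_\infty(w)$ is an injective smooth parametrization of the fixed-point set $I\subset U$.

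The main obstacle is the bookkeeping of errors: the quadratic error in the cocycle condition requires only $C^0$-smallness of $\rho'-\rho$, while the closeness of $d\rho'$ to $d\rho$ requires $C^1$-smallness, and both must be controlled \emph{uniformly} in the initial guess $p+w$ so that a single iteration scheme works for every $w$ in a fixed neighborhood of $0\in H^0_{CE}(\mf g_p,T_pM)$. Making these estimates precise (with explicit contraction constants independent of $w$) is the technical core of the argument and explains why the $C^1$-topology, rather than $C^0$, appears in the statement.
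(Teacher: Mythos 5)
Your overall strategy (Newton-type iteration in a trivialization) is a genuinely different route from the paper, which avoids iteration altogether, but as written it has a real gap at its central estimate. You claim that the identity $\rho'([x,y]')=[\rho'(x),\rho'(y)]$, evaluated at a point $q$, shows that $\rho'(q)\in\Hom(\mf g_p,T_pM)$ satisfies the cocycle condition \eqref{eq:diff2} up to an error that is \emph{quadratic} in $\lVert\rho'(q)\rVert$, on the grounds that the right-hand side is a bracket of two vector fields whose values at $q$ are $O(\lVert\rho'(q)\rVert)$. This is false: a Lie bracket involves first derivatives, so $[\rho'(x),\rho'(y)](q)=(D\rho'(y))_q(\rho'(q)(x))-(D\rho'(x))_q(\rho'(q)(y))$ is only \emph{linear} in $\lVert\rho'(q)\rVert$, with coefficient of the size of $\lVert D\rho\rVert$, which is not small. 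What the anchor--bracket compatibility actually gives is an \emph{exact} linear relation: $\rho'(q)$ lies in the kernel of the perturbed operator built from $(D\rho'(\cdot))_q$ and $[\cdot,\cdot]'(q)$ (this is precisely the paper's map $R_{q,Q}$ and its exact vanishing property c)). Comparing that operator with the fixed $d_1$ of \eqref{eq:diff2} produces a defect bounded by $C\bigl(\lVert Q'-Q\rVert_{C^1}+|q-p|\bigr)\lVert\rho'(q)\rVert$, i.e.\ linear in $\lVert\rho'(q)\rVert$ with a small constant, not quadratic. Consequently your inequality $\lVert\rho'(q_{k+1})\rVert\le C\lVert\rho'(q_k)\rVert^2$ and the claimed quadratic convergence are unjustified, and your remark that this step needs only $C^0$-closeness is also incorrect: controlling the defect needs $C^1$-closeness of the anchor and closeness of the bracket, and in addition a uniform bound keeping the iterates $q_k$ near $p$, since the constant degrades with $|q_k-p|$.

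The scheme is repairable: with the corrected linear-with-small-constant defect, the component of $\rho'(q_k)$ transverse to $Z^1=\ker d_1$ is at most $\epsilon\lVert\rho'(q_k)\rVert$, so your update still yields $\lVert\rho'(q_{k+1})\rVert\le\epsilon'\lVert\rho'(q_k)\rVert$ with $\epsilon'<1$, geometric (not quadratic) convergence, and a limit fixed point; the same correction affects your initialization estimate, where $\rho'(p+w)=\rho'(p)+O(\epsilon\lVert w\rVert+\lVert w\rVert^2)$ rather than $\rho'(p)+O(\lVert w\rVert^2)$, which is still small enough. You would additionally need an argument (e.g.\ parametric contraction with uniform constants) that $w\mapsto q_\infty(w)$ is smooth, not just injective. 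For comparison, the paper sidesteps all of this bookkeeping: it uses the exact identity $R_{q,Q}(\mathrm{ev}_Q(q))=0$, the stability of injective immersions (lemma \ref{lem:immersion}) to make $R_{q,Q}$ injective on a neighborhood in a complement of $\ker d_1$, and a Poincar\'e--Miranda transversality lemma (lemma \ref{lem:transverse}) applied to $\mathrm{ev}_{Q'}$, so that $\mathrm{ev}_{Q'}(q)=0$ is forced exactly and the $H^0_{CE}(\mf g_p,T_pM)$-parametrized family comes from the preimage theorem, with no convergence analysis needed.
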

\begin{proof}
The setup of the proof will be similar to the proof of Theorem 1.2 of \cite{dufour2005stability}. The only difference is that we work with the description of Lie algebroids in terms of an anchor and a bracket, while the authors of \cite{dufour2005stability} work with a special class of Poisson structures on $A^\ast$. \\
As the statement is local in $M$, we may assume that $M = V$ is a finite-dimensional real vector space, $p = 0 \in V$, and that $A = \mf g_p \times V$ is a trivial bundle.\\
The outline of the proof is as follows. 
\begin{itemize}
    \item[i)] We construct a smooth map $\text{ev}_{Q}:V\to W = \mf g_p^\ast[-1]\otimes V$, parametrized continuously by elements \nrt{$Q$ of $\Gamma(E)$} with the $C^1$-topology, which contains the Lie algebroid structures on $A$.
    \item[ii)] We construct a smooth map $R_{q,Q}: \mf g_p^\ast[-1] \otimes V = W \to T = S^2(\mf g_p^\ast[-1])\otimes V$, parametrized continuously by $(q,Q)\in V\times \Gamma(E)$, where the second factor has the $C^1$-topology.
\end{itemize}
These maps will have the following properties: denoting by $Q_0 := (\rho,[-,-])$,
\begin{itemize}
    \item [a)] $\text{ev}_{Q_0}(p) = 0$, and $(D(\text{ev}_{Q_0}))_p= -d_0:V \to W$, where $d_0$ is defined in equation \eqref{eq:diff1} for the Bott representation $\tau$. Moreover, if $Q$ is a Lie algebroid structure, $\text{ev}_Q(q) = 0$ if and only if $q$ is a fixed point of $Q$.
    \item[b)] $R_{q,Q}(0) = 0$ for every $(q,Q)\in V \times \Gamma(E)$, and $(D(R_{p,Q_0}))_0 = d_1$, with $d_1$ as in equation \eqref{eq:diff2} for the Bott representation $\tau$.
    \item[c)] Whenever $Q \in \Gamma(E)$ is a Lie algebroid structure and $q\in V$, we have
    $$
    R_{q,Q}(\text{ev}_{Q}(q)) = 0\in T.
    $$
\end{itemize}

The existence of the maps with these properties is sufficient to prove the theorem. Before we construct the maps, we show how the \nrt{result} follows from these properties. Let $C$ be a complement to $\ker(d_1) = \text{im}(d_0)$ in $W = \mf g_p^\ast[-1]\otimes V$.\\
First\nrt{,} property b) implies that $R_{p,Q_0}$ restricted to $C$ is an immersion at $0\in C$ as $C$ has trivial intersection with $\ker(d_1 = (D(R_{p,Q_0}))_0)$. By Lemma \ref{lem:immersion}, there exists an open neighborhood $O$ of $0 \in C$, an open neighborhood $S$ of $p \in V$ and a $C^1$-neighborhood $\mc U_2$ of $Q_0\in \Gamma(E)$ such that $\left. R_{q,Q}\right|_{O}$ is an injective immersion for $Q\in \mc U_2$ and $q \in S$.\\
Property a) implies that ev$_{Q_0}$ intersects $O\subset C$ transversely in $p$, as $d_0 = -(D(\text{ev}_{Q_0}))_p$, and $\text{im}(d_0) = \ker(d_1)$ by the cohomological assumption. Therefore by Lemma \ref{lem:transverse}, there exists a $C^1$-neighborhood of $Q_0 \in \mc U_1 \subset \Gamma(E)$ such that for any $Q \in \mc U_1$, there exists a $q\in S$ such that $\text{ev}_Q(q) \in O$.\\
By property c), for any Lie algebroid structure $Q \in \mc U = \mc U_1 \cap \mc U_2$, and any $q\in V$ we have
$$
R_{q,Q}(\text{ev}_Q(q)) = 0.
$$
By injectivity of $R_{q,Q}$ restricted to $O$, combined with the fact that ev$_{Q}(q) \in O$, it follows that
$$
\text{ev}_Q(q) = 0,
$$
or equivalently, $q$ is a fixed point of $Q$. For the existence of the family of fixed points, apply the final statement of Lemma \ref{lem:transverse}, again using that $R_{q,Q}$ is injective restricted to $O$.

Now we define the maps. 
\begin{itemize}
\item[i)]For $Q = (\sigma,[-,-]')$, let 
$$
\text{ev}_Q:V\to W
$$
be defined by $$\text{ev}_Q(v) = T^\ast_{v}(\sigma)_p\in W,$$ where $T_v:V \to V$ is the translation by $v \in V$. The continuous dependence on $Q \in \Gamma(E)$ holds by definition of the topology on $\Gamma(E)$.
\item[ii)]
Now let $q\in V$, and $Q = (\sigma,[-,-]') \in \Gamma(E)$. Define
$$
R_{q,Q}:W\to T
$$
by
$$
R_{q,Q}(\alpha)(x,y) = -T^\ast_q([\sigma(x),\widetilde{\alpha}(\widetilde{y})] - [\sigma(y),\widetilde{\alpha}(\widetilde{x})] - \alpha([\widetilde{x},\widetilde{y}]'))_p,
$$
where $\alpha \in W$, $x,y \in \mf g_p$, and the $\tilde{\cdot}$ indicates the extension of $\cdot$ to a constant section.  We postpone the proof of the continuous dependence on $(q,Q)\in V\times \Gamma(E)$ to Section \ref{sec:exampleLiealg}, as it will be shown more generally. 
\item[a)]
It is then clear that $\text{ev}_{Q_0}(p) = 0$, and that $q$ is a fixed point of $Q$ if and only if $\text{ev}_Q(q) = 0$. For the statement about the derivative of $\text{ev}_{Q_0}$ note that the translation map $T_v:V\to V$ is the time-1 flow of the \emph{constant} vector field induced by $v\in V$, so for $v\in V \cong T_p V, x \in A_p$ viewing both as constant sections, we have
$$
(D(\text{ev}_{Q_0}))_p(v)(x) = \left. \frac{d}{dt}\right|_{t=0} T^\ast_{tv}(\rho)_p(x) = [v,\rho(x)](p) = -d_0(v)(x).
$$
Here in the second to last equality we note that $T^\ast_{-tv}(\rho)_p(x)$ is the pushforward of the vector field $\rho(x)$ by the diffeomorphism $T_{tv}:V\to V$, which is the time $t$ flow of the constant vector field $v$.
\item[b)] The properties regarding its value and derivative are immediate.
\item[c)] We note that 
\begin{align*}
    R_{q,Q}(\text{ev}_{Q}(q)) = - T^\ast_q([\sigma(x),\sigma(y)] -\sigma([x,y]'))_p = 0.
\end{align*}
\end{itemize}
\end{proof}
\begin{rmk}\label{rmk:liealg1}
\begin{itemize}
    \item []
    \item [-]Observe that for the proof it was only necessary that $\sigma:\Gamma(A) \to \mf X(M)$ preserves the brackets, and the full Jacobi identity for $[-,-]'$ was not needed. So the theorem actually yields a stability criterion for \emph{almost} Lie algebroid structures on $A$ (see \cite{univlinfty}, Definition 3.7). For almost Lie algebroids, the requirement that the Jacobiator vanishes identically is replaced by the requirement that it is $C^\infty(M)$-multilinear. In this case, the fiber of $A$ over a singular point $p\in M$ carries a bracket (which does not necessarily satisfy the Jacobi identity), and the action on $T_pM$ still makes sense, so the cohomology $H^1_{CE}(\mf g,T_pM)$ is well-defined according to Remark \ref{rmk:cecohom}ii).
    \item[-] If $0\in W$ is a regular value for $\text{ev}_{Q_0}$, the map $R_{\cdot,\cdot}$ \nrt{is not} needed. However, for dimensional reasons this can only happen if the Lie algebroid $A$ has rank 1. When the rank of $A$ is 1, $0$ being a regular value of $\text{ev}_{Q_0}$ is equivalent to the cohomological assumption.
    \item[-] The map $R_{q,Q}$ is linear. It is therefore unnecessary to use Lemma \ref{lem:immersion}. The linearity is a consequence of the fact that constant vector fields commute, and in Theorem \ref{thm:mainthm} there will be a quadratic term. As the arguments are very similar we choose to give the general argument here, and refer back to this in the proof of the main theorem.
\end{itemize}
\end{rmk}
\section{The main theorem}\label{sec:mainthm}
In this section, we state and prove a generalization of Theorem \ref{thm:liealgd1}, which is an algebraic statement about differential graded Lie algebras (Theorem \ref{thm:mainthm}). We give some basic background on differential graded Lie algebras in Section \ref{sec:backgrounddgla}. To motivate the generalization, we characterize Lie algebroid structures in terms of certain elements in a graded Lie algebra, and show how the problem of stability of singular points can be reformulated in terms of this graded Lie algebra (section \ref{sec:dictionary}). In Section \ref{ssec:mainthm}, we then state the assumptions and prove Theorem \ref{thm:mainthm}.
\subsection{Differential graded Lie algebras}\label{sec:backgrounddgla}
\begin{defn}
A \textit{differential graded Lie algebra} is a triple $(\mf g,\partial,[-,-])$, where 
\begin{itemize}
    \item [i)] $\mf g$ is a non-negatively graded real vector space,
    \item [ii)] $\partial:\mf g\to \mf g$ is a linear map of degree 1,
    \item[iii)] $[-,-]:\mf g\times \mf g \to \mf g$ is a degree 0 graded skew-symmetric bilinear map,
\end{itemize}
satisfying
\begin{itemize}
    \item[a)] $\partial\circ\partial = 0$,
    \item[b)] $\partial([x,y]) = [\partial(x),y] + (-1)^{|x|} [x,\partial(y)]$ for all $x,y \in \mf g$ where $|x|$ denotes the degree of $x$,
    \item[c)] $[[x,y],z] = [x,[y,z]] - (-1)^{|x||y|}[y,[x,z]]$ for $x,y,z \in \mf g$, where $|x|$ and $|y|$ denote the degree of $x$ and $y$ respectively.
\end{itemize}
\end{defn}
Any element $Q\in \mf g^1$ can be used to twist the differential $\partial$, by considering the map $$\partial + [Q,-].$$ The resulting triple $(\mf g,\partial +[Q,-], [-,-])$ still satisfies property b) and c). It will in general not satisfy a), as
$$
(\partial+[Q,-])\circ (\partial + [Q,-]) = \left[\partial(Q) + \frac{1}{2}[Q,Q],-\right].
$$
This leads us to the following definition.
\begin{defn}
Let $(\mf g, \partial,[-,-])$ be a differential graded Lie algebra, and $Q \in\mf g^1$ an element of degree 1. $Q$ is a \emph{Maurer-Cartan} element if 
$$
\partial(Q) + \frac{1}{2}[Q,Q] = 0 \in \mf g^2.
$$
Denote the set of all Maurer-Cartan elements of $\mf g$ by $MC(\mf g)$.
\end{defn}
\begin{conv}
Although the examples we consider may have nonzero negative degrees, we will implicitly set the negative degrees equal to zero. Note that this is harmless: the bracket of two elements of non-negative degrees has non-negative degree, and the differential respects this. We may use the bracket on negative degrees to define certain subspaces however.
\end{conv}
We start with some examples.
\begin{exmp}
\begin{itemize}
    \item []
    \item [1)] We can consider two extreme cases: if $\partial = 0$, a differential graded Lie algebra $(\mf g,0,[-,-])$ is just a graded Lie algebra. If $[-,-] =0$, a differential graded Lie algebra $(\mf g,\partial,0)$ is just a cochain complex. In the former case, a Maurer-Cartan element is simply a degree 1 element $Q\in \mf g^1$ such that $[Q,Q] = 0$. In the latter, Maurer-Cartan elements are precisely 1-cocycles.
    \item[2)] Let $M$ be a smooth manifold. Then $(\Gamma(S( TM[-1])[1],0,[-,-]_{SN})$, where $[-,-]_{SN}$ is the Schouten-Nijenhuis bracket, is the graded Lie algebra of multivector fields on $M$. Its Maurer-Cartan elements are $\pi \in \Gamma(S^2(TM[-1]))$ such that $[\pi,\pi]_{SN} = 0$. These are precisely the Poisson bivectors.
    \item[3)] Let $M$ be a smooth manifold, and $A$ a vector bundle over $M$. The space of multiderivations \nrt{of $A$} has a graded Lie algebra structure as described in Proposition 1 in \cite{CrMo08}, and its Maurer-Cartan elements are precisely the Lie algebroid structures on $A$. \\
    An equivalent description is also given in \cite{CrMo08}, in which the bracket is more intuitive, which can be generalized to graded vector bundles. Borrowing notation from graded geometry, of which the basics can be found in \cite{CATTANEO_2011}, any vector bundle $A\to M$ gives rise to a graded manifold $A[1]$. Its functions are given by the non-negatively graded commutative algebra
    $$
    C^\infty(A[1]) := \Gamma(S(A^\ast[-1])).
    $$
    Now there is a natural graded Lie algebra associated to $A[1]$: it is the graded Lie algebra of graded derivations of $C^\infty(A[1])$, which we denote by  
    $$
    \mf X(A[1]) := \Der_{\mb R}(C^\infty(A[1])).
    $$
    The graded commutator bracket
    $$
    [X,Y] = X\circ Y - (-1)^{|X||Y|} Y \circ X
    $$
    equips $\mf X(A[1])$ with a graded Lie algebra structure, where $X,Y\in \mf X(A[1])$ of degrees $|X|,|Y|$ respectively. It was first observed by A. Vaintrob \cite{Vaintrob} that a degree 1 derivation $Q$ on $C^\infty(A[1])$ satisfying $[Q,Q] = 2Q^2 = 0$ is equivalent to the data of a Lie algebroid.
\end{itemize}
\end{exmp}
We will encounter more examples \nrt{of} differential graded Lie algebras in this text. \\
Another construction on differential graded Lie algebras we will need is the gauge action of $\mf g^0$ on $\mf g^1$. For nilpotent graded Lie algebras, the \nrt{action} can be written down as an infinite sum which terminates, see for instance the formula in \cite{manettidefcomp} above Remark V.33. As the differential graded Lie algebras we will encounter will come with a notion of differentiable paths, we take Remark V.33 in \cite{manettidefcomp} as a definition.
\begin{defn}\label{def:gaugeeq}
Let $Q\in \mf g^1$, and $v\in \mf g^0$. Consider the initial value problem in $\mf g^1$ given by
\begin{equation}\label{eq:gaugeeq}
\frac{d}{dt}\gamma_t = [v,\gamma_t]-\partial(v), \gamma_0 = Q.
\end{equation}
Assume that there exists a unique solution for $t\in [0,1]$. Then the \emph{gauge action} of $v$ on $Q$ is defined to be $\gamma_1$, and will be denoted by $Q^v$.
\end{defn}
This action satisfies a property similar to the exponential map for Lie groups.
\begin{lem}
For $t \in [0,1]$, we have $\gamma_t = Q^{tv}$. That is, the time $t$ solution of the initial value problem associated to $v$ is equal to the time 1 solution of the initial value problem associated to $tv$.
\end{lem}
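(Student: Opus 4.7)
The plan is to use a time rescaling argument to reduce the time-$t$ flow of the ODE for $v$ to the time-$1$ flow of the ODE for $tv$, and then invoke uniqueness of solutions to the resulting initial value problem.

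More concretely, fix $t\in[0,1]$ and let $\gamma_s$ denote the solution of \eqref{eq:gaugeeq} on $[0,1]$, so by definition $Q^v = \gamma_1$. I would introduce the reparametrised path $\widetilde{\gamma}_s := \gamma_{ts}$ for $s\in[0,1]$, which makes sense because $ts\in[0,t]\subset[0,1]$. Differentiating via the chain rule and substituting \eqref{eq:gaugeeq} gives
\[
\frac{d}{ds}\widetilde{\gamma}_s \;=\; t\cdot\frac{d}{du}\Big|_{u=ts}\gamma_u \;=\; t\bigl([v,\gamma_{ts}]-\partial(v)\bigr) \;=\; [tv,\widetilde{\gamma}_s]-\partial(tv),
\]
and obviously $\widetilde{\gamma}_0=\gamma_0=Q$. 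Hence $\widetilde{\gamma}$ solves precisely the initial value problem \eqref{eq:gaugeeq} with $v$ replaced by $tv$, on the interval $[0,1]$.

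By hypothesis the gauge equation for $tv$ with initial condition $Q$ admits a unique solution on $[0,1]$ (at least for $|t|$ small, though since $t\in[0,1]$ and a solution for $v$ exists, a standard argument together with the estimate one obtains by comparing the two ODEs shows that the solution for $tv$ also exists; if one wants, one can simply note that $\widetilde{\gamma}$ \emph{is} such a solution, so existence is free and only uniqueness is needed). Evaluating at $s=1$ therefore yields $Q^{tv}=\widetilde{\gamma}_1=\gamma_t$, which is the claim.

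The argument is essentially formal, so I do not expect any serious obstacle; the only point to be careful about is the implicit existence of the solution for the rescaled equation, but this is supplied by the construction of $\widetilde{\gamma}$ itself, so no further hypothesis is required beyond those in the definition.
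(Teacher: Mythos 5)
Your proof is correct; the paper states this lemma without proof, and your time-rescaling of the solution (checking that $\widetilde{\gamma}_s=\gamma_{ts}$ solves the gauge equation for $tv$ with the same initial condition, then invoking the uniqueness implicit in the definition of $Q^{tv}$) is exactly the standard argument the authors leave to the reader. Your remark that existence for the rescaled problem comes for free from $\widetilde{\gamma}$ itself, so only uniqueness is needed, is the right way to handle the only subtle point.
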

One of the main properties of the gauge action in \cite{manettidefcomp} is that it should preserve the Maurer-Cartan elements of $\mf g$. While for the nilpotent case this can be proven, we will assume this. 
\subsection{Lie algebroid stability in terms of a graded Lie algebra}\label{sec:dictionary}
In this section we will reformulate the problem of stability of fixed points of Lie algebroid structures on $A\to M$, as well as the solution provided by Theorem \ref{thm:liealgd1} completely in terms of operations on the graded Lie algebra $\mf X(A[1])$, as motivation for Theorem \ref{thm:mainthm}. We do this in four steps:
\begin{itemize}
    \item[i)] Identify Lie algebroid structures on $A$ with Maurer-Cartan elements of $\mf X(A[1])$.
    \item[ii)] Identify a graded Lie subalgebra $\mf X_{p,1}(A[1])\subset \mf X(A[1])$ whose Maurer-Cartan elements are Lie algebroid structures for which a \emph{given} $p\in M$ is a fixed point.
    \item[iii)] Use the gauge action of $\mf X^0(A[1])$ to identify a neighborhood of $\mf X^0(A[1])/\mf X^0_{p,1}(A[1])$ with a neighborhood of $p\in M$, so that $q\in M$ near $p$ will be a fixed point of a Lie algebroid structure $Q$ if and only if $Q$ is gauge equivalent to an element in $\mf X^1_{p,1}(A[1])$.
    \item[iv)] Interpret the cohomology appearing in Theorem \ref{thm:liealgd1} in terms of $\mf X(A[1])$, $\mf X_{p,1}(A[1])$ and $Q\in \mf X_{p,1}^1(A[1])$, which is the Lie algebroid structure we start with.
\end{itemize}
We carry out the steps described above.
\begin{itemize} 
\item[i)]
We start by recalling the bijection between Lie algebroid structures on a vector bundle $A$ in terms of an anchor and a bracket as in Definition \ref{def: Classical Lie algebroid} and Lie algebroid structures defined using degree 1 vector fields on the graded manifold $A[1]$, due to T. Voronov \cite{qmanvoronov}.
\begin{lem}
Let $M$ be a manifold and $(A,\rho,[-,-]_A)$ a Lie algebroid. Then the Lie algebroid differential
\begin{align}\label{eq:liealgdr}
Q(\alpha)(X_0,\dots,X_k) =& \sum_{i=0}^k (-1)^{i+k} \rho(X_i)(\alpha(X_0,\dots, \widehat{X_i},\dots, X_k)) \\\nonumber&+ \sum_{0\leq i<j\leq k} (-1)^{i+j+k}\alpha([X_i,X_j]_A,X_0,\dots, \widehat{X_i},\dots,\widehat{X_j},\dots, X_k)
\end{align}
for $\alpha \in \Gamma(S^k(A^\ast[-1])), X_0,\dots ,X_k \in \Gamma(A)$ defines a degree 1 derivation of the graded algebra $\Gamma(S(A^\ast[-1]))$ satisfying $[Q,Q] = 2Q^2 = 0.$\\
Conversely, let $Q $ be a degree 1 derivation of $\Gamma(S(A^\ast[-1]))$ satisfying $[Q,Q]=0$. Identifying $\iota:\Gamma(A) \to \mf X^{-1}(A[1])$ using the contraction map,
the structure maps
$$
\rho(X)(f) = -[Q,\iota_X](f),
$$
$$
[X,Y]_A = \iota^{-1}([[\iota_Y,Q],\iota_X])
$$
for $f\in C^\infty(M), X,Y \in \Gamma(A)$
equip $A$ with a Lie algebroid structure.
\end{lem}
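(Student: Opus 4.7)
The plan is to exploit the fact that a degree $1$ derivation $Q$ of the graded commutative algebra $\Gamma(S(A^\vee[-1]))$ is uniquely determined by, and freely specified through, its restrictions to the generating subspaces $C^\infty(M) = \Gamma(S^0(A^\vee[-1]))$ and $\Gamma(A^\vee[-1]) = \Gamma(S^1(A^\vee[-1]))$. These restrictions are in turn equivalent, via pairing with $\Gamma(A)$, to a bundle map $\rho:A\to TM$ together with a bracket $[-,-]_A$ on $\Gamma(A)$ satisfying the Leibniz rule. With this dictionary fixed, the forward direction reduces to two routine checks: first, that the Koszul formula \eqref{eq:liealgdr} is well-defined on $\Gamma(S^k(A^\vee[-1]))$ (that is, $C^\infty(M)$-multilinear in the $X_i$ after antisymmetrization), which follows directly from the Leibniz rule for $[-,-]_A$; and second, that this formula agrees in degrees $0$ and $1$ with the derivation extension of the prescribed data, which identifies it as the unique such extension.

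For the vanishing $[Q,Q]=2Q^2=0$, I would observe that $Q^2$ is itself a degree $2$ derivation, hence is determined by its action on the generators $C^\infty(M)$ and $\Gamma(A^\vee[-1])$. A direct evaluation shows that $(Q^2 f)(X,Y)$ is, up to an overall sign, equal to $\big(\rho([X,Y]_A)-[\rho(X),\rho(Y)]\big)(f)$, so $Q^2$ vanishes on $C^\infty(M)$ precisely when $\rho$ preserves brackets, which is guaranteed by lemma \ref{lem:anchorbrackpres}. Evaluating $Q^2$ on an element of $\Gamma(A^\vee[-1])$ and pairing with three sections of $A$ then reproduces the Jacobiator of $[-,-]_A$ (again up to an overall sign), so its vanishing is equivalent to the Jacobi identity.

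For the converse direction, given a degree $1$ derivation $Q$ with $[Q,Q]=0$, I would define $\rho$ and $[-,-]_A$ by the stated single- and double-commutator formulas. The key structural inputs are that $\iota_X$ is $C^\infty(M)$-linear in $X$, vanishes on $C^\infty(M)$, and acts on $\Gamma(A^\vee[-1])$ by the pairing $\alpha\mapsto \alpha(X)$. Using these together with the graded Jacobi identity for the commutator bracket on $\mf X(A[1])$, one verifies $C^\infty(M)$-linearity of $\rho$ in $X$ (so that $\rho$ descends from a bundle map), skew-symmetry of $[-,-]_A$, the Leibniz rule (by expanding $[[\iota_Y, Q], \iota_{fX}]$ and using the computation of $[Q,\iota_X]$ on functions), and finally the Jacobi identity as the translation of $[Q,Q]=0$ into $\Gamma(A^\vee[-1])$ through $\iota$. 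That the two constructions are mutually inverse is automatic from the uniqueness-by-restriction statement above. The main obstacle is bookkeeping signs and normalization conventions across the contraction $\iota$, the shifted symmetric algebra and the graded commutator on $\mf X(A[1])$, particularly in the Leibniz and Jacobi derivations; once a consistent convention is fixed, each verification is routine.
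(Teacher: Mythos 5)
Your proposal is sound, but there is nothing in the paper to compare it against: the lemma is recalled as a known result (due to Vaintrob and Voronov, cited to \cite{qmanvoronov}) and the paper gives no proof of it. Your argument is the standard one — a degree $1$ derivation of $\Gamma(S(A^\vee[-1]))$ is determined by its values on $C^\infty(M)$ and $\Gamma(A^\vee[-1])$, $Q^2=\tfrac12[Q,Q]$ is again a derivation so its vanishing is tested on generators (reproducing the anchor-morphism property on functions and the Jacobiator on one-forms), and the converse is the derived-bracket construction — and it does prove the statement. Two points deserve tightening. First, the step ``agrees in degrees $0$ and $1$ with the derivation extension, which identifies it as the unique such extension'' presupposes that formula \eqref{eq:liealgdr} is itself a derivation; to close this you should either check the graded Leibniz rule for \eqref{eq:liealgdr} directly on a product $\alpha\cdot\beta$, or define $Q$ as the unique derivation extending the degree $0$ and $1$ data and prove by induction on $k$ that this extension is given by \eqref{eq:liealgdr} (well-definedness, i.e.\ $C^\infty(M)$-multilinearity via cancellation of the anchor terms against the Leibniz terms of $[-,-]_A$, is then part of that induction). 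Second, in the converse the skew-symmetry and Jacobi identity of $[X,Y]_A$ rest on two facts you use only implicitly: every degree $-1$ derivation of $\Gamma(S(A^\vee[-1]))$ is a contraction $\iota_Z$ for a unique $Z\in\Gamma(A)$ (so the double commutator really defines a section), and $[\iota_X,\iota_Y]=0$, which is what makes the derived bracket skew and lets $[Q,Q]=0$ translate into the Jacobi identity; both are routine but should be stated.
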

From now on, a Lie algebroid $(A,\rho,[-,-]_A)$ will be denoted by $(A,Q)$ where $Q$ is defined by equation \eqref{eq:liealgdr}. 
\item[ii)] We now identify a graded Lie subalgebra of $\mf X(A[1])$, whose Maurer-Cartan elements are precisely those Lie algebroid structures for which a \emph{given} $p\in M$ is a fixed point. For that we start with an observation:
\begin{lem}
Let $(A,Q)$ be a Lie algebroid over $M$. Then $p\in M$ is a fixed point of $Q$ if and only if \begin{equation}\label{eq:singpointcond}
    Q(C^\infty(M)) \subset I_p\Gamma(A^\ast[-1]),
\end{equation}where $I_p$ denotes the vanishing ideal of $p \in M$.
\end{lem}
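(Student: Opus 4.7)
The plan is to unpack the right-hand condition directly using the explicit formula for the Lie algebroid differential, and then to translate ``section lies in $I_p\Gamma(A^\vee[-1])$'' into ``section vanishes pointwise at $p$''; the equivalence with $\rho_p = 0$ is then immediate.

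First I would specialize equation \eqref{eq:liealgdr} to $k=0$, $\alpha = f \in C^\infty(M) = \Gamma(S^0(A^\vee[-1]))$. The bracket sum is empty, and the derivation sum collapses to a single term, giving
$$
Q(f)(X) = \rho(X)(f) \qquad \text{for all } X \in \Gamma(A),\ f \in C^\infty(M).
$$
This is the only place where the explicit formula enters the proof.

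Next I would use the standard fact that, for any vector bundle $E \to M$ on a smooth manifold, a section $\sigma \in \Gamma(E)$ satisfies $\sigma(p) = 0$ if and only if $\sigma \in I_p\Gamma(E)$. The nontrivial direction is proved by picking a local frame $\{e_i\}$ of $E$ near $p$, writing $\sigma = \sum g_i e_i$ locally with $g_i(p) = 0$, applying a partition of unity to globalize, and using that smooth functions vanishing at $p$ lie in $I_p$. Applied to $E = A^\vee[-1]$, this gives
$$
Q(f) \in I_p \Gamma(A^\vee[-1]) \iff Q(f)(X)(p) = 0 \text{ for every } X \in \Gamma(A).
$$

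Combining these two observations, the containment $Q(C^\infty(M)) \subset I_p\Gamma(A^\vee[-1])$ is equivalent to $\rho(X)(f)(p) = 0$ for all $X \in \Gamma(A)$ and all $f \in C^\infty(M)$. Since every element of $A_p$ extends to a global section of $A$ (using a bump function), and since the vector field $\rho(X) \in \mf X(M)$ vanishes at $p$ precisely when $\rho(X)(f)(p)=0$ for every $f$, this is in turn equivalent to $\rho_p = 0$, i.e., to $p$ being a fixed point. There is no genuine obstacle here; the only step requiring any care is the identification of $I_p\Gamma(A^\vee[-1])$ with the sections vanishing at $p$, and even that is a textbook fact about vector bundles on smooth manifolds.
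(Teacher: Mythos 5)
Your proof is correct and follows essentially the same route as the paper: both reduce the statement to the identity expressing $\rho(X)(f)$ as the pairing of $Q(f)$ with $X$ (the paper writes it as $\rho(X)(f)=-\iota_X(Q(f))$, a sign-convention variant of your $Q(f)(X)=\rho(X)(f)$), and then use the equivalence between a section vanishing at $p$ and lying in $I_p\Gamma(A^\vee[-1])$. The only difference is that you spell out the latter textbook fact (local frame plus partition of unity), which the paper takes for granted.
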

\begin{proof}
For any $X\in \Gamma(A)$ and $f\in C^\infty(M)$ we have the equality
$$
\rho(X)(f) = -\iota_X(Q(f)).
$$
If $Q(f) \in I_p\Gamma(A^\ast[-1])$, we get that $\rho(X)(f) \in I_p$. As $X$ and $f$ are arbitrary, this implies that $\rho_p = 0 \in A_p^\ast\otimes T_pM$, so $p\in M$ is a fixed point of $Q$. \\
Conversely, if $\rho_p = 0 \in A_p^\ast \otimes T_pM$, then for every $a\in A_p$, $Q(f)_p(a) = 0$. As $a \in A_p$ and $f$ are arbitrary, $Q(f)_p = 0$ hence $Q(f) \in I_p \Gamma(A^\ast[-1])$.
\end{proof}
Note that condition (\ref{eq:singpointcond})
defines a linear subspace of $\mf X^1(A[1])$. This condition can be extended to other degrees, which then defines a graded Lie subalgebra of $(\mf X(A[1]),0,[-,-])$:
\begin{lem}\label{lem:liesubalg} For $p \in M$, $k= 0,\dots, \rk(A)$, define
$$
\mf X^k_{p,1}(A[1]) := \{Q \in \mf X^k(A[1]) \mid Q(C^\infty(M))\subset I_p \Gamma(S^k(A^\ast[-1]))\}.
$$
Then $(\mf X_{p,1}(A[1]), 0, [-,-])$ is a graded Lie subalgebra of $(\mf X(A[1]),0,[-,-])$.
\end{lem}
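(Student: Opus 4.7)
The plan is to verify two things: first that $\mf X_{p,1}(A[1])$ is a graded linear subspace of $\mf X(A[1])$, which is immediate from the defining condition being linear in $Q$, and second that it is closed under the graded commutator bracket. The nilpotency of the differential is vacuous here since $\partial = 0$ on $\mf X(A[1])$, and the bracket is the only structure to check.

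For closure under the bracket, I would take $Q_1 \in \mf X^k_{p,1}(A[1])$ and $Q_2 \in \mf X^l_{p,1}(A[1])$, and show that $[Q_1,Q_2](f) \in I_p \Gamma(S^{k+l}(A^\vee[-1]))$ for every $f\in C^\infty(M)$. The key preliminary observation, which I would record as a small remark or invoke implicitly, is that an element $\beta\in \Gamma(S^j(A^\vee[-1]))$ lies in $I_p\Gamma(S^j(A^\vee[-1]))$ if and only if $\beta(p)=0$; this is a standard local-frame plus cutoff-function argument and is equivalent to saying that elements of the submodule are precisely those sections vanishing pointwise at $p$.

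With that in hand, the calculation is a straightforward application of the graded Leibniz rule. Writing $Q_2(f) = \sum_i g_i \alpha_i$ with $g_i\in I_p$ and $\alpha_i \in \Gamma(S^l(A^\vee[-1]))$, I would expand
\[
Q_1(Q_2(f)) = \sum_i \bigl(Q_1(g_i)\,\alpha_i + g_i\, Q_1(\alpha_i)\bigr),
\]
where the sign in the Leibniz rule is trivial because $g_i$ has degree $0$. The first summand lies in $I_p\Gamma(S^{k+l}(A^\vee[-1]))$ because $Q_1(g_i) \in I_p\Gamma(S^k(A^\vee[-1]))$ by hypothesis, and the second lies there because $g_i \in I_p$. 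The analogous argument applies to $Q_2(Q_1(f))$, so the graded commutator $[Q_1,Q_2](f)$ belongs to $I_p\Gamma(S^{k+l}(A^\vee[-1]))$, which is the condition that $[Q_1,Q_2]\in \mf X^{k+l}_{p,1}(A[1])$.

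There is no real obstacle; the entire proof is a single application of the graded derivation property. The only mildly delicate point is justifying that one may write an element of $I_p\Gamma(S^l(A^\vee[-1]))$ as a finite sum $\sum g_i\alpha_i$ with $g_i\in I_p$, which is why I would either cite or briefly note the equivalence with pointwise vanishing at $p$ before beginning the computation.
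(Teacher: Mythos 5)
Your proof is correct and is essentially the computation the paper has in mind: the paper's proof simply declares the inclusion $[\mf X^{k}_{p,1},\mf X^{l}_{p,1}]\subset \mf X^{k+l}_{p,1}$ to be "a straightforward computation," and your Leibniz-rule argument, writing elements of $I_p\Gamma(S^l(A^\vee[-1]))$ as finite sums $\sum_i g_i\alpha_i$ with $g_i\in I_p$ and using both hypotheses $Q_1(g_i),Q_2(h_j)\in I_p\Gamma(\cdot)$, is exactly that computation spelled out. The preliminary identification of $I_p\Gamma(S^j(A^\vee[-1]))$ with sections vanishing at $p$ is also consistent with how the paper uses these submodules (e.g.\ in the lemma characterizing fixed points via $Q(f)_p=0$), so nothing is missing.
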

\begin{proof}
We need to show that
$$
[\mf X^k_{p,1}(A[1]),\mf X^l_{p,1}(A[1])] \subset \mf X^{k+l}_{p,1}(A[1]),
$$
 which is a straightforward computation.  
\end{proof}
\begin{rmk}
The subscript 1 indicates that it is the subspace of vector fields which have prescribed vanishing behavior up to first order in $p$. We will also encounter higher order vanishing conditions later.
\end{rmk}
\item[iii)]
The next question is now how to keep track of a Lie algebroid structure having a fixed point at $q\neq p \in M$. This is where the gauge action of $\mf X(A[1])$ comes into play. We unpack the definition and construct the solution of the gauge action.\\
Recall that in the proof of Theorem \ref{thm:liealgd1}, we used the translation map $T_v$, which was the time-1 flow of the constant vector field $v \in \mf X(V) \cong C^\infty(V,V)$. We now describe this in a coordinate free way. First\nrt{,} we describe $\mf X^0(A[1])$. The following lemma is well-known, for a proof we refer to the appendix of \cite{ZAMBON2013155}. For details on covariant differential operators on $A$, also known as derivations of $A$, we refer to \cite[Section 3.4]{genthylgpdlalgd}.
\begin{lem} \label{lem:cdodeg0}
$$\mf X^0(A[1]) \cong CDO(A^\ast) \cong CDO(A).$$
\end{lem}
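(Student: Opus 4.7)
The plan is to unwind what a degree zero derivation of the graded algebra $C^\infty(A[1]) = \Gamma(S(A^\vee[-1]))$ actually is, and compare it with the definition of a covariant differential operator. Since this algebra is generated as a graded commutative algebra by $C^\infty(M)$ in degree $0$ and $\Gamma(A^\vee[-1])$ in degree $1$, any $D \in \mf X^0(A[1])$ is determined by its restrictions to these two subspaces. First I would observe that $D$ restricts to a derivation of $C^\infty(M)$, hence to a vector field $X \in \mf X(M)$, and to an $\R$-linear endomorphism $D' \colon \Gamma(A^\vee) \to \Gamma(A^\vee)$. Applying the Leibniz rule to a product $f \cdot \alpha$ with $f \in C^\infty(M)$ and $\alpha \in \Gamma(A^\vee[-1])$ forces
\begin{equation*}
D'(f\alpha) = X(f)\alpha + fD'(\alpha),
\end{equation*}
which is exactly the condition that the pair $(D', X)$ is a covariant differential operator on $A^\vee$.

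Conversely, starting from $(D',X) \in CDO(A^\vee)$, I would define $D$ on generators by $D(f) := X(f)$ and $D(\alpha) := D'(\alpha)$, and extend to all of $\Gamma(S(A^\vee[-1]))$ by the graded Leibniz rule. The CDO compatibility displayed above is precisely what is needed for this extension to descend through the only nontrivial relation in $\Gamma(S(A^\vee[-1]))$, namely $f\alpha = \alpha f$ with $f$ of degree $0$ and $\alpha$ of degree $1$; the graded anti-commutativity $\alpha\beta = -\beta\alpha$ between two degree $1$ generators is automatically respected by any degree $0$ derivation, since the relevant signs agree on both sides. This yields a bijection $\mf X^0(A[1]) \cong CDO(A^\vee)$.

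For the second isomorphism $CDO(A^\vee) \cong CDO(A)$, I would invoke the standard duality: given $(D, X) \in CDO(A)$, define $D^\vee \colon \Gamma(A^\vee) \to \Gamma(A^\vee)$ by
\begin{equation*}
(D^\vee \alpha)(s) := X(\alpha(s)) - \alpha(D(s))
\end{equation*}
for $s \in \Gamma(A)$ and $\alpha \in \Gamma(A^\vee)$. A short calculation shows that $D^\vee \alpha$ is $C^\infty(M)$-linear in $s$ (the failures of $C^\infty(M)$-linearity of $X$ and $D$ cancel via the Leibniz rule), that its symbol is $X$, and that the same formula with the roles of $A$ and $A^\vee$ exchanged provides an inverse. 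The only step with any real content is the well-definedness of the extension in the first isomorphism; everything else is pure bookkeeping, which is presumably why the paper merely cites \cite{ZAMBON2013155}.
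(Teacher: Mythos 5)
Your argument is correct, and in fact the paper does not prove this lemma at all: it simply records it as well known and refers to the appendix of \cite{ZAMBON2013155}, so your proposal supplies a proof where the paper gives none. Your identification $\mf X^0(A[1])\cong CDO(A^\vee)$ by restricting a degree-zero derivation to the generators $C^\infty(M)$ and $\Gamma(A^\vee[-1])$ is the standard route, and your dualization formula $(D^\vee\alpha)(s)=X(\alpha(s))-\alpha(D(s))$ is exactly the duality the paper itself records later in lemma \ref{lem:dualandtensor} of its appendix on covariant differential operators. The one step I would phrase more carefully is the well-definedness of the Leibniz-rule extension: the relevant relation is not literally $f\alpha=\alpha f$ but the $C^\infty(M)$-bilinearity of the product, i.e.\ $(f\alpha)\cdot\beta=\alpha\cdot(f\beta)$ in each symmetric power; the CDO identity $D'(f\alpha)=X(f)\alpha+fD'(\alpha)$ shows that the Leibniz formula changes by the same term $X(f)\,\alpha_1\cdots\alpha_k$ no matter in which factor $f$ is absorbed, and together with locality (work in a local frame, where every section of $S^k(A^\vee[-1])$ is a $C^\infty(M)$-combination of decomposables) this gives a well-defined operator on each $\Gamma(S^k(A^\vee[-1]))$. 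Alternatively one can simply quote the extension of a CDO to tensor powers and its descent to symmetric and exterior powers, which is part ii) of the same appendix lemma. With that caveat, everything else in your write-up is indeed bookkeeping.
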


As covariant differential operators are infinitesimal vector bundle automorphisms, they can be integrated to vector bundle automorphisms covering a diffeomorphism of the base $M$. \\
Let $D\in CDO(A)$, and extend it to $S(A^\ast[-1])$. Assume that its symbol $\sigma(D) \in \mf X(M)$ is complete and let $$(\tilde{\Phi}^D_{-t})^\ast:\Gamma(S(A^\ast[-1]))\to \Gamma(S(A^\ast[-1]))$$ be the automorphism associated to it. That is, $$\frac{d}{dt}(\tilde{\Phi}^D_{-t})^\ast = D^\ast \circ (\tilde{\Phi}^D_{-t})^\ast, $$ where $D^\ast:\Gamma(S(A^\ast[-1])) \to \Gamma(S(A^\ast[-1]))$ is dual to $D$. As $\mf X(A[1])$ is the set of derivations of the algebra $C^\infty(A[1])=\Gamma(S(A^\ast[-1]))$, any automorphism of $C^\infty(A[1])$ induces an automorphism of $\mf X(A[1])$ by conjugation: for $X \in \mf X(A[1])$, it is defined by
$$
(\tilde{\Phi}^D_{-t})_\ast(X) = (\tilde{\Phi}^D_{t})^{\ast} \circ X \circ (\tilde{\Phi}^D_{-t})^\ast.
$$
The automorphism $(\tilde{\Phi}_{-t}^{D})_\ast$ interacts nicely with the subalgebra $\mf X_{p,1}(A[1])$\nrt{, moving $p$ by the flow of the symbol of $D$}:
\begin{lem}\label{lem:singpointmove}
Let $Q$ be a Lie algebroid structure, and $p\in M$. \nrt{Let $D\in CDO(A)$, with symbol $\sigma(D) =: X$.}  The automorphism $(\tilde{\Phi}_{-1}^D)_\ast$ satisfies 
$$
(\tilde{\Phi}_{-1}^D)_\ast(Q) \in \mf X_{p,1}^1(A[1]) \iff Q\in \mf X ^1_{\phi^X_{1}(p),1}(A[1]).
$$
\end{lem}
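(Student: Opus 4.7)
The plan is to prove the lemma by unraveling both sides into statements about pointwise vanishing of sections of $A^\vee[-1]$ and translating between them using the fact that $\tilde{\Phi}^D_{-1}$ is a vector bundle automorphism of $A$ covering the base diffeomorphism $\phi^X_{-1}$ (with $X = \sigma(D) \in \mf X(M)$).

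First I would unfold the conjugation formula on a function $f \in C^\infty(M) \subset C^\infty(A[1])$. Since $(\tilde{\Phi}^D_{-1})^\ast$ restricted to $C^\infty(M)$ is simply pullback by the base diffeomorphism, one obtains
$$
\bigl((\tilde{\Phi}^D_{-1})_\ast Q\bigr)(f) \;=\; (\tilde{\Phi}^D_1)^\ast \bigl(Q(f \circ \phi^X_{-1})\bigr) \;\in\; \Gamma(A^\vee[-1]).
$$
So the condition $(\tilde{\Phi}^D_{-1})_\ast(Q) \in \mf X^1_{p,1}(A[1])$ becomes: $(\tilde{\Phi}^D_1)^\ast(Q(f\circ \phi^X_{-1}))$ vanishes at $p$ for every $f \in C^\infty(M)$.

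Next I would invoke the one genuine geometric input, namely that for any $\alpha \in \Gamma(A^\vee[-1])$ and any $q \in M$, $((\tilde{\Phi}^D_1)^\ast \alpha)_q = 0$ if and only if $\alpha_{\phi^X_1(q)} = 0$. This is immediate from the definition of pullback of a covector under a vector bundle automorphism, using that $\tilde{\Phi}^D_1 : A_q \to A_{\phi^X_1(q)}$ is a linear isomorphism. Applied with $q = p$ and $\alpha = Q(f\circ \phi^X_{-1})$, the condition turns into $Q(f\circ \phi^X_{-1})_{\phi^X_1(p)} = 0$ for every $f$.

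Finally, since $f \mapsto f\circ \phi^X_{-1}$ is a bijection of $C^\infty(M)$, ranging over all $f$ is the same as ranging over all $g \in C^\infty(M)$, so the condition is equivalent to $Q(g)_{\phi^X_1(p)} = 0$ for every $g$, which is exactly $Q \in \mf X^1_{\phi^X_1(p),1}(A[1])$. The argument is reversible at every step, giving the claimed equivalence. The only potential pitfall is bookkeeping of signs and inverses when going between $\tilde{\Phi}^D_{-1}$ and $(\tilde{\Phi}^D_1)^\ast$; once one fixes the convention that the pullback is compatible with covering the inverse of the base map, the computation is essentially formal, and there is no substantial analytic or cohomological obstacle.
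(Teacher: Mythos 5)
Your proposal is correct and follows essentially the same route as the paper's proof: unfold $(\tilde{\Phi}^D_{-1})_\ast(Q)(f) = (\tilde{\Phi}^D_{1})^\ast(Q(f\circ \phi^X_{-1}))$, transport the vanishing condition through the bundle automorphism covering the base flow, and use that $f\mapsto f\circ\phi^X_{-1}$ is a bijection of $C^\infty(M)$. The only cosmetic difference is that you phrase the middle step pointwise ($\alpha_{\phi^X_1(p)}=0$) while the paper phrases it via the identity $(\tilde{\Phi}^D_{-1})^\ast(I_p\Gamma(A^\vee)) = I_{\phi^X_1(p)}\Gamma(A^\vee)$, which is the same fact since sections in $I_p\Gamma(A^\vee)$ are exactly those vanishing at $p$.
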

\begin{proof}
Let $f \in C^\infty(M)$. Then 
\begin{align}\label{eq:pushforwardgradedgeom}
(\tilde{\Phi}^D_{-1})_\ast(Q)(f) &= (\tilde{\Phi}^D_{1})^\ast(Q(f\circ \phi_{-1}^X)).
\end{align}
Now for the implication $\Rightarrow$, we assume that the left hand side lies in $I_p\Gamma(A^\ast)$. It follows that $Q(f\circ \phi_{-1}^X)$ lies in $$(\tilde{\Phi}_{-1}^D)^{\ast}(I_p\Gamma(A^\ast)) = I_{\phi_1^X(p)} \Gamma(A^\ast)$$
As $(\phi^X_{-1})^\ast$ is surjective, the implication follows.\\
Conversely, for the implication $\Leftarrow$, it follows immediately that the right hand side of \eqref{eq:pushforwardgradedgeom} lies in $I_p\Gamma(A^\ast)$, proving the lemma.
\end{proof}
It is not a coincidence that the notation for the automorphism of $\mf X(A[1])$ associated to an element $D \in CDO(A) \cong \mf X^0(A[1])$ resembles the notation for the pushforward of a vector field along a diffeomorphism. The following lemma shows that this automorphism is precisely the gauge transformation by $D^\ast$: 
\begin{lem}\label{lem:solgaugeeq} Let $Q \in \mf X(A[1])$, and $D \in CDO(A)$ with symbol $X$. Then whenever $\phi_t^X$ is defined, we have the equality 
$$
\frac{d}{dt} (\tilde{\Phi}_{-t}^D)_{\ast}(Q) = [D^\ast,(\tilde{\Phi}_{-t}^D)_{\ast}(Q)]
$$
with $(\tilde{\Phi}_{0}^D)_{\ast}(Q) = Q$.
\end{lem}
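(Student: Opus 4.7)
The plan is to differentiate the definition of the conjugation action directly, after first observing that the one-parameter family of algebra automorphisms $(\tilde{\Phi}^D_t)^*$ of $C^\infty(A[1]) = \Gamma(S(A^\vee[-1]))$ has infinitesimal generator $D^*$. This is a standard fact about the integration of a covariant differential operator to a vector bundle automorphism (so that its dual acts as an algebra automorphism of $\Gamma(S(A^\vee[-1]))$ by pullback), and I would take it from the appendix on covariant differential operators. Concretely, this says
\[
\frac{d}{dt}(\tilde{\Phi}^D_t)^* \;=\; D^*\circ (\tilde{\Phi}^D_t)^* \;=\; (\tilde{\Phi}^D_t)^*\circ D^*,
\]
and differentiating $(\tilde{\Phi}^D_t)^*\circ (\tilde{\Phi}^D_{-t})^* = \mathrm{Id}$ then yields $\tfrac{d}{dt}(\tilde{\Phi}^D_{-t})^* = -(\tilde{\Phi}^D_{-t})^*\circ D^*$.

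Next, I would apply the Leibniz rule to the definition
\[
(\tilde{\Phi}^D_{-t})_\ast(Q) \;=\; (\tilde{\Phi}^D_{t})^{\ast}\circ Q\circ (\tilde{\Phi}^D_{-t})^\ast,
\]
treating it as a product of three $t$-dependent linear endomorphisms of $C^\infty(A[1])$. Using the formulas from the previous step, only the outer two factors contribute, giving
\[
\frac{d}{dt}(\tilde{\Phi}^D_{-t})_\ast(Q) \;=\; D^*\circ (\tilde{\Phi}^D_{-t})_\ast(Q) \;-\; (\tilde{\Phi}^D_{-t})_\ast(Q)\circ D^*.
\]
Since $D^*$ has degree $0$, the graded commutator in $\mf X(A[1])$ coincides with the ordinary commutator of endomorphisms, so the right-hand side equals $[D^*, (\tilde{\Phi}^D_{-t})_\ast(Q)]$. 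The initial condition $(\tilde{\Phi}^D_0)_\ast(Q) = Q$ is immediate since $(\tilde{\Phi}^D_0)^\ast = \mathrm{Id}$.

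The only non-routine point is the first step, namely confirming that the generator of $(\tilde{\Phi}^D_t)^*$ on $\Gamma(S(A^\vee[-1]))$ is exactly $D^*$ with the conventions fixed in the appendix. Everything else is a purely formal calculation with compositions of operators, and the fact that $D^*$ is an even derivation is what makes the graded and ungraded commutators agree, producing the bracket on the right-hand side and confirming that $(\tilde{\Phi}^D_{-t})_\ast(Q)$ is the solution of the gauge equation \eqref{eq:gaugeeq} with parameter $D^*$.
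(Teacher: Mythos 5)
Your proposal is correct: the generator of the one-parameter family of algebra automorphisms $(\tilde{\Phi}^D_{t})^\ast$ of $C^\infty(A[1])$ is indeed $D^\ast$ (on $C^\infty(M)$ it acts by the symbol $X$, on $\Gamma(A^\vee[-1])$ by the dual operator, consistent with the appendix conventions $(\tilde{\Phi}^D_{t})^\ast=\tilde{\Phi}^{D^\ast}_{-t}$ and $D^\ast(\alpha)=\frac{d}{dt}\big|_{t=0}\tilde{\Phi}^{D^\ast}_{-t}(\alpha)$), and then the Leibniz rule applied to $(\tilde{\Phi}^D_{-t})_\ast(Q)=(\tilde{\Phi}^D_{t})^\ast\circ Q\circ(\tilde{\Phi}^D_{-t})^\ast$ gives $D^\ast\circ(\tilde{\Phi}^D_{-t})_\ast(Q)-(\tilde{\Phi}^D_{-t})_\ast(Q)\circ D^\ast$, which equals the graded commutator because $D^\ast$ has degree $0$. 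The paper states this lemma without proof, and your computation is precisely the standard argument it implicitly relies on, so there is nothing further to reconcile.
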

We can now rephrase the question of stability in a way which only involves operations intrinsic to the graded Lie algebra $\mf X(A[1])$. 
For simplicity, we write $ \mf X(A[1])=:\mf g$, $ \mf X_{p,1}(A[1])=:\mf h$, and for $Q \in \mf g$, $D \in \mf g^0$, we write $$Q^D:=(\tilde{\Phi}_{-1}^D)_{\ast}(Q).$$ The stability problem can now roughly be formulated as:

\emph{Let $(A,Q)$ be a Lie algebroid over $M$, and $p\in M$ a fixed point. When is it the case that for any Lie algebroid structure $Q'$ near $Q$, there exists a $D^\ast \in \mf g^0$ such that the  solution $\gamma:[0,1] \to \mf X^1(A[1])$ of the initial value problem
$$
\frac{d}{dt}\gamma_t = [D^\ast,\gamma_t], \gamma_0 = Q'
$$
satisfies $\gamma_1 \in \mf h^1$?}

Note that if $D \in \mf h^0\subset \mf g^0$, then $Q^D \in \mf h$ if and only if $Q\in \mf h$. We may therefore restrict our search for such a $D^\ast \in \mf g^0$ to a \emph{complement} of $\mf h^0$ in $\mf g^0$. Such a complement is naturally isomorphic to $T_pM$. For our purposes however, it will be more convenient to work with the quotient $\mf g^0/\mf h^0$, with a chosen $\mb R$-linear splitting $\Sigma:\mf g^0/\mf h^0 \cong T_pM \to \mf g^0$. As we are interested in small neighborhoods of the point $p$ and will only look at the action by elements of the image of this splitting, the requirement that the symbol of a differential operator is a complete vector field is not restrictive. Indeed, on a coordinate chart where $A$ trivializes, it is easy to see that the constant extension of an element of $T_pM$ defines a splitting.

    \item[iv)]
Now that we have phrased the problem in this context, there is also a way to phrase the answer provided by Theorem \ref{thm:liealgd1} in this context. In fact, the cochain complex associated to the Bott representation arises naturally:
\begin{lem}\label{lem:isocohom}
Let $(A,Q)$ be a Lie algebroid over $M$, and $p\in M$ a fixed point, with isotropy Lie algebra $\mf g_p$. Let $(\mf X(A[1]),[Q,-],[-,-])$ and $(\mf X_{p,1}(A[1]),[Q,-],[-,-])$ the associated differential graded Lie algebras. For $k = 0,\dots, \rk(A) = r$, there is a natural isomorphism
$$
\mf X^k(A[1])/\mf X^k_{p,1}(A[1]) \to S^k(\mf g_p^\ast[-1])\otimes T_pM
$$
which intertwines the differential $\overline{[Q,-]}$ induced on the quotient complex \\$(\mf X(A[1])/\mf X_{p,1}(A[1]),\overline{[Q,-]})$ with the Chevalley-Eilenberg differential $d_{CE}^\tau$ on the right hand side.
\end{lem}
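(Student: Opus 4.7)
The plan is to construct the isomorphism degree-by-degree from data intrinsic to derivations of $C^\infty(A[1])$, and then verify the intertwining with the differentials by a direct computation exploiting the fixed-point condition $\rho_p = 0$. For $X \in \mf X^k(A[1])$, restriction to $C^\infty(M) = C^\infty(A[1])^0$ followed by evaluation at $p$ yields a linear map $\ev_p \circ X : C^\infty(M) \to S^k(A_p^\vee[-1])$. Since $X$ is a derivation, $X(1) = 0$, and the Leibniz rule applied to $f,g \in I_p$ gives $X(fg)|_p = X(f)|_p g(p) + f(p) X(g)|_p = 0$. Hence $\ev_p \circ X$ descends to $I_p/I_p^2 = T_p^\vee M$ and defines
\[
\Psi_k(X) \in \Hom(T_p^\vee M, S^k(A_p^\vee[-1])) \;=\; S^k(\mf g_p^\vee[-1]) \otimes T_pM,
\]
using the underlying vector space identification $A_p = \mf g_p$.

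The kernel of $\Psi_k$ is $\mf X^k_{p,1}(A[1])$ essentially by the definition of the latter: $\Psi_k(X) = 0$ iff $X(f)|_p = 0$ for every $f \in C^\infty(M)$. For surjectivity, a local construction suffices: in a chart around $p$ with coordinates $(x^i)$ and a local frame of $A$, any element of the target has the form $\sum_i \eta_i \otimes \partial_{x^i}|_p$ with $\eta_i \in S^k(A_p^\vee[-1])$; extending each $\eta_i$ to a constant-coefficient section $\tilde \eta_i$, declaring $X(x^i) = \tilde \eta_i$ and $X$ to vanish on the dual frame, extending by Leibniz, and cutting off by a bump function supported near $p$ gives a global derivation with the prescribed image under $\Psi_k$.

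The content lies in the intertwining. Write
\[
[Q,X](f)|_p \;=\; Q(X(f))|_p \;-\; (-1)^k X(Q(f))|_p,
\]
and pair both sides with $(x_0, \ldots, x_k) \in \mf g_p^{k+1}$ and $df_p \in T_p^\vee M$. Applying formula \eqref{eq:liealgdr} to $\alpha = X(f) \in \Gamma(S^k(A^\vee[-1]))$ on extensions $\tilde x_i$ of $x_i$, the anchor terms $\rho(\tilde x_i)(X(f)(\ldots))|_p$ vanish because $\rho_p = 0$; what survive are the bracket terms, and since $[\tilde x_i, \tilde x_j]|_p = [x_i, x_j]_{\mf g_p}$ by Lemma \ref{lem:isolie}, these reproduce verbatim the bracket sum $\sum_{i<j}(-1)^{i+j+k} \Psi_k(X)([x_i,x_j]_{\mf g_p}, x_0, \ldots, \hat x_i, \ldots, \hat x_j, \ldots, x_k)(df_p)$ of $d_{CE}^\tau(\Psi_k(X))$. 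For the second contribution, use $\iota_{\tilde x}(Q(f)) = \rho(\tilde x)(f)$ together with graded Jacobi to push the iterated contraction $\iota_{\tilde x_0}\cdots \iota_{\tilde x_k}$ past $X$ and $Q$; the fixed-point condition again eliminates every term containing $\rho(\tilde x_i)|_p$, and what remains, once the overall sign $-(-1)^k$ is absorbed, equals $\sum_i (-1)^{i+k}[\rho(\tilde x_i), \widetilde{\Psi_k(X)(x_0,\ldots,\hat x_i,\ldots,x_k)}](f)|_p$, which is precisely the representation sum in $d_{CE}^\tau(\Psi_k(X))(x_0,\ldots,x_k)(df_p)$ by the definition $\tau(x)(v) = [\rho(\tilde x), \tilde v](p)$ of the Bott representation.

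The main obstacle is this last step, specifically the sign bookkeeping: ensuring that the two halves $Q(X(f))|_p$ and $-(-1)^k X(Q(f))|_p$ of $[Q,X](f)|_p$ recombine precisely into the bracket and representation sums defining $d_{CE}^\tau$. The cleanest way to keep the signs under control is to manipulate everything through the iterated contractions $\iota_{\tilde x_0}\cdots \iota_{\tilde x_k}$, applying graded Jacobi to move them past $Q$ and $X$ and consistently invoking $\rho_p = 0$ to discard the unwanted contributions, so that the bracket piece and the Bott representation piece appear by construction.
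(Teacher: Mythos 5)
Your argument is correct, but it takes a different route from the paper on both halves of the statement. For the isomorphism, you build the map by hand: evaluate $X(f)$ at $p$, observe it kills constants and $I_p^2$, descend to $I_p/I_p^2\cong T_p^\vee M$, and check kernel and surjectivity directly; the paper instead reads the isomorphism off from the two short exact sequences $0\to C^\infty(A[1])\otimes\Gamma(A[1])\to \mf X(A[1])\to C^\infty(A[1])\otimes\mf X(M)\to 0$ and its analogue for $\mf X_{p,1}(A[1])$, so that the ``vertical'' parts cancel in the quotient and the identification is immediate. For the intertwining, you carry out the full degree-$k$ computation, pairing $[Q,X](f)|_p$ with $(x_0,\dots,x_k)$, letting $\rho_p=0$ kill the anchor terms of $Q(X(f))$ and the fully contracted remainder of $X(Q(f))$, and matching the survivors with the bracket and representation sums of $d^\tau_{CE}$; this works (the sign conventions of \eqref{eq:liealgdr} and of definition \ref{def:cecomplex} are identical by design, and $[\tilde x_i,\tilde x_j](p)=\mu_p(x_i,x_j)$ by lemma \ref{lem:isolie}), but it is exactly the sign-heavy part you flag as the main obstacle. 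The paper avoids this entirely with a structural shortcut: it first notes $Q(\alpha)(p)=d_{CE}(\alpha(p))$ for $\alpha\in C^\infty(A[1])$, then uses the Leibniz/module compatibility of $\overline\sigma$ with multiplication by functions on $A[1]$ to reduce the whole intertwining check to degree $0$, where it is a three-line computation. So your version is more self-contained and explicit, at the cost of the degree-$k$ sign bookkeeping you would still need to write out in full; the paper's version buys brevity by exploiting the dg-module structure of the quotient over $(S(\mf g_p^\vee[-1]),d_{CE})$. If you finish your route, do the contraction-commutator argument carefully (degree $-1$ derivations are contractions, and each $[\iota_{\tilde x_i},X]$ applied to a function is just $\iota_{\tilde x_i}X(\cdot)$), which is precisely the mechanism that makes the surviving terms recombine with the stated signs.
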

\begin{proof}
There is a short exact sequence of graded $C^\infty(M)$-modules in which $\mf X(A[1])$ sits:
\[
\begin{tikzcd}
0\arrow{r} & C^\infty(A[1]) \otimes \Gamma(A[1]) \arrow{r}{\iota} & \mf X(A[1]) \arrow{r}{\sigma}& C^\infty(A[1]) \otimes \mf X(M) \arrow{r}& 0,
\end{tikzcd}
\]
where $\iota$ denotes the contraction and $\sigma$ the restriction to $C^\infty(M)\subset C^\infty(A[1])$.\\
Note that this shows that $\mf X(A[1]) = \Gamma(E)$ for some graded vector bundle $E$, as any connection on $A$ splits the sequence. Now the graded Lie algebra $\mf X_{p,1}(A[1])$ also sits inside a short exact sequence of graded $C^\infty(M)$-modules:
\[
\begin{tikzcd}
0\arrow{r} & C^\infty(A[1]) \otimes \Gamma(A[1]) \arrow{r}{\iota} & \mf X_{p,1}(A[1]) \arrow{r}{\sigma}& I_pC^\infty(A[1]) \otimes \mf X(M) \arrow{r}& 0
\end{tikzcd}
\]
Consequently, on the quotients we get a short exact sequence of vector spaces
\[
\begin{tikzcd}
0 \arrow{r}& 0 \arrow{r}& \mf X(A[1])/\mf X_{p,1}(A[1]) \arrow{r}{\overline{\sigma}} & S(\mf g_p^\ast[-1])\otimes T_pM\arrow{r}& 0,
\end{tikzcd}
\]
proving the isomorphism. 

As both $\mf X(A[1])/\mf X_{p,1}$ and $S(\mf g_p^\ast[-1])\otimes T_pM$ are a left dg-module over $(S(\mf g_p^\ast [-1]),d_{CE})$, it suffices to check compatibility of the differentials in degree 0, which can be done by inspection.

\end{proof}
For future reference, we state the short exact sequence as a lemma.
\begin{lem}\label{lem:sesder}
There is a short exact sequence of graded $C^\infty(M)$-modules
\[
\begin{tikzcd}\label{diag:sesder}
0\arrow{r} & C^\infty(A[1]) \otimes \Gamma(A[1]) \arrow{r}{\iota} & \mf X(A[1]) \arrow{r}{\sigma}& C^\infty(A[1]) \otimes \mf X(M) \arrow{r}& 0.
\end{tikzcd}
\]
Moreover, any connection on $A$ induces a splitting of the sequence.
\end{lem}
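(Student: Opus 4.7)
The plan is to exploit the presentation of $C^\infty(A[1]) = \Gamma(S(A^\vee[-1]))$ as the free graded-commutative $C^\infty(M)$-algebra generated in degree one by $\Gamma(A^\vee[-1])$. A graded derivation of such an algebra is determined by its action on generators, so the sequence ought to encode the principle that a derivation is built from two independent pieces: its restriction to $C^\infty(M)$, giving $\sigma$, and its $C^\infty(M)$-linear action on $\Gamma(A^\vee[-1])$, giving $\ker(\sigma)$ via $\iota$.

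First I would verify that $\iota$ is well-defined and injective: for $\alpha\otimes X\in C^\infty(A[1])\otimes \Gamma(A[1])$ the operator $\alpha\cdot \iota_X$ is a graded derivation of the correct degree, and injectivity follows immediately by evaluating on $\Gamma(A^\vee[-1])\subset C^\infty(A[1])$, where $\iota_X$ is the pointwise duality pairing. Next I would check that $\sigma$ is well-defined: for $X\in \mf X^k(A[1])$, the Leibniz rule forces $X|_{C^\infty(M)}$ to be a derivation of $C^\infty(M)$ with values in $C^\infty(A[1])^k = \Gamma(S^k(A^\vee[-1]))$, which is exactly the data of an element of $(C^\infty(A[1])\otimes \mf X(M))^k$.

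The main step will be exactness in the middle. Given $X\in \ker(\sigma)$, the derivation $X$ annihilates $C^\infty(M)$ and therefore, by the graded Leibniz rule, is $C^\infty(M)$-linear. Its restriction to $\Gamma(A^\vee[-1])$ is then a $C^\infty(M)$-linear map into $C^\infty(A[1])$, equivalently a section of $A\otimes S(A^\vee[-1])$, which is precisely an element of $C^\infty(A[1])\otimes \Gamma(A[1])$. I would then verify that $\iota$ applied to this section recovers $X$ by comparing both sides on the generating set $C^\infty(M)\oplus \Gamma(A^\vee[-1])$. The one place where care is required is the bookkeeping of the grading shifts $[-1]$ and $[1]$, which governs the identification of a $C^\infty(M)$-linear degree-$k$ map $\Gamma(A^\vee[-1])\to \Gamma(S^{k+1}(A^\vee[-1]))$ with a degree-$k$ element of $C^\infty(A[1])\otimes \Gamma(A[1])$.

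Finally, for the splitting, I would fix a linear connection $\nabla$ on $A$ and use it to construct a right inverse of $\sigma$. For $Y\in \mf X(M)$, define a degree-zero derivation $\widetilde{Y}$ of $C^\infty(A[1])$ by setting $\widetilde{Y}|_{C^\infty(M)} = Y$ and $\widetilde{Y}|_{\Gamma(A^\vee[-1])} = \nabla^\vee_Y$, and extending as a graded derivation on the symmetric algebra. Extending the assignment $Y\mapsto \widetilde{Y}$ to all of $C^\infty(A[1])\otimes \mf X(M)$ by $C^\infty(A[1])$-linearity yields a section $s$ of $\sigma$, which furnishes the claimed splitting of $C^\infty(M)$-modules.
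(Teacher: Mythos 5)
Your proposal is correct and follows essentially the same route the paper takes (which states the sequence with $\iota$ the contraction and $\sigma$ the restriction to $C^\infty(M)$, and asserts that a connection splits it): you spell out the standard facts that a graded derivation of $\Gamma(S(A^\vee[-1]))$ is determined by its values on $C^\infty(M)$ and $\Gamma(A^\vee[-1])$, that a derivation killing $C^\infty(M)$ is $C^\infty(M)$-linear and hence a contraction with an element of $C^\infty(A[1])\otimes\Gamma(A[1])$, and that the connection-induced horizontal lift $Y\mapsto\widetilde Y$ extended $C^\infty(A[1])$-linearly gives the module splitting (and in particular the surjectivity of $\sigma$). The only cosmetic omission is the one-line check $\sigma\circ\iota=0$, which is immediate since contractions annihilate $C^\infty(M)$.
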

\end{itemize}
This section can now be summarized to give an alternative formulation of Theorem \ref{thm:liealgd1}, which involves only operations on the graded Lie algebra. Let $\mf g = \mf X(A[1]), \mf h = \mf X_{p,1}(A[1])$  as above, and let $\Sigma:\mf g^0/\mf h^0 \to \mf g^0$ denote a splitting of the quotient map. 
\begin{thm}[Reformulation of Theorem \ref{thm:liealgd1}]\label{thm:liealgd1dgla} Let $Q\in \mf h$ be a Maurer-Cartan element of $\mf h$ (hence of $\mf g$). If $$H^1(\mf g/\mf h,\overline{[Q,-]}) = 0,$$ then for every open neighborhood $V$ of $0 \in \mf g^0/\mf h^0\cong T_p M$ there exists a $C^1$-open neighborhood $\mc U$ of $Q$ in the space of Maurer-Cartan elements of $(\mf g,0,[-,-])$ such that for any $Q'\in \mc U$, there exists a family $I\subset V$, parametrized by an open neighborhood of the origin of $H^0(\mf g/\mf h,\overline{[Q,-]})$, with $(Q')^{\Sigma(v)}\in \mf h^1$ for all $v\in I$. 
\end{thm}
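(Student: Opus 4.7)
The plan is to deduce this reformulation directly from theorem \ref{thm:liealgd1} using the dictionary developed in steps (i)--(iv) above. First I would verify that the cohomological hypothesis matches: by lemma \ref{lem:isocohom}, there is a natural isomorphism $H^i(\mf g/\mf h, \overline{[Q,-]}) \cong H^i_{CE}(\mf g_p, T_p M)$ for $i = 0, 1$, so the assumption $H^1(\mf g/\mf h, \overline{[Q,-]}) = 0$ is exactly the vanishing appearing in theorem \ref{thm:liealgd1}. Similarly, since under step (i) the space of Lie algebroid structures on $A$ is identified with the Maurer-Cartan locus of $(\mf g, 0, [-,-])$, and the $C^1$-topology on the former matches the $C^1$-topology inherited from $\mf g^1 = \Gamma(E)$, a $C^1$-neighborhood of $Q$ on one side corresponds to a $C^1$-neighborhood on the other.

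Next I would set up the correspondence between open neighborhoods of $p \in M$ (as in theorem \ref{thm:liealgd1}) and open neighborhoods of $0 \in \mf g^0/\mf h^0 \cong T_p M$ (as in the reformulation). The map $v \mapsto \phi^{\Sigma(v)}_1(p)$ does the job: choosing $\Sigma$ to send $v \in T_p M$ in a local trivialization of $A$ near $p$ to the constant vector field extending $v$, its time-$1$ flow is the translation $T_v$, so its derivative at $0 \in T_p M$ is the identity. After shrinking the given $V$ if necessary, we obtain a diffeomorphism from $V$ onto an open neighborhood $U$ of $p \in M$.

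Now I would apply theorem \ref{thm:liealgd1} to this $U$: it produces a $C^1$-neighborhood $\mc U$ of $Q$ such that every $Q' \in \mc U$ admits a family of fixed points $\{q_w : w \in W\} \subset U$ parametrized by an open neighborhood $W$ of $0 \in H^0_{CE}(\mf g_p, T_pM)$. Pulling $q_w$ back along the diffeomorphism $V \to U$ produces $v_w \in V$, and by lemma \ref{lem:singpointmove} the condition that $q_w = \phi_1^{\Sigma(v_w)}(p)$ be a fixed point of $Q'$ is exactly $(Q')^{\Sigma(v_w)} \in \mf h^1$. Setting $I = \{v_w : w \in W\}$ and transporting the parametrization through the identification $H^0_{CE}(\mf g_p, T_pM) \cong H^0(\mf g/\mf h, \overline{[Q,-]})$ gives the desired family. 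There is really no hard step: once the dictionary is in place, the statement is bookkeeping, and the only point requiring a small check is the choice of $\Sigma$ that makes $v \mapsto \phi_1^{\Sigma(v)}(p)$ into a local diffeomorphism. The analytic content sits entirely in theorem \ref{thm:liealgd1} together with the naturality of the cohomology from lemma \ref{lem:isocohom}.
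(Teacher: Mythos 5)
Your proposal is correct and follows exactly the route the paper intends: theorem \ref{thm:liealgd1dgla} is presented there as a summary of the dictionary of steps i)--iv), so its justification is precisely the translation of theorem \ref{thm:liealgd1} via lemma \ref{lem:isocohom} for the cohomology, the constant-extension splitting $\Sigma$ identifying a neighborhood of $0\in\mf g^0/\mf h^0$ with a neighborhood of $p$, and lemmas \ref{lem:singpointmove} and \ref{lem:solgaugeeq} converting ``fixed point at $\phi_1^{\Sigma(v)}(p)$'' into ``$(Q')^{\Sigma(v)}\in\mf h^1$''. Your write-up simply makes this bookkeeping explicit, which matches the paper's argument.
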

Under some assumptions on $\mf g$ and $\mf h$, this theorem will hold in more generality. Making appropriate choices for $\mf g$ and $\mf h$ will then yield similar results in other contexts.

\subsection{The main theorem: assumptions and proof} \label{ssec:mainthm}
In this section we state some assumptions on a pair consisting of a differential graded Lie algebra $\mf g$ and a differential graded Lie subalgebra $\mf h$ so that Theorem \ref{thm:liealgd1dgla} holds in more generality, and prove this general result.
\begin{assumptions}\label{ass:mainthmass} Assume we have the following:
\begin{itemize}
    \item [i)] A differential graded Lie algebra $(\mf g,\partial,[-,-])$,
    \item[ii)] a differential graded Lie subalgebra $(\mf h,\partial,[-,-])$ such that $\mf g^i/\mf h^i$ is finite-dimensional for $i = 0,1,2$,
    \item[iii)] splittings $\sigma_i:\mf g^i/\mf h^i \to \mf g^i$ for $i = 0,1$,
    \item[iv)] a Maurer-Cartan element $Q \in \mf h^1\subset \mf g^1$,
\end{itemize}
such that
\begin{itemize}
    \item [a)] $\mf g^i$ for $i = 0,1,2$ are locally convex topological vector spaces such that the projections $p_i:\mf g^i\to \mf g^i/\mf h^i$ are continuous,
    \item[b)] $\partial:\mf g^1 \to \mf g^2$ is continuous,
    \item[c)] $[-,-]:\mf g^1\times \mf g^1\to \mf g^2$ is continuous,
    \item[d)] There is an open neighborhood $U$ of $0 \in \mf g^0/\mf h^0$ such that for any $Q'\in \mf g^1$, the gauge action as in Definition \ref{def:gaugeeq} of $\sigma_0(v)$ for $v\in U$ on $Q'$ is defined, the assignment
    $$
    U\times \mf g^1 \ni (v,Q') \mapsto (Q')^{\sigma_0(v)}\in \mf g^1
    $$
    is jointly continuous, and its class mod $\mf h^1$ depends smoothly on $v\in U$ for each fixed $Q'$.
    \item[e)] For $v\in U$, $Q'\in \mf g^1$ is Maurer-Cartan if and only if $(Q')^{\sigma_0(v)}$ is Maurer-Cartan.
\end{itemize}
\end{assumptions}
\begin{rmk}
The choice of $Q$ implies that $(\mf g, \partial + [Q,-])$ is a cochain complex, with $(\mf h,\partial+[Q,-])$ as a subcomplex. We can therefore take the quotient complex which we denote by $(\mf g/\mf h,\overline{\partial+[Q,-]})$.
\end{rmk}
The following lemma gives a sufficient condition for condition e) of Assumptions \ref{ass:mainthmass} to be satisfied. In particular, the lemma applies when $\mf g$ is degreewise given by the sections of some vector bundle, and the bracket $[-,-]$ is a first order bidifferential operator.
\begin{lem}\label{lem:uniqueness}
Let $U\subset \mf g^0/\mf h^0$ as in d) of Assumptions \ref{ass:mainthmass}. If $v\in U$ and the initial value problem 
\begin{equation}\label{eq:expaddeg2}
\frac{d}{dt}\gamma_t = [\sigma_0(v),\gamma_t], \gamma_0 = 0 \in \mf g^2,
\end{equation}
has only the constant solution $\gamma_t \equiv 0 \in \mf g^2$, then $(Q')^{\sigma_0(v)} \in \mf g^1$ is a Maurer-Cartan element if and only if $Q' \in \mf g^1$ is.
\end{lem}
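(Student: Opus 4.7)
The plan is to track the ``Maurer--Cartan curvature'' along the gauge trajectory. Fix $v \in U$ and $Q' \in \mf g^1$, set $Q'_t := (Q')^{t\sigma_0(v)}$ for $t \in [0,1]$, and define
\[
F(t) := \partial(Q'_t) + \tfrac{1}{2}[Q'_t,Q'_t] \in \mf g^2.
\]
The whole proof reduces to establishing that $F$ is itself a solution of the IVP appearing in the hypothesis, namely $\dot \gamma_t = [\sigma_0(v), \gamma_t]$; once we have this, linearity of the equation together with the uniqueness assumption (and the time-translation invariance of an autonomous linear ODE) implies that $F(0)$ and $F(1)$ vanish simultaneously, which is exactly the desired equivalence $Q' \in MC(\mf g) \Leftrightarrow (Q')^{\sigma_0(v)} \in MC(\mf g)$.

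The first step is the computation of $\dot F$. Using the defining gauge equation $\dot Q'_t = [\sigma_0(v), Q'_t] - \partial \sigma_0(v)$, and the fact that $|Q'_t| = |\dot Q'_t| = 1$ makes $[\dot Q'_t, Q'_t] = [Q'_t, \dot Q'_t]$, one has $\dot F = \partial \dot Q'_t + [\dot Q'_t, Q'_t]$. Substituting the gauge equation, expanding by the Leibniz rule $\partial[\sigma_0(v), Q'_t] = [\partial \sigma_0(v), Q'_t] + [\sigma_0(v), \partial Q'_t]$, and using $\partial^2 = 0$, the $\partial \sigma_0(v)$ contributions cancel, leaving $[\sigma_0(v), \partial Q'_t] + [[\sigma_0(v), Q'_t], Q'_t]$. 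A short application of the graded Jacobi identity together with graded antisymmetry in degree $1$ yields $[[\sigma_0(v), Q'_t], Q'_t] = \tfrac{1}{2}[\sigma_0(v), [Q'_t, Q'_t]]$, and the two summands combine to $[\sigma_0(v), F(t)]$, as required.

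The second step is the uniqueness argument. The equation $\dot\gamma = [\sigma_0(v), \gamma]$ is linear, so the difference of two solutions with the same initial data is a solution starting at $0$; the hypothesis forces such a difference to vanish identically, and by time-translation invariance of the autonomous equation this uniqueness transports to any point of $[0,1]$. Consequently the evaluation maps $F \mapsto F(0)$ and $F \mapsto F(1)$ on solutions have the same (trivial) kernel, and the equivalence $F(0)=0 \Leftrightarrow F(1)=0$ follows. The regularity needed to write $F$ as a $C^1$ curve with the computed derivative is supplied by the continuity of $\partial$ and $[-,-]$ in assumptions (b), (c), and by the smoothness of the gauge trajectory itself, which is a solution of an ODE in $\mf g^1$ by assumption (d).

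The main obstacle, such as it is, is confined to the formal differentiation of $F$ in the locally convex setting. Conceptually the computation $\dot F = [\sigma_0(v), F]$ is the same bookkeeping that shows, in the nilpotent case, that the gauge action preserves the Maurer--Cartan set; what the hypothesis of the lemma provides is precisely the analytic replacement (uniqueness of the linear IVP in $\mf g^2$) for the convergent power-series arguments available in that setting.
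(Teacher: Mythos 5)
Your argument is essentially the paper's own proof: the paper likewise considers the curvature $\gamma_t := \partial\alpha_t + \tfrac{1}{2}[\alpha_t,\alpha_t]$ along the gauge trajectory, observes that it solves the linear initial value problem \eqref{eq:expaddeg2}, and concludes from the uniqueness hypothesis; your explicit verification that $\frac{d}{dt}F(t) = [\sigma_0(v),F(t)]$ via the Leibniz rule and the graded Jacobi identity is correct. The only caveat concerns the converse implication: forward uniqueness of the zero solution (even from arbitrary start times, via time translation) does not by itself yield the backward uniqueness needed to pass from $F(1)=0$ to $F\equiv 0$, but the paper's one-line proof elides exactly the same point, and in all the applications the gauge flow comes from flows of vector fields, where uniqueness holds in both time directions.
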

\begin{proof}
Let $\alpha:[0,1]\to \mf g^1$ be a solution to equation \eqref{eq:gaugeeq}, where $\alpha_0 = Q'$ is a Maurer-Cartan element. The expression
$$
\gamma_t := \partial\alpha_t + \frac{1}{2}[\alpha_t,\alpha_t]
$$
satisfies the initial value problem \eqref{eq:expaddeg2}, hence must be identically 0. 
\end{proof}
We can now state the main theorem, which roughly states that given a Maurer-Cartan element $Q\in \mf h^1$, the vanishing of a certain cohomology group implies that every Maurer-Cartan element $Q'$ of $\mf g$ near $Q$ is gauge equivalent to a Maurer-Cartan element of $\mf h$. Moreover, it also describes the amount of different gauge equivalences that take $Q'$ into $\mf h$.
\begin{thm}[\nrt{Main theorem}]\label{thm:mainthm}
Assume that we are in the setting as described in Assumptions \ref{ass:mainthmass}. Assume that $$H^1(\mf g/\mf h,\overline{\partial+[Q,-]}) = 0.$$ Then for every open neighborhood $V$ of $0\in U$ there exists an open neighborhood $\mc U\subset MC(\mf g)$ of $Q$ such that for any $Q'\in \mc U$ there exists a family $I$ in $V$ parametrized by an open neighborhood of $0\in H^0(\mf g/\mf h,\overline{\partial+[Q,-]})$ with $(Q')^{\sigma_0(v)}\in \mf h^1$ for $v\in I$. 
\end{thm}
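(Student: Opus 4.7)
The proof parallels that of theorem \ref{thm:liealgd1} (as sketched in the last item of remark \ref{rmk:liealg1}); the only genuine novelty is that the obstruction map becomes quadratic rather than linear. Write $d_i:\mf g^i/\mf h^i\to \mf g^{i+1}/\mf h^{i+1}$ for the differentials of the quotient complex $(\mf g/\mf h,\overline{\partial+[Q,-]})$ and $p_i:\mf g^i\to \mf g^i/\mf h^i$ for the canonical projections. The plan is to introduce an evaluation map and an obstruction map
\[
\ev_{Q'}:U\to \mf g^1/\mf h^1,\quad \ev_{Q'}(v):=p_1\bigl((Q')^{\sigma_0(v)}\bigr),\qquad R(\alpha):=d_1(\alpha)+\tfrac12\overline{[\sigma_1(\alpha),\sigma_1(\alpha)]}.
\]
Differentiating \eqref{eq:gaugeeq} at the origin yields $\ev_Q(0)=0$ and $(D\ev_Q)_0=-d_0$, while $R(0)=0$ and $(DR)_0=d_1$ are immediate from the definition.

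The hypothesis $H^1(\mf g/\mf h,\overline{\partial+[Q,-]})=0$ gives $\mathrm{im}(d_0)=\ker(d_1)$. Fix a complement $C$ of $\ker(d_1)$ in $\mf g^1/\mf h^1$. Since $d_1|_C$ is injective, $R|_C$ is an immersion at $0$, and lemma \ref{lem:immersion} supplies a neighborhood $O$ of $0\in C$ on which $R$ is an injective immersion. As $\mathrm{im}(D\ev_Q)_0=\ker(d_1)$ is complementary to $C$, the map $\ev_Q$ is transverse to $C$ at $0$; lemma \ref{lem:transverse} then produces, for any prescribed open neighborhood $V$ of $0\in U$, a neighborhood $\mc U_1\subset MC(\mf g)$ of $Q$ such that for every $Q'\in\mc U_1$ there is a family $I\subset V$ parametrized by an open neighborhood of $0\in \ker(d_0)=H^0(\mf g/\mf h,\overline{\partial+[Q,-]})$, with $\ev_{Q'}(I)\subset O$.

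The key identity, replacing property (c) of the proof of theorem \ref{thm:liealgd1}, uses the Maurer-Cartan equation. For $Q'$ Maurer-Cartan, set $\eta:=(Q')^{\sigma_0(v)}-Q$; by assumption e), $(Q')^{\sigma_0(v)}$ is also Maurer-Cartan, so subtracting the MC equation for $Q$ gives $(\partial+[Q,-])\eta+\tfrac12[\eta,\eta]=0$. Decomposing $\eta=\sigma_1(\overline{\eta})+h$ with $h\in\mf h^1$, and using that $[h,h]\in\mf h^2$ because $\mf h$ is a Lie subalgebra, projection to $\mf g^2/\mf h^2$ computes
\[
R(\ev_{Q'}(v))\;=\;-\,\overline{[\sigma_1(\ev_{Q'}(v)),\,h]}.
\]
In sharp contrast to the linear setting of theorem \ref{thm:liealgd1}, this is not identically zero but is bilinear in the two small quantities $\ev_{Q'}(v)$ and $h$. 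This is the main obstacle: it reflects that $[-,-]$ does not descend to the quotient $\mf g/\mf h$ (since $\mf h$ is a subalgebra but not an ideal), so property (c) holds only modulo a splitting-dependent bilinear error.

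To close the argument, fix norms on the finite-dimensional spaces $\mf g^1/\mf h^1$ and $\mf g^2/\mf h^2$. The injective immersion of $R$ on $O$ yields a constant $c>0$ with $\|R(\alpha)\|\geq c\|\alpha\|$ for $\alpha\in O$; joint continuity of $[-,-]$, $\sigma_1$ and $p_2$ furnishes a constant $C$ and a continuous seminorm $\mu$ on $\mf g^1$ with $\|\overline{[\sigma_1(\alpha),h]}\|\leq C\|\alpha\|\cdot\mu(h)$. On $I$, the key identity combines these into $c\|\ev_{Q'}(v)\|\leq C\|\ev_{Q'}(v)\|\,\mu(h(v,Q'))$. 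Joint continuity of the gauge action (assumption d) together with continuity of $\sigma_1$ ensures $\mu(h(v,Q'))\to 0$ as $(v,Q')\to(0,Q)$; shrinking $\mc U_1$ and $V$ so that $\mu(h)<c/C$ throughout forces $\ev_{Q'}(v)=0$ on $I$, i.e.\ $(Q')^{\sigma_0(v)}\in \mf h^1$, as required.
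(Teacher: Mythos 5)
Your proposal is correct, and its overall skeleton (the evaluation map, transversality via lemma \ref{lem:transverse}, and the Maurer--Cartan equation as the source of the vanishing) is the same as the paper's; the genuine difference lies in how the quadratic obstruction is handled. The paper defines a \emph{parametrized} map $R_{v,Q'}$ containing the extra linear term $[(Q')^{\sigma_0(v)}-\sigma_1((Q')^{\sigma_0(v)}+\mf h^1),\sigma_1(-)]$, precisely so that the exact identity $R_{v,Q'}(\mathrm{ev}_{Q'}(v))=\partial X+\tfrac12[X,X]+\mf h^2=0$ holds (property c), and then concludes $\mathrm{ev}_{Q'}(v)=0$ from injectivity of $R_{v,Q'}|_O$, which must be made uniform in the parameters via lemma \ref{lem:immersion} and the continuity of $(v,Q')\mapsto R_{v,Q'}$ (this is where assumption c, or lemma \ref{lem:Rcontcond}, enters the paper). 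You instead keep a single fixed map $R(\alpha)=d_1(\alpha)+\tfrac12\overline{[\sigma_1\alpha,\sigma_1\alpha]}$, compute the defect $R(\mathrm{ev}_{Q'}(v))=-\overline{[\sigma_1(\mathrm{ev}_{Q'}(v)),h]}$ (your identity is exactly the projection of the MC equation for $(Q')^{\sigma_0(v)}-Q$, and is equivalent to the paper's property c after unwinding), and close with a quantitative estimate: a lower bound $\|R(\alpha)\|\geq c\|\alpha\|$ near $0$ in $C$ against an upper bound $C\|\alpha\|\,\mu(h)$ with $\mu(h)$ small by joint continuity of the gauge action, continuity of $p_1$, and continuity of $\sigma_1$ (automatic, as its domain is finite-dimensional). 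This buys you a cleaner argument that avoids lemma \ref{lem:immersion} and the discussion of continuity of $R$ in the parameters, at the cost of exhibiting the seminorm $\mu$, which is indeed available because $\alpha$ lives in the finite-dimensional quotient: expanding in a basis $e_i$ of $\mf g^1/\mf h^1$, each $h\mapsto p_2([\sigma_1(e_i),h])$ is a continuous linear map into the finite-dimensional space $\mf g^2/\mf h^2$ by assumptions a) and c). Two small points of precision: the bound $\|R(\alpha)\|\geq c\|\alpha\|$ does not follow from injective immersivity on $O$ as such, but from injectivity of $(DR)_0|_C=d_1|_C$ together with Taylor's theorem, after possibly shrinking $O$ (so you can in fact dispense with lemma \ref{lem:immersion} entirely); and the ``family'' statement requires the $C^1$-closeness part of lemma \ref{lem:transverse}, which you use in the same (implicit) way the paper does.
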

\begin{proof}
The proof setup is similar to the proof of Theorem \ref{thm:liealgd1}, the difference being in the maps which are used. We repeat the key steps of the proof.
\begin{itemize}
    \item[i)] Construct a smooth map $\text{ev}_{Q'}:V \to \mf g^1/\mf h^1$ depending continuously on $Q'\in \mf g^1$,
    \item[ii)] construct a smooth map $R_{v,Q'}:\mf g^1/\mf h^1 \to \mf g^2/\mf h^2$ depending continuously on $(v,Q') \in V\times \mf g^1$,
\end{itemize}
satisfying
\begin{itemize}
    \item [a)] $\text{ev}_Q(0) = 0 \in \mf g^1/\mf h^1$, and $(D(\text{ev}_{Q}))_0 = -(\overline{\partial +[Q,-]}): \mf g^0/\mf h^0 \cong T_0 V \to T_0\mf g^1/\mf h^1 \cong \mf g^1/\mf h^1$. Moreover, $(Q')^{\sigma_0(v)} \in \mf h^1$ if and only if $\text{ev}_{Q'}(v) = 0 \in \mf g^1/\mf h^1$.
    \item [b)] $R_{v,Q'}(0) = 0 \in \mf g^2/\mf h^2$ for every $(v,Q') \in V\times \mf g^1$, and $(D(R_{0,Q}))_0 = \overline{\partial+[Q,-]}: \mf g^1/\mf h^1 \to \mf g^2/\mf h^2$.
    \item[c)] Whenever $Q'\in \mf g^1$ is Maurer-Cartan, for every $v\in \mf g^0/\mf h^0$ we have:
    $$
    R_{v,Q'}(\text{ev}_{Q'}(v)) = 0\in \mf g^2/\mf h^2.
    $$
\end{itemize}
We recall the way the result follows from these properties. Let $C$ be a complement to $$\ker(\overline{\partial+[Q,-]}:\mf g^1/\mf h^1 \to \mf g^2/\mf h^2) = \text{im}(\overline{\partial+[Q,-]}:\mf g^0/\mf h^0 \to \mf g^1/\mf h^1)$$ in $\mf g^1/\mf h^1$.\\
First property b) implies that $R_{0,Q}$ restricted to $C$ is an immersion at $0\in C$ as $C$ has trivial intersection with $\ker(\overline{\partial+[Q,-]}= (D(R_{0,Q}))_0:\mf g^1/\mf h^1 \to \mf g^2/\mf h^2 )$. By Lemma \ref{lem:immersion}, there exists an open neighborhood $O$ of $0 \in C$, an open neighborhood $S$ of $0 \in V$ and a neighborhood $\mc U_2$ of $Q\in \mf g^1$ such that $\left. R_{v,Q'}\right|_{O}$ is an injective immersion for $Q'\in \mc U_2$ and $v \in S$, where we use the continuous dependence of $R$ on the parameters $(v,Q') \in V\times \mf g^1$.\\
Property a) implies that ev$_{Q}$ intersects $O\subset C$ transversely in $0$, as $$\overline{\partial+[Q,-]} = -(D(\text{ev}_{Q}))_0:\mf g^0/\mf h^0 \to \mf g^1/\mf h^1,$$ and $$\ker(\overline{\partial+[Q,-]}:\mf g^1/\mf h^1 \to \mf g^2/\mf h^2) = \text{im}(\overline{\partial+[Q,-]}:\mf g^0/\mf h^0 \to \mf g^1/\mf h^1)$$ by the cohomological assumption. Therefore by Lemma \ref{lem:transverse}, there exists a neighborhood $\mc U_1$ of $Q\in \mf g^1$ such that for any $Q' \in \mc U_1$, there exists a $v\in S$ such that $\text{ev}_{Q'}(v) \in O$.\\
By property c), for any Maurer-Cartan element $Q' \in \mc U = \mc U_1 \cap \mc U_2$, and any $v\in V$ we have
$$
R_{v,Q'}(\text{ev}_{Q'}(v)) = 0.
$$
By injectivity of $R_{v,Q'}$ restricted to $O$, combined with the fact that ev$_{Q'}(v) \in O$, it follows that
$$
\text{ev}_{Q'}(v) = 0,
$$
or equivalently, $(Q')^{\sigma_0(v)}\in \mf h^1$. For the existence of the family of fixed points, apply the final statement of Lemma \ref{lem:transverse}, again using that $R_{v,Q'}$ is injective restricted to $O$.\\
We define the maps.
\begin{itemize}
    \item [i)]Let $Q'\in \mf g^1$. Then for $v \in V$, we set
$$
\text{ev}_{Q'}(v) = (Q')^{\sigma_0(v)} + \mf h^1.
$$
Then by assumption on the gauge action, the map depends continuously on $Q'$ and smoothly on $v\in V$.
\item[ii)] Next, for $(v,Q') \in V\times \mf g^1$, $\hat{Q} + \mf h^1 \in \mf g^1/\mf h^1$ we set
\begin{align*}
R_{v,Q'}(\hat{Q} + \mf h^1) = \partial(\hat{Q}) &+ [(Q')^{\sigma_0(v)}- \sigma_1((Q')^{\sigma_0(v)}+ \mf h^1),\sigma_1(\hat{Q} +\mf h^1)] \\&+ \frac{1}{2}[\sigma_1(\hat{Q} + \mf h^1),\sigma_1(\hat{Q}+\mf h^1)] + \mf h^2.
\end{align*}
\item[a)] The properties regarding the value of $\text{ev}_{Q'}$ hold by definition. For the differential, we compute for $v\in \mf g^0/\mf h^0 \cong T_0V$
\begin{align*}
    \left.\frac{d}{dt}\right|_{t=0} \text{ev}_{Q}(tv) &= \left.\frac{d}{dt}\right|_{t=0} (Q)^{t\sigma_0(v)} + \mf h^1\\
    &= \left.\left[\sigma_0(v),Q^{t\sigma_0(v)}\right]-\partial(v) \right|_{t=0} + \mf h^1\\
    &= [\sigma_0(v),Q] -\partial(v) + \mf h^1 \\
    &= -\overline{\partial+[Q,-]}(v).
\end{align*}
\item[b)] The properties regarding the values and derivatives of $R_{v,Q'}$ are immediate. 
\item[c)] Finally, for $Q'\in \mf g^1$, $v \in V$, setting $X = (Q')^{\sigma_0(v)}$, we compute
\begin{align*} 
     R_{v,Q'}(\text{ev}_{Q'}(v)) =& \partial\left(X\right) + \left[X-\sigma_1(X + \mf h^1),\sigma_1(X+ \mf h^1)\right]+ \frac{1}{2}\left[\sigma_1(X+\mf h^1),\sigma_1(X + \mf h^1)\right] + \mf h^2\\
     =& \partial\left(X\right) + \left[X-\sigma_1(X + \mf h^1),\sigma_1(X+ \mf h^1)\right] + \frac{1}{2}\left[\sigma_1(X+\mf h^1),\sigma_1(X + \mf h^1)\right] \\&
     +\frac{1}{2} \left[X-\sigma_1(X + \mf h^1),X-\sigma_1(X + \mf h^1)\right]+\mf h^2\\
     =&  \partial\left(X\right) + \frac{1}{2}\left[X- \sigma_1(X + \mf h^1) + \sigma_1(X + \mf h^1),\right.\left.X- \sigma_1(X + \mf h^1) + \sigma_1(X + \mf h^1)\right]\\
     =& \partial\left(X\right) + \frac{1}{2}\left[X,X\right] + \mf h^2,
\end{align*}
which vanishes if $Q'$ is a Maurer-Cartan element. Here the second equality holds because $X - \sigma_1(X +\mf h^1) \in \mf h^1$, and $[\mf h^1,\mf h^1] \subset \mf h^2$.
\end{itemize}
\end{proof}
\begin{rmk}\label{rmk:mainthmrmk}
We start with a couple of remarks, highlighting the differences with the proof of Theorem \ref{thm:liealgd1}.
\begin{itemize}
    \item[i)] The map $\partial$ does not appear in Theorem \ref{thm:liealgd1}. As the input of Theorem \ref{thm:mainthm} consists of a differential graded Lie algebra, and a Maurer-Cartan element $Q$ to start with, when the differential is an inner derivation of the graded Lie algebra one can take $\partial = 0$, and look at Maurer-Cartan elements near $Q$ rather than Maurer-Cartan elements of the \emph{differential} graded Lie algebra $(\mf g,[Q,-],[-,-])$ near $0\in \mf g^1$. One reason to do this is that the initial value problem for the gauge equation is homogeneous. As we will see in Section \ref{sec:dirac}, this cannot be done if $\partial$ is not inner.
    \item[ii)] The role of $\sigma_0$ and $\sigma_1$ in the case of Theorem \ref{thm:liealgd1} was to extend elements $v\in T_pM$ and $\rho \in A_p^\ast \otimes T_pM$ to constant sections of their respective vector bundles. This explains why the map $R_{v,Q'}$ was linear in that example: the pair $(\rho,[-,-])$ defined by a constant anchor and a zero bracket satisfies the axioms of a Lie algebroid, hence the second term is identically zero.
    \item[iii)] Note that although $\sigma_1$ appears in the proof, it does not appear in the statement. It is however necessary: as $\mf h$ is not an ideal, but only a graded Lie subalgebra, the quotients do not inherit a graded Lie algebra structure.
    \item[iv)] The requirement that $[-,-]:\mf g^1\times \mf g^1\to \mf g^2$ is continuous is needed to show that 
    \begin{align*}
    R:U\times \mf  g^1&\to C^\infty\left(\mf g^1/\mf h^1,\mf g^2/\mf h^2\right)\\
    (v,Q') &\mapsto R_{v,Q'}
    \end{align*}
    is continuous. This condition can be replaced by a different condition which is easier to check, and is satisfied in our applications. We give it in Lemma \ref{lem:Rcontcond} below.
    \item[v)] Marco Zambon pointed out that Theorem \ref{thm:mainthm} has \nrt{the following} deformation theoretic interpretation. The inclusion $i:(\mf h, \partial,[-,-]) \hookrightarrow (\mf g,\partial,[-,-])$ is a map of differential graded Lie algebras, hence induces a map on the level of Maurer-Cartan sets
    $$
    i_{MC}:MC(\mf h) \to MC(\mf g),
    $$
    and on the level of Maurer-Cartan varieties
    $$
    \overline{i_{MC}}:MC(\mf h)/\mf h^0 \to MC(\mf g)/\mf g^0.
    $$
    The cohomological assumption $H^1(\mf g/\mf h,\overline{\partial+[Q,-]}) =0$ implies that the induced map on the degree 1 cohomology 
    $$
    H^1(i):H^1(\mf h,\partial+[Q,-]) \to H^1(\mf g,\partial+[Q,-])
    $$
    is surjective by the long exact sequence associated to the short exact sequence of cochain complexes
    \[
    \begin{tikzcd}
    0\arrow{r}&(\mf h,\partial+[Q,-]) \arrow{r}{i}&(\mf g,\partial+[Q,-]) \arrow{r}& (\mf g/\mf h,\overline{\partial+[Q,-]}) \arrow{r} &0.
    \end{tikzcd}
    \]
    As $H^1(\mf h,\partial+[Q,-])$ and $H^1(\mf g,\partial+[Q,-])$ morally play the role of the tangent spaces to the respective Maurer-Cartan varieties at $Q$ with $H^1(i)$ playing the role of the differential of $i$, Theorem \ref{thm:mainthm} turns a stronger version of surjectivity of $H^1(i)$ into (local) surjectivity of $\overline{i_{MC}}$. 
    \item[vi)] It would be interesting to know if the requirement $H^1(\mf g/\mf h,\overline{\partial+[Q,-]}) = 0$ can be weakened to the surjectivity of $H^1(i)$. 
    \item[vii)] Theorem \ref{thm:mainthm} only yields a sufficient condition, and not a necessary one. In some places of the text we comment on this.
    \item[viii)] By modifying the maps $\text{ev}$ and $R$, we can generalize this result to the case where $\mf g$ is an $L_\infty$-algebra, and $\mf h$ a strict $L_\infty$-subalgebra, see \cite{stablinfty}.
    \item[ix)] If there exists a subspace $K\subset\mf g^1/\mf h^1$ such that for any Maurer-Cartan element $Q'\in MC(\mf g)$, $Q' + \mf h^1\in K$, we can replace $H^1(\mf g/\mf h,\overline{\partial + [Q,-]})$ in the hypothesis by $$H^1_{red}:= \frac{K\cap \ker(\overline{\partial + [Q,-]}:\mf g^1/\mf h^1 \to \mf g^2/\mf h^2)}{\text{im}(\overline{\partial+[Q,-]}:\mf g^0/\mf h^0 \to K)},$$ and the conclusion remains true. Observe that the space we quotient by is well-defined: As by assumption $Q$ is a Maurer-Cartan element, so is $Q^{t\sigma_0(v)}$ for all $t\in \mb R, v \in \mf g^0/\mf h^0$, and the differential $\partial+ [Q,-]$ is the $t$-derivative at $t=0$.
    \item[x)] In applications, we will often only specify $\mf h$ in degrees 0, 1 and 2. To complete this into a differential graded Lie subalgebra, we can take $\mf h^i = \mf g^i$ for $i>2$. 
\end{itemize}
\end{rmk}
As mentioned in Remark \ref{rmk:mainthmrmk}iv), there is a simpler condition than the continuity of $[-,-]:\mf g^1\times \mf g^1 \to \mf g^2$ which guarantees the continuity of $$R:U\times \mf  g^1\to C^\infty\left(\mf g^1/\mf h^1,\mf g^2/\mf h^2\right)$$ where $U$ is as in Assumptions \ref{ass:mainthmass}iv).
\begin{lem}\label{lem:Rcontcond}
Assume that there exists a closed subspace $F\subset \mf g^1$ such that
\begin{itemize}
    \item [-] $F$ has finite codimension in $\mf g^1$,
    \item [-] The bracket $[-,-]: \mf g^1 \times \mf g^1 \to \mf g^2/\mf h^2$ factors through $\overline{[-,-]}:\mf g^1/F \times \mf g^1/F \to \mf g^2/\mf h^2$.
\end{itemize}
Then the map $R:U\times \mf g^1 \to C^\infty\left(\mf g^1/\mf h^1,\mf g^2/\mf h^2\right)$ is continuous, when the right hand side carries the $C^1$-topology.
\end{lem}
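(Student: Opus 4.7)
The plan is to exploit the hypotheses so that $R_{v,Q'}$, viewed as a smooth function between the finite-dimensional spaces $\mf g^1/\mf h^1$ and $\mf g^2/\mf h^2$, is a polynomial of degree at most $2$ in the input $\hat Q + \mf h^1$, whose only $(v,Q')$-dependent coefficient varies continuously, and then to conclude $C^1$-continuity from this structural fact.

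First I would rewrite $R_{v,Q'}$ in a form that exposes its polynomial dependence on $\hat Q + \mf h^1$. Setting $X := (Q')^{\sigma_0(v)}$ and using the induced differential $\bar\partial:\mf g^1/\mf h^1 \to \mf g^2/\mf h^2$, which is well-defined since $\mf h$ is a subcomplex, one has
$$
R_{v,Q'}(\hat Q + \mf h^1) = \bar\partial(\hat Q + \mf h^1) + \overline{[-,-]}\!\bigl(p_F(X - \sigma_1(X + \mf h^1)),\, p_F(\sigma_1(\hat Q + \mf h^1))\bigr) + \tfrac{1}{2}\,\overline{[-,-]}\!\bigl(p_F(\sigma_1(\hat Q + \mf h^1)),\, p_F(\sigma_1(\hat Q + \mf h^1))\bigr),
$$
where $\overline{[-,-]}:\mf g^1/F \times \mf g^1/F \to \mf g^2/\mf h^2$ is the factored bilinear map supplied by the hypothesis; being bilinear between finite-dimensional spaces, it is automatically smooth. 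In this expression the first and third terms are independent of $(v, Q')$, so all dependence on $(v, Q')$ is concentrated in the linear-in-$\hat Q$ middle term, through the vector
$$
\beta(v, Q') := p_F\bigl(X - \sigma_1(X + \mf h^1)\bigr) \in \mf g^1/F.
$$

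Next I would verify that $(v, Q') \mapsto \beta(v, Q')$ is continuous into the finite-dimensional space $\mf g^1/F$. The assignment $(v, Q') \mapsto X = (Q')^{\sigma_0(v)}$ is jointly continuous by assumption \ref{ass:mainthmass}(d); composing with the continuous projection $\mf g^1 \to \mf g^1/\mf h^1$ from assumption \ref{ass:mainthmass}(a) and with $\sigma_1$, which is continuous because it is linear from a finite-dimensional space into a locally convex Hausdorff space, shows that $(v, Q') \mapsto X - \sigma_1(X + \mf h^1)$ is continuous into $\mf g^1$. Applying the hypothesized continuous projection $p_F$ then yields continuity of $\beta$.

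Finally I would assemble these pieces. For fixed $(v_0, Q'_0)$, the difference $R_{v, Q'} - R_{v_0, Q'_0}$ is the single linear map $\hat Q + \mf h^1 \mapsto \overline{[-,-]}(\beta(v, Q') - \beta(v_0, Q'_0),\, p_F(\sigma_1(\hat Q + \mf h^1)))$, and its derivative at any point differs from that of $R_{v_0, Q'_0}$ by the same linear map. On any compact $K \subset \mf g^1/\mf h^1$, both the values and the first derivatives are therefore bounded by a constant (depending on $K$ and the bilinear form $\overline{[-,-]}$) times $\|\beta(v, Q') - \beta(v_0, Q'_0)\|$ in the finite-dimensional space $\mf g^1/F$, which tends to zero by the previous paragraph. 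This gives continuity of $R$ in the $C^1$-topology. No real obstacle is anticipated; the conceptual crux is simply that the factoring hypothesis routes every continuity-sensitive piece of $R$ through a map into a finite-dimensional space, where continuity is automatic.
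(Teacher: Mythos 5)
Your proof is correct and follows essentially the same route as the paper: both exploit that $R_{v,Q'}$ is affine in the parameter, with all $(v,Q')$-dependence concentrated in the linear-in-$\hat Q$ term, which factors through the continuous map $p_F$ into the finite-dimensional space $\mf g^1/F$ where continuity is automatic. The only cosmetic difference is that you handle the joint $(v,Q')$-dependence directly via continuity of $\beta$, whereas the paper first reduces to $\{0\}\times\mf g^1$ by precomposing with the jointly continuous gauge action.
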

\begin{proof}
Note first that because the assignment $U\times \mf g^1\ni (v,Q')\mapsto (Q')^{\sigma_0(v)}$ is continuous, it is sufficient to show that the restriction of $R$ to $\{0\} \times \mf g^1$ is continuous. Further, while the map $R$ is not linear in $\mf g^1$, it is not very far from it, as it is affine. So continuity of $R$ is equivalent to the continuity of the map
$$
R-R_{0,0}: \{0\}\times \mf g^1 \to C^\infty(\mf g^1/\mf h^1,\mf g^2/\mf h^2).
$$
This map takes values in $\Hom(\mf g^1/\mf h^1,\mf g^2/\mf h^2)$, the \emph{linear} maps, and is given by
$$
(R_{0,Q'} - R_{0,0}) = [Q' - \sigma_1(Q'+\mf h^1),\sigma_1(-)] + \mf h^2 = \overline{[p_F(Q'-\sigma_1(Q'+ \mf h^1),p_F(\sigma_1(-))]} \in \mf g^2/\mf h^2,
$$
which is the composition of a continuous linear map $p_F$ with a linear map between finite-dimensional vector spaces, hence continuous.
\end{proof}
\begin{rmk}
A way to think about Lemma \ref{lem:Rcontcond} is as follows: given a manifold $M$, the graded Lie algebra $\mf g = \mf X^\bullet(M)[1]$ of multivector fields and a point $p\in M$ we can take $\mf h := I_p\mf X^\bullet(M)[1]$. Then, $\mf g^i/\mf h^i \cong \wedge^{i+1} T_pM$, and the quotient map is simply the evaluation at $p\in M$. To compute the value of the Schouten-Nijenhuis bracket $[\pi_1,\pi_2]_{SN}$ of bivector fields $\pi_1,\pi_2 \in \mf X^2(M)$ in $p$ however, it is not sufficient to know the values of $\pi_1,\pi_2$ at $p\in M$: we also need to know the value of their first derivatives. In other words, we need to know the equivalence class of $\pi_1,\pi_2$ mod $I_p^2 \mf X^2(M)$, which could be taken as $F$.
\end{rmk}
\section{Higher order fixed points of Lie ($n$-)algebroids}\label{sec:applications} 
In this section we give some applications of Theorem \ref{thm:mainthm}. We first apply it to obtain a stability result for higher order fixed points of Lie algebroids (Theorem \ref{thm:liealgdk}), which we then show to be equivalent to Theorem 1.3 of  \cite{dufour2005stability}.

We then apply Theorem \ref{thm:mainthm} to obtain a stability result for (higher order) fixed points of Lie $n$-algebroids (Theorems \ref{thm:lienalg1} and \ref{thm:lienalgk}). 

The results for Lie $n$-algebroids can then be applied to singular foliations, and we obtain a stability result (Theorem \ref{thm:singfolstab}) and a formal rigidity result (Corollary \ref{cor:singfolrig}) for linear singular foliations. 

\subsection{Higher order fixed points of Lie algebroids}\label{sec:exampleLiealg}
Let $(A,Q)$ be a Lie algebroid over the manifold $M$. The first application will be Theorem 1.3 of \cite{dufour2005stability}.\\
This theorem yields a cohomological criterion for stability of a \emph{higher order} fixed point of a Lie algebroid\nrt{:
\begin{defn}\label{def:liealgk}
Let $(A,Q)$ be a Lie algebroid over $M$ with anchor $\rho:A \to TM$ and bracket $[-,-]_A$. Let $k\geq1$. We say $p\in M$ is a \emph{fixed point of order $k$ of $Q$} if for all sections $x,y \in \Gamma(A)$, we have 
$$
\rho(x) \in I_p^k \mf X(M), \,[x,y]_A \in I_p^{k-1} \Gamma(A),
$$
where for $l \in \mb Z_{\geq0}$, $I_p^l\subset C^\infty(M)$ is the ideal of functions vanishing at $p$ up to order $l$.
\end{defn}
\begin{rmk}\label{rmk:liealgkwhybracket}
     While the requirement on the anchor is what we are interested in, the restriction on the bracket comes out naturally: for instance, when $A = T^\ast M$, and the Lie algebroid structure comes from a Poisson structure, both requirements are satisfied, as the structure constants with respect to a coordinate frame are precisely the partial derivatives of coefficients of the Poisson bivector.\\
     Another motivation comes from the structure equations of a Lie algebroid: as
     $$
     \rho([x,y]_A) = [\rho(x),\rho(y)], 
     $$
     and the right hand side lies in $I_p^{2k-1} \mf X(M)$ if $\rho(x),\rho(y)\in I_p^k \mf X(M)$, so does the left hand side. Hence, it is natural to require that $[x,y]_A\in I_p^{k-1}\Gamma(A)$.\\
     Finally, we will see below that these requirements ensure that Lie algebroid structures for which $p \in M$ is a fixed point of order $k$ are the Maurer-Cartan elements of a differential graded Lie subalgebra of $\mf X(A[1])$.
\end{rmk}}
\subsubsection{The ingredients}\label{sssec:liealgk}
In this section we show that we are in the setting of Assumptions \ref{ass:mainthmass}.
\begin{itemize}
\item[i)] As we still deal with Lie algebroid structures, we take the graded Lie algebra $$(\mf g_{LA} = \mf X(A[1]),0,[-,-]).$$
\item[ii)]
We now define the graded Lie subalgebra of $\mf g_{LA}$ which contains the Lie algebroid structures with a higher order fixed point $p\in M$.
\begin{defn}\label{def:liealgksubalg}
Let $p \in M$, $j\geq -1,k\geq 0$ and let $A$ be a vector bundle. Define
\begin{align*}
\mf X^j_{p,k}(A[1]) := \{\delta\in \mf X^j(A[1]) \mid \delta(C^\infty(M)) \subset I_p^k &\Gamma(S^j(A^\ast[-1])),\\ &\delta(\Gamma(A^\ast[-1])) \subset I_p^{k-1}\Gamma(S^{j+1}(A^\ast[-1]))\},
\end{align*}
where $I_p\subset C^\infty(M)$ is the ideal of functions vanishing at $p$.
\end{defn}
These subspaces behave well with respect to the graded commutator of vector fields, extending Lemma \ref{lem:liesubalg}. We leave the proof to the reader.
\begin{lem}\label{lem:commcompat} For $p \in M$, $j_1,j_2\geq 0, k_1,k_2 \geq 0$, we have
$$[\mf X^{j_1}_{p,k_1}(A[1]), \mf X^{j_2}_{p,k_2}(A[1])] \subset \mf X^{j_1+j_2}_{p,k_1+k_2-1}\nrt{(A[1])}.$$
\end{lem}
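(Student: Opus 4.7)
The plan is to reduce the claim to a computation on the generators of $C^\infty(A[1]) = \Gamma(S(A^\vee[-1]))$ as a $C^\infty(M)$-algebra. Since any graded derivation is determined by its restriction to $C^\infty(M) \oplus \Gamma(A^\vee[-1])$, the membership $[\delta_1,\delta_2] \in \mf X^{j_1+j_2}_{p,k_1+k_2-1}(A[1])$ amounts to verifying the two defining conditions, one on $C^\infty(M)$ and one on $\Gamma(A^\vee[-1])$, for the graded commutator $\delta_1 \delta_2 - (-1)^{j_1 j_2}\delta_2 \delta_1$.

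The workhorse is the following auxiliary observation: if $\delta \in \mf X^j_{p,k}(A[1])$ and $\eta \in I_p^l \Gamma(S^m(A^\vee[-1]))$ with $(l,m) \neq (0,0)$, then $\delta(\eta) \in I_p^{k+l-1}\Gamma(S^{j+m}(A^\vee[-1]))$ (with the convention $I_p^{-1} = C^\infty(M)$ when $k = 0$). One proves this by writing $\eta$ locally as a $C^\infty(M)$-linear combination of products $h_1 \cdots h_l \cdot \beta_1 \cdots \beta_m$ with $h_i \in I_p$ and $\beta_i \in \Gamma(A^\vee[-1])$ and expanding $\delta(\eta)$ via the graded Leibniz rule: each of the resulting $l+m$ terms has exactly one differentiated factor, which is either a $\delta(h_i) \in I_p^k \Gamma(S^j(A^\vee[-1]))$ or a $\delta(\beta_i) \in I_p^{k-1}\Gamma(S^{j+1}(A^\vee[-1]))$, and combining this with the remaining $l+m-1$ undifferentiated factors gives the stated order of vanishing in all cases.

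With this in hand, the commutator check is immediate. For $f \in C^\infty(M)$, the defining condition gives $\delta_2(f) \in I_p^{k_2}\Gamma(S^{j_2}(A^\vee[-1]))$; applying the auxiliary observation to $\delta_1$ yields $\delta_1(\delta_2(f)) \in I_p^{k_1 + k_2 - 1}\Gamma(S^{j_1 + j_2}(A^\vee[-1]))$ (the edge case $k_2 = j_2 = 0$ is handled directly from the definition, and is strictly stronger than needed), and symmetrically for $\delta_2 \delta_1$. For $\beta \in \Gamma(A^\vee[-1])$, $\delta_2(\beta) \in I_p^{k_2 - 1}\Gamma(S^{j_2+1}(A^\vee[-1]))$ has positive symmetric degree $j_2 + 1$, so the auxiliary observation applies and gives $\delta_1(\delta_2(\beta)) \in I_p^{k_1 + k_2 - 2}\Gamma(S^{j_1 + j_2 + 1}(A^\vee[-1]))$; this is exactly the second defining condition for $\mf X^{j_1+j_2}_{p,k_1+k_2-1}(A[1])$, and again the same holds for $\delta_2\delta_1$.

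I do not foresee any real obstacle: the argument is entirely a bookkeeping exercise with the graded Leibniz rule and with the filtration by powers of $I_p$. The only mild subtleties are the edge cases where some $k_i$ equals $0$ or $1$, which are absorbed either by the convention $I_p^{-1} = C^\infty(M)$ or by noting that the conclusion is a vacuous statement in those degrees.
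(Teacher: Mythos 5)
Your proof is correct, and it is exactly the argument the paper intends: the paper explicitly leaves this lemma to the reader as a ``straightforward computation,'' and your verification of the two defining conditions on $C^\infty(M)$ and $\Gamma(A^\vee[-1])$ via the graded Leibniz rule, using the auxiliary estimate $\delta(\eta)\in I_p^{k+l-1}\Gamma(S^{j+m}(A^\vee[-1]))$ for $\eta\in I_p^{l}\Gamma(S^{m}(A^\vee[-1]))$, is precisely that computation, with the edge cases ($k_i\in\{0,1\}$, $j_2=k_2=0$, and negative powers of $I_p$ read as no condition) treated correctly.
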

\begin{rmk}
As the numbers $k_1,k_2, k_1+k_2-1$ represent the order to which elements of the respective spaces vanish at $p$ in a certain sense, Lemma \ref{lem:commcompat} is not too surprising. It is an extension of the fact that given vector fields $X,Y\in \mf X(M)$ vanishing to order $K_1,K_2 \geq 0$ in a point respectively, their Lie bracket $[X,Y]$ vanishes to order $K_1 + K_2 -1$.
\end{rmk}

\begin{cor} For $p\in M$,
$$
\left(\mf g_{LA}(p,k) := \bigoplus_{j = 0}^{\rk(A)}\mf X^j_{p,j(k-1)+1}(A[1]),0,[-,-]\right)
$$
is a graded Lie subalgebra of $(\mf X(A[1]),0 ,[-,-])$.
\end{cor}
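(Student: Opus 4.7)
The plan is to verify closure of $\mf g_{LA}(p,k)$ under the graded commutator by directly applying Lemma \ref{lem:commcompat} and checking a small arithmetic identity. Since each summand $\mf X^j_{p,j(k-1)+1}(A[1])$ is by definition a graded subspace of $\mf X^j(A[1])$, and the direct sum is taken over the homogeneous degrees $j=0,\dots,\rk(A)$ (which exhausts the nonzero degrees of $\mf X(A[1])$), it suffices to check that the bracket of two homogeneous elements lies in the appropriate summand.

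So I would take $\delta_1 \in \mf X^{j_1}_{p,k_1}(A[1])$ and $\delta_2 \in \mf X^{j_2}_{p,k_2}(A[1])$ with $k_i = j_i(k-1)+1$, and invoke Lemma \ref{lem:commcompat} to conclude
$$
[\delta_1,\delta_2] \in \mf X^{j_1+j_2}_{p,k_1+k_2-1}(A[1]).
$$
The only thing left is to verify that $k_1+k_2-1$ matches the index prescribed for degree $j_1+j_2$ in the direct sum, namely $(j_1+j_2)(k-1)+1$. This is the elementary identity
$$
k_1 + k_2 - 1 = \bigl(j_1(k-1)+1\bigr) + \bigl(j_2(k-1)+1\bigr) - 1 = (j_1+j_2)(k-1)+1,
$$
so $[\delta_1,\delta_2] \in \mf X^{j_1+j_2}_{p,(j_1+j_2)(k-1)+1}(A[1])$, as required.

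The main (and essentially only) obstacle is bookkeeping: ensuring that the choice $k_j = j(k-1)+1$ is precisely the one that makes the ``$-1$'' in Lemma \ref{lem:commcompat} absorbed into the linearity in $j_1+j_2$. Nothing else — differential, Jacobi, grading — needs any additional argument, since $\mf g_{LA}(p,k)$ inherits the graded skew-symmetric bracket and trivial differential from $\mf X(A[1])$, and the ambient Jacobi identity restricts to the subspace automatically once closure is established.
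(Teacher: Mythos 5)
Your proof is correct and is exactly the intended argument: the corollary follows immediately from Lemma \ref{lem:commcompat} together with the arithmetic identity $\bigl(j_1(k-1)+1\bigr)+\bigl(j_2(k-1)+1\bigr)-1=(j_1+j_2)(k-1)+1$, which is why the paper states it without further proof. The remaining observations (trivial differential, inherited bracket and Jacobi identity, degrees beyond $\rk(A)$ vanishing) are handled correctly.
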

\nrt{The graded Lie subalgebra $\mf g_{LA}(p,k)$ captures the Lie algebroid structures for which $p$ is a fixed point of order $k$:
\begin{lem}
Let $(A,Q)$ be a Lie algebroid over $M$, $k \geq 1$. A point $p\in M$ is a fixed point of order $k$ of $Q$ if and only if $Q \in \mf g_{LA}(p,k)$.
\end{lem}}

We therefore obtain a graded Lie subalgebra $\mf g_{LA}(p,k) \subset \mf g_{LA}$ corresponding to Lie algebroid structures for which $p\in M$ is a fixed point of order $k$. The cochain spaces of the relevant complex will consist of $\mf g_{LA}/\mf g_{LA}(p,k)$. We will give a description of these vector spaces, proving that the quotients $\mf g^i_{LA}/\mf g^i_{LA}(p,k)$ are finite-dimensional for $i = 0,1,2$. Note that Lemma \ref{lem:sesder} extends to $\mf g_{LA}(p,k)$:

\begin{lem}\label{lem:vanishingses}
For $j,k\geq 0$, let $ r:= j(k-1)$. Let $V := A[1]$, and $V^\ast = A^\ast[-1]$. \nrt{The sequence of $C^\infty(M)$-modules}
\[
\begin{tikzcd}\label{diag:vanishingses}
0\arrow{r}& I_p^{r}\Gamma(S^{j+1}(V^\ast)\otimes V) \arrow{r}{\iota}& \mf X^j_{p,r+1}(V) \arrow{r}{\sigma} & I_p^{r+1}\Gamma(S^j(V^\ast)\otimes TM) \arrow{r}& 0
\end{tikzcd}
\]
\nrt{is exact.}
\end{lem}
Consequently, the cochain spaces $\mf g_{LA}/\mf g_{LA}(p,k)$ sit inside a short exact sequence:
\begin{cor} Using notation from Lemma \ref{lem:vanishingses}, there is a short exact sequence
\[
\begin{tikzcd}
0 \arrow{r}& J^{r-1}_p(S^{j+1}(V^\ast)\otimes V) \arrow{r}{\bar{\iota}} & \mf X^j(V)/\mf X^j_{p,r+1}(V) \arrow{r}{\bar{\sigma}} & J_p^{r}(S^j(V^\ast)\otimes TM)\arrow{r} & 0
\end{tikzcd},
\]
where for a vector bundle $E$ and a positive integer $l$, $$J^l_p(E) := \Gamma(E)/I_p^{l+1}\Gamma(E)$$ are the $l$-jets of $E$ at $p$. In particular, the cochain spaces are finite dimensional vector spaces.
\end{cor}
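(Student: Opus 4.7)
The plan is to apply the nine lemma (or equivalently a short snake-lemma diagram chase) to a $3\times 3$ diagram of short exact sequences. The top row will be the sequence of the previous lemma for $\mf X^j_{p,r+1}(A[1])$; the middle row will be the restriction of lemma \ref{lem:sesder} to degree $j$,
\[
0 \to \Gamma(S^{j+1}(A^\vee[-1]))\otimes_{C^\infty(M)} \Gamma(A) \xrightarrow{\iota} \mf X^j(A[1]) \xrightarrow{\sigma} \Gamma(S^j(A^\vee[-1])\otimes TM) \to 0;
\]
the bottom row will be the desired sequence, obtained by taking cokernels column-by-column. The three vertical arrows are the natural inclusions $I_p^r\Gamma(\cdot)\hookrightarrow \Gamma(\cdot)$ on the left, $\mf X^j_{p,r+1}(A[1])\hookrightarrow \mf X^j(A[1])$ in the middle, and $I_p^{r+1}\Gamma(\cdot)\hookrightarrow \Gamma(\cdot)$ on the right; commutativity is immediate from the definition of $\mf X^j_{p,r+1}(A[1])$ and from the fact that $\iota$ and $\sigma$ respect the vanishing-order conditions, so the top row maps into the middle row with the claimed images.

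Next I would identify the cokernels. By the definition $J^l_p(E)=\Gamma(E)/I_p^{l+1}\Gamma(E)$, the left cokernel is $J^{r-1}_p(S^{j+1}(A^\vee[-1])\otimes A)$ and the right cokernel is $J^{r}_p(S^j(A^\vee[-1])\otimes TM)$; the middle cokernel is the term appearing in the statement. The nine lemma then produces exactness of the bottom row, giving the asserted short exact sequence, with $\bar{\iota}$ and $\bar{\sigma}$ the maps induced on quotients by $\iota$ and $\sigma$.

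Finite-dimensionality of the cochain spaces is then automatic: the outer jet spaces are finite-dimensional, since jets of sections of a finite-rank vector bundle at a point depend on only finitely many Taylor coefficients (the dimensions are controlled by $\dim I_p^s/I_p^{s+1}=\binom{s+\dim M-1}{\dim M -1}$), and additivity along the short exact sequence forces the middle term to be finite-dimensional as well. I do not anticipate any subtle obstacle; the only real bookkeeping is tracking the degree shifts in $A[1]$ and $A^\vee[-1]$ consistently, and verifying once and for all that $\iota$ and $\sigma$ send the subspaces defining $\mf X^j_{p,r+1}(A[1])$ precisely to the subspaces appearing in the outer columns.
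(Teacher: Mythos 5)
Your proposal is correct and follows essentially the same route the paper takes (implicitly): quotient the short exact sequence for $\mf X^j_{p,r+1}(A[1])$ by the one from lemma \ref{lem:sesder}, i.e.\ apply the snake/nine lemma to the inclusion of the two sequences and identify the cokernels with the jet spaces, exactly as is done for the $k=1$ case in lemma \ref{lem:isocohom}. Nothing further is needed.
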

The differential however, does not restrict to the outer parts of the sequence.
\begin{rmk}
For $k = 1$, we have $r= 0$, and we are back in the setup of Section \ref{sec:directproof}.
\end{rmk}
\item[iii)]By restricting \nrt{to} a sufficiently small coordinate neighborhood of the point $p\in M$ over which $A$ trivializes, we can choose splittings $\sigma_i:\mf g_{LA}^i/\mf g_{LA}^i(p,k)\to \mf g_{LA}^i$ for $i = 0,1$ by lifting jets to polynomial sections.
\item[iv)] Now pick a Lie algebroid structure $Q$ such that $p\in M$ is a fixed point of order $k$, i.e. a Maurer-Cartan element $Q\in \mf g^1_{LA}(p,k)$.
\end{itemize}
We now check that the data above satisfies Assumptions \ref{ass:mainthmass}.
\begin{itemize}
\item[a)]
As each of the degrees of $\mf g_{LA}$ are the sections of some vector bundle we take various $C^s$-topologies. 
\begin{itemize}
    \item [-] On $\mf g_{LA}^0$, take the $C^\infty$-topology,
    \item[-] On $\mf g_{LA}^1$, take the $C^{2k-1}$-topology,
    \item[-] On $\mf g_{LA}^2$, take the $C^{2k-2}$-topology.
\end{itemize}
These choices make the projections continuous.
\item[b)] The differential is identically zero, hence is continuous.
\item[c)] This follows from a local computation, where it is crucial that for $X,Y \in \mf g_{LA}^1$, the $(2k-2)$-jet of $[X,Y]$ depends bilinearly on the $(2k-1)$-jets of $X$ and $Y$.
\item[d)] As the gauge action for $\mf g_{LA}$ is given by the flow of some vector field, the choice of the splittings above implies that it is defined for any $v \in \mf g_{LA}^0/\mf g_{LA}^0(p,k)$.\\
Finally,
\begin{lem}
The gauge action $\mf g_{LA}^0/\mf g_{LA}^0(p,k)\times \mf g^1_{LA} \to \mf g^1_{LA}$ is continuous.
\end{lem}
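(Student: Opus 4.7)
The plan is to reduce the statement to joint continuity of a translation action on sections of a vector bundle over a coordinate chart, which is a standard property of the $C^s$-topology.

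Since the claim is local in $M$, I first restrict to a relatively compact coordinate chart $U$ around $p$ on which $A$ trivializes, and in this trivialization choose the splitting $\sigma_0 : T_pM \cong \mf g_{LA}^0/\mf g_{LA}^0(p,k) \to \mf g_{LA}^0$ so that $\sigma_0(v)$ is the CDO whose symbol is the constant vector field $v$ on $U$ and whose connection piece vanishes. The associated flow $\tilde{\Phi}^{\sigma_0(v)}_t$ on the graded manifold $A[1]$ then covers the translation $\phi_t^v(x) = x+tv$ and acts as the identity on fibers, and is defined jointly for $(v,t)$ in a neighborhood of $(0,0)$ in $T_pM \times \mb R$.

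Next, I identify $\mf X^1(A[1])|_U$ with $\Gamma(U,E)$ for the finite-rank vector bundle $E$ in degree $1$ provided by Lemma \ref{lem:sesder}. Under this identification the gauge action
$$
(Q')^{\sigma_0(v)} = (\tilde{\Phi}^{\sigma_0(v)}_1)^\ast \circ Q' \circ (\tilde{\Phi}^{\sigma_0(v)}_{-1})^\ast
$$
reduces to the precomposition of the section $Q'\in \Gamma(U,E)$ with the base translation $\phi_1^v$: no derivatives of $Q'$ are taken, since the trivial fiber action of $\tilde{\Phi}^{\sigma_0(v)}$ reduces conjugation to a relabelling of base coordinates. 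In particular the $C^{2k-1}$-regularity of $Q'$ is preserved exactly.

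Finally, I invoke joint continuity of the map
$$
V_0 \times \Gamma(U, E) \to \Gamma(U, E), \qquad (v, s) \mapsto s \circ \phi^v_1,
$$
with both sides carrying the $C^{2k-1}$-topology and $V_0 \subset T_pM$ chosen small enough that translates of any prescribed compact $K \subset U$ stay in $U$. For fixed $v$, $s \mapsto s\circ \phi^v_1$ is an isometry of the $C^{2k-1}$-seminorms on $K$; for fixed $s$, $v \mapsto s \circ \phi^v_1$ is continuous into $C^{2k-1}$ by uniform continuity on $K$ of all derivatives of $s$ up to order $2k-1$. A standard $\varepsilon/2$-argument then yields joint continuity. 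The only delicate point is the identification in the middle step, which is where the explicit constant-vector-field splitting is essential: it guarantees that conjugation absorbs no derivatives of $Q'$. A splitting with nontrivial linear fiber piece would still give continuity, but at the cost of tracking additional derivatives and raising the $C^s$-indices on the $\mf g^i$.
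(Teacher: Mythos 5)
Your proof is correct and takes essentially the same route as the paper's: the same $\varepsilon/2$ decomposition (uniform continuity in $v$ for fixed $Q'$ over a compact $K$, plus an estimate of $\lVert (Q_1-Q_2)^{\sigma_0(w)}\rVert_{K,2k-1}$ by the seminorm of $Q_1-Q_2$ over an enlarged compact swept out by the translates), with your explicit identification of the gauge action as precomposition by the base translation playing the role of the paper's appeal to the fact that the vector bundle automorphisms integrating $\sigma_0(v)$ induce continuous maps on sections. The only quibble is your closing aside: a splitting with nontrivial fiber part would not force raising the $C^s$-indices on the $\mf g^i$, since the extra derivatives fall on the fixed smooth automorphism rather than on $Q'$, but this remark is not used in the argument.
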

\begin{proof}
We need to show that for $(v,Q_1) \in \mf g_{LA}^0/\mf g_{LA}^0 (p,k)\times \mf g_{LA}^1$, any compact set $K\subset M$, and $\epsilon >0$, there is a compact set $K'\subset M$, $\epsilon'>0$ and $\delta>0$ with the property that if
$$
\lVert Q_1-Q_2 \rVert_{K',2k-1}<\epsilon', \lVert v-w\rVert<\delta,
$$
we have
$$
\lVert Q_1^{\sigma_0(v)} - Q_2^{\sigma_0(w)} \rVert_{K,2k-1} < \epsilon,
$$
where $\lVert \cdot\rVert_{K,2k-1}$ denotes the $C^{2k-1}$-seminorm associated to the compact set $K$.\\
Note that
\begin{align*}
    \lVert Q_1^{\sigma_0(v)} - Q_2^{\sigma_0(w)} \rVert_{K,2k-1} \leq \lVert Q_1^{\sigma_0(v)}-Q_1^{\sigma_0(w)} \rVert_{K,2k-1} + \lVert (Q_1-Q_2)^{\sigma_0(w)} \rVert_{K,2k-1}.
\end{align*}
By uniform continuity of $Q_1$ restricted to $K$, there exists $\delta>0$ such that the first term is at most $\frac{\epsilon}{2}$ if $\lVert v-w \rVert < \delta$. Recall that the gauge action by $\sigma_0(u)$ is given by some vector bundle automorphism of $\mf g_{LA}^1$, and vector bundle automorphisms induce continuous maps on the space of sections. Hence there exists some constant $C>0$, such that for $K' = \phi(\overline{B_{v}(\delta)} \times K)$
$$
\lVert (Q_1-Q_2)^{\sigma_0(w)}\rVert_{K,2k-1} \leq C\lVert Q_1-Q_2\rVert_{K',2k-1}.
$$
Here $\phi:\mf g_{LA}^0/\mf g_{LA}^0(p,k) \times K \to M$ for $(w,x) \in \mf g_{LA}^0/\mf g_{LA}^0(p,k)\times K$ is defined by
$$
\phi(w,x) = \phi_w(x),
$$
and $\phi_w$ is the time-1 flow on $M$ of the symbol of the element $\sigma_0(w)$, using that $\phi$ is continuous.\\
Setting $\epsilon'= \frac{\epsilon}{2C}$ then yields the result. 
\end{proof}
\item[e)] Lemma \ref{lem:uniqueness} implies that the gauge action preserves Maurer-Cartan elements.
\end{itemize}

\subsubsection{Applying the main theorem}
Now that all assumptions are satisfied, taking $\mf g_{LA} = \mf X(A[1])$, $\mf g_{LA}(p,k) = \bigoplus_{i = 0}^{\rk(A)} \mf X_{p,i(k-1)+1}^i(A[1])$ \nrt{as in Definition \ref{def:liealgksubalg}}, the main theorem implies:
\begin{thm}\label{thm:liealgdk}
Let $(A,Q)$ be a Lie algebroid over $M$. Let $p \in M$ be a fixed point of order $k\geq 1$, \nrt{as in Definition \ref{def:liealgk}}. Assume that $$H^1(\mf g_{LA}/\mf g_{LA}(p,k),\overline{[Q,-]}) = 0.$$ Then for every open neighborhood $U$ of $p\in M$, there exists a $C^{2k-1}$-neighborhood $\mc U$ of $Q$ such that for any Lie algebroid structure $Q' \in \mc U$ there is a family $I$ in $U$ of fixed points of order $k$ of $Q'$ parametrized by an open neighborhood of $$0 \in H^0(\mf g_{LA}/\mf g_{LA}(p,k),\overline{0^Q}).$$
\end{thm}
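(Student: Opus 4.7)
The verification of the hypotheses of Theorem \ref{thm:mainthm} has already been carried out in Section \ref{sssec:liealgk}: we have the differential graded Lie algebras $(\mf g_{LA}, 0, [-,-])$ and its subalgebra $(\mf g_{LA}(p,k), 0, [-,-])$ with finite-dimensional quotients in degrees $0,1,2$, the splittings $\sigma_0, \sigma_1$ obtained by lifting jets to polynomial sections on a trivializing chart around $p$, the Maurer-Cartan element $Q \in \mf g_{LA}^1(p,k)$, and the continuity and gauge-action axioms a)-e) verified in the $C^\infty, C^{2k-1}, C^{2k-2}$ topologies on $\mf g_{LA}^0, \mf g_{LA}^1, \mf g_{LA}^2$ respectively. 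So the plan is simply to invoke Theorem \ref{thm:mainthm} and translate its gauge-theoretic conclusion into a geometric one.

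Theorem \ref{thm:mainthm} furnishes, for every open neighborhood $V$ of $0 \in \mf g_{LA}^0/\mf g_{LA}^0(p,k)$, a $C^{2k-1}$-neighborhood $\mc U$ of $Q$ in $MC(\mf g_{LA})$ such that for every $Q' \in \mc U$ there is a family $I \subset V$, parametrized by an open neighborhood of $0 \in H^0(\mf g_{LA}/\mf g_{LA}(p,k), \overline{[Q,-]})$, with $(Q')^{\sigma_0(v)} \in \mf g_{LA}^1(p,k)$ for every $v \in I$. The remaining step is to extend Lemma \ref{lem:singpointmove} to higher order, translating the gauge condition $(Q')^{\sigma_0(v)} \in \mf g_{LA}^1(p,k)$ into the geometric statement that $\phi_1^{X_v}(p)$ is a fixed point of order $k$ of $Q'$, where $X_v \in \mf X(M)$ is the symbol of the covariant differential operator $\sigma_0(v)$. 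I would do this by writing the gauge action as conjugation by the vector bundle automorphism $\widetilde{\Phi}_1^{\sigma_0(v)}$ covering $\phi_1^{X_v}$ and observing that its pullback intertwines the ideals $I_p^j$ and $I_{\phi_1^{X_v}(p)}^j$ for all $j$, so that both components of the definition of $\mf X^1_{p,k}(A[1])$, namely the $I_p^k$-valued condition on the symbol and the $I_p^{k-1}$-valued condition on $\Gamma(A^\vee[-1])$, transform compatibly under this conjugation. After shrinking $V$ so that $\phi_1^{X_v}(p) \in U$ for all $v \in V$, which is possible by continuity of the assignment $v \mapsto \phi_1^{X_v}(p)$ together with the fact that it sends $0$ to $p$, this produces the asserted family of order-$k$ fixed points inside $U$.

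The main obstacle is precisely verifying this extended version of Lemma \ref{lem:singpointmove}: for $k > 1$ one must track both the symbol and the $\Gamma(A^\vee[-1])$-part of elements of $\mf g_{LA}^1$ under conjugation by $\widetilde{\Phi}_1^{\sigma_0(v)}$, whereas for $k = 1$ only the symbol enters. There is also a small bookkeeping check that the polynomial jet splitting $\sigma_0$ lifts to genuine covariant differential operators whose integrated flow moves $p$ to the expected nearby point and whose conjugation preserves the appropriate filtration by $I_p$-powers. Granted these, the statement of the theorem is a direct application of Theorem \ref{thm:mainthm}, with the $C^{2k-1}$-topology on Lie algebroid structures arising precisely from the topology imposed on $\mf g_{LA}^1$ in the verification of assumptions a)-c).
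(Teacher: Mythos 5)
Your proposal is correct and follows essentially the same route as the paper: the paper's proof of Theorem \ref{thm:liealgdk} is precisely the verification of assumptions \ref{ass:mainthmass} carried out in section \ref{sssec:liealgk} followed by a direct invocation of Theorem \ref{thm:mainthm}, with the geometric reinterpretation of the gauge condition handled by the vector-bundle-automorphism picture of the gauge action. The only difference is that you make explicit the higher-order analogue of Lemma \ref{lem:singpointmove} (that conjugation by $\widetilde{\Phi}^{\sigma_0(v)}_{1}$ intertwines the powers of the vanishing ideals $I_p^j$ and $I_{\phi_1^{X_v}(p)}^j$), which the paper leaves implicit; your argument for it is sound.
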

\subsubsection{Equivalence with the Dufour-Wade stability theorem for Lie algebroids}\label{sec:comparison}
In this section we compare Theorem \ref{thm:liealgdk} with Theorem 1.3 of \cite{dufour2005stability}. We will show that Theorem \ref{thm:liealgdk} is equivalent to Theorem 1.3 of \cite{dufour2005stability}. The conclusions of the theorems are equivalent so we will show that the cohomological assumptions are also equivalent. The theorem was originally proven in \cite{dufour2005stability} using a special kind of multivector fields on $A^\ast$ rather than vector fields on $A[1]$, and most notably, the differentials are not the same.\\
The first difference can be quite easily explained: the graded Lie algebra of multivector fields on $A^\ast$ which are fiber-wise linear \cite[Section 4.9]{CrMo08} \nrt{is} canonically isomorphic to the graded Lie algebra of multiderivations on $A$, and the same holds for vector fields on $A[1]$ \cite[Section 2.5]{CrMo08}.\\
The second difference is that we work with a quotient of the vector fields on $A[1]$, whereas in \cite{dufour2005stability} the authors work with representatives of the classes in the quotients, namely the multivector fields on $A^\ast$, which are fiber-wise linear, and polynomial in coordinates on $M = V$. However, even when making these identifications, the differential $$\overline{[Q,-]}:\mf g_{LA}/\mf g_{LA}(p,k)\to \mf g_{LA}/\mf g_{LA}(p,k)$$ and the differential of \cite{dufour2005stability} on the complexes do not agree. 

We want to show that the vanishing of the respective cohomologies are equivalent conditions. To see this, we first do an intermediate step, which will double as a more convenient way to check the cohomological hypothesis of Theorem \ref{thm:liealgdk}.\\
The complex
\[
\begin{tikzcd}
    \mf X^0(A[1])/\mf X^0_{p,1}(A[1]) \arrow{r}{\overline{[Q,-]}}& \mf X^1(A[1])/\mf X^1_{p,k}(A[1]) \arrow{r}{\overline{[Q,-]}} &\mf X^2(A[1])/\mf X^2_{p,2k-1}(A[1]) 
\end{tikzcd}
\]
has a finite descending filtration. Indeed: \nrt{define} for $t = 0,\dots, k$ (omitting the $A[1]$ for convenience):
\begin{align*}
F^t(\mf X^0/\mf X^0_{p,1})&:= \begin{cases} \mf X^0/\mf X^0_{p,1} & t = 0,\dots, k-1 \\ 0 & t = k\end{cases},\\
F^t(\mf X^1/\mf X^1_{p,k})&:= \mf X^1_{p,t}/\mf X^1_{p,k},\\
F^t(\mf X^2/\mf X^2_{p,2k-1}) &:= \mf X^2_{p,k-1+t}/\mf X^2_{p,2k-1}.
\end{align*}
The differentials preserve this filtration, so we obtain $k$ complexes on the graded quotient: for $t = 0,\dots, k-2$ we get
\begin{equation}\label{diag:assgrad0}
\begin{tikzcd}
0\arrow{r} & \mf X^1_{p,t}/\mf X^1_{p,t+1} \arrow{r}{gr(\overline{[Q,-]})}& \mf X^2_{p,t+k-1}/\mf X^2_{p,t+k}
\end{tikzcd},
\end{equation}
and for $t = k-1$ we have
\begin{equation}\label{diag:assgradk1}
\begin{tikzcd}
\mf X^0/\mf X_{p,1}^0\arrow{r}{gr(\overline{[Q,-]})} &\mf X^1_{p,k-1}/\mf X^1_{p,k} \arrow{r}{gr(\overline{[Q,-]})} & \mf X^2_{p,2k-2}/\mf X^2_{p,2k-1}.
\end{tikzcd}
\end{equation}
Denote the corresponding cohomology groups by
\begin{equation}\label{eq:gradedcohomologygroups}
gr_t H^1(\mf g_{LA}/\mf g_{LA}(p,k),\overline{[Q,-]})
\end{equation}
for $t = 0,\dots, k-1$.
The vanishing of these cohomologies is equivalent to the vanishing of $H^1(\mf g_{LA}/\mf g_{LA}(p,k),\overline{0}^Q)$.
\begin{prop}
$$
H^1(\mf g_{LA}/\mf g_{LA}(p,k),\overline{[Q,-]}) = 0 \iff gr_t H^1(\mf g_{LA}/\mf g_{LA}(p,k),\overline{[Q,-]}) = 0 
$$
for $t=0,\dots, k-1$.
\end{prop}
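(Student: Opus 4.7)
The proof strategy exploits the fact that the filtration $F^\bullet$ is bounded: $F^k C = 0$ and $F^0 C = C$, where $C$ denotes the three-term total complex. This reduces cohomological information in the total complex to finitely many graded pieces, and the transfer of information in each direction goes through standard homological algebra of filtered complexes.

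For the direction $\Leftarrow$ (which is the direction used in applications), I would argue by descending induction on the filtration level of a 1-cocycle. Assume $\text{gr}_t H^1 = 0$ for all $t = 0, \ldots, k-1$, and let $\alpha \in C^1 = \mf X^1(A[1])/\mf X^1_{p,k}(A[1])$ satisfy $\overline{[Q,\alpha]} = 0$. The base case puts $\alpha \in F^0 C^1$. For the inductive step with $t \leq k-2$: if $\alpha \in F^t C^1$, its image in $\text{gr}_t C^1 = \mf X^1_{p,t}(A[1])/\mf X^1_{p,t+1}(A[1])$ is annihilated by $\text{gr}_t \overline{[Q,-]}$, because the full differential reduces to its graded piece modulo $F^{t+1}$. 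Since $\text{gr}_t C^0 = 0$ for $t < k-1$, the cohomology group $\text{gr}_t H^1$ is simply $\ker(\text{gr}_t \overline{[Q,-]})$; vanishing forces the class of $\alpha$ to be trivial, hence $\alpha \in F^{t+1} C^1$. At $t = k-1$, $\alpha \in F^{k-1} C^1 = \mf X^1_{p,k-1}(A[1])/\mf X^1_{p,k}(A[1])$, and the vanishing of $\text{gr}_{k-1} H^1$ in the complex \eqref{diag:assgradk1} (which now contains a nontrivial contribution from $C^0$) produces $\beta \in C^0$ with $\alpha - \overline{[Q,\beta]} \in F^k C^1 = 0$. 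Thus $\alpha$ is a coboundary.

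For the direction $\Rightarrow$, I would use the long exact sequence in cohomology associated to the short exact sequences of complexes $0 \to F^{t+1} C \to F^t C \to \text{gr}_t C \to 0$. Since $H^0(\text{gr}_t C) = 0$ whenever $t < k-1$ (the degree-$0$ piece vanishes by construction), one obtains injections $H^1(F^{t+1} C) \hookrightarrow H^1(F^t C)$ for such $t$. Iterating gives $H^1(F^t C) \hookrightarrow H^1(F^0 C) = H^1(C) = 0$ for all $t \leq k-1$; in particular, $\text{gr}_{k-1} H^1 = H^1(F^{k-1} C) = 0$ because $F^{k-1} C$ equals $\text{gr}_{k-1} C$ (as $F^k = 0$). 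For $t < k-1$, the long exact sequence continues as $0 \to \text{gr}_t H^1 \to H^2(F^{t+1} C) \to H^2(F^t C)$, and a parallel inductive analysis of the degree-$2$ filtration together with the Maurer-Cartan structure of $Q$ (which via the Jacobi identity constrains the image of $\overline{[Q,-]}$ in $C^2$) shows that the connecting maps are injective, so $\text{gr}_t H^1 = 0$.

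The main obstacle is the $\Rightarrow$ direction: unlike $\Leftarrow$, which is a clean descending induction using only kernels of the graded differentials, the reverse requires careful tracking through the degree-$2$ part of the filtered complex, and ultimately relies on the specific compatibility of the filtration with the bracket $[Q,-]$ coming from $Q \in \mf X^1_{p,k}(A[1])$. The $\Leftarrow$ direction is the one that is genuinely useful, since it reduces the hypothesis of Theorem \ref{thm:liealgdk} to checking vanishing of $k$ smaller, explicitly described finite-dimensional cohomologies.
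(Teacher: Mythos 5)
Your ``$\Longleftarrow$'' argument is correct and is essentially the paper's own proof of that direction: a cocycle in $\mf X^1/\mf X^1_{p,k}$ is pushed step by step deeper into the filtration using $gr_tH^1=0$ for $t\le k-2$ (where $gr_tC^0=0$, so the graded $H^1$ is just the kernel of the graded differential), and the level $t=k-1$, which contains the degree-$0$ term, produces the primitive.

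The ``$\Longrightarrow$'' direction, however, has a genuine gap. Your reduction via the sequences $0\to F^{t+1}C\to F^tC\to gr_tC\to 0$ and the resulting vanishing $H^1(F^tC)=0$ for all $t$ is fine (and the fact that $F^0C^2=\mf X^2_{p,k-1}/\mf X^2_{p,2k-1}$ is a proper subspace of $C^2$ is harmless for $H^1$, since $\overline{[Q,-]}$ takes values in it). But from $H^1(F^tC)=0$ the long exact sequence gives $0\to H^1(gr_tC)\to H^2(F^{t+1}C)\to H^2(F^tC)$, so what you must prove is injectivity of $H^2(F^{t+1}C)\to H^2(F^tC)$, i.e.\ that every element of $F^{t+1}C^2$ of the form $\overline{[Q,x]}$ with $x\in F^tC^1$ equals $\overline{[Q,y]}$ for some $y\in F^{t+1}C^1$. (Injectivity of the connecting map $H^1(gr_tC)\to H^2(F^{t+1}C)$ is automatic and does not help.) Given what you have already established, this injectivity is equivalent to $gr_tH^1=0$ itself, so asserting it ``by a parallel inductive analysis together with the Maurer--Cartan structure of $Q$'' is circular; no actual argument is supplied at exactly the point where one is needed.

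Moreover, no purely formal filtered-complex argument can close this: for a general bounded filtered three-term complex the implication $H^1=0\Rightarrow gr_tH^1=0$ is false. Take $C^0=0$, $C^1=\mb R$ in filtration level $0$, $C^2=\mb R$ in filtration level $1$, and $d^1=\mathrm{id}$; then $H^1=0$ while $gr_0H^1=\mb R$, and correspondingly $H^2(F^1C)\to H^2(F^0C)$ fails to be injective. In spectral sequence terms, classes of $E_1$ in total degree $1$ can die by supporting differentials into total degree $2$ without contributing to $H^1$. So the forward implication must be extracted from the specific shape of the differential, and this is where the paper takes a different route: it chooses linear splittings of the filtrations on $\mf X^1/\mf X^1_{p,k}$ and on $\mf X^2_{p,k-1}/\mf X^2_{p,2k-1}$, identifies each filtered piece with the direct sum of its graded pieces, and exploits that in this decomposition $\overline{[Q,-]}$ becomes block triangular with diagonal blocks exactly $gr(\overline{[Q,-]})$. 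Your long-exact-sequence scheme would need an explicit input of comparable strength — some control of $\mathrm{im}(\overline{[Q,-]})\cap F^{t+1}C^2$ coming from $Q\in\mf X^1_{p,k}$ and lemma \ref{lem:commcompat} — and as written it does not contain one.
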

\begin{proof}
$``\implies"$
The key observation is that we can pick linear splittings of the sequences 
\[
\begin{tikzcd}
0 \arrow{r} & \mf X^i_{p,t+1} \arrow{r}& \mf X^i_{p,t} \arrow{r} & \mf X^i_{p,t}/\mf X^i_{p,t+1} \arrow{r}& 0
\end{tikzcd}
\]
for $i =1, t = 0,\dots, k-1$, and $i =2, t= k-1,\dots, 2k-2$. This gives rise to (filtered) linear isomorphisms 
$$
\mf X^i_{p,(i-1)(k-1)}/\mf X^i_{p,k + (i-1)(k-1)} \cong \bigoplus_{t = 0}^{k-1} \mf X^i_{p,t+(i-1)(k-1)}/\mf X^i_{p,t+1+(i-1)(k-1)} 
$$
for $i = 1,2$. Decomposing the differential with respect to this isomorphism it is block upper triangular, with the block diagonal being precisely $gr(\overline{[Q,-]})$. \\
$'' \Longleftarrow"$ Let $\delta + \mf X_{p,k}^1 \in \mf X^1/\mf X_{p,k}^1$ such that $[Q,\delta] \in \mf X_{p,2k-1}^2$. Then in particular $[Q,\delta] \in \mf X^2_{p,k} \supset \mf X_{p,2k-1}^2$. As $$gr_0 H^1(\mf g_{LA}/\mf g_{LA}(p,k),\overline{[Q,-]}) = 0,$$ we find that $\delta \in \mf X_{p,1}$, using \eqref{diag:assgrad0} for $t=0$. Applying this reasoning inductively, we eventually find that $\delta \in \mf X_{p,k-1}$.\\ Finally, the assumption
$$
gr_{k-1}H^1(\mf g_{LA}/\mf g_{LA}(p,k),\overline{[Q,-]}) = 0,
$$
implies that there exists $X\in \mf X^0$ such that $[Q,X] - \delta \in \mf X_{p,k}^1$ using \eqref{diag:assgradk1}, concluding the proof.
\end{proof}
Now we can make a direct connection with \cite{dufour2005stability}: the cohomology groups appearing in their Theorem 1.3 are exactly $gr_t H^1(\mf g_{LA}/\mf g_{LA}(p,k),\overline{[Q,-]})$. 
\begin{prop}
Let $(A = \mf g\times M,Q)$ be a Lie algebroid over the vector space $M = V$ such that $ p= 0\in M$ is a fixed point of order $k$. Let $\Pi_Q$ denote the corresponding fiberwise linear Poisson structure on $A^\ast$. Then 
$$
H^{2,t}_{lin}(\Pi^{(k)}_Q) \cong gr_t H^1(\mf g_{LA}/\mf g_{LA}(p,k),\overline{[Q,-]})
$$
for $t = 0,\dots, k-1$. Here the left hand side denotes the cohomology as defined in \cite{dufour2005stability}, where $\Pi_Q^{(k)}$ denotes the $k$-th order Taylor expansion of $\Pi_Q$ around $0_p$. The right hand side denotes the graded cohomology of the filtered complex as defined above \eqref{eq:gradedcohomologygroups}.
\end{prop}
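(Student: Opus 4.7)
The plan is to translate the paper's graded complex into the Dufour--Wade complex via the two classical identifications mentioned in the preceding paragraph. First, I would invoke the canonical isomorphisms of graded Lie algebras
\[
\mf X(A[1]) \;\cong\; \mathrm{Mder}(A) \;\cong\; \mf X_{\mathrm{lin}}(A^\vee),
\]
where the rightmost term denotes the fiberwise-linear multivector fields on $A^\vee$ equipped with the Schouten--Nijenhuis bracket. Under these isomorphisms the Maurer--Cartan element $Q\in \mf X^1(A[1])$ maps to the fiberwise-linear Poisson bivector $\Pi_Q\in \mf X^2_{\mathrm{lin}}(A^\vee)$, and the differential $[Q,-]$ becomes $[\Pi_Q,-]_{SN}$, so the underlying cochain complexes agree abstractly once the degree shift (vector fields of degree $i$ on $A[1]$ corresponding to $(i+1)$-vectors on $A^\vee$) is taken into account.

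Next, I would match the two filtrations. Since $M=V$ is a vector space and $A=\mf g\times V$ is trivial, polynomial coordinates on the base identify jets at $p=0$ with polynomial sections, and fiberwise-linear multivector fields on $A^\vee$ decompose by polynomial weight in the $V$-coordinates. Under the isomorphism above, the vanishing conditions defining $\mf X^j_{p,r}(A[1])$ translate precisely to vanishing to polynomial weight at least $r$ at $0_p\in A^\vee$ for the corresponding fiberwise-linear multivector field. In particular, for $i=1,2$ the graded piece $\mf X^i_{p,(i-1)(k-1)+t}/\mf X^i_{p,(i-1)(k-1)+t+1}$ is identified with the subspace of fiberwise-linear multivectors homogeneous of polynomial weight exactly $(i-1)(k-1)+t$ in the base coordinates. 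The built-in shift $(i-1)(k-1)$ is exactly the convention by which Dufour--Wade index their bigrading $(i,t)$ on $H^{i,t}_{\mathrm{lin}}$.

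Finally, I would compare the differentials on the associated graded. The assumption $Q\in \mf g^1_{LA}(p,k)$ translates to $\Pi_Q$ having polynomial weight at least $k$ at $0_p$, so its $k$-th order Taylor truncation $\Pi_Q^{(k)}$ picks out the lowest-weight contribution. In the bracket $[\Pi_Q,-]_{SN}$, all contributions coming from pieces of $\Pi_Q$ of weight strictly greater than $k$ raise the weight further than needed and hence vanish on the associated graded; only $[\Pi_Q^{(k)},-]_{SN}$ restricted to weight $t$ survives, which matches the Dufour--Wade differential computing $H^{2,t}_{\mathrm{lin}}(\Pi_Q^{(k)})$. A term-by-term comparison with the complexes \eqref{diag:assgrad0} and \eqref{diag:assgradk1} then produces the desired isomorphism on $H^1$. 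The main obstacle is purely bookkeeping: one must verify that the weight/degree shifts and the sign conventions arising from passing between the graded commutator on $A[1]$, the bracket of multiderivations, and the Schouten bracket on $A^\vee$ line up with those of \cite{dufour2005stability}; once this dictionary is pinned down, the isomorphism of cohomology groups follows essentially tautologically.
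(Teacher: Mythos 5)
Your proposal follows essentially the same route as the paper: transport the problem through the graded Lie algebra isomorphism $\mf X(A[1])\cong \mf X_{lin}^\bullet(A^\vee)[1]$, check that the vanishing-order filtrations correspond (so one gets a filtered isomorphism of complexes and hence of associated gradeds), identify the graded pieces with multivector fields having homogeneous polynomial base coefficients, and observe that on the associated graded only $[\Pi_Q^{(k)},-]_{SN}$ survives because $\Pi_Q-\Pi_Q^{(k)}$ vanishes to order $\geq k+1$. The only point to keep straight in the bookkeeping is that the paper's filtration on the $A^\vee$ side is by vanishing order at the point $0_p$ of the \emph{total space} (the ideal $I_{0_p}\subset C^\infty(A^\vee)$), so for the fiberwise-linear (bracket-type) components the linear fiber coordinate contributes one order, matching the $I_p^{r}$ versus $I_p^{r-1}$ asymmetry in the definition of $\mf X^j_{p,r}(A[1])$ — exactly the shift you defer to the "dictionary".
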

\begin{proof}
As shown in \cite{CrMo08}, there is a $C^\infty(M)$-linear isomorphism of graded Lie algebras
$$
\Phi:\mf X^\bullet_{lin}(A^\ast)[1] \to \mf X(A[1]).
$$
Let $p \in M$ denote the origin. On the left hand side, we can define subspaces corresponding to the order of vanishing at $0_p \in A^\ast$:
For $q,t\geq 0$, let
$$
\mf X^q_{lin}(A^\ast)_{p,t} := I_{0_p}^{t} \mf X^q(A^\ast) \cap \mf X^q_{lin}(A^\ast),
$$
where $I_{0_p}\subset C^\infty(A^\ast)$ is the ideal of functions on $A^\ast$ vanishing at $0_p\in A^\ast$. It is straightforward to verify that 
$$
\Phi(\mf X^q_{lin}(A^\ast)_{p,t}) = \mf X^{q-1}_{p,t}.
$$
Therefore, we have$$ [\mf X^{q_1}_{lin}(A^\ast)_{p,k_1},\mf X^{q_2}_{lin}(A^\ast)_{p,k_2}] \subset \mf X^{q_1+q_2-1}_{lin}(A^\ast)_{p,k_1+k_2-1}.$$ In particular, the assumption that $p$ is a fixed point of order $k$ in terms of $\Pi_Q$ means that $\Pi_Q \in \mf X^2(A^\ast)_{p,k}$. This implies that the differential $[\Pi_Q,-]:\mf X_{lin}^\bullet(A^\ast) \to \mf X_{lin}^{\bullet+1}(A^\ast)$ descends to 
$$
\overline{[\Pi_Q,-]}:\mf X_{lin}^\bullet(A^\ast)/\mf X_{lin}^\bullet(A^\ast)_{p,1 + \bullet(k-1)} \to \mf X^{\bullet+1}_{lin}(A^\ast)/\mf X^{\bullet+1}_{lin}(A^\ast)_{p,k + \bullet(k-1)}.
$$

Again the order of vanishing gives rise to a filtered complex, and $\Phi$ descends to a filtered isomorphism of complexes
\begin{equation}\label{eq:inducedmaplinvf}
\overline{\Phi}:(\mf X_{lin}^\bullet(A^\ast)/\mf X_{lin}^\bullet (A^\ast)_{p,1+\bullet(k-1)}, \overline{[\Pi_Q,-]}) \to (\mf X^{\bullet-1}(A[1])/\mf X^{\bullet-1}(A[1])_{p,1+(\bullet-1)(k-1)}, \overline{[Q,-]}),
\end{equation}
giving rise to an isomorphism of the corresponding graded quotients. It remains to be shown that $H^{2,s}_{lin}(\Pi^{(k)}) $ is isomorphic to the cohomology of the corresponding graded quotient of the left hand side of \eqref{eq:inducedmaplinvf}. However, mapping a homogeneous degree $s$ polynomial (in $V$) fiberwise $k$-vector field to its corresponding class in the graded quotient $\mf X_{lin}^k(A^\ast)_{p,s}/\mf X_{lin}^k(A^\ast)_{p,s+1}$ defines an isomorphism of complexes.

\end{proof}
\subsection{Fixed points of Lie $n$-algebroids}
In this section we apply the main theorem to Lie $n$-algebroids and obtain a result analogous to Theorem \ref{thm:liealgdk}. In classical terms, Lie $n$-algebroids can be described as a graded vector bundle with an anchor, and a collection of higher brackets on the space of its sections. We will first look at the case when the anchor vanishes in a point $p\in M$, and interpret Theorem \ref{thm:mainthm} in this setting.
\subsubsection{Lie $n$-algebroids}
We start with the definition of a Lie $n$-algebroid. Just as for Lie algebroids there are several ways to characterize Lie $n$-algebroids. We choose the graded geometric point of view.
\begin{defn}
Let $M$ be a smooth manifold. A \textit{Lie $n$-algebroid} over $M$ is a pair $(E,Q)$, where
\begin{itemize}
    \item[i)] $E = \bigoplus_{i = 1}^{n} E_i[i-1]$ is a non-positively graded vector bundle,
    \item[ii)] $Q:\Gamma(S(E^\ast[-1])) \to \Gamma(S(E^\ast[-1]))$ is a degree 1 $\mb R$-linear derivation,
\end{itemize}
satisfying
\begin{itemize}
    \item [a)] $Q^2 =\frac{1}{2}[Q,Q]= 0$.
\end{itemize}
We denote by
$$
C^\infty(E[1],Q) := (\Gamma(S(E^\ast[-1])),Q)
$$
the differential graded commutative algebra of \emph{smooth functions} on $E[1]$, and by
$$
(\mf X(E[1]), [Q,-],[-,-]) := (\text{Der}_{\mb R}(C^\infty(E[1])), [Q,-],[-,-])
$$
the differential graded Lie algebra of \emph{vector fields} on $E[1]$.
\end{defn}
\begin{rmk}\label{rmk:lienalgebroidprop}
\begin{itemize}
    \item[]
    \item [i)] $C^\infty(E[1])$ is bigraded as an algebra: the grading coming from the grading on $E$, and the grading coming from the symmetric power of $E^\ast[-1]$, which we will refer to as the \emph{weight}. 
    \item[ii)] The bigrading on $C^\infty(E[1])$ induces a bigrading on $\mf X(E[1])$. The grading coming from the grading on $E$ will be called the $\emph{degree}$ of a vector field, while the grading coming from the weight will be called the $\emph{arity}$.
    \item[iii)] We do not require $Q$ to be homogeneous with respect to the bigrading, merely that $Q$ has degree 1. So $Q$ can be decomposed as $Q = \sum_{i= 0}^{n} Q^{(i)}$, where $Q^{(i)}$ raises the symmetric power by $i$.
    \item[iv)] Due to T. Voronov \cite{qmanvoronov}, the data of a Lie $n$-algebroid can be described equivalently in terms of an anchor $\rho: \Gamma(E_1)\to \mf X(M)$ and a collection of multibrackets $$\ell_k:S^k(E[1])\to E[1]$$ for $k\geq1 $ of degree 1 satisfying quadratic identities. 
    \item[v)] When $E$ is concentrated in degree 0, this reduces to the definition of a Lie algebroid.
    \item[vi)] $\mf X(E[1])$ is a graded Lie algebra whose Maurer-Cartan elements are precisely the Lie $n$-algebroid structures.
    \end{itemize}
\end{rmk}
Although the functions and vector fields on $E[1]$ are more complicated to describe than in the Lie algebroid setting, the analogue of Lemma \ref{lem:sesder} still holds:
\begin{prop}
There is a short exact sequence of graded $C^\infty(M)$-modules:
\[
\begin{tikzcd}
0\arrow{r}& C^\infty(E[1]) \otimes E[1] \arrow{r}{\iota} &\mf X(E[1]) \arrow{r}{\sigma} & C^\infty(E[1]) \otimes \mf X(M) \arrow{r} & 0
\end{tikzcd},
\]
where tensor products are over $C^\infty(M)$, $\iota$ and $\sigma$ are the contraction and restriction to $C^\infty(M)$ respectively. Moreover, any choice of connections on the $E_i$ give rise to a splitting of the sequence.
\end{prop}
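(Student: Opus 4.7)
The plan is to mirror the Lie algebroid proof of lemma \ref{lem:sesder}, promoting it to the graded bundle $E = \bigoplus_{i=1}^n E_i[i-1]$ and keeping track of the bigrading on $C^\infty(E[1])$ described in remark \ref{rmk:lienalgebroidprop}. First I would define the two maps explicitly. For $s\in \Gamma(E[1])$ the contraction $\iota_s$ is a graded derivation of $\Gamma(S(E^\vee[-1]))$ which vanishes on $S^0(E^\vee[-1]) = C^\infty(M)$; multiplying by $f\in C^\infty(E[1])$ yields another derivation, and $C^\infty(M)$-linearity in $s$ shows $\iota$ is well-defined on the $C^\infty(M)$-tensor product. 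The map $\sigma$ is restriction to $C^\infty(M)\subset C^\infty(E[1])$: for $X\in \mf X(E[1])$ the map $X|_{C^\infty(M)}:C^\infty(M)\to C^\infty(E[1])$ is a derivation over $C^\infty(M)$, and such derivations are standardly identified with elements of $C^\infty(E[1])\otimes_{C^\infty(M)}\mf X(M)$.

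Next I would check exactness. The equality $\sigma\circ \iota = 0$ is immediate since contractions annihilate $S^0(E^\vee[-1])$, and injectivity of $\iota$ follows locally by pairing against a dual frame for $E^\vee[-1]$. The crucial step is $\ker \sigma \subset \mathrm{im}(\iota)$: if $X\in \mf X(E[1])$ vanishes on $C^\infty(M)$ then $X$ is $C^\infty(M)$-linear on $C^\infty(M)$, and since $\Gamma(S(E^\vee[-1]))$ is generated as a graded $C^\infty(M)$-algebra by $\Gamma(E^\vee[-1])$, the graded derivation $X$ is entirely determined by its restriction $X|_{\Gamma(E^\vee[-1])}$. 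This restriction is a $C^\infty(M)$-linear map $\Gamma(E^\vee[-1]) \to C^\infty(E[1])$, which by duality corresponds to an element of $C^\infty(E[1])\otimes_{C^\infty(M)} \Gamma(E[1])$ mapping back to $X$ under $\iota$.

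For surjectivity of $\sigma$ together with the splitting, I would fix linear connections $\nabla^i$ on each $E_i$. These induce dual connections on $E_i^\vee$ and extend by the graded Leibniz rule to $\mb R$-linear, degree $0$ derivations $\widehat{\nabla}_V:\Gamma(S(E^\vee[-1]))\to \Gamma(S(E^\vee[-1]))$ for each $V\in \mf X(M)$, restricting to $V$ on $C^\infty(M)$. Setting $\mathrm{spl}(f\otimes V) := f\cdot \widehat{\nabla}_V$ and extending $C^\infty(M)$-linearly produces a splitting $\mathrm{spl}:C^\infty(E[1])\otimes_{C^\infty(M)}\mf X(M) \to \mf X(E[1])$ of $\sigma$; in particular $\sigma$ is surjective.

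The main work, rather than any deep obstacle, is the bookkeeping for the bigrading and signs: tracking the degree shift $|s|$ when contracting with a section $s$ of nontrivial internal $E$-degree, and verifying that the identification of $\ker \sigma$ with $C^\infty(E[1])\otimes \Gamma(E[1])$ respects both degree and arity. Once the graded conventions are pinned down, no ideas beyond those of the Lie algebroid case are required.
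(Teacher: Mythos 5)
Your proof is correct and follows essentially the same route the paper takes (the paper simply asserts the proposition as the graded analogue of lemma \ref{lem:sesder}, where $\iota$ is contraction, $\sigma$ is restriction to $C^\infty(M)$, and a connection-induced derivation splits the sequence). The only points needing care are exactly the ones you flag — well-definedness over the $C^\infty(M)$-tensor product, the identification of $\ker\sigma$ via the generators $\Gamma(E^\vee[-1])$, and degree/arity bookkeeping — and your handling of them is sound.
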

Note however that there is a difference from the Lie algebroid case: the module $C^\infty(E[1])$ is not finitely generated as $C^\infty(M)$-module. Consequently, there is no (finite-dimensional) vector bundle $F$ such that $C^\infty(E[1]) =\Gamma(F)$. However, not all hope is lost: the sequence above restricts to every degree, and every degree is finitely generated. Hence, vector fields of a given degree are isomorphic to the sections of some vector bundle. This allows us to make sense of vector fields of given degree being $C^p$-close for some $p$.

\nrt{We define the type of fixed points we are interested in:
\begin{defn}\label{def:lienalg1l}
Let $(E,Q)$ be a Lie $n$-algebroid over $M$ with anchor $\rho:E_1\to TM$, and unary bracket $\ell_1$. Let $l\geq 0$. We say $p\in M$ is a \emph{fixed point of order $(1,l)$ of $Q$} if for all sections $x\in \Gamma(E_1), e\in \Gamma(E_2)$, we have $$\rho(x) \in I_p \mf X(M), \ell_1(e)\in I_p^l\Gamma(E_1).$$ 
\end{defn}
\begin{rmk}\label{rmk:almostliealgd}
\begin{itemize}
\item[]
\item[i)] We include the order of vanishing of the map $\ell_1:\Gamma(E_2)\to \Gamma(E_1)$ in the definition, as different $l$ will lead to different criteria. When applying the result to singular foliations in Section \ref{sec:singfolapplic}, it is essential that we have a criterion for the stability of a fixed point of order $(1,l)$, instead of just a point where the anchor vanishes.
\item[ii)] Before we go on to apply Theorem \ref{thm:mainthm}, we make an observation, pointed out in \cite{univlinfty}: For any Lie $n$-algebroid $(E,Q)$, $(E_1,Q^{(1)})$ is an almost Lie algebroid, which makes sense as the component $Q^{(1)}$ preserves the functions on $E[1]$:
$$
Q^{(1)}(\Gamma(S(E_1^\ast[-1]))) \subset \Gamma(S(E_1^\ast[-1])).
$$
If we now define a fixed point of $Q$ to be a fixed point of $Q^{(1)}$,
Theorem \ref{thm:liealgd1} in combination with Remark \ref{rmk:liealg1} now yields a stability criterion for fixed points of order $(1,0)$ of Lie $n$-algebroids.\\
A Lie $n$-algebroid however contains more data than just the underlying almost Lie algebroid structure on its degree $-1$ part. Therefore, if we want to know if a fixed point is stable for nearby Lie $n$-algebroid structures, we might be able to do better, taking into account properties of other components of $Q$. 
\end{itemize}
\end{rmk}}
\subsubsection{The ingredients}
In this section we show that we are in the setting of Assumptions \ref{ass:mainthmass}.
\begin{itemize}
\item[i)]
We first define the graded Lie algebra where things will take place. There is a slight difference with Lie algebroids here: while for Lie algebroids, the notion of arity and degree of vector fields coincide, that is not the case here. In particular, this implies that degree zero vector fields do not only consist of covariant differential operators on the graded vector bundle $E$, but also contain  $C^\infty(M)$-linear maps $X:\Gamma(E_2^\ast[-2])\to \Gamma(S^2(E_1^\ast[-1])) $ for example. The induced gauge action on a vector field by this element is nothing but 
$$
\mf X^1(E[1])\ni Q \mapsto Q + [X,Q] \in \mf X^1(E[1]).
$$
In particular, if $Q$ is arity-homogeneous, $Q$ gauge transformed by $X$ will not be. Moreover, as $X$ is $C^\infty(M)$-linear, it induces the identity on $M$, so it is not relevant for moving the fixed point around. We therefore define
\begin{equation}\label{eq:glalien}
\mf g_{LnA}^i:= \begin{cases}  {\null}^0\mf X^{0}(E[1]) & i = 0, \\
\mf X^i(E[1])& i > 0,\end{cases}
\end{equation}
where $\null^0 \mf X^0(E[1])$ denotes the set of vector fields with degree and arity $0$. We take the differential $\partial = 0$.
\item[ii)]
\nrt{The general algebraic framework does not allow such a uniform description as for Lie algebroids. Nevertheless,} given a fixed point $p\in M$ of order $(1,l)$ for $l\geq 0$\nrt{,} we can still associate a graded Lie subalgebra $\mf g_{LnA}(p,(1,l))$ of $\mf g_{LnA}$ to it,  such that its Maurer-Cartan elements are those Lie $n$-algebroid structures which have $p\in M$ a fixed point of order $(1,l)$. As the only relevant degrees to Theorem \ref{thm:mainthm} are 0, 1 and 2, we only specify the Lie subalgebra in these degrees. The definition can be extended by setting $\mf g_{LnA}^{\geq 3}(p,(1,l)) = \mf g_{LnA}^{\geq 3}$ to obtain a Lie subalgebra.
\begin{defn}\label{defn:sglalien1} Let $E = \bigoplus_{i = 1}^{n} E_i[i-1]$ be a graded vector bundle over $M$, $p\in M$ and let $l\geq 0$ be an integer. Define
\begin{align*}
\mf g_{LnA}^0(p,(1,l)) := \{\delta \in \null^0\mf X^0(E[1]) \mid&\, \delta(C^\infty(M)) \subset I_p C^\infty(M)\}\\
\mf g_{LnA}^1(p,(1,l)):= \{\delta \in \mf X^1(E[1]) \mid &\,\delta(C^\infty(M)) \subset I_p\Gamma(E_1^\ast[-1]),\\& \,\delta^{(0)}(\Gamma(E_{1}^\ast[-1])) \subset I_p^l\Gamma(E_{2}^\ast[-2])\}\\
\mf g_{LnA}^2(p,(1,l)):= \{\delta\in \mf X^2(E[1]) \mid&\, \delta^{(2)}(C^\infty(M)) \subset I_p \Gamma(S^2(E^\ast_{1}[-1])),\\ &\, \delta^{(1)}(C^\infty(M)) \subset I_p^{l+1} \Gamma(E_{2}^\ast[-2]),\\
&\,\delta^{(1)}(\Gamma(E_1^\ast[-1]))\subset I_p^{l}\Gamma(E_2^\ast[-2])\Gamma(E_1^\ast[-1])\},
\end{align*}
where the superscripts in parentheses correspond to the arity.
\end{defn}
The following lemma shows that these subspaces actually yield a graded Lie subalgebra of $\mf g_{LnA}$.
\begin{lem}\label{lem:diffcompat1}
The subspaces defined above satisfy
\begin{align*}
[\mf g^0_{LnA}(p,(1,l)),\mf g_{LnA}^i(p,(1,l))]&\subset \mf g_{LnA}^i(p,(1,l)),\\ [\mf g_{LnA}^1(p,(1,l)),\mf g_{LnA}^1(p,(1,l))]&\subset \mf g_{LnA}^2(p,(1,l))
\end{align*}
for $i = 0,1,2.$
\end{lem}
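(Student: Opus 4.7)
The plan is to unwind the definitions and verify each inclusion by the graded Leibniz rule. Every element of $\mf X(E[1])$ is an $\mb R$-linear graded derivation of the $C^\infty(M)$-algebra $C^\infty(E[1])=\Gamma(S(E^\vee[-1]))$, hence it is determined by its action on the generators $C^\infty(M)$ and $\Gamma(E_j^\vee[-j])$ for $j\geq 1$. Moreover the bracket $[X,Y]=X\circ Y-(-1)^{|X||Y|}Y\circ X$ is again such a derivation, and arity is additive under composition, so $[X,Y]^{(a)}=\sum_{b+c=a}[X^{(b)},Y^{(c)}]$. Since the defining conditions of $\mf g_{LnA}^i(p,(1,l))$ are all of the form ``the arity-$a$ part of a derivation applied to a specific generator lands in $I_p^m\cdot\Gamma(F)$'', each claimed inclusion reduces to a finite arity-by-arity check on the generators $C^\infty(M)$ and $\Gamma(E_1^\vee[-1])$.

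The universal mechanism is this: for a homogeneous derivation $\delta$ with $\delta(C^\infty(M))\subset I_p^k\cdot C^\infty(E[1])$, an iterated application of the Leibniz rule yields $\delta(I_p^m\cdot\Gamma(F))\subset I_p^{m+k-1}\cdot C^\infty(E[1])+I_p^m\cdot\delta(\Gamma(F))$ for every graded subbundle $F\subset E^\vee[-\cdot]$. In particular any arity-zero derivation whose symbol vanishes at $p$ and which preserves each $\Gamma(E_j^\vee[-j])$ (both hold for $X\in\mf g^0_{LnA}(p,(1,l))$) preserves every subspace of the form $I_p^m\cdot\Gamma(F)$.

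From here the inclusions $[\mf g^0_{LnA}(p,(1,l)),\mf g^i_{LnA}(p,(1,l))]\subset\mf g^i_{LnA}(p,(1,l))$ for $i=0,1,2$ are immediate: for $X\in\mf g^0_{LnA}(p,(1,l))$ and $Y\in\mf g^i_{LnA}(p,(1,l))$ and $\alpha$ any relevant generator, both $X(Y(\alpha))$ and $Y(X(\alpha))$ lie in the subspace dictated by the condition on $Y$, the former because $X$ preserves every subspace $I_p^m\cdot\Gamma(F)$ appearing in the definition of $\mf g^i_{LnA}$, the latter because $X$ sends each generator to a section of the same type while keeping track of $I_p$-powers correctly. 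For the harder inclusion $[\mf g^1_{LnA}(p,(1,l)),\mf g^1_{LnA}(p,(1,l))]\subset\mf g^2_{LnA}(p,(1,l))$ I proceed arity by arity on $f\in C^\infty(M)$ and $\beta\in\Gamma(E_1^\vee[-1])$. A degree count eliminates most arity components (a degree-$1$ vector field applied to $C^\infty(M)$ admits only an arity-$1$ contribution, for example, since $E$ has no positive-degree pieces), and in each surviving case the defining conditions on $X,Y$ yield exactly the required power of $I_p$: for instance the arity-$1$ component applied to $\beta$ involves $X^{(1)}$ acting on $Y^{(0)}(\beta)\in I_p^l\Gamma(E_2^\vee[-2])$, and since $X^{(1)}$ sends $I_p$ into $I_p\Gamma(E_1^\vee[-1])$ (the arity-$1$ component of the assumption $X(C^\infty(M))\subset I_p\Gamma(E_1^\vee[-1])$), the $l$ factors of $I_p$ are preserved by repeated Leibniz, landing in $I_p^l\Gamma(E_2^\vee[-2])\Gamma(E_1^\vee[-1])$ as required; the other surviving components are analogous.

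The main obstacle is purely notational: the bigrading by degree and arity, combined with the order-of-vanishing parameter $l$ appearing asymmetrically across the three conditions in Definition \ref{defn:sglalien1}, makes the bookkeeping tedious, and in particular one must be careful that the $l$ factors of $I_p$ coming from the hypotheses on one factor are preserved rather than reduced when the other factor consumes them one at a time via Leibniz. No new input is required beyond the graded Leibniz rule and the definitions themselves.
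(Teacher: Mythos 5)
Your proposal is correct: the statement is exactly the kind of direct verification the paper leaves to the reader (as with Lemmas \ref{lem:liesubalg} and \ref{lem:commcompat}), and your arity-by-arity check on the generators $C^\infty(M)$ and $\Gamma(E_1^\vee[-1])$ via the graded Leibniz rule, together with the observation that arity-zero operators with symbol in $I_p$ preserve all subspaces $I_p^m\Gamma(F)$, is precisely the intended computation. In particular your bookkeeping correctly accounts for the asymmetric exponents ($I_p^{l+1}$ on $\delta^{(1)}(C^\infty(M))$ arising from one $I_p$ factor of $X(C^\infty(M))$ multiplying the $I_p^l$ from $Y^{(0)}$), so no gap remains.
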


\nrt{The subalgebra $\mf g_{LnA}(p,(1,l))$ encodes the Lie $n$-algebroid structures for which $p$ is a fixed point of order $(1,l)$:
\begin{lem}
Let $(E,Q)$ be a Lie $n$-algebroid over  $M$. Let $l\geq 0$ be an integer. A point $p\in M$ is a fixed point of order $(1,l)$ if and only if $Q\in \mf g_{LnA}^1(p,(1,l))$.
\end{lem}}

Similar to the Lie algebroid case, one can show that $\mf g_{LnA}^0/\mf g_{LnA}^0(p,(1,l))\cong T_p M$, and $\mf g_{LnA}^i/\mf g_{LnA}^i(p,(1,l))$ for $i = 1,2$ consists of jets at $p$ of sections of some vector bundles. We will give a description of the complex below in Remark \ref{rmk:expldesccohom}.

\item[iii)] By the description above, the splittings can be chosen to be polynomial sections when restricting to a small neighborhood of $p\in M$.
\item[iv)] Fix a Lie $n$-algebroid structure $Q\in \mf g_{LnA}(p,(1,l))$, i.e. a Lie $n$-algebroid structure on $E$ such that $p\in M$ is a fixed point of order $(1,l)$ of $Q$.
\begin{rmk}\label{rmk:expldesccohom}
We can give an explicit description of $(\mf g_{LnA}/\mf g_{LnA}(p,(1,l)),\overline{[Q,-]})$ in degrees 0, 1 and 2 in terms of the multibrackets. After picking coordinates around the fixed point $p$, we may assume that $M = V$ is a vector space and $p$ is the origin. Let $E = V\times \bigoplus_{i=1}^n \mf e_i[i-1]$ a trivial bundle. Then the cochain spaces are isomorphic to
\begin{align*}
    \mf g_{LnA}^0/\mf g_{LnA}^0(p,(1,l)) \cong &V\\
    \mf g_{LnA}^1/\mf g_{LnA}^1(p,(1,l)) \cong & \mf e_1^\ast \otimes V \oplus J_p^{l-1}(E_2^\ast \otimes E_1)\\
    \mf g_{LnA}^2/\mf g_{LnA}^2(p,(1,l)) \cong & S^2(\mf e_1^\ast[-1])\otimes V \oplus J^l_p(E_2^\ast \otimes TV) \oplus J_p^{l-1}(E_1^\ast \otimes E_2^\ast \otimes E_1).
\end{align*}
For $v \in V$, the map $$\overline{[Q,-]}:\mf g_{LnA}^0/\mf g_{LnA}^0(p,(1,l)) \to  \mf g_{LnA}^1/\mf g_{LnA}^1(p,(1,l))$$ is defined by
\begin{equation}\label{eq:diff1l0}
\overline{[Q,-]}(v) = (d^\tau_{CE}(v),-v(\ell_1)+ I_p^{l}\Gamma(E_2^\ast \otimes E_1)),
\end{equation}
where we use that the bundles are trivialized. Here we recall that $d^\tau_{CE}$ is the Chevalley-Eilenberg differential associated to the linear holonomy representation $\tau:\mf e_1 \to \End(T_pM)$ as in equation \eqref{eq:linholrep} and Definition \ref{def:cecomplex}.\\
For $\alpha: \mf e_1 \to V$ and $\delta: \Gamma(E_2)\to \Gamma(E_1)$, the map $$\overline{[Q,-]}: \mf g_{LnA}^1/\mf g_{LnA}^1(p,(1,l)) \to  \mf g_{LnA}^2/\mf g_{LnA}^2(p,(1,l))$$ is defined by
\begin{equation}\label{eq:diff1ltot}
\overline{[Q,-]}(\alpha,\delta+I_p^l\Gamma(E_2^\ast \otimes E_1)) = (d^\tau_{CE}(\alpha), \rho\circ \delta + \alpha \circ \ell_1 + I_p^{l+1}\Gamma(E_2^\ast \otimes TV), \overline{[Q,-]_{(2,1)}}(\alpha,\delta+I_p^l)),
\end{equation}
where the last term for $e \in \mf e_1, f \in \mf e_2$ is defined by 
\begin{equation}\label{eq:diff1l3}
\overline{[Q,-]_{(2,1)}}(\alpha,\delta+I_p^l)(e,f) = \alpha(e)(\ell_1(f)) + \ell_2(e,\delta(f)) - \delta(\ell_2(e,f)) + I_p^l\Gamma(E_1).
\end{equation}
The cocycles are therefore pairs $(\alpha,\delta+ I_p^l\Gamma(E_2^\ast \otimes E_1))$, where $\alpha$ is a Chevalley-Eilenberg cocycle for $\mf e_1$ with values in the representation $V$, and extending it to any section $\hat{\alpha}$, it satisfies
$$
\rho \circ \delta + \hat{\alpha}\circ \ell_1 \in I_0^{l+1}\Gamma(E_2^\ast \otimes TV)
$$
and for any $e\in \mf e_1, f \in \mf e_2,$
$$
\alpha(e)(\ell_1(f)) + \ell_2(e,\delta(f)) - \delta(\ell_2(e,f)) \in  I_p^l\Gamma(E_1).
$$
\end{rmk}
\end{itemize}
We check that the Assumptions \ref{ass:mainthmass}a)-e) are satisfied.
\begin{itemize}
\item[a)]By considerations similar to the ones for Lie algebroids, we choose the following topologies. 
\begin{itemize}
    \item[-] On $\mf g_{LnA}^0$, we pick the $C^\infty$-topology,
    \item[-] On $\mf g_{LnA}^1$, we pick the $C^{\max\{1,l\}}$-topology,
    \item[-] On $\mf g_{LnA}^2$, we pick the $C^l$-topology.
\end{itemize}
\item[b)] As $\partial = 0$, it is continuous.
\item[c)] Note that the continuity of the bracket is only needed in certain arity components, as the others disappear after taking the quotient. More precisely, the continuity of $[-,-]:\mf g^1_{LnA} \times \mf g^1_{LnA} \to \mf g_{LnA}^2$ is only needed when restricting the domain to  $$(\pi_{E_2^\ast}\oplus \pi_{S^2(E_1^\ast[-1]})\mf g^1_{LnA} \pi_{E_1^\ast} \oplus \pi_{E_1^\ast} \mf g^1_{LnA} \pi_{C^\infty(M)},$$
and the codomain to 
$$
 ( \pi_{E_{-2}^\ast}\oplus \pi_{S^2(E_1^\ast[-1])})\mf g^2_{LnA} \pi_{C^\infty(M)} \oplus \pi_{E_1^\ast \otimes E_2^\ast} \mf g^2_{LnA} \pi_{E_{-1}^\ast},
 $$
 where the $\pi$ denote projections onto the respective component in $C^\infty(E[1])$. In these degrees, continuity is guaranteed by the choices above.
\item[d)] The gauge action of $\mf g_{LnA}^0$ on $\mf g_{LnA}^1$ is similar to item d) in Section \ref{sssec:liealgk}. The degree 0 part is given by $\mf g_{LnA}^0 =  CDO\left(\bigoplus_{i = 1}^n E_i^\ast\right)$, and these act by degree 0 automorphisms of the graded vector bundle $E$, covering the flow of the symbol of the differential operator. Hence the analogue of Lemma \ref{lem:singpointmove} still holds, and a neighborhood of the origin of $\mf g_{LnA} ^0/\mf g_{LnA}^0(p,(1,l))$ corresponds to a neighborhood of $p \in M$.
\item[e)] Lemma \ref{lem:uniqueness} implies that the gauge action preserves Maurer-Cartan elements.
\end{itemize}
\subsubsection{Applying the main theorem}
Let $\mf g_{LnA} $ and $\mf g_{LnA}(p,(1,l))$ \nrt{be} as above in equation \eqref{eq:glalien} and Definition \ref{defn:sglalien1} respectively. Plugging this into the main theorem yields:
\begin{thm}\label{thm:lienalg1}
Let $(E,Q)$ be a Lie $n$-algebroid over $M$. Let $p \in M$ be a fixed point of order $(1,l)$ for $l \geq 0$, \nrt{as in Definition \ref{def:lienalg1l}}. Assume that $$H^1(\mf g_{LnA}/\mf g_{LnA}(p,(1,l)),\overline{[Q,-]}) = 0.$$ Then for every open neighborhood $U$ of $p \in M$, there exists a $C^{\max\{1,l\}}$-neighborhood $\mc U$ of $Q$ such that for any Lie $n$-algebroid structure $Q' \in \mc U$ there is a family $I$ in $U$ of fixed points of order $(1,l)$ of $Q'$ parametrized by an open neighborhood of $$0 \in H^0(\mf g_{LnA}/\mf g_{LnA}(p,(1,l)),\overline{[Q,-]}).$$
\end{thm}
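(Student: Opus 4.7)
The plan is to apply Theorem \ref{thm:mainthm} directly, since the preceding subsection has set up precisely the data required by Assumptions \ref{ass:mainthmass}. Specifically, I would take $\mf g = \mf g_{LnA}$ as defined in equation \eqref{eq:glalien}, $\mf h = \mf g_{LnA}(p,(1,l))$ as defined in Definition \ref{defn:sglalien1}, and the Maurer-Cartan element $Q \in \mf h^1$ is the given Lie $n$-algebroid structure with a fixed point of order $(1,l)$ at $p$. The differential is $\partial = 0$, which is an admissible choice since the bracket $[-,-]$ on the graded Lie algebra already encodes all of the structure.

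First I would recall (or point to) the verification of Assumptions \ref{ass:mainthmass}a)-e) which occupies the bulk of the preceding subsection: the short exact sequence description of $\mf X(E[1])$ in each degree gives $\mf g_{LnA}^i/\mf g_{LnA}^i(p,(1,l))$ as a finite-dimensional jet space for $i = 0,1,2$ (thanks to the restriction to arity-zero degree-zero vector fields in degree $0$ and the explicit jet description in Remark \ref{rmk:expldesccohom}); the splittings $\sigma_i$ can be chosen by lifting jets to polynomial sections after trivializing over a coordinate chart; the topologies $C^\infty$, $C^{\max\{1,l\}}$, $C^l$ on the three degrees make the quotient projections, the (zero) differential, and the relevant arity-components of the bracket continuous; the gauge action is given by pushforward along vector bundle automorphisms covering flows on $M$, which yields Lemma \ref{lem:singpointmove}'s analogue and joint continuity; finally Lemma \ref{lem:uniqueness} ensures that the gauge action preserves Maurer-Cartan elements.

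Once these assumptions are in place, Theorem \ref{thm:mainthm} immediately gives the conclusion: the cohomological hypothesis $H^1(\mf g_{LnA}/\mf g_{LnA}(p,(1,l)),\overline{[Q,-]}) = 0$ produces, for every neighborhood $V$ of $0$ in $\mf g_{LnA}^0/\mf g_{LnA}^0(p,(1,l))$, a $C^{\max\{1,l\}}$-neighborhood $\mc U$ of $Q$ and, for each $Q' \in \mc U$, a family $I\subset V$ parametrized by a neighborhood of $0 \in H^0(\mf g_{LnA}/\mf g_{LnA}(p,(1,l)),\overline{[Q,-]})$ with $(Q')^{\sigma_0(v)} \in \mf h^1$ for $v \in I$. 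Translating back via the identification $\mf g_{LnA}^0/\mf g_{LnA}^0(p,(1,l)) \cong T_pM$ (an open neighborhood $V$ of the origin corresponds to an open neighborhood $U$ of $p$), and using Lemma \ref{lem:singpointmove}'s analogue for Lie $n$-algebroids (which says that $(Q')^{\sigma_0(v)} \in \mf h^1$ iff $Q'$ has a fixed point of order $(1,l)$ at the image of $p$ under the time-1 flow of the symbol), one obtains the desired family of fixed points of order $(1,l)$ of $Q'$ in $U$.

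The only step that is not a pure appeal to the abstract machinery is the verification of continuity of the relevant arity-components of the bracket (Assumption \ref{ass:mainthmass}c)), which I expect to be the main technical point. Here one needs to check that the continuity of $[-,-]: \mf g_{LnA}^1 \times \mf g_{LnA}^1 \to \mf g_{LnA}^2$ holds modulo $\mf h^2$, i.e.\ on the components that survive the quotient. This reduces to the observation that the Schouten-type bracket on vector fields on $E[1]$ is a first-order bidifferential operator on each arity component, so that modulo ideals of the form $I_p^N$ the bracket depends continuously on finitely many derivatives; this is exactly the situation of Lemma \ref{lem:Rcontcond} with $F$ chosen as a suitable high-order vanishing ideal, and the choice of $C^{\max\{1,l\}}$ and $C^l$ topologies has been calibrated precisely so that the jets needed to compute $[Q_1,Q_2]$ modulo $\mf g_{LnA}^2(p,(1,l))$ depend continuously on $Q_1, Q_2$.
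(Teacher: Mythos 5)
Your proposal is correct and follows essentially the same route as the paper: the paper's proof consists exactly of verifying Assumptions \ref{ass:mainthmass} for $\mf g_{LnA}$ and $\mf g_{LnA}(p,(1,l))$ in the preceding subsection (with the same topologies, jet-space quotients, polynomial splittings, gauge action via vector bundle automorphisms, and Lemma \ref{lem:uniqueness}) and then plugging these into Theorem \ref{thm:mainthm}, with the continuity of the bracket handled exactly as you describe on the arity components that survive the quotient.
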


\begin{rmk}
This result gives a more refined criterion for stability of the fixed point $p$: instead of looking at $E_1$ with its almost Lie algebroid structure as in Remark \ref{rmk:almostliealgd}ii), we additionally take into account the map $\ell_1:\Gamma(E_2) \to \Gamma(E_1)$. \\
The relation between Remark \ref{rmk:almostliealgd}ii) and Theorem \ref{thm:lienalg1} is as follows: the projection of a vector field $X\in \mf X^i(E[1])$ to the arity $i$ component restricted to $C^\infty(E_{1}[1])$ defines a map
$$
\text{res}:\mf g_{LnA} \to \mf X(E_1[1]),
$$
with $\text{res}(\mf g_{LnA}^j(p,(1,l))\subset \mf X_{p,1}^j(E_1[1])$ for $j = 0,1,2$. It therefore induces a map
$$
\overline{\text{res}}: \mf g_{LnA}^j/\mf g_{LnA}^j(p,(1,l)) \to \mf X^j(E_1[1])/\mf X^j_{p,1}(E_1[1]).
$$
Now given a Lie $n$-algebroid structure $Q \in \mf g^1_{LnA}(p,(1,l))$ with corresponding almost Lie algebroid structure $Q^{(1)}\in \mf X^1_{p,1}(E_1[1])$, the map $\overline{res}$ is compatible with the differentials $\overline{[Q,-]}$ and $\overline{[Q^{(1)},-]}$ of $\mf g_{LnA}/\mf g_{LnA}(p,(1,l))$ and $\mf X(E_1[1])$ respectively in degrees $0$ and $1$. Consequently, $\overline{res}$ descends to cohomology:
$$
H^1(\overline{\text{res}}): H^1(\mf g_{LnA}/\mf g_{LnA}(p,(1,l)),\overline{[Q,-]}) \to H^1(\mf X(E_1[1])/\mf X_{p,1}(E_1[1]), \overline{[Q^{(1)},-]}).
$$
For $l = 0$, $H^1(\overline{\text{res}})$ is injective, reflecting the fact that any Lie $n$-algebroid $(E,Q)$ has an underlying almost Lie algebroid structure $(E_1,Q^{(1)})$.\\
For $l>0$, the map is no longer injective, but vanishing of $H^1(\mf g_{LnA}/\mf g_{LnA}(p,(1,l)),\overline{[Q,-]})$ also guarantees the existence of fixed points of order $(1,l)$, which the vanishing of $$H^1(\mf X(E_1[1])/\mf X_{p,1}(E_1[1]),\overline{[Q^{(1)},-]})$$ does not.
\end{rmk}
\subsection{Higher order fixed points of Lie $n$-algebroids}
As for Lie algebroids, there are several examples of Lie $n$-algebroids, for which Theorem \ref{thm:lienalg1} fails in a trivial way because the anchor of the Lie $n$-algebroid vanishes up to higher order, causing components of $Q$ to vanish to first order. For this reason, we extend Theorem \ref{thm:lienalg1} to so called fixed points of order $(k,l)$, where $k$ and $l$ are integers. 
\nrt{
\begin{defn}\label{def:lienalgkl}
Let $(E,Q)$ be a Lie $n$-algebroid over $M$ with anchor $\rho:E_1\to TM$, unary bracket $\ell_1$, binary bracket $\ell_2$, and ternary bracket $\ell_3$.  Let $k\geq 1, 0\leq l\leq 2k-2$. We say $p\in M$ is a \emph{fixed point of order $(k,l)$ of $Q$} if for all sections $x,y,z\in \Gamma(E_1), e\in \Gamma(E_2)$, we have $$\rho(x) \in I_p^k \mf X(M), \ell_1(e)\in I_p^l\Gamma(E_1), \ell_2(x,y) \in I_p^{k-1}\Gamma(E_1), \ell_3(x,y,z)\in I_p^{2k-2-l}\Gamma(E_2)
.$$ 
\end{defn}
\begin{rmk}
    The condition on the anchor is what we are interested in, while the restriction on the binary bracket is imposed for the same reason as for higher order fixed points of Lie algebroids. By taking into account the order of vanishing of $\ell_1:E_1\to E_2$ at $p$, the condition on $\ell_3$ becomes more transparent: decomposing the identity $Q^2 = 0$, and considering only the weight 2 part restricted to $\Gamma(E_1^\ast[-1])$, we find the equation 
    $$
        \ell_2(x,\ell_2(y,z)) + \text{cycl.}(x,y,z) = \ell_1(\ell_3(x,y,z)).
    $$
    As the left hand side lies in $I_p^{2k-2}\Gamma(E_1)$, one way to ensure the same holds for the right hand side is to require $\ell_3(x,y,z) \in I_p^{2k-2-l}\Gamma(E_2)$.
\end{rmk}
}
\subsubsection{The ingredients}
We show that we are in the setting of Assumptions \ref{ass:mainthmass}.
\begin{itemize}
\item[i)] As we still work with Lie $n$-algebroid structures on a graded vector bundle $E = \bigoplus_{i = 1}^n E_i[i-1]$, take the graded Lie algebra $\mf g_{LnA}$ as in equation \eqref{eq:glalien}.
\item[ii)] We define the graded Lie subalgebra, corresponding to $p\in M$ being a fixed point of order $(k,l)$.

Let $p \in M$, $k\geq 0, 0\leq l\leq 2k-2$ integers.
\begin{defn}\label{def:lienalgklsubalg} Define
\begin{flalign*}
\mf g_{LnA}^0(p,(k,l)) :=\{ \delta\in {}^0\mf X^0(E[1])\mid& \,\delta(C^\infty(M))\subset I_p\}\\
\mf g_{LnA}^1(p,(k,l)) :=  \{\delta\in \mf X^1(E[1])\mid &\,\delta(C^\infty(M))\subset I_p^k\Gamma(E_1^\ast[-1] ),\\& \,\delta^{(1)}(\Gamma(E_1^\ast[-1] ))\subset I_p^{k-1}\Gamma(S^2(E_1^\ast[-1])),\\
& \,\delta^{(0)}(\Gamma(E_1^\ast[-1])) \subset I_p^l \Gamma(E_2^\ast[-2]),\\&\, \delta^{(2)}(\Gamma(E_{2}^\ast[-2]))\subset I_p^{2k-2-l}\Gamma(S^3(E_1^\ast[-1]))\}\\
\mf g^2_{LnA}(p,(k,l)) := \{\delta \in \mf X^2(E[1]) \mid& \,\delta^{(2)}(C^\infty(M)) \subset I_p^{2k-1}\Gamma(S^2(E_1^\ast[-1])),\\&\, \delta^{(1)}(C^\infty(M))\subset I_p^{k+l}\Gamma(E^\ast_2[-2]),\\&\, \delta^{(2)}(\Gamma(E_1^\ast[-1]))\subset I_p^{2k-2}\Gamma(S^3(E_1^\ast[-1]))\}.
\end{flalign*}
\end{defn}
Then:

\begin{lem}
The subspaces defined above satisfy \begin{align*}[\mf g_{LnA}^0(p,(k,l)),\mf g_{LnA}^i(p,(k,l))] &\subset \mf g_{LnA}^i(p,(k,l)),\\ [\mf g_{LnA}^1(p,(k,l)),\mf g_{LnA}^1(p,(k,l))]&\subset \mf g_{LnA}^2(p,(k,l))
\end{align*}
for $i = 0,1,2$.
\end{lem}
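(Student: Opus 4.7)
The plan is a direct verification using the arity decomposition of derivations and the Leibniz rule, entirely analogous in spirit to Lemma \ref{lem:commcompat} in the Lie algebroid case but with the added bookkeeping of tracking both arity and vanishing order at $p$. Any derivation of $C^\infty(E[1])$ is determined by its action on the three classes of generators $C^\infty(M), \Gamma(E_1^\vee[-1])$ and $\Gamma(E_2^\vee[-2])$ (higher-weight generators are irrelevant for the degrees $\leq 2$ we care about). The bracket of two arity-homogeneous derivations is again arity-homogeneous, with arities adding. So to verify $[X,Y]\in \mf g_{LnA}^i(p,(k,l))$ it suffices to check, for each of the arity components explicitly constrained in Definition \ref{def:lienalgkl}, that the corresponding component $[X,Y]^{(a)}$ applied to the relevant generator lies in the prescribed $I_p$-power ideal of sections.

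For the first inclusion $[\mf g^0_{LnA}(p,(k,l)), \mf g^i_{LnA}(p,(k,l))] \subset \mf g^i_{LnA}(p,(k,l))$ with $i = 0, 1, 2$, the key point is that an $X \in \mf g^0_{LnA}(p,(k,l))$ consists solely of an arity $0$, degree $0$ component: a CDO of the graded vector bundle $\bigoplus_i E_i^\vee$ whose symbol is a vector field vanishing at $p$. Such a CDO preserves every ideal of the form $I_p^r \Gamma(S^\bullet(E^\vee[-1]))$ by the Leibniz rule applied iteratively. Consequently, for any $Y$ with $Y^{(a)}(\text{generator}) \in I_p^r\Gamma(F)$, the bracket $[X,Y]^{(a)} = X \circ Y^{(a)} - (-1)^{|Y|\cdot 0} Y^{(a)} \circ X$ satisfies the same bound on generators, because both terms do. This disposes of all three cases $i = 0, 1, 2$ simultaneously.

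The substantive case is $[\mf g^1_{LnA}(p,(k,l)), \mf g^1_{LnA}(p,(k,l))] \subset \mf g^2_{LnA}(p,(k,l))$. For $X, Y \in \mf g^1_{LnA}(p,(k,l))$, I would check in turn the three conditions defining $\mf g^2_{LnA}(p,(k,l))$. For $[X,Y]^{(2)}(C^\infty(M))$: the only contribution is of the form $X^{(1)}(Y^{(1)}(f)) + Y^{(1)}(X^{(1)}(f))$ (graded symmetric because both have degree $1$); since $Y^{(1)}(f) \in I_p^k \Gamma(E_1^\vee[-1])$ and $X^{(1)}$ sends $\Gamma(E_1^\vee[-1])$ into $I_p^{k-1}\Gamma(S^2(E_1^\vee[-1]))$, the Leibniz rule together with $X^{(1)}$ preserving $I_p$-order produces an element of $I_p^{2k-1}\Gamma(S^2(E_1^\vee[-1]))$. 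For $[X,Y]^{(1)}(C^\infty(M))$: the contributing terms are $X^{(0)}(Y^{(1)}(f)) + X^{(1)}(Y^{(0)}(f))$ plus the symmetric counterpart; using $Y^{(1)}(f)\in I_p^k\Gamma(E_1^\vee[-1])$ together with $X^{(0)}(\Gamma(E_1^\vee[-1]))\subset I_p^l\Gamma(E_2^\vee[-2])$ and the analogous estimate for $X^{(1)}(Y^{(0)}(f))$ yields an element of $I_p^{k+l}\Gamma(E_2^\vee[-2])$, where the crucial arithmetic is $k + l$ coming out of $k + l$ and $(k-1) + (l+1)$ respectively. The condition $[X,Y]^{(2)}(\Gamma(E_1^\vee[-1])) \subset I_p^{2k-2}\Gamma(S^3(E_1^\vee[-1]))$ is verified similarly, using the arity $1$ conditions on $X$ and $Y$ applied to $\Gamma(E_1^\vee[-1])$, combined with the arity $2$ conditions applied to $\Gamma(E_2^\vee[-2])$ (which appears when a generator of weight $1$ gets sent to weight $2$ and then to weight $3$).

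The main obstacle is purely combinatorial: one must enumerate all pairs $(a_1, a_2)$ of arities contributing to a given output arity, ensure the correct sign conventions for the graded commutator, and verify that the arithmetic of $I_p$-exponents always matches or exceeds what is required by Definition \ref{def:lienalgkl}. The numerology is designed precisely so that these add up; the constraint $l \leq 2k-2$ is exactly what makes the condition $\delta^{(2)}(\Gamma(E_2^\vee[-2])) \subset I_p^{2k-2-l}\Gamma(S^3(E_1^\vee[-1]))$ compatible with the bracket, as $l + (2k-2-l) = 2k-2$. The final step would be to extend the definition to degrees $\geq 3$ by setting $\mf g_{LnA}^{\geq 3}(p,(k,l)) = \mf g_{LnA}^{\geq 3}$, as indicated in Remark \ref{rmk:mainthmrmk}x), making $\mf g_{LnA}(p,(k,l))$ a genuine graded Lie subalgebra.
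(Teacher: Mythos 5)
Your proposal is correct and is essentially the verification the paper leaves to the reader: a direct check, generator by generator and arity component by arity component, that the Leibniz rule makes the $I_p$-exponents add up to the thresholds defining $\mf g_{LnA}^2(p,(k,l))$, with the counts $k+(k-1)=2k-1$, $k+l$, and $l+(2k-2-l)=2k-2$ exactly as you record, and with the arity-$0$, degree-$0$ case handled by the observation that such operators preserve every $I_p^r\Gamma(F)$. The only blemish is the term $X^{(1)}(Y^{(0)}(f))$ and its purported $(k-1)+(l+1)$ bookkeeping: since there are no weight-zero, degree-one functions on $E[1]$, one has $Y^{(0)}(f)=0$ identically (and likewise $Y^{(2)}(f)=0$), so this term contributes nothing and is not covered by any condition in the definition anyway, while the genuinely surviving contributions $X^{(0)}(Y^{(1)}(f))+Y^{(0)}(X^{(1)}(f))$ already yield the required $I_p^{k+l}\Gamma(E_2^\vee[-2])$.
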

\nrt{
We see that $\mf g_{LnA}(p,(k,l))$ is indeed a graded Lie subalgebra (if as before, we set $\mf g_{LnA}^{\geq 3}(p,(k,l)) = \mf g_{LnA}^{\geq 3}$). Moreover, its Maurer-Cartan elements are precisely those Lie algebroid structures for which $p \in M$ is a fixed point of order $(k,l)$: 
\begin{lem}
Let $(E,Q)$ be a Lie $n$-algebroid over $M$. Let $k\geq 1$ be an integer, and $0\leq l\leq 2k-2$. A point $p\in M$ is a fixed point of order $(k,l)$ of $Q$ if and only if $Q\in \mf g^1_{LnA}(p,(k,l))$.
\end{lem}
}

\begin{rmk}
After picking a suitably small coordinate neighborhood of $p\in M$ such that $E$ is a trivial bundle, we can identify the cochain spaces of the complex $\mf g_{LnA}/\mf g_{LnA}(p,k)$ in the relevant degrees with the following:
\begin{align*}
    \mf g_{LnA}^0/\mf g_{LnA}^0(p,(k,l)) \cong& T_pM\\
    \mf g_{LnA}^1/\mf g_{LnA}^1(p,(k,l))\cong& J^{k-1}_p(E_1^\ast[-1] \otimes TM) \oplus J_p^{k-2} (S^2(E_1^\ast[-1]) \otimes E_1[1]) \\&\oplus J^{l-1}_p(E_2^\ast[-2]\otimes E_1[1]) \oplus J^{2k-l-3}_p(S^3(E_1^\ast[-1])\otimes E_2[2])\\
    \mf g_{LnA}^2/\mf g_{LnA}^2(p,(k,l)) \cong & J^{2k-2}(S^2(E_1^\ast[-1])\otimes TM) \oplus J^{2k-3}_p(S^3(E_1^\ast[-1])\otimes E_1[1])\\& \oplus J^{k+l-1}_p(E_2^\ast [-2]\otimes TM).
\end{align*}
\end{rmk}
\item[iii)] It follows that by restricting to a sufficiently small coordinate neighborhood of $p\in M$ over which $E$ trivializes as a graded vector bundle, we can choose splittings $$\sigma_i:\mf g_{LnA}^i/\mf g_{LnA}^i(p,(k,l)) \to \mf g_{LnA}^i$$ for $i = 0,1$.

\item[iv)] Now fix a Lie $n$\nrt{-}algebroid structure $Q\in \mf g_{LnA}^1(p,(k,l))$. 
\end{itemize}
We check that the data satisfies Assumptions \ref{ass:mainthmass}a)-d).
\begin{itemize}
\item[a)] We pick the following topologies.
\begin{itemize}
    \item[-] On $\mf g_{LnA}^0$, we pick the $C^\infty$-topology,
    \item[-] On $\mf g_{LnA}^1$, we pick the $C^{\max\{2k-1,k+l-1\}}$-topology,
    \item[-] On $\mf g_{LnA}^2$, we pick the $C^{\max\{2k-2,k+l-1\}}$-topology.
\end{itemize}
\item[b)] As $\partial = 0$, it is continuous.
\item[c)] As for fixed points of order $(1,l)$ it is only certain components of $\mf g_{LnA}^1$ and $\mf g^2_{LnA}$ we need continuity of the bracket in. More precisely, for the domain, we may restrict to
$$
(\pi_{E_2^\ast}\oplus \pi_{S^2(E_1^\ast[-1]})\mf g^1_{LnA} \pi_{E_1^\ast} \oplus \pi_{E_1^\ast} \mf g^1_{LnA} \pi_{C^\infty(M)} \oplus \pi_{S^3(E_{1}^\ast[-1])} \mf g_{LnA}^1 \pi_{E_2^\ast[-2]},
$$
while for the codomain, we need
$$
(\pi_{S^2(E_1^\ast[-1])} \oplus \pi_{E_2^\ast)}\mf g_{LnA}^2 \pi_{C^\infty(M)} \oplus \pi_{S^3(E_1^\ast[-1])} \mf g_{LnA}^2 \pi_{E_1^\ast}.
$$
The choices in a) guarantee the continuity.
\item[d)] The gauge action is the same as for fixed points of order $(1,l)$. In particular, a neighborhood of $0$ in $\mf g_{LnA}/\mf g_{LnA}(p,(k,l))$ corresponds to a neighborhood of $p\in M$.
\item[e)] Lemma \ref{lem:uniqueness} implies that the gauge action preserves Maurer-Cartan elements.
\end{itemize}
\subsubsection{Applying the main theorem}
Let $\mf g_{LnA}$ and $\mf g_{LnA}(p,(k,l))$ be as in equation \eqref{eq:glalien} and Definition \ref{def:lienalgklsubalg} respectively. Applying Theorem \ref{thm:mainthm}, we obtain:
\begin{thm}\label{thm:lienalgk}
Let $(E,Q)$ be a Lie $n$-algebroid over $M$. Let $p \in M$ be a fixed point of order $(k,l)$ for $k\geq 0$, $0\leq l \leq 2k-2$, \nrt{as in Definition \ref{def:lienalgkl}}. Assume that
$$H^1(\mf g_{LnA}/\mf g_{LnA}(p,(k,l)),\overline{[Q,-]}) = 0.$$
Then for every open neighborhood $U$ of $p \in M$, there exists a $C^{\max\{2k-1,k+l-1\}}$-neighborhood $\mc U$ of $Q \in \mf g_{LnA}$ such that for any Lie $n$-algebroid structure $Q' \in \mc U$ there is a family $I \subset U$ of fixed points of order $(k,l)$ of $Q'$ parametrized by an open neighborhood of $$0 \in H^0(\mf g_{LnA}/\mf g_{LnA}(p,(k,l)),\overline{[Q,-]}).$$
\end{thm}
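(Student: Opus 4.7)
The plan is to directly invoke Theorem \ref{thm:mainthm} with the choices $\mf g = \mf g_{LnA}$, $\mf h = \mf g_{LnA}(p,(k,l))$, and the given Maurer-Cartan element $Q$. The preceding discussion already lays out all the ingredients; the task is to verify each of the items in Assumptions \ref{ass:mainthmass} for this specific setup, and then to translate the conclusion of Theorem \ref{thm:mainthm} into the geometric statement about fixed points of order $(k,l)$.

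First, I would restate that $(\mf g_{LnA}, 0, [-,-])$ is a graded Lie algebra and that $\mf g_{LnA}(p,(k,l))$ is a graded Lie subalgebra of finite codimension in degrees $0,1,2$, this having been verified in the preceding lemma and the jet-space description. Then I would read off the splittings $\sigma_0, \sigma_1$ from the coordinate neighborhood choice (lifting jets at $p$ to polynomial sections after trivializing $E$ over a chart around $p$), and note that by hypothesis $Q \in \mf g_{LnA}^1(p,(k,l))$ is Maurer-Cartan. Next I would verify the topological assumptions a)--c) using the $C^s$-topologies chosen with $s = \infty, \max\{2k-1,k+l-1\}, \max\{2k-2, k+l-1\}$ on degrees $0,1,2$ respectively: the projections to finite jets at $p$ are continuous because jet evaluation factors through a finite number of derivatives, and the bracket's continuity reduces to the finite collection of arity components identified in step c) of the preceding discussion, each of which is a first-order bidifferential operator (so a $(2k-1)$-jet in each factor suffices to control a $(2k-2)$-jet of the output, and similarly for the mixed components).

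For assumptions d) and e), the gauge action of $\sigma_0(v) \in \mf g_{LnA}^0 = CDO\bigl(\bigoplus_i E_i^\vee\bigr)$ on an element $Q' \in \mf g_{LnA}^1$ is given by pulling back along the graded vector bundle automorphism integrating $\sigma_0(v)$, just as for Lie algebroids. Since the splitting takes values in polynomial sections on the chosen chart, completeness of the symbol is automatic on a small enough neighborhood of $p$, so the action is defined and the analogue of Lemma \ref{lem:singpointmove} identifies a neighborhood of $0 \in \mf g_{LnA}^0/\mf g_{LnA}^0(p,(k,l))$ with a neighborhood of $p \in M$. Joint continuity and the smooth dependence of the class mod $\mf h^1$ on $v$ follow exactly as in the Lie algebroid proof, by combining uniform continuity of $Q'$ on compacts with the continuity of pullbacks by vector bundle automorphisms. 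Assumption e) is handled by Lemma \ref{lem:uniqueness}, since the bracket is a first order bidifferential operator and the initial value problem \eqref{eq:expaddeg2} for a graded vector field of degree 2 starting at zero has only the zero solution.

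With all assumptions verified, Theorem \ref{thm:mainthm} yields: for every open neighborhood $V$ of $0 \in \mf g_{LnA}^0/\mf g_{LnA}^0(p,(k,l))$, there is an open neighborhood $\mc U$ of $Q$ in $MC(\mf g_{LnA})$, in the appropriate topology (the $C^1$-topology on $\mf g_{LnA}^1/\mf g_{LnA}^1(p,(k,l))$, which after choice of the chart corresponds to the $C^{\max\{2k-1,k+l-1\}}$-topology on $\mf g_{LnA}^1$), such that every $Q' \in \mc U$ admits a family of gauge transformations $\sigma_0(v)$ with $v$ parametrized by an open neighborhood of $0 \in H^0(\mf g_{LnA}/\mf g_{LnA}(p,(k,l)),\overline{[Q,-]})$ moving $Q'$ into $\mf g_{LnA}^1(p,(k,l))$. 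Translating back via the gauge-action/fixed-point correspondence (the analogue of Lemma \ref{lem:singpointmove}: $(Q')^{\sigma_0(v)} \in \mf g_{LnA}^1(p,(k,l))$ iff the image of $p$ under the integrated diffeomorphism is a fixed point of order $(k,l)$ of $Q'$), this produces the desired family $I \subset U$ of fixed points of order $(k,l)$. The main conceptual obstacle is purely bookkeeping: one must check that the arity truncations in Definition \ref{def:lienalgkl} are precisely those needed for the bracket-closure lemma and for the quotient complex to have finite-dimensional cochains in degrees $0,1,2$; both are ensured by the weight restrictions, so no new computation beyond what is already done is required.
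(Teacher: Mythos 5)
Your proposal is correct and follows essentially the same route as the paper: the paper's proof of this theorem consists precisely of verifying Assumptions \ref{ass:mainthmass} for $\mf g_{LnA}$ and $\mf g_{LnA}(p,(k,l))$ (finite-codimension quotients via jets, polynomial splittings on a trivializing chart, the stated $C^s$-topologies, continuity of the relevant arity components of the bracket, the $CDO$-integrated gauge action, and Lemma \ref{lem:uniqueness}) and then invoking Theorem \ref{thm:mainthm}, translating the conclusion back through the analogue of Lemma \ref{lem:singpointmove}. The only cosmetic slip is your phrase about the ``$C^1$-topology on the quotient'' — the quotient is finite-dimensional so its topology is canonical, and the relevant choice is the $C^{\max\{2k-1,k+l-1\}}$-topology on $\mf g_{LnA}^1$ itself, which you in any case use correctly.
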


\subsection{Examples}
In this section we compute the various cohomologies we encountered in some explicit examples.
\begin{exmp}[\nrt{Action of $\mf {gl}_2$ on the plane}]\label{gl2}
Let $M= \mb R^2$. Consider the Lie algebroid
$$
\mf{gl}_2 \times M,
$$
associated to the standard action on $M$. The origin $p$ is a fixed point of order 1 of the Lie algebroid\nrt{,} and we will show that it is stable for nearby Lie algebroid structures using Theorem \ref{thm:liealgd1}. The relevant cohomology in this case is given by 
$$
H^1_{CE}(\mf {gl}_2, T_pM).
$$
This cohomology vanishes as for any $\alpha:\mf {gl}_2 \to T_pM$, the cocycle condition reads
$$
\alpha([x,y]) = x\alpha(y) - y\alpha(x)
$$
for every $x,y \in \mf {gl}_2$. Plugging in $x = \text{id}$, we see that $\alpha = d_{CE}^\tau(\alpha(\text{id}))$.\\
Therefore, using Theorem \ref{thm:liealgd1}, the origin is a stable fixed point for nearby Lie algebroid structures. \\
But we can say more. There is a natural Lie 2-algebroid $(E,Q)$ associated to this action \cite[Example 3.33, Example 3.98]{univlinfty}, such that the origin is a fixed point of order (1,1). We will apply Theorem \ref{thm:lienalg1} to show that the fixed point is also stable for Lie $2$-algebroid structures close to this one. The only information we need is that $E_1$ is the trivial bundle with fiber $\mf {gl}_2$, such that the binary bracket agrees with the Lie bracket of the action Lie algebroid, $E_2$ is a trivial bundle with fiber $\mb R^2$ and that the complex
\[
\begin{tikzcd}
0 \arrow{r} & \Gamma(E_2) \arrow{r}{\ell_1} & \Gamma(E_1) \arrow{r}{\rho} & \rho(\Gamma(E_1)) \arrow{r}& 0
\end{tikzcd}
\]
is exact.\\
Indeed, following Remark \ref{rmk:expldesccohom}, we can compute the cohomology relevant to Theorem \ref{thm:lienalg1}. If we take a cocycle $(\alpha,\delta + I_p\Gamma(E_2^\ast \otimes E_1))\in \mf X^1(E[1])/\mf X^1_{p,(1,1)}(E[1])$, then the vanishing of $H^1_{CE}(\mf {gl}_2,T_pM)$ implies that $\alpha = d_{CE}^\tau(v)$ for some $v \in T_pM$, or more explicitly, we have $\alpha = -v(\rho)(p) \in (E_1)_p^\ast\otimes T_pM$.\\
The second cocycle condition in equation \eqref{eq:diff1ltot} now reads that
$$
-v(\rho)(\ell_1(e)) + \rho(\delta(e)) \in I_p^{2}\mf X(M)
$$
for every section $e \in \Gamma(E_2)$. If we now take $e$ to be a constant section, then we know in particular that
$$
-v(\rho)(\ell_1(e)) - \rho(v(\ell_1(e)) = -v(\rho(\ell_1(e))) = 0.
$$
So we find
$$
\rho(\delta(e) + v(\ell_1)(e)) \in I_p^{2}\mf X(M).
$$ 
As this means that the first order Taylor expansion of this expression vanishes, we look at the linear part of this:
$$
0 = (\rho(\delta(e) + v(\ell_1)(e)))^{(1)} = \rho^{(1)}(\delta(e)(p) + v(\ell_1)(e)(p)).
$$
However, note that $\rho$ has linear coefficient functions. This means that $\rho^{(1)} = \rho$, and it follows that the constant section $\delta(e)(p) + v(\ell_1(e))(p) \in \ker (\rho:\Gamma(E_1)\to \mf X(M))$. However, as $\ker(\rho) = \text{im}(\ell_1)$ at the level of sections, and all sections in the image of $\ell_1$ vanish at the origin, it follows that $\delta(p) + v(\ell_1)(p) = 0$, which means that $\delta = -v(\ell_1) +I_p\Gamma(E_2^\ast\otimes E_1)$, showing that $H^1(\mf g_{LnA}/\mf g_{LnA}(p,(1,1)),\overline{[Q,-]})$ vanishes (without needing the third cocycle condition!), and that $p$ is stable as a fixed point of order $(1,1)$ of the Lie 2-algebroid by Theorem \ref{thm:lienalg1}.
\end{exmp}
A similar example is given by the special linear subalgebra. 

\subsection{Fixed points of singular foliations}\label{sec:singfolapplic}
In this section we apply the results of the previous section to singular foliations. We work towards a cohomological stability criterion for certain singular foliations (\nrt{Theorem} \ref{thm:singfolstab}). We then apply this to obtain a formal rigidity criterion for foliations induced by a linear representation of a semisimple Lie algebra (Corollary \ref{cor:singfolrig})\nrt{, which is the final result of this section}. We start with a general lemma, which extends Example \ref{gl2}, giving a sufficient condition for when the cohomological assumption of Theorem \ref{thm:lienalg1} is satisfied. \nrt{In particular, the lemma will be used in Theorem \ref{thm:singfolstab}}.
\begin{lem}\label{lem:suffcrit}
Let $V$ be a finite-dimensional vector space, and let $(E = V\times \bigoplus_{i =1}^n \mf g_i[i-1],Q)$ be a Lie $n$-algebroid over $V$, such that the origin $p$ is a fixed point of order $(1,l)$. Assume that
\begin{itemize}
    \item [-] The coefficient functions of the anchor $\rho_Q:\Gamma(E_1)\to \mf X(V)$ are linear,
    \item[-] $H^1_{CE}(\mf g_1,V) = 0$,
    \item[-]$\ker(\rho_Q) = \text{im}(\ell_1^Q) $.
\end{itemize} Then $$H^1(\mf g_{LnA}/\mf g_{LnA}(p,(1,l)),\overline{[Q,-]})=0.$$
\end{lem}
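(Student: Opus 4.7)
The plan is to adapt the arguments of examples \ref{gl2} and \ref{sl2} to arbitrary $l$. Using the explicit description of the complex in remark \ref{rmk:expldesccohom} (equations \eqref{eq:diff1l0}--\eqref{eq:diff1l3}), a $1$-cocycle is a pair $(\alpha, \delta+I_p^l\Gamma(E_2^\vee\otimes E_1))$, where $\alpha:\mf g_1\to V$ is a Chevalley-Eilenberg cocycle and $\delta$ is an $(l-1)$-jet of a $C^\infty(V)$-linear map $\Gamma(E_2)\to \Gamma(E_1)$. I aim to show that every such cocycle is the coboundary of some $v\in V\cong \mf g_{LnA}^0/\mf g_{LnA}^0(p,(1,l))$.

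By the assumption $H^1_{CE}(\mf g_1,V) = 0$, the first cocycle condition gives $\alpha = d^\tau_{CE}(v)$ for some $v\in V$, which when $v$ is extended to a constant section on $V$ coincides with $-v(\rho)|_p$. It remains to prove $\delta \equiv -v(\ell_1) \pmod{I_p^l\Gamma(E_2^\vee\otimes E_1)}$. For a constant section $e$ of $E_2$, set $\eta_e := \delta(e) + v(\ell_1)(e) \in \Gamma(E_1)$. The Lie $n$-algebroid axiom $\rho\circ\ell_1 = 0$ together with the Leibniz rule yields $v(\rho)(\ell_1(e)) = -\rho(v(\ell_1)(e))$, so the second cocycle condition $\rho\circ\delta + \alpha\circ\ell_1 \in I_p^{l+1}\Gamma(E_2^\vee\otimes TV)$ simplifies to
\[
\rho(\eta_e) \in I_p^{l+1}\mf X(V).
\]

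The main step is to leverage the linearity hypothesis on $\rho$. Since the coefficient functions of $\rho$ are linear, $\rho$ maps degree-$m$ polynomial sections of $E_1$ to degree-$(m+1)$ polynomial vector fields on $V$. Matching Taylor components of the displayed inclusion therefore gives $\rho(\eta_e^{(m)}) = 0$ for every $m=0,1,\dots,l-1$, where $\eta_e^{(m)}$ is the degree-$m$ homogeneous part of the Taylor expansion of $\eta_e$ at $p$. By the assumption $\ker(\rho) = \im(\ell_1)$ at the level of sections, each $\eta_e^{(m)}$ lies in $\im(\ell_1) \subset I_p^l\Gamma(E_1)$; but a polynomial section of degree $m\leq l-1$ vanishing to order $l$ at $p$ must be identically zero. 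Hence $\eta_e\in I_p^l\Gamma(E_1)$ for every constant $e$, and by $C^\infty(V)$-linearity of $\delta$ this forces $\delta + v(\ell_1) \in I_p^l\Gamma(E_2^\vee\otimes E_1)$, so that $(\alpha,\delta+I_p^l) = \overline{[Q,-]}(v)$.

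As already observed in examples \ref{gl2} and \ref{sl2}, the third cocycle condition \eqref{eq:diff1l3} is not needed for this argument. The only mild subtlety is the bookkeeping of Taylor orders, which rests crucially on the linearity of the coefficients of $\rho$: without it, $\rho$ would not raise the order of vanishing at $p$ by exactly one, and the degree-by-degree comparison above would break down.
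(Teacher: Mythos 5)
Your proposal is correct and follows essentially the same route as the paper's proof: after using $H^1_{CE}(\mf g_1,V)=0$ to write $\alpha = d^\tau_{CE}(v)$, you reduce the second cocycle condition to $\rho(\delta(e)+v(\ell_1)(e))\in I_p^{l+1}\mf X(V)$ for constant $e$, and then compare homogeneous Taylor components using the linearity of $\rho$ together with $\ker(\rho)=\im(\ell_1)\subset I_p^l\Gamma(E_1)$, exactly as in the paper (which likewise never invokes the third cocycle condition). The only cosmetic difference is that you bundle $\delta(e)$ and $v(\ell_1)(e)$ into a single section $\eta_e$ before taking homogeneous parts, whereas the paper treats the degrees $s\leq l-1$ and $s=l$ separately; this is harmless.
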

\begin{proof} We use the description of $H^1(\mf g_{LnA}/\mf g_{LnA}(p,(1,l)),\overline{[Q,-]})$ as in Remark \ref{rmk:expldesccohom}.\\
The main ingredients of the proof are already present in Example \ref{gl2}. Given a cocycle $(\alpha,\delta+ I_p^{l}\Gamma(E_2^\ast\otimes E_1)) \in \mf g_{LnA}^1/\mf g_{LnA}^1(p,(1,l))$, we eventually find that 
$$
\rho(\delta(e) + v(\ell_1(e))) \in I_p^{l+1}\mf X(V)
$$
for every constant section $e\in\Gamma(E_2)$. Taking the $s$-homogeneous part of this expression for $s = 1,\dots,l-1$, we see that it is equal to
$$
0 = \rho^{(1)}(\delta^{(s-1)}(e)),
$$
and for the $l$-homogeneous part, we find
$$
0 = \rho^{(1)}(\delta^{(l-1)}(e) + v(\ell_1^{(l)})(e)).
$$
As $\rho^{(1)} = \rho$, and $\ker(\rho) = \text{im}(\ell_1)\subset I_p^l\Gamma(E_1)$, it follows that the $s$-homogeneous part of $\delta$ for $s = 0,\dots, l-2$ vanishes, while the $\nrt{(l-1)}$-homogeneous part is given by $-v(\ell_1^{(l)})$. As $v(\ell_1)\in I_p^{l-1}\Gamma(E_2^\ast\otimes E_1)$, we see that $\delta = -v(\ell_1) + I_p^{l}\Gamma(E_2^\ast\otimes E_1)$, which concludes the proof.
\end{proof}
\begin{rmk}\begin{itemize}
\item[]
\item[-]
    Note that the linearity of $\rho$ only comes into play when we want to use properties of the kernel of $\rho$. In fact a weaker condition which can replace linearity of $\rho$ and $\ker(\rho) = \text{im}(\ell_1)$ is that the linear part $\rho^{(1)}$ of $\rho$, which can be viewed as a map between sections of $E_1$ and vector fields on $V$ has kernel contained in $ I_p^l\Gamma(E_1)$.
\item[-] Note that if the isotropy Lie algebra $\mf g_1$ is semisimple, the condition on $H^1_{CE}(\mf g_1,V)$ is satisfied by Whitehead's first lemma.
\end{itemize}
\end{rmk}
Recall the following definition.
\begin{defn}
Let $M$ be a smooth manifold. A \emph{singular foliation} on $M$ is a subsheaf $\mc F\subset \mf X_M$ which is locally finitely generated, and involutive. Denote a foliated manifold by $(M,\mc F)$.\\
We say that $p\in M$ is a fixed point of $\mc F$ if $\mc F\subset I_p\mf X_M$.
\end{defn}
Just as for Lie algebroids, associated to a fixed point $p\in M$ of a singular foliation $\mc F$, there is a Lie algebra $\mf g_p$ and a representation of $\mf g_p$ on $T_pM$:
\begin{lem} [\cite{holgpd},\cite{AnZa12}] Let $(M,\mc F)$ be a foliated manifold and $p\in M$ a fixed point of $\mc F$. 
\begin{itemize}
    \item [i)] Let $\mc F_p$ denote the stalk of $\mc F$ at $p$. Then the induced Lie bracket on $\mc F_p$ descends to the finite-dimensional vector space
    $$
        \mf g_p := \mc F_p/I_p\mc F_p.
    $$
    \item[ii)] The map
    $$
        \tau: \mf g_p\to \End(T_pM)
    $$
    given by
    $$
    \tau(x)(v) = [\tilde{x},\tilde{v}](p)
    $$
    is a well-defined representation of $\mf g_p$ called the \emph{linear holonomy representation}, where $x\in \mf g_p, v\in T_pM$, and $\tilde{x},\tilde{v}$ are extensions to elements of $\mc F$ and $ \mf X(M)$ respectively.
\end{itemize}
\end{lem}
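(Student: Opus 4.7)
The plan is to verify the three assertions in turn, all of which reduce to the identity
\[
[fY,X] = f[Y,X] - X(f)Y
\]
for the Lie bracket of vector fields, combined with the single observation that every $X\in\mc F$ satisfies $X(f)\in I_p$ whenever $f\in I_p$, which in turn follows from $X\in\mc F \subset I_p\mf X_M$.

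For part (i), I would first note that since $\mc F$ is locally finitely generated, the stalk $\mc F_p$ is a finitely generated module over the local ring $C^\infty_{M,p}$, so Nakayama's lemma gives that $\mc F_p/I_p\mc F_p$ is finite-dimensional over $\mb R$. To show that the bracket on $\mc F_p$ descends, it suffices to prove $[I_p\mc F_p,\mc F_p]\subset I_p\mc F_p$. Writing a typical element of $I_p\mc F_p$ as $fY$ with $f\in I_p,\, Y\in\mc F_p$, the Leibniz identity gives $[fY,X] = f[Y,X] - X(f)Y$. The first summand lies in $I_p\mc F_p$ by inspection, and the second lies in $I_p\mc F_p$ because $X(f)\in I_p$ by the observation above. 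The Jacobi identity for the induced bracket on $\mf g_p$ is then inherited from the one on $\mf X(M)$.

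For part (ii), I would check independence of $\tau(x)(v) = [\tilde x,\tilde v](p)$ from the two chosen extensions separately. If I replace $\tilde x$ by $\tilde x + W$ with $W\in I_p\mc F_p$, writing $W = \sum f_iY_i$ with $f_i\in I_p,\, Y_i\in\mc F_p$, then
\[
[W,\tilde v](p) = \sum_i \bigl(f_i\,[Y_i,\tilde v] - \tilde v(f_i)\,Y_i\bigr)(p) = 0,
\]
using $f_i(p)=0$ and $Y_i(p)=0$ (the latter because $Y_i\in\mc F\subset I_p\mf X_M$). If I replace $\tilde v$ by $\tilde v + Z$ where $Z\in I_p\mf X_M$, then $[\tilde x,Z](p)=0$ by the same computation, now using that $\tilde x(g)\in I_p$ for every $g\in I_p$. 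Bilinearity over $\mb R$ is immediate.

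For the representation property $\tau([x,y])=[\tau(x),\tau(y)]$, I would use involutivity of $\mc F$ to take $\widetilde{[x,y]} = [\tilde x,\tilde y]$, and the well-definedness in the second argument (just verified) to take $\widetilde{\tau(y)(v)}=[\tilde y,\tilde v]$ and $\widetilde{\tau(x)(v)}=[\tilde x,\tilde v]$ as extensions. The Jacobi identity for vector fields then yields
\[
[\tau(x),\tau(y)](v) = [\tilde x,[\tilde y,\tilde v]](p) - [\tilde y,[\tilde x,\tilde v]](p) = [[\tilde x,\tilde y],\tilde v](p) = \tau([x,y])(v).
\]
I do not anticipate a real obstacle: the entire argument is a bookkeeping exercise with the Leibniz and Jacobi identities, with the fixed-point hypothesis $\mc F\subset I_p\mf X_M$ supplying every vanishing that is needed. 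The only point requiring care is consistent use of the fact that an element of $\mc F$ both vanishes at $p$ \emph{and} sends $I_p$ into itself, which is where the dual role of $I_p\mc F_p$ in the quotient comes from.
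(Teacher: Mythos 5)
Your proof is correct, and it is the standard argument: the paper itself does not prove this lemma but quotes it from the cited references, where the verification is exactly this kind of Leibniz-rule bookkeeping, using that every element of $\mc F$ vanishes at $p$ (so it maps $C^\infty(M)$ into $I_p$) together with involutivity and the Jacobi identity. The only cosmetic remark is that Nakayama's lemma is not needed for finite-dimensionality of $\mf g_p$: since $\mc F_p$ is finitely generated over $C^\infty_{M,p}$, the images of any finite generating set already span $\mc F_p/I_p\mc F_p$ over $C^\infty_{M,p}/I_p\cong \mb R$. Everything else, including the trick of using well-definedness in the second argument to take $[\tilde y,\tilde v]$ itself as the extension of $\tau(y)(v)$ when verifying $\tau([x,y])=[\tau(x),\tau(y)]$, is exactly as it should be.
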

\begin{rmk}
Note that a Lie $n$-algebroid $(E,Q)$ induces a singular foliation on the base manifold $M$, which is the image of the anchor map at the level of sections.
\end{rmk}
The following definition was proposed by Camille Laurent-Gengoux and Sylvain Lavau \cite{defsingfol}.
\begin{defn}
Let $(M,\mc F)$ be a foliated manifold. \nrt{An \emph{isomodular deformation}} of $\mc F$ is a singular foliation $\mc F_U$ on $M\times U$, where $0\in U \subset \mathbb{R}^n$ is an open subset, such that
\begin{itemize}
    \item[a)] For every $p \in U$, $\mc F^p$ is tangent to $M\times\{p\}$, where $\mc F^p$ is the restriction of $\mc F_U$ to $M\times \{p\}$,
    \item[b)] $\mc F^0 = \mc F$,
\end{itemize}
together with an isomorphism of $C^\infty_{M\times U}$-modules $\phi: C^\infty_{M\times U}\otimes_{C^\infty_M} \mc F \to \mc F_U$.
\end{defn}
This allows us to define a notion of stability of fixed points of a singular foliation.
\begin{defn}
Let $(M,\mc F)$ be a foliated manifold, and let $p \in M$ be a zero-dimensional leaf of $\mc F$. We say that $p$ is \textit{stable} if for every isomodular deformation $(M\times U, \mc F_U)$, for every neighborhood $p \in V\subset M$, there is a neighborhood $0\in W\subset U$, such that $\mc F^q$ has a zero-dimensional leaf in $V$ for $q\in W$.
\end{defn}
The following lemma was proven by Camille Laurent-Gengoux and Sylvain Lavau \cite{defsingfol}. We formulate a weaker version and include the proof in the appendix for completeness.
\begin{lem}\label{lem:resexact} 
Let $(V,\mc F)$ be a vector space equipped with a \emph{linear} foliation, and $(V\times U, \mc F_U)$ an isomodular (not necessarily linear) deformation. Then there exists a geometric resolution (see \cite{univlinfty}) of $\mc F$
\begin{equation}\label{diag:geomresF}
\begin{tikzcd}
0\arrow{r}&  \Gamma(E_n) \arrow{r}&\dots \arrow{r}& \Gamma(E_1)\arrow{r}&\mc F\arrow{r}& 0,
\end{tikzcd}
\end{equation}
 such that the differential has polynomial coefficient functions.
 Moreover, there exists a Lie $n$-algebroid structure $Q$ on $\oplus_{i=1}^n p^\ast E_i$, where $p: V\times U \to V$ is the projection, with the following properties:
\begin{itemize}
    \item[-] The unary bracket extends the complex \eqref{diag:geomresF},
    \item[-] $Q$ induces the foliation $\mc F_U$,
    \item[-] The sequence of $C^\infty_{M\times U}$-modules 
    \[
\begin{tikzcd}
0\arrow{r}&  \Gamma(p^\ast E_n) \arrow{r}&\dots \arrow{r}& \Gamma(p^\ast E_1)\arrow{r}&\mc F_U\arrow{r}& 0,
\end{tikzcd}
\]
is exact,
    \item[-] The restriction of $Q$ to $V\times \{0\}$ is a Lie $n$-algebroid structure on $E$ inducing $\mc F$.
\end{itemize}
\end{lem}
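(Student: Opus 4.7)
The plan is to first construct a polynomial geometric resolution of the linear foliation $\mc F$ on $V$, then pull it back to $V\times U$ using flatness, and finally invoke the universal Lie $\infty$-algebroid construction of \cite{univlinfty} to obtain the desired Lie $n$-algebroid structure.

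For the first step, since $\mc F$ is linear, the submodule $\mc F^{\mathrm{pol}} \subset \mf X^{\mathrm{pol}}(V)$ of polynomial vector fields lying in $\mc F$ is a finitely generated graded module over the polynomial ring $R = \mb R[V]$. By Hilbert's syzygy theorem applied to graded modules over $R$, there is a finite free resolution
$$
0 \to R^{r_n} \to \cdots \to R^{r_1} \to \mc F^{\mathrm{pol}} \to 0
$$
of length $n \leq \dim V$ by graded free $R$-modules, whose differentials are given by matrices with polynomial entries. Tensoring with $C^\infty(V)$ over $R$ is exact (since $C^\infty(V)$ is flat over $R$, as can be checked locally on the Zariski open where generators are defined, or via the standard result on flatness of smooth functions over polynomials in coordinates), so we obtain a geometric resolution by trivial bundles $E_i = V\times \mb R^{r_i}$ with polynomial differentials.

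For the second step, pull this complex back along $p:V\times U \to V$. Since $C^\infty(V\times U)$ is flat over $C^\infty(V)$ (as extending by additional smooth coordinates preserves flatness), the pulled-back complex remains exact at every internal position. The image of the last map is $C^\infty_{V\times U} \otimes_{C^\infty_V} \mc F$, which under the isomodule identification $\phi$ corresponds to $\mc F_U$, giving the stated exact sequence for $\mc F_U$. The differentials of the pulled-back resolution still have polynomial coefficients in $V$-directions (with smooth $U$-dependence, via $\phi$ they may acquire a $u$-dependence that is smooth).

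For the third step, apply the main existence result of \cite{univlinfty}: any geometric resolution of a singular foliation extends to a Lie $n$-algebroid structure whose unary bracket is the differential of the resolution and whose anchor composed with the resolution surjection yields the foliation. Applying this to the pulled-back resolution on $V\times U$ yields the Lie $n$-algebroid structure $Q$ whose induced foliation is $\mc F_U$. The main obstacle here is really just citing this result correctly, since the construction of higher brackets inductively requires resolving cocycles and this is precisely what \cite{univlinfty} accomplishes. Finally, for the restriction to $V\times\{0\}$, observe that evaluation at $u=0$ is a morphism of differential graded commutative algebras $C^\infty((V\times U) \oplus_{V\times U} E[1]) \to C^\infty(V \oplus_V E[1])$, and since $Q$ is tangent to $V\times\{0\}$ (as each $\mc F^u$ is tangent to $M\times\{u\}$ by assumption a) in the definition of isomodule deformation), $Q$ descends to a degree one derivation squaring to zero on the restriction, which is a Lie $n$-algebroid structure on $E$ inducing $\mc F^0 = \mc F$ by assumption b).
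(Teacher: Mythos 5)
Your overall outline (polynomial syzygy resolution of $\mc F^{\mathrm{pol}}$, passage to smooth coefficients by flatness, then the existence theorem of \cite{univlinfty} plus tangency for the restriction to $V\times\{0\}$) matches the paper's strategy, but there is a genuine gap at the pullback step. You justify exactness of the pulled-back complex on $V\times U$ by asserting that $C^\infty(V\times U)$ is flat over $C^\infty(V)$ ``as extending by additional smooth coordinates preserves flatness.'' This is not a known standard fact, and it is precisely the point the paper flags: the authors state explicitly that it is not clear whether pulling back the smooth geometric resolution along $p\colon V\times U\to V$ preserves exactness, and they deliberately avoid this claim. Flatness questions for $C^\infty(V\times U)$ over $C^\infty(V)$ (a completed tensor product situation) are delicate, so as written this step does not go through.

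The paper circumvents the problem by performing the base change at the polynomial level instead: one applies $S(V^\vee\oplus\mb R^n)\otimes_{S(V^\vee)}-$ to the Hilbert-syzygy resolution, which is exact because a polynomial ring extension is free, hence flat, and only then passes to smooth coefficients on $V\times U$ by the same stalkwise argument used over $V$ (sheaf exactness is checked on stalks, and $C^\infty_{V\times U,q}$ is flat over the polynomial ring via Malgrange's flatness of smooth germs over analytic germs together with flatness of analytic germs over polynomials through the formal completion). If you replace your pullback-by-smooth-flatness step with this polynomial base change followed by the stalkwise smooth-over-polynomial flatness, the rest of your argument (the unary bracket and the induced foliation coming from the unchanged differentials, \cite[Theorem 2.7]{univlinfty} for the higher brackets, and tangency of $\mc F_U$ to $V\times\{0\}$ for the restriction) is in line with the paper's proof. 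Relatedly, your first step asserts flatness of the global ring $C^\infty(V)$ over $\mb R[V]$ rather casually; the paper's stalkwise formulation is the cleaner way to make that precise, and you should either argue stalkwise or cite the flatness results (Malgrange, plus flatness of the completion maps) explicitly.
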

Using the results of this section, we obtain
a stability result for linear foliations.
\nrt{\begin{thm}\label{thm:singfolstab}
Let $(V,\mc F)$ be a vector space, equipped with a linear foliation. Assume that $$H^1_{CE}(\mf g,V) = 0,$$ where $\mf g$ is the isotropy Lie algebra of $\mc F$ at $0$ acting by the linear holonomy representation. Then the origin is a stable fixed point for all isomodular deformations.
\end{thm}}
\begin{proof}
Let $(V\times U,\mc F_U)$ be an isomodular deformation of $\mc F$, and pick a Lie $n$-algebroid $(p^\ast E,Q)$ inducing $\mc F_U$ as in Lemma \ref{lem:resexact}. By \cite[Theorem 2.3.5]{univlinfty}, we may assume that the differential on the complex vanishes in $0$ up to finite order $l>0$. In this case, the fiber $(E_1)_0$ over $0 \in V$ has the same dimension as the isotropy Lie algebra by \cite[Proposition 4.14]{univlinfty}.\\
Denote by $\rho:\Gamma(E_1) \to \mf X(V)$ the map inducing the foliation. 
\begin{claim}We may assume that $\rho$ has linear coefficient functions.
\end{claim}
\begin{proof}[Proof of claim]
To see this, pick linear generators $\{X_i\}_{i=1}^r$ for $\mc F$ which are linearly independent over $\mb R$, and consider their images $\{e_i \}_{i=1}^r$ in $\mf g$. It is clear that the $e_i$ generate $\mf g$. However, they are also linearly independent: if $a_i\in \mb R$ are such that 
$$
\sum_{i =1}^r a_ie_i = 0 \in \mf g,
$$
then
$$
\sum_{i=1}^r a_i X_i \in I_0\mc F\subset  I_0^2\mf X(M).
$$
As the $X_i$ are linear and were assumed to be linearly independent over $\mb R$, it follows that the $a_i$ must be zero. \\
Pick preimages $s_i$ for $X_i$ under $\rho$. The 
diagram
\[
\begin{tikzcd}
\Gamma(E_1) \arrow{r}{\rho}\arrow{d}& \mc F\arrow{d}\\
\Gamma(E_1)/I_0\Gamma(E_1) \arrow{r}{\overline{\rho}} & \mc F/I_0\mc F
\end{tikzcd}
\]
commutes, and the bottom row is a map from $(E_1)_0$ to $\mf g$, sending $s_i(0)$ to $e_i$. This shows that $s_i$ form a local frame around $0$, and $\rho(s_i) = X_i$ is a linear vector field, which proves the claim.
\end{proof}
By the choice of Lie $n$-algebroid $(p^\ast E,Q)$ of $\mc F_U$, the restriction $(E,Q_0)$ to $V\times \{0\}$ has a fixed point of order $(1,l)$, and satisfies the assumptions of Lemma \ref{lem:suffcrit}. So given a neighborhood $W\subset V$ of the origin, there exists a neighborhood $\mc U$ of $Q_0$ in the space of Lie $n$-algebroid structures on $E$ such that all $Q'\in \mc U$ have a fixed point $q\in W$.\\
Finally, as the map
$$
U \to \{\text{Lie }n\text{-algebroid structures on $E$}\}
$$
given by
$$
p \mapsto Q_p
$$
is continuous, the result follows. 
\end{proof}
In particular, we have:
\begin{cor}\label{cor:semisimplestable}
Let $(V,\mc F)$ be a vector space equipped with a foliation that has linear generators, \nrt{which implies} that all vector fields in $\mc F$ vanish at the origin. If the isotropy Lie algebra $\mf g$ of $\mc F$ in $0\in V$ is semisimple, then the origin is a stable fixed point for all isomodular deformations. 
\end{cor}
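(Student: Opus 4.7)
The plan is to reduce the corollary directly to Proposition \ref{prop:singfolstab} via Whitehead's first lemma. The hypotheses on $(V,\mc F)$ in the corollary match those of the proposition: linear generators produce vector fields vanishing at the origin, so $\mc F\subset I_0\mf X(V)$ and the origin is indeed a fixed point. Thus the only piece missing in order to invoke Proposition \ref{prop:singfolstab} is the vanishing of $H^1_{CE}(\mf g,V)$, where $\mf g$ is the isotropy Lie algebra at the origin, acting on $T_0V\cong V$ through the linear holonomy representation $\tau$.

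First I would record that both $\mf g$ and $V$ are finite-dimensional: $V$ by assumption, and $\mf g = \mc F_0/I_0\mc F_0$ is finite-dimensional because $\mc F$ is locally finitely generated (a finite set of linear generators descends to a spanning set of $\mf g$). Hence $\tau:\mf g\to \End(T_0V)$ is a finite-dimensional representation of a finite-dimensional semisimple Lie algebra.

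Next I would apply Whitehead's first lemma, which states that for a finite-dimensional semisimple Lie algebra $\mf g$ and any finite-dimensional $\mf g$-module $W$, one has $H^1_{CE}(\mf g,W)=0$. Taking $W = V$ with the linear holonomy action $\tau$ yields $H^1_{CE}(\mf g,V)=0$, verifying the last hypothesis of Proposition \ref{prop:singfolstab}.

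Finally, Proposition \ref{prop:singfolstab} delivers the conclusion: for every isomodule deformation of $\mc F$, the origin persists as a zero-dimensional leaf in any prescribed neighborhood for parameters close to $0$, which is exactly the stability asserted. There is no real obstacle; the content of the corollary is essentially that semisimplicity of the isotropy Lie algebra is the clean cohomology-free hypothesis implying the cohomological criterion of Proposition \ref{prop:singfolstab}.
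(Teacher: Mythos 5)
Your proposal is correct and follows essentially the same route as the paper: the corollary's hypotheses are matched against Proposition \ref{prop:singfolstab}, and the only cohomological input, $H^1_{CE}(\mf g,V)=0$, is supplied by Whitehead's first lemma for the finite-dimensional linear holonomy representation of the semisimple isotropy Lie algebra. The paper's own proof is just this one-line appeal to Whitehead's lemma, so your added verification of finite-dimensionality is harmless elaboration rather than a deviation.
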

\begin{proof}
For semisimple Lie algebras $\mf g$, $H^1_{CE}(\mf g,-)$ is identically zero for finite-dimensional representations by Whitehead's first lemma. 
\end{proof}
\begin{rmk}\begin{itemize}
\item[]
\item[-] Under the assumption that the foliation $\mc F$ we start with is linear, we obtained a simplified criterion for stability of the origin, depending only on the foliation, and not on the chosen Lie $n$-algebroid inducing it. 
\item[-] If $\mc F$ is arbitrary, with a fixed point, and $\mc F_U$ is a deformation which admits a geometric resolution as in \cite[Definition 2.1] {univlinfty}, hence a Lie $n$-algebroid inducing $\mc F_U$, then restricting to $\mc F_0$ one gets a Lie $n$-algebroid inducing $\mc F$. Now Theorem \ref{thm:lienalg1} can be applied to the latter Lie $n$-algebroid if $H^1(\mf g_{LnA}/\mf g_{LnA}(p,(1,l)),\overline{[Q,-]}) = 0$.
\end{itemize}
\end{rmk}
If $(M,\mc F)$ is a foliated manifold, and $p \in M$ a fixed point stable under isomodular deformations, let $(M\times U, \mc F_U)$ be such an isomodular deformation. In particular, for $W\subset U$ as in \nrt{Theorem \ref{thm:singfolstab}}, $q\in W$ implies that $\mc F_q$ has a fixed point $p'$ in $M$. This gives rise to two questions.
\begin{itemize}
    \item[-] Can we describe the isotropy Lie algebra of $\mc F_q$ in $p'$?
    \item[-] If so, what can we say about the linear holonomy representation?
\end{itemize}
The following proposition provides an answer for both of these questions when $\mf g$ is semisimple.
\begin{prop}\label{prop:firstorderrigid}
Let $(V,\mc F)$ be a vector space equipped with a foliation that has linear generators, such that the isotropy Lie algebra is semisimple, \nrt{which implies} that the origin $0 = p\in V$ is a stable fixed point \nrt{by Corollary \ref{cor:semisimplestable}}.\\
Let $(V\times U, \mc F_U)$ be an isomodular deformation, such that for any $q\in U$, the foliation $\mc F_q$ has a fixed point $p'(q)\in V$. \\
Then for a possibly smaller neighborhood $W\subset U$, the isotropy Lie algebra at $p'(q)$ is isomorphic to $\mf g$. Moreover, the linear holonomy representations are isomorphic.
\end{prop}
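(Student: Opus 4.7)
The strategy is to transfer the question into one about deformations of the pair (semisimple Lie algebra, finite-dimensional representation) and then apply classical rigidity. First, by Lemma \ref{lem:resexact} and the setup of Proposition \ref{prop:singfolstab}, fix a Lie $n$-algebroid $(p^\ast E, Q)$ on $V \times U$ inducing $\mc F_U$ such that $E_1 = V \times \mf g$ is trivial, the anchor $\rho$ has linear coefficient functions at $0$, and $\ell_1^{Q_0}$ vanishes at $0$ to some order $l \geq 1$. After shrinking $U$ to a neighborhood $W$, Theorem \ref{thm:lienalg1} applied exactly as in the proof of Proposition \ref{prop:singfolstab} can be used to arrange that for every $q \in W$ the chosen fixed point $p'(q)$ is a fixed point of order $(1,l)$ of $Q_q$; in particular, both $\rho_{Q_q}$ and $\ell_1^{Q_q}$ vanish at $p'(q)$.

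The central step is to identify the isotropy Lie algebra $\mf g_{p'(q)}$ with the fibre $(E_1)_{p'(q)}$. Localising the exact sequence $\Gamma(E_2) \xrightarrow{\ell_1^{Q_q}} \Gamma(E_1) \xrightarrow{\rho_{Q_q}} \mc F_q \to 0$ at $p'(q)$ and reducing modulo $I_{p'(q)}$ yields a surjection $(E_1)_{p'(q)} \twoheadrightarrow \mf g_{p'(q)}$ whose kernel is the image of $\ell_1^{Q_q}|_{p'(q)} = 0$, hence a linear isomorphism. The Lie $n$-algebroid structure equation $[\rho_{Q_q}(s), \rho_{Q_q}(t)] = \rho_{Q_q}(\ell_2^{Q_q}(s,t))$ then shows this isomorphism intertwines the isotropy bracket with the fibrewise bracket $\ell_2^{Q_q}(-,-)|_{p'(q)}$ (well-defined because $\rho_{Q_q}$ vanishes at $p'(q)$); differentiating $\rho_{Q_q}$ at $p'(q)$ does the analogous thing for the linear holonomy representation $\tau_q$ and the $1$-jet of $\rho_{Q_q}$ at $p'(q)$ acting on $T_{p'(q)} V \cong V$. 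Via the trivialization $E_1 = V \times \mf g$, these identifications produce a continuous family of Lie brackets $[-,-]_q$ on $\mf g$ and of linear maps $\tau_q : \mf g \to \End(V)$ parametrised by $q \in W$, reducing at $q = 0$ to the original semisimple bracket and its representation on $V$.

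The proof then finishes by classical rigidity. Whitehead's second lemma gives $H^2_{CE}(\mf g,\mf g) = 0$, so by the Nijenhuis--Richardson rigidity theorem, after possibly shrinking $W$ there exists a continuous family $\phi_q \in GL(\mf g)$ with $\phi_0 = \mathrm{id}$ conjugating $[-,-]_q$ to the original bracket of $\mf g$. Transporting $\tau_q$ along $\phi_q^{-1}$ produces a continuous family of genuine $\mf g$-representations on $V$, and Whitehead's first lemma $H^1_{CE}(\mf g, \End(V)) = 0$ together with the standard rigidity argument for finite-dimensional representations of semisimple Lie algebras implies, after a final shrinking of $W$, that all of them are isomorphic to $\tau_0$; composing the two conjugations gives simultaneously an isomorphism of Lie algebras and of representations. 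I expect the main obstacle to be precisely the first step, namely making the identification of $\mf g_{p'(q)}$ with the fibrewise data of the Lie $n$-algebroid clean and compatible with the bracket and the holonomy representation; once this is done, the deformation-theoretic conclusion is a direct transport of the classical rigidity package for semisimple Lie algebras.
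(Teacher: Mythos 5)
Your overall route is the same as the paper's: identify the isotropy Lie algebra at $p'(q)$ with the fibre of $E_1$ equipped with the binary bracket, identify the linear holonomy representation with the linearization of the anchor, and then invoke classical rigidity of semisimple Lie algebras and of their finite-dimensional representations (Whitehead's lemmas plus Nijenhuis--Richardson and the rigidity of representations). However, there is a genuine gap at the step on which everything else rests. You claim that Theorem \ref{thm:lienalg1}, ``applied exactly as in the proof of Proposition \ref{prop:singfolstab}'', lets you arrange that the \emph{given} fixed point $p'(q)$ is a fixed point of order $(1,l)$ of $Q_q$, so that in particular $\ell_1^{Q_q}$ vanishes at $p'(q)$. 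Theorem \ref{thm:lienalg1} does not say this: it asserts the \emph{existence} of some family of fixed points of order $(1,l)$ in a prescribed neighbourhood of $p$, for structures close to $Q_0$; it gives no control whatsoever over an arbitrary prescribed fixed point of the foliation $\mc F_q$, and in particular does not show that $\ell_1^{Q_q}$ vanishes there. But your identification $\mf g_{p'(q)}\cong (E_1)_{p'(q)}$ needs exactly that: the kernel of the surjection $(E_1)_{p'(q)}\to \mf g_{p'(q)}$ is the image of $(\ell_1^{Q_q})_{p'(q)}$, so if this map is nonzero the isotropy Lie algebra is only a proper quotient of the fibre; moreover $\ell_2^{Q_q}|_{p'(q)}$ then need not even satisfy the Jacobi identity (its Jacobiator at the point is $(\ell_1)_{p'(q)}\circ(\ell_3)_{p'(q)}$), so the transport to a deformation of the pair $(\mf g,V)$ collapses. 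The paper does not obtain this identification from the stability theorem at all; it quotes \cite[Proposition 4.14]{univlinfty} to identify $((E_1)_{p'},(\ell_2)_{(p',q)})$ with the isotropy Lie algebra at the fixed point, and never claims the given fixed point has order $(1,l)$.

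A secondary issue is the parametrization of your rigidity argument. You form ``continuous families'' $[-,-]_q$ and $\tau_q$ indexed by $q\in W$ and reducing to the original data at $q=0$; this presupposes that $q\mapsto p'(q)$ is continuous and that $p'(q)\to p$, neither of which is among the hypotheses. The paper avoids this by using the continuity of the single map $(v,q)\mapsto(\ell_2)_{(v,q)}$ on $V\times U$ together with a rigidity neighbourhood $O$ of $(\ell_2)_{(p,0)}$: the conclusion then holds for \emph{every} fixed point $(p',q)$ lying in a neighbourhood $D$ of $(p,0)$, with no regularity of $p'(q)$ needed, and only pointwise isomorphism is required (a continuous family $\phi_q\in GL(\mf g)$ with $\phi_0=\mathrm{id}$ is more than the Nijenhuis--Richardson theorem literally provides, and is not needed). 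If you repair the first step --- i.e., justify the identification of the isotropy Lie algebra with the full fibre at $p'(q)$, as the paper does via \cite[Proposition 4.14]{univlinfty} --- and rephrase the rigidity step in this pointwise form on a neighbourhood of $(p,0)$, your argument becomes the paper's proof.
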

\begin{proof}
As $\mc F_U$ is \nrt{an isomodular deformation}, let $(p^\ast E,Q)$ be a Lie $n$-algebroid inducing $\mc F_U$ as in Lemma \ref{lem:resexact}, for which the unary bracket vanishes at $(p,0)\in V\times U$ up to order $l\in \mb Z_{>0}$.\\
As $l>0$, by \cite[Proposition 4.14]{univlinfty}, $((E_1)_{p'},(\ell_2)_{(p',q)})$ is exactly the isotropy Lie algebra of $\mc F^q$ in $p'$, where $\ell_2$ is the binary bracket of $(p^\ast E,Q)$. As we may assume $E_1$ is a trivial bundle we denote $(E_1)_p$ by $\mf h$, and we consider the map
$$
V\times U\to \wedge^2 \mf h^\ast \otimes \mf h
$$
given by
$$
(v,q) \mapsto (\ell_2)_{(v,q)},
$$
where the right hand side should be seen as the map which extends $x_1,x_2\in \mf h$ to a constant section, applies the binary bracket of the Lie $n$-algebroid, and then evaluates it on $(v,q)\in V\times U$. As semisimple Lie algebras are rigid by \cite[Theorem 7.1]{defliealg} and Whitehead's second lemma, there is a neighborhood $O$ of $(\ell_2)_{(p,0)}$, such that every Lie algebra structure in $O$ is isomorphic to $(\ell_2)_{(p,0)}$. Hence there is a neighborhood $D$ of $(p,0)$ such that for every pair $(p',q) \in D$ such that $p'$ is a fixed point of $\mc F^q$, its isotropy Lie algebra is isomorphic to $\mf g$.\\
For the assertion about the linear holonomy representation the proof is analogous: we note that the linear holonomy representation is the linearization of the anchor, and that representations of semisimple Lie algebras are rigid by \cite[Theorem A]{bams/1183528646} and Whitehead's first lemma.
\end{proof}
Proposition \ref{prop:firstorderrigid} says something about the first order approximation. If $\mc F_U$ is a deformation such that for every $q\in U$, $\mc F^q$ is linearizable, this yields a rigidity result for such a deformation. \\
There is a sufficient condition for a foliation with semisimple isotropy to be formally linearizable around a fixed point.
\begin{cor}\label{cor:singfolrig}
Let $(V,\mc F)$ be a vector space equipped with a foliation with linear generators, such that the isotropy Lie algebra $\mf g$ is semisimple. Let $(V\times U,\mc F_U)$ be an isomodular deformation such that $\mc F^q$ has analytic generators for every $q\in U$. For some neighborhood $W\subset U$ of the origin, for every $q\in W$, there is a formal diffeomorphism of $\phi_q:V\to V$ such that $\phi_q^\ast \mc F^q = \mc F$.
\end{cor}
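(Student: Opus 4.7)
The plan is to combine the first-order data from Proposition \ref{prop:firstorderrigid} with a classical iterative linearization scheme whose obstruction at each order vanishes by Whitehead's first lemma. First, I would apply Proposition \ref{prop:firstorderrigid} to obtain a neighborhood $W \subset U$ of $0$ such that for every $q \in W$, the foliation $\mc F^q$ has a fixed point $p'(q)$ whose isotropy Lie algebra is isomorphic to $\mf g$ via an isomorphism intertwining the linear holonomy representations. After translating $p'(q)$ to the origin and performing a linear change of coordinates, I may assume that $\mc F^q$ admits analytic generators $\bar X_1, \dots, \bar X_r$ whose linear parts are exactly the linear generators $X_1, \dots, X_r$ of $\mc F$.

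Next, I would build $\phi_q = \mathrm{id} + \sum_{k\ge 2} \phi^{(k)}$ inductively, with each $\phi^{(k)} \in S^k V^\vee \otimes V$ a homogeneous polynomial map of degree $k$. The inductive hypothesis at step $k$ is that, after pulling back by $\mathrm{id} + \sum_{j=2}^{k-1} \phi^{(j)}$, each generator has the form $\bar X_i' = \sum_j f_{ij} X_j + R_i$ with $f_{ij}(0) = \delta_{ij}$ and $R_i \in I_0^k \mf X(V)$. The degree-$k$ part of $R_i$, modulo the subspace $\sum_j S^{k-1} V^\vee \cdot X_j$ of foliation-module contributions, defines a linear map $\alpha_k \colon \mf g \to M_k := (S^k V^\vee \otimes V)/(\sum_j S^{k-1} V^\vee \cdot X_j)$. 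Using the identity $\phi^\ast X = X - [X,\phi^{(k)}] + (\text{terms of degree}\ge k+1)$ for linear $X$, adding $\phi^{(k)}$ alters $\alpha_k$ by the coboundary $-d_{CE}^\tau(\phi^{(k)})$, where $\tau$ is the linear holonomy representation of $\mf g$ on $V$. Bracket closure of $\mc F^q$ (together with $[\bar X_i', \bar X_j'] \in \mc F^q$) forces $\alpha_k$ to satisfy the Chevalley-Eilenberg $1$-cocycle condition, and since $\mf g$ is semisimple and $M_k$ is finite-dimensional, Whitehead's first lemma (combined with Whitehead's second lemma to control the long exact sequence associated to the quotient) yields $H^1_{CE}(\mf g, M_k) = 0$, so a suitable $\phi^{(k)}$ killing $\alpha_k$ exists.

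The hard part will be the careful bookkeeping of the module ambiguity: one must verify that bracket closure of $\mc F^q$ translates precisely into the Chevalley-Eilenberg cocycle condition for $\alpha_k$ in the quotient $M_k$, and must track the passage from cocycles on $S^k V^\vee \otimes V$ to cocycles on $M_k$ via the appropriate long exact sequence. Once this is established, taking the formal inverse limit of $\mathrm{id} + \sum_{j=2}^k \phi^{(j)}$ produces the desired formal diffeomorphism $\phi_q$ with $\phi_q^\ast \mc F^q = \mc F$ at the formal level at the origin; a priori $\phi_q$ depends on $q \in W$, and this dependence is inherited from the continuous choice of $p'(q)$ and the identification of the linear holonomy representations provided by Proposition \ref{prop:firstorderrigid}.
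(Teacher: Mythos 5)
Your route is genuinely different from the paper's. The paper does not run an iterative normal-form argument at all: it reduces the corollary to the statement that a foliation with analytic generators and semisimple isotropy at a fixed point is formally linearizable, and quotes \cite[Corollary 2.24]{laurentgengoux2020neighbourhood} for this; the only thing it verifies by hand is the hypothesis of that citation, namely faithfulness of the linear holonomy representation, which it gets by combining \cite[Theorem 1.10]{laurentgengoux2020neighbourhood} (the kernel is nilpotent) with the fact that an ideal of a semisimple Lie algebra is semisimple, hence trivial if nilpotent. You instead propose to reprove the formal linearization from scratch, Hermann/Guillemin--Sternberg style, killing the degree-$k$ obstruction in $M_k=(S^kV^\vee\otimes V)/\sum_j S^{k-1}V^\vee\cdot X_j$ by Whitehead's first lemma, and you use Proposition \ref{prop:firstorderrigid} in the same way the paper's surrounding discussion does, to normalize the first-order data. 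What your approach buys is self-containedness and the fact that faithfulness of the holonomy representation never needs to be discussed; what the paper's approach buys is brevity and that all the delicate module bookkeeping is outsourced.

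That said, as written your proposal has concrete soft spots. First, the entire content of the argument is the step you defer (``the hard part''): that involutivity of $\mc F^q$ really does translate into the Chevalley--Eilenberg cocycle condition for $\alpha_k$ \emph{in the quotient} $M_k$, and that $M_k$ is a $\mf g$-module at all. The latter needs the observation that the linear span of the $X_j$ is closed under bracket with \emph{constant} structure coefficients (true, because $[X_i,X_j]$ is a linear vector field lying in $\mc F$, so its difference from the constant-coefficient combination $\sum_m c_{ij}^m X_m$ vanishes to second order and hence is zero); without this, $\sum_j S^{k-1}V^\vee\cdot X_j$ need not be $\tau$-invariant and $\alpha_k$ has no home. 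Second, the appeal to ``Whitehead's second lemma to control the long exact sequence'' is spurious: once $M_k$ is known to be a finite-dimensional $\mf g$-module you apply Whitehead's first lemma to $M_k$ directly, lift the primitive to $S^kV^\vee\otimes V$, and no long exact sequence or $H^2$ enters; invoking it suggests the cohomological setup is not yet pinned down. Third, you never use the hypothesis that $\mc F^q$ has \emph{analytic} generators, which the corollary (and the paper's citation) explicitly requires; you should either locate where analyticity enters your induction or be explicit that you are claiming a purely formal statement in the smooth category, which is stronger than what the quoted literature provides and therefore deserves extra scrutiny before being asserted.
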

\begin{proof}
It suffices to show that a foliation with analytic generators with semisimple isotropy in a fixed point is formally linearizable. This follows from \nrt{\cite[Theorem 2.2]{Cerveau1979}}: to apply this result, we need to show that the linear holonomy representation $\mf g\to \End(V)$ is faithful. The kernel is an ideal, hence it is itself a semisimple Lie algebra. However, \cite[Theorem 1.10]{laurentgengoux2020neighbourhood} implies that the kernel is nilpotent. As \nrt{any simultaneously nilpotent and semisimple Lie algebra} is trivial, this shows that the linear holonomy representation is faithful.
\end{proof}

\section{Stability under additional structure}\label{sec:additionalstr}
In the previous sections we have considered Lie $n$-algebroids, and applied Theorem \ref{thm:mainthm} to give a sufficient condition for when a fixed point of some type of a Lie $n$-algebroid is stable. \\
Now suppose we are given a Lie algebroid with some additional structure. A natural question we can ask is if we can refine the criterion when we only look at Lie algebroid structures which also have this additional structure. For instance, given a Lie algebroid structure on $T^\ast M$, we can require the Lie algebroid differential on $\mf X^\bullet(M)$ to be a derivation of the Schouten-Nijenhuis bracket. Is there a theorem similar to Theorem \ref{thm:liealgdk} when we only allow Lie algebroid structures on $T^\ast M$ which are in addition derivations of the Schouten bracket? Of course, it is known that such Lie algebroid structures are precisely the Poisson structures on $M$, so \cite{CrFe} and \cite{dufour2005stability}  contain results on it. \nrt{A pair of Lie algebroid structures on dual vector bundles with this property is called a \emph{Lie bialgebroid}:}
\nrt{\begin{defn}\label{def:liebialgd}
    Let $(A,d_A)$ be a Lie algebroid over a manifold $M$. Let $A^\ast$ be the dual vector bundle to $A$, and let $d_{A^\ast} \in \mf X^1(A^\ast[1])$ be a Lie algebroid structure on $A^\ast$. Then $((A,d_A),(A^\ast,d_{A^\ast}))$ is a \emph{Lie bialgebroid} if $d_{A^\ast}$ is a degree 1 derivation of the graded Lie algebra $(C^\infty(A^\ast[1])[1],0,[-,-]_A)$.
\end{defn}}

In the first subsection, we will address the question above by fixing a Lie algebroid $(A,d_A)$, and apply Theorem \ref{thm:mainthm} to give a stability criterion for fixed points of a Lie algebroid structure $d_{A^\ast}$ on $A^\ast$ such that $((A,d_A),(A^\ast,d_{A^\ast}))$ is a Lie bialgebroid. We then apply this result to obtain the following:
\begin{itemize}
\item[-] First, by taking $A= TM$ with its standard Lie algebroid structure $d_A = d_{dR}$, recover the result from \cite{dufour2005stability} for Poisson manifolds (Corollary \ref{cor:poissonasliebi}).
\item[-]
When $Z \subset M$ is a hypersurface, we let $A = {}^\flat TM$ be the b-tangent bundle. In this case we obtain a result for zeros of self-commuting b-bivector fields (Theorems \ref{cor:bpoissonasliebi} and \ref{thm:bpoissonliebi+}). 
\item[-]
When $N:TM\to TM$ is a Nijenhuis tensor, we obtain a result for Lie algebroid structures near a Poisson structure compatible with $N$ (Theorem \ref{thm:PNLie}), which we then refine to a result dealing only with Poisson-Nijenhuis structures (Theorem \ref{thm:PN}). As application of this, we obtain a stability result for fixed points of holomorphic Poisson structures \nrt{(Corollary \ref{cor:holpoisson})}.
\end{itemize}
\par In the second subsection, we look at Courant algebroids and formulate a stability theorem for fixed points of Courant algebroid structures on a given vector bundle $E$ with fixed non-degenerate metric $\pairing --$ (Theorem \ref{thm:courant}).

Finally, in the last subsection we consider Dirac structures inside a split Courant algebroid $A\oplus A^\ast$. Under the assumption that both $A$ and $A^\ast$ are Dirac structures, we apply the main theorem to obtain a sufficient condition for fixed points of the Dirac structure $A$. There is a difference from all the results obtained so far: the theorem does not only guarantee a fixed point near the given one, but in fact guarantees that the fixed point will lie on the same $A^\ast$-leaf as the original one (Theorem \ref{thm:Dirac}).
% We then look at graded manifolds which come with a symplectic structure of some positive degree. For degree 1, this will again recover the result on Poisson manifolds from \cite{dufour2005stability} and for degree 2, we obtain a stability result for singular points of Courant algebroids.
\subsection{Higher order fixed points of Lie bialgebroids}\label{sec:liebialg}
Throughout this section, let $(A,d_A)$ be a fixed Lie algebroid over the manifold $M$, with anchor $\rho_A$ and bracket $[-,-]_A$. We apply Theorem \ref{thm:mainthm}, and find a sufficient condition for when a fixed point of $(A^\ast,d_{A^\ast})$ is stable for nearby Lie algebroid structures on $A^\ast$ which are compatible with $d_A$, as in Definition \ref{def:liebialgd}.
\subsubsection{Lie bialgebroids}
We embed the Lie algebra of vector fields on $A^\ast[1]$ into a bigger graded Lie algebra, in which compatibility with the Lie algebroid structure on $A$ can be formulated as a commutation condition. The details of this procedure can be found in Section 3 of  \cite{roytenberg1999courant}, but we describe the outline.

Given the graded manifold $A^\ast[1]$, we can consider the graded manifold $T^\ast[2] A^\ast[1]$. As ordinary cotangent bundles, this graded manifold carries a symplectic form, which has degree 2 in this case. To avoid going into details about this, we will work with the corresponding \emph{dual} structure, which is a 2-Poisson algebra structure on $C^\infty(T^\ast[2]A^\ast[1])$ as in \cite{cattaneo2018graded} and can be described explicitly by its properties. The additional property this Poisson bracket has as it comes from a symplectic form, is that there is a Darboux-like theorem for the Poisson bracket \cite{cueca2019geometry}.\\ 
In the following proposition, we summarise the properties of this graded manifold that we use:
\begin{prop}[\cite{roytenberg1999courant,ANTUNES201366,cueca2019geometry}]\label{prop:gradedcotangentproperties}
The 2-Poisson algebra $C^\infty(T^\ast[2] A^\ast[1])$ satisfies the following properties. 
\begin{itemize}
    \item[i)] $C^\infty(T^\ast[2]A^\ast[1])$ is a $(\mb Z_{\geq 0} \times \mb Z_{\geq 0})$-bigraded algebra: as
    $$C^\infty(T^\ast[2]A^\ast[1]) = S_{C^\infty(A^\ast[1])}(\mf X(A^\ast[1])[-2]),$$ the bidegree $(p,q)$-part is given by 
    $$
    S^p_{C^\infty(A^\ast[1])}(\mf X(A^\ast[1])[-2])^{p+q}
    $$
     \item[ii)] The Poisson bracket is non-degenerate. Moreover, the Poisson bracket is homogeneous of bidegree $(-1,-1)$ with respect to the bigrading.
     \item[iii)] The Poisson bracket extends the bracket of vector fields on $A^\ast[-1]$: any $\delta \in \mf X(A^\ast[1])$ gives rise to $f_\delta\in \mf X(A^\ast[1])[-2]$ of bidegree $(1,|\delta|+1)$ \nrt{by the inclusion of $\mf X(A^\ast[1])$}, compatible with the Lie bracket.
    \item[iv)] The Lie algebroid structure $d_A$ on $A$ gives rise to an element $\Pi_{d_A}\in C^\infty(T^\ast[2]A^\ast[1])$ of bidegree $(2,1)$ as follows: as $f_{d_A}$ defines a self-commuting element of $C^\infty(T^\ast[2] A[1])$ and there is a canonical symplectomorphism $T^\ast[2]A^\ast[1]\cong T^\ast[2]A[1]$, we get an element $\Pi_{d_A} \in C^\infty(T^\ast[2] A^\ast[1])$ satisfying \nrt{$\{\Pi_{d_A},\Pi_{d_A}\} = 0$.} 
    \item[v)] The pair $((A,d_A),(A^\ast,d_{A^\ast}))$ is a Lie bialgebroid if and only if $\{f_{d_{A^\ast}},\Pi_{d_A}\} = 0$.
    \item[vi)] The action of $d_{A^\ast}$ on $g\in C^\infty(A^\ast[1])=\Gamma(S(A[-1]))$ is given by
    $$
    d_{A^\ast}(g) = \{f_{d_{A^\ast}},g\}.
    $$
    \item[vii)] $\Pi_{d_A}$ encodes the Schouten-Nijenhuis extension of the bracket on $\Gamma(A)$ to $C^\infty(A^\ast[1])$: given two homogeneous functions $f,g \in C^\infty(A^\ast[1])$, 
    $$
    [f,g]_A = (-1)^{|f|-1} \{\{\Pi_{d_A},f\},g\}.
    $$
\end{itemize}
\end{prop}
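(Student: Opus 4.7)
The plan is to reduce the proposition to local computations in Darboux coordinates on the degree $2$ symplectic $\mb N$-graded manifold $T^\ast[2]A^\vee[1]$, and then read off each item. By the Darboux theorem for degree $2$ symplectic graded manifolds (\cite{cueca2019geometry}), around any point of $M$ we can choose coordinates $x^i$ of degree $0$, $\xi^a$ of degree $1$ (from a local frame of $A^\vee[1]$), together with conjugate momenta $p_i$ of degree $2$ and $\theta_a$ of degree $1$, so that $\omega = dp_i \wedge dx^i + d\theta_a \wedge d\xi^a$ and hence $\{p_i, x^j\} = \delta_i^j$, $\{\theta_a, \xi^b\} = \delta_a^b$. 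Define the bidegree by assigning $(0,0)$ to $x^i$, $(0,1)$ to $\xi^a$, $(1,0)$ to $\theta_a$, and $(1,1)$ to $p_i$, and extend multiplicatively. The first component counts the fiber polynomial degree of $T^\ast[2]A^\vee[1] \to A^\vee[1]$ (matching the symmetric power in $\mf X(A^\vee[1])[-2]$ in the global description), and the total is the $\mb Z$-degree, which gives (i); item (ii) follows immediately because each canonical bracket of basic coordinates drops both components by one.

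For (iii) and (vi), a vector field $\delta = \delta^i \partial_{x^i} + \delta^a \partial_{\xi^a}$ on $A^\vee[1]$ lifts to the linear-in-momenta function $f_\delta = \delta^i p_i + (-1)^{|\delta|}\delta^a \theta_a$ on $T^\ast[2]A^\vee[1]$, which has bidegree $(1, |\delta|+1)$. A direct check in any Darboux chart gives both $\{f_{\delta_1}, f_{\delta_2}\} = f_{[\delta_1, \delta_2]}$ and $\{f_\delta, g\} = \delta(g)$ for $g \in C^\infty(A^\vee[1])$. Item (iv) is the main technical ingredient: I would quote the canonical symplectomorphism $\Phi \colon T^\ast[2]A[1] \to T^\ast[2]A^\vee[1]$ of Roytenberg and Mackenzie--Xu (\cite{roytenberg1999courant}, Section~3), which in local dual frames interchanges the two families of degree $1$ coordinates. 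Applied to the homological vector field $d_A$ on $A[1]$, property (iii) produces a function $f_{d_A}$ on $T^\ast[2]A[1]$ of bidegree $(1, 2)$ with $\{f_{d_A}, f_{d_A}\} = 0$ (since $[d_A, d_A]=0$), and $\Pi_{d_A} := \Phi^\ast f_{d_A}$ then has bidegree $(2, 1)$ because $\Phi$ swaps the two components of the bigrading while preserving the total degree and the Poisson bracket.

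For (vii), it is enough to verify the identity on generators: if $f, g \in \Gamma(A) \subset C^\infty(A^\vee[1])$ are of bidegree $(0, 1)$, a coordinate calculation shows that $\{\Pi_{d_A}, f\}$ is linear in $(p, \theta)$ of bidegree $(1, 1)$ and corresponds under (iii) to the vector field $[f, -]_A$ on $A^\vee[1]$, so that $\{\{\Pi_{d_A}, f\}, g\} = (-1)^{|f|-1}[f, g]_A$; the general case follows from the graded Leibniz rules obeyed by both the Poisson and the Schouten-Nijenhuis brackets. For (v), using (iii), (vi) and (vii), the function $\{f_{d_{A^\vee}}, \Pi_{d_A}\}$ of bidegree $(2, 2)$ is uniquely determined by its successive brackets with pairs $f, g \in \Gamma(A)$; expanding by graded Jacobi these brackets reproduce exactly the defect $d_{A^\vee}[f,g]_A - [d_{A^\vee}f, g]_A - (-1)^{|f|-1}[f, d_{A^\vee}g]_A$ of $d_{A^\vee}$ being a derivation of the $A$-Schouten bracket, which is the classical Lie bialgebroid compatibility, and non-degeneracy of the pairing against such test functions gives the equivalence.

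The only genuine obstacle is the explicit construction of the symplectomorphism $\Phi$ in (iv) together with the sign bookkeeping forced by the dualization $A \leftrightarrow A^\vee$; since this is standard and well documented, I would cite \cite{roytenberg1999courant} (Proposition~3.4) rather than reconstruct it, and the remaining steps are routine calculations in the Darboux coordinates above.
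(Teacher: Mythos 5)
The paper does not prove this proposition at all: it is imported wholesale from the cited references (Roytenberg's Section~3 in particular), so there is no internal argument to compare against. Your reconstruction follows exactly the route those references take — Darboux coordinates for the degree~$2$ symplectic structure, Hamiltonian lifts of vector fields for (iii) and (vi), the Legendre-type symplectomorphism $T^\ast[2]A[1]\cong T^\ast[2]A^\vee[1]$ for (iv), and derived brackets for (vii) — and items (i)--(iv), (vi), (vii) are handled correctly (for (vii) you should say explicitly that the generators to be checked include the degree~$0$ functions $C^\infty(M)$ as well as $\Gamma(A)$, but that check is immediate).

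The one step that does not work as stated is (v). You claim that the bidegree $(2,2)$ element $\{f_{d_{A^\vee}},\Pi_{d_A}\}$ is ``uniquely determined by its successive brackets with pairs $f,g\in\Gamma(A)$.'' That determination claim is false in general: in Darboux coordinates a $(2,2)$ element decomposes into a $p_ip_j$-part (coefficients in $C^\infty(M)$), a $p_i\theta_a$-part and a $\theta_a\theta_b$-part, and double brackets with two sections of $A$ cannot see the $p_ip_j$-part in low rank — for a rank-one bundle the double bracket of $h(x)\,p_ip_j$ with $f_1(x)\xi$ and $g_1(x)\xi$ is proportional to $\xi\cdot\xi=0$, so a nonzero element pairs to zero against every such pair. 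The $p_ip_j$-component is precisely the defect of the function--function identity $[d_{A^\vee}f,g]_A=[f,d_{A^\vee}g]_A$, the $p_i\theta_a$-component the mixed identity, and the $\theta_a\theta_b$-component the section--section identity (these are the three identities recorded in the remark following the proposition). So either test against \emph{all} pairs of algebra generators of $C^\infty(A^\vee[1])$, i.e.\ pairs drawn from $C^\infty(M)\cup\Gamma(A)$, identifying each bigraded component with one compatibility defect, or check only the section--section pairs and then invoke the Kosmann-Schwarzbach/Mackenzie--Xu result that this single condition forces the other two; as written, your argument only establishes the vanishing of the $\theta\theta$-component and the equivalence in (v) does not yet follow.
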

\begin{rmk}
In classical terms, the compatibility condition between $d_{A^\ast}$ and $d_A$ can be expressed as follows: for $f,g \in C^\infty(M)$, $X,Y \in \Gamma(A)$,
\begin{align*}
[d_{A^\ast}(f),g]_A &=  [f,d_{A^\ast}(g)]_A ,\\
d_{A^\ast}[X,f]_A &= [d_{A^\ast}(X),f]_A + [X,d_{A^\ast}(f)]_A,\\
d_{A^\ast}[X,Y]_A &= [d_{A^\ast}(X),Y]_A + [X,d_{A^\ast}(Y)]_A,
\end{align*}
as a consequence of the Jacobi identity for $\{-,-\}$.
\end{rmk}
We will apply Theorem \ref{thm:mainthm} to formulate a stability criterion for fixed points of $(A^\ast, d_{A^\ast})$ (as in Definition \ref{def:liealgk}), only taking into account Lie algebroid structures which are compatible with $d_A$.
\subsubsection{The ingredients}
We check that we are in the setting of Assumptions \ref{ass:mainthmass}. Fix the Lie algebroid $(A,d_A)$.
\begin{itemize}
\item[i)]
Proposition \ref{prop:gradedcotangentproperties} now tells us that if we set \begin{align}\label{eq:liebialgk}
\mf g_{LbA} := C^\infty(T^\ast[2] A^\ast[1])^{(\geq 1,\geq 1)}[1,1],
\end{align}
we have a differential bigraded Lie algebra, with differential $\{\Pi_{d_A},-\}$ of bidegree $(1,0)$. Lie algebroid structures $d_{A^\ast}$ on $A^\ast$ such that $((A,d_A), (A^\ast,d_{A^\ast})) $ is a Lie bialgebroid are Maurer-Cartan elements of bidegree $(0,1)$. Note that because of the bigrading, the Maurer-Cartan equation
for $f_{d_{A^\ast}}\in C^\infty(T^\ast[2]A^\ast[1])[1,1]$
$$
\{\Pi_{d_A},f_{d_{A^\ast}}\} + \frac{1}{2}\{f_{d_{A^\ast}},f_{d_{A^\ast}}\}=0
$$
breaks up into two components: the bidegree $(1,1)$-part, which is $$\{\Pi_{d_A},f_{d_{A^\ast}}\} = 0,$$ and the bidegree $(0,2)$-part, which is
$$
\{f_{d_{A^\ast}},f_{d_{A^\ast}}\} = 0.
$$
We would like to apply the main theorem in this setting, but as it is formulated, it is not clear it can be applied, as we are \nrt{only interested in Maurer-Cartan elements} of bidegree $(0,1)$. The naive thing to do here would be to take the graded Lie algebra $\mf g_{LbA}$ by forgetting the bigrading and only caring about the total degree (where we restrict to those where the unshifted bidegree is at least $(1,1)$). We show that this works.

\item[ii)]
Let $p\in M$. We now define the Lie subalgebra $\mf g_{LbA}(p,k)$ corresponding to fixed points of order $k\geq\nrt{1}$. In bidegrees $(0,i)$ for $i = 0,1,2$ we see that $\mf g_{LbA}^{(0,i)} = \mf X(A^\ast[1])^i$. So we set 
\nrt{\begin{equation} \label{eq:liebialgksubalg1}
\mf g_{LbA}^{(0,i)}(p,k):= \mf X(A^\ast[1])^i_{p,i(k-1)+1},
\end{equation}}
as in Section \ref{sec:exampleLiealg}. The compatibility with the Lie algebroid structure on $A$ will be an extra condition on the cocycles in the quotient complex. Now set $\mf g_{LbA}^{(1,0)}(p,k) =\mf g_{LbA}^{(1,0)},\mf g_{LbA}^{(2,0)}(p,k) = \mf g_{LbA}^{(2,0)}$, and
\nrt{\begin{align}\label{eq:liebialgksubalg2}
\mf g_{LbA}^{(1,1)}(p,k) = \left\{\Pi\in \mf g_{LbA}^{(1,1)} \mid \{\{\Pi,f\},g\} \in I_p^{k-j}C^\infty(A^\ast[1])^j \forall f,g \in C^\infty(A^\ast[1])^{\leq 1},j = |f| + |g|\right\}.
\end{align}}

This defines a differential graded Lie subalgebra:
\begin{lem}
For $i = 0,1,2$, let $\mf g_{LbA}^i(p,k) = \bigoplus_{j = 0}^{i}\mf g_{LbA}^{(j,i-j)}(p,k)$.
Then
\begin{align*}
\{\mf g_{LbA}^0(p,k),\mf g_{LbA}^i(p,k)\} &\subset \mf g_{LbA}^i(p,k), \\\{\mf g_{LbA}^1(p,k),\mf g_{LbA}^1(p,k)\} &\subset \mf g_{LbA}^2(p,k).
\end{align*}
\end{lem}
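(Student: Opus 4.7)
The plan is to exploit the bidegree decomposition of $\mf g_{LbA}$. Since the Poisson bracket preserves bidegree (it has bidegree $(0,0)$ on $\mf g_{LbA}$ after the shift by $[1,1]$), I would break the verification into cases indexed by the bidegrees of the two arguments. The components $\mf g_{LbA}^{(j,0)}(p,k) = \mf g_{LbA}^{(j,0)}$ for $j=1,2$ carry no condition, so any bracket landing there is automatic. The components $\mf g_{LbA}^{(0,i)}(p,k) = \mf X^i_{p,i(k-1)+1}(A^\vee[1])$ together with $\mf g_{LbA}^{(0,0)}(p,k)$ form a graded Lie subalgebra under $\{-,-\}$ by direct application of lemma \ref{lem:commcompat}, using that the Poisson bracket restricts to the vector field commutator on these pieces (proposition \ref{prop:gradedcotangentproperties}iii). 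This reduces the verification to the two nontrivial cases involving the bidegree $(1,1)$ component: $\{\mf g_{LbA}^{(0,0)}(p,k),\mf g_{LbA}^{(1,1)}(p,k)\}\subset \mf g_{LbA}^{(1,1)}(p,k)$, and $\{\mf g_{LbA}^{(0,1)}(p,k), \mf g_{LbA}^{(1,0)}\} \subset \mf g_{LbA}^{(1,1)}(p,k)$.

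For the first case, given $X\in \mf g_{LbA}^{(0,0)}(p,k) = \mf X^0_{p,1}(A^\vee[1])$ and $\Pi\in \mf g_{LbA}^{(1,1)}(p,k)$, I would check the condition directly on $\{X,\Pi\}$. Applying the graded Jacobi identity twice and using that $\{X,-\}$ agrees with $X$ acting as a derivation on $C^\infty(A^\vee[1])$ (proposition \ref{prop:gradedcotangentproperties}vi) rewrites, for $f,g\in C^\infty(A^\vee[1])^{\leq 1}$ with $j = |f|+|g|$,
\[
\{\{\{X,\Pi\},f\},g\} \;=\; \pm\,X\!\bigl(\{\{\Pi,f\},g\}\bigr) \,\pm\, \{\{\Pi,X(f)\},g\} \,\pm\, \{\{\Pi,f\},X(g)\}.
\]
The last two terms land in $I_p^{k-j}C^\infty(A^\vee[1])^j$ by the defining condition on $\Pi$, since $X(f)$ and $X(g)$ have the same degrees as $f$ and $g$. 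For the first term, any degree $0$ derivation of $C^\infty(A^\vee[1])$ sending $C^\infty(M)$ into $I_p$ also sends $I_p^r C^\infty(A^\vee[1])^j$ into itself by the Leibniz rule, so $X\!\bigl(\{\{\Pi,f\},g\}\bigr)$ lies in $I_p^{k-j}C^\infty(A^\vee[1])^j$ as well.

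For the second case, let $\delta\in \mf g_{LbA}^{(0,1)}(p,k) = \mf X^1_{p,k}(A^\vee[1])$ and $P\in \mf g_{LbA}^{(1,0)}$; by proposition \ref{prop:gradedcotangentproperties}vii, $P$ encodes a Schouten-type bracket $[f,g]_P := (-1)^{|f|-1}\{\{P,f\},g\}$ on $C^\infty(A^\vee[1])$. Expanding by Jacobi gives
\[
\{\{\{\delta,P\},f\},g\} \;=\; \pm\,\delta\!\bigl([f,g]_P\bigr) \,\pm\, [\delta(f),g]_P \,\pm\, [f,\delta(g)]_P.
\]
Using the explicit vanishing orders $\delta(C^\infty(M))\subset I_p^k\,\Gamma(A^\vee[1])$ and $\delta(\Gamma(A^\vee[1])) \subset I_p^{k-1}\,\Gamma(S^2(A^\vee[1]))$, and noting that $P$, of unshifted bidegree $(2,1)$, defines a first-order bidifferential operator and hence decreases the order of vanishing at $p$ by at most $1$ on arguments of degree $\leq 1$, each of the three terms is seen to lie in $I_p^{k-j}C^\infty(A^\vee[1])^j$, which yields the desired inclusion into $\mf g_{LbA}^{(1,1)}(p,k)$.

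The main obstacle will be the bookkeeping in the second case: one must verify that $P$ does not reduce the vanishing order at $p$ by more than one on any pair of arguments of total degree at most $1$. The key algebraic input here is that $P$ is a first-order bidifferential operator of total degree $1$, so for $f,g$ of degrees $|f|,|g|\leq 1$ the elementary bracket $[f,g]_P$ is an at-most-first-order expression in each of $f,g$. The remaining verification is a routine sign-tracking exercise using graded Jacobi.
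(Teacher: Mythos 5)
Your reduction by bidegree is exactly what is needed (the paper states this lemma without proof, and the intended argument is the one you outline): the bracket is of bidegree $(0,0)$ on the shifted complex, the components $\mf g_{LbA}^{(j,0)}$ carry no condition, the purely vertical components are handled by lemma \ref{lem:commcompat}, and your treatment of $\{\mf g_{LbA}^{(0,0)}(p,k),\mf g_{LbA}^{(1,1)}(p,k)\}$ is correct, including the observation that a degree $0$ derivation $X$ with $X(C^\infty(M))\subset I_p$ preserves $I_p^r C^\infty(A^\vee[1])^j$.

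There is, however, a gap in the second case, $\{\mf g_{LbA}^{(0,1)}(p,k),\mf g_{LbA}^{(1,0)}\}\subset \mf g_{LbA}^{(1,1)}(p,k)$: the estimate you invoke, namely that $[-,-]_P$ ``decreases the order of vanishing at $p$ by at most $1$'', is not strong enough to produce the exponent $k-j$ required by the definition of $\mf g_{LbA}^{(1,1)}(p,k)$. Take $j=0$, i.e.\ $f,g\in C^\infty(M)$: the term $[\delta(f),g]_P$ has $\delta(f)\in I_p^k\Gamma(A[-1])$, and ``lose at most one order'' only gives $I_p^{k-1}$, while the definition demands $I_p^{k}$. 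The same shortfall occurs in the mixed cases, e.g.\ $|f|=1$, $|g|=0$, where $\delta(f)\in I_p^{k-1}\Gamma(S^2(A[-1]))$ and the crude bound gives $I_p^{k-2}$ instead of the required $I_p^{k-1}$. What saves the argument is a sharper fact than first-order-ness: since $[h,g]_P\in C^\infty(A^\vee[1])^{-1}=0$ for $h,g\in C^\infty(M)$ (degree reasons), the operation $[-,g]_P$ is $C^\infty(M)$-\emph{linear} in its other slot whenever $g$ is a function on $M$; equivalently, the loss of one order of vanishing can only be charged to arguments of degree $1$, never to degree $0$ arguments. With this refinement, writing $\delta(f)=\sum u\,\omega$ with $u\in I_p^k$ (resp.\ $I_p^{k-1}$) one gets $[\delta(f),g]_P\in I_p^{k}C^0$ (resp.\ $I_p^{k-1}C^1$), and all remaining degree combinations, as well as the term $\delta([f,g]_P)$, do follow from the vanishing orders of $\delta$ together with the bound you state. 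So the architecture of your proof is right, but the stated ``key algebraic input'' must be replaced by this per-argument statement; as given, it does not close the $j=0$ and mixed cases.
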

\begin{rmk}
The only space \nrt{we have not encountered yet} is $\mf g_{LbA}^{(1,1)}/\mf g_{LbA}^{(1,1)}(p,k)$. By picking an open neighborhood of $p \in M$ over which $A$ trivializes, we see that
\begin{align*}
\mf g_{LbA}^{(1,1)}/\mf g_{LbA}^{(1,1)}(p,k) \cong J_p^{k-1}(S^2(TM)) &\oplus J_p^{k-2}(\Hom(A\otimes T^\ast M, A)) \\&\oplus J_p^{k-3}(\Hom(S^2(A[1]),S^2(A[1]))).
\end{align*}
\end{rmk}
\item[iii)] The splittings exist for the same reason as for Lie algebroids.
\item[iv)] Now pick a Lie algebroid structure $d_{A^\ast}$ on $A^\ast$ such that $((A,d_A),(A^\ast,d_{A^\ast}))$ is a Lie bialgebroid structure, and $p\in M$ is a fixed point of order $k$ \nrt{of $d_{A^\ast}$}.
\end{itemize}
We now check that the data satisfies the Assumptions \ref{ass:mainthmass}.
\begin{itemize}
\item[a)] We pick the following topologies:
\begin{itemize}
    \item[-] On $\mf g^0_{LbA}$, we pick the $C^\infty$-topology,
    \item[-] On $\mf g^1_{LbA}$, we pick the $C^{2k-1}$-topology,
    \item[-] On $\mf g^2_{LbA}$, we pick the $C^{2k-2}$-topology.
\end{itemize}
\item[b)] As the $(2k-2)$-jet of $\{\Pi_A,X\}$ depends linearly on the $(2k-1)$-jet of $X$, $\{\Pi_A,-\}$ is continuous.
\item[c)] The continuity of $[-,-]:\mf g^1_{LbA}\times \mf g_{LbA}^1 \to \mf g_{LbA}^2$ holds for the same reasons as in Section \ref{sec:exampleLiealg}.
\item[d)]
It remains to understand the gauge action. It is clear that $\mf g_{LbA}^{(0,0)} \cong CDO(A)$. Take $X\in \mf g_{LbA}^{(0,0)}$ and $(Q,\Pi)\in \mf g_{LbA}^1 = \mf g_{LbA}^{(0,1)} \oplus \mf g_{LbA}^{(1,0)}$.\\
The gauge equation then becomes
$$
\frac{d}{dt}(Q_t,\Pi_t) = (\{X,Q_t\}, \{X,\Pi_t\} -\{\Pi_A,X\}), \quad(Q_0,\Pi_0) = (Q,\Pi).
$$
For the purposes of interpreting the main theorem it is sufficient to consider only the first component: as $\mf g_{LbA}^{(1,0)}(p,k) =\mf g_{LbA}^{(1,0)}$\nrt{,} the second component of the conclusion holds no information. The first component however, is simply the same as the gauge action for ordinary Lie algebroids: the take-away message is that $$(Q_t,\Pi_t)\in \mf g_{LbA}^1(p,k) \iff \phi_{ t}^{\sigma(X)}(p) \text{ is a fixed point of $Q_t$}.$$ Here $\sigma(X)\in \mf X(M)$ is the symbol of the differential operator $X$. Consequently, an open neighborhood of $0 \in \mf g_{LbA}^0/\mf g_{LbA}^0(p,k) \cong T_pM$ corresponds to an open neighborhood of $p \in M$.
\item[e)] Lemma \ref{lem:uniqueness} implies that the gauge action preserves Maurer-Cartan elements.

\end{itemize}
\begin{rmk}\label{rmk:concrcohom}

\begin{itemize}
    \item[]
\item[i)] When $k = 1$, the cohomology group $H^1\left(\mf g_{LbA}/\mf g_{LbA}(p,1), \overline{\{\Pi_{d_A},-\}+\{f_{d_{A^\ast}}\}}\right)$ is a subspace of $H^1_{CE}( A_p^\ast,T_pM)$, which is the cohomology group appearing in Theorem \ref{thm:liealgd1}: the coboundaries remain unchanged, while the cocycles are the Chevalley-Eilenberg cocycles $\alpha \in A_p \otimes T_pM$, for which $(\rho_{A,p}\otimes \text{id})(\alpha) \in T_p M\otimes T_p M$ is skew-symmetric. For general $k\geq 1$, there is an injective map
$$
H^1\left(\mf g_{LbA}/\mf g_{LbA}(p,k),\overline{\{\Pi_{d_A},-\}+\{f_{d_{A^\ast}},-\}}\right) \hookrightarrow H^1\left(\mf g_{LA}/\mf g_{LA}(p,k),\overline{\{{d_{A^\ast,-}}\}}\right),
$$
but for $k\geq 1$ the image is not as simple to describe. This is what we should expect: indeed, if a fixed point is stable for \emph{all} Lie algebroid structures, it should in particular be stable for a subclass of Lie algebroid structures.
\item[ii)] The approach taken here to obtain the right graded Lie algebra might seem indirect, and a seemingly more direct approach would be to replace $(C^\infty(T^\ast[2]A^\ast[1]))^{(1,2)}$ by the kernel of the vertical differential $\{\Pi_{d_A},-\}$. In order to make sure this is well-defined, one would also have to restrict the functions in bidegree (1,1) to those which are in the kernel of the vertical differential. As these are not given by the sections of some vector bundle over $M$ in general, we would have less control over the objects we work with.
\end{itemize}
\end{rmk}

\subsubsection{Applying the main theorem}
Applying the main theorem to $\mf g = \mf g_{LbA}$, $\mf h = \mf g_{LbA}(p,k)$ \nrt{as in equations \eqref{eq:liebialgk}, \eqref{eq:liebialgksubalg1}, \eqref{eq:liebialgksubalg2}} yields:
\begin{thm}\label{thm:liebialg}
Let $(A,d_A)$ be a Lie algebroid over $M$, and $(A^\ast,d_{A^\ast})$ a Lie algebroid defined on the dual vector bundle such that $((A,d_A),(A^\ast,d_{A^\ast}))$ is a Lie bialgebroid. Let $p \in M$ be a fixed point of order $k$ for $k \geq 0$ of $d_{A^\ast}$, \nrt{as in Definition \ref{def:liealgk}}. Assume that
$$
H^1\left(\mf g_{LbA}/\mf g_{LbA}(p,k), \overline{\{\Pi_{d_A},-\}+\{f_{d_{A^\ast}},-\}}\right) = 0,
$$ 
where $\Pi_{d_A}$, $f_{d_{A^\ast}}$ are as in Proposition \ref{prop:gradedcotangentproperties}. Then for every open neighborhood $U$ of $p\in M$, there exists a $C^{2k-1}$-neighborhood $\mc U$ of $d_{A^\ast} \in \mf g_{LbA}^1$ such that for any Lie algebroid structure $Q \in \mc U$ compatible with $d_A$, there is a family $I$ in $U$ of fixed points of order $k$ of $Q$ parametrized by an open neighborhood of $$0 \in H^0(\mf g_{LbA}/\mf g_{LbA}(p,k),\overline{\{\Pi_{d_A},-\}+\{f_{d_{A^\ast}},-\}}).$$
\end{thm}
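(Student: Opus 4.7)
The plan is a direct application of Theorem \ref{thm:mainthm} with $\mf g = \mf g_{LbA}$, $\mf h = \mf g_{LbA}(p,k)$, differential $\partial = \{\Pi_{d_A},-\}$, bracket $\{-,-\}$, and Maurer-Cartan element $Q = f_{d_{A^\vee}}\in\mf h^1$. The preceding paragraphs have already carried out the bookkeeping: they verify that $\mf h$ is a differential graded Lie subalgebra, that $\mf g^i/\mf h^i$ is finite-dimensional for $i=0,1,2$ after shrinking to a trivializing chart around $p$, and that splittings $\sigma_0,\sigma_1$ exist by lifting jets to polynomial sections. Items (a)--(e) of Assumptions \ref{ass:mainthmass} are checked with the $C^\infty$ topology on $\mf g^0$, the $C^{2k-1}$ topology on $\mf g^1$ and the $C^{2k-2}$ topology on $\mf g^2$; preservation of the Maurer-Cartan property under the gauge action follows from Lemma \ref{lem:uniqueness}. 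Thus all hypotheses of the main theorem are in place, and together with the assumed vanishing of $H^1$ it supplies, for each open neighborhood $V$ of $0\in \mf g_{LbA}^0/\mf g_{LbA}^0(p,k)\cong T_pM$, an open neighborhood $\mc U\subset MC(\mf g_{LbA})$ of $Q$ such that every $Q'\in\mc U$ admits a family of elements $v\in V$, parametrized by a neighborhood of $0\in H^0$, with $(Q')^{\sigma_0(v)}\in\mf h^1$.

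The remaining work is to translate this abstract conclusion into the geometric statement about Lie algebroid structures on $A^\vee$ compatible with $d_A$. A genuine such structure $Q' = f_{d'_{A^\vee}}$ is of pure bidegree $(0,1)$, and the bidegree decomposition splits its total Maurer-Cartan equation into the compatibility $\{\Pi_{d_A},Q'\}=0$ in bidegree $(1,1)$ and the Jacobi-type identity $\tfrac12\{Q',Q'\}=0$ in bidegree $(0,2)$; hence such $Q'$ genuinely lie in $MC(\mf g_{LbA})$, and restricting $\mc U$ to this subclass gives the claimed $C^{2k-1}$-neighborhood. The gauge action in the $(0,1)$-sector is exactly the pullback of $Q'$ along a vector bundle automorphism of $A^\vee$ covering the time-$1$ flow of the symbol of $\sigma_0(v)$, as noted in item (d) of the setup, so the condition $(Q')^{\sigma_0(v)}\in\mf g_{LbA}^{(0,1)}(p,k)$ is equivalent to the image of $p$ under that flow being a fixed point of order $k$ of $Q'$. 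As $v$ ranges over the parametrizing family, these images trace out the desired family in $U$ of fixed points of order $k$ of $Q'$.

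The main obstacle, as flagged in Remark \ref{rmk:concrcohom}(ii), is really the choice of ambient graded Lie algebra: a naive attempt to work with the vertical kernel of $\{\Pi_{d_A},-\}$ inside $\mf X(A^\vee[1])$ would not yield sections of a vector bundle, obstructing both the finite-dimensionality of $\mf g/\mf h$ in low degrees and the topological requirements on the bracket. Working with the full bigraded $\mf g_{LbA}$ and only restricting to bidegree $(0,1)$ at the end of the argument sidesteps this while still producing exactly the class of MC elements we care about; all other verifications reduce to minor variants of the Lie algebroid case treated in Section \ref{sec:exampleLiealg}, with the extra bidegree $(1,0)$ component playing no role because $\mf g_{LbA}^{(1,0)}(p,k) = \mf g_{LbA}^{(1,0)}$.
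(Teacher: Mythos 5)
Your proposal is correct and follows essentially the same route as the paper: Theorem \ref{thm:liebialg} is obtained there precisely by feeding $\mf g_{LbA}$, $\mf g_{LbA}(p,k)$, $\partial=\{\Pi_{d_A},-\}$ and $Q=f_{d_{A^\vee}}$ into Theorem \ref{thm:mainthm}, with the bidegree splitting of the Maurer--Cartan equation and the fact that only the $(0,1)$-component of the gauge-transformed element carries information (since $\mf g_{LbA}^{(1,0)}(p,k)=\mf g_{LbA}^{(1,0)}$) handled exactly as in your translation step. Your reinterpretation of the gauge action as moving the fixed point along the flow of the symbol matches item d) of the paper's setup, so nothing is missing.
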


\subsubsection{Poisson manifolds as Lie bialgebroid structures}\label{sssec:Poissonliebialg}
In this section we apply Theorem \ref{thm:liebialg} to the case where $A = TM$ with its standard Lie algebroid structure $d_A = d_{dR}$. In this case, Lie algebroid structures on $T^\ast M$ compatible with $d_{dR}$ are in bijection with Poisson structures, as was pointed out in \cite[Corollary 5.3]{Roytenberg2002}. \nrt{We therefore obtain a result for higher order singularities of Poisson structures, which we show to be equivalent to \cite[Theorem 1.2]{dufour2005stability} (Corollary \ref{cor:poissonasliebi}).} \nrt{First, we} briefly sketch the correspondence \nrt{between Lie algebroid structures on $T^\ast M$ compatible with the $d_{dR}$ and Poisson structures on $M$.}
\begin{lem}\label{lem:liebialgTM}
Let $A = TM$, with $d_A = d_{dR}$ being the standard Lie algebroid structure on $TM$. Then a Lie algebroid structure $d_{T^\ast M}$ on $T^\ast M$ such that $((TM,d_{dR}),(T^\ast M,d_{T^\ast M}))$ is a Lie bialgebroid structure is equivalent to a Poisson structure on $M$.
\end{lem}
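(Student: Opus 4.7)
The plan is to identify a Lie algebroid structure $d_{T^\ast M}$ on $T^\ast M$ with a degree $1$ derivation of the Gerstenhaber algebra $(\mf X^\bullet(M),\wedge,[-,-]_{SN})$ that squares to zero, and to show that such a derivation is \emph{inner}, i.e.\ of the form $[\pi,-]_{SN}$ for a Poisson bivector $\pi$.

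For the direction from Poisson structures to Lie bialgebroids: given $\pi\in\mf X^2(M)$ with $[\pi,\pi]_{SN}=0$, the operator $d_{T^\ast M}:=[\pi,-]_{SN}$ is a degree $1$ derivation of both $\wedge$ and $[-,-]_{SN}$ on $\mf X^\bullet(M)$, and it squares to zero by the graded Jacobi identity together with $[\pi,\pi]_{SN}=0$. This is precisely a $Q$-structure on $T^\ast M[1]$, hence a Lie algebroid structure on $T^\ast M$, whose anchor is $\pi^\sharp$ and whose bracket on $1$-forms is the Koszul bracket. Compatibility with $d_{dR}$ in the Lie bialgebroid sense amounts to $d_{T^\ast M}$ being a derivation of $[-,-]_{SN}$, which by properties v) and vii) of Proposition~\ref{prop:gradedcotangentproperties} together with the graded Jacobi identity for $\{-,-\}$ is equivalent to $\{f_{d_{T^\ast M}},\Pi_{d_{dR}}\}=0$.

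For the converse: suppose $d_{T^\ast M}$ is a Lie bialgebroid partner of $(TM,d_{dR})$. Property vi) realises $d_{T^\ast M}$ as a degree $1$ derivation of $\wedge$ on $\mf X^\bullet(M)$ that squares to zero. The compatibility condition combined with property vii) and graded Jacobi for $\{-,-\}$ ensures $d_{T^\ast M}$ is additionally a derivation of $[-,-]_{SN}$. The key step is then to show that any such derivation is inner. I would define $\pi\in\mf X^2(M)$ by $\pi(df,dg):=d_{T^\ast M}(f)(g)$; applying the bracket-derivation property to $[f,g]_{SN}=0$ together with the graded antisymmetry of the Schouten bracket yields $d_{T^\ast M}(f)(g)=-d_{T^\ast M}(g)(f)$, so $\pi$ is a well-defined skew bivector field. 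By construction $d_{T^\ast M}$ and $[\pi,-]_{SN}$ agree on $C^\infty(M)$; to extend the equality to $\mf X(M)$, I would apply the bracket-derivation rule to $[X,f]_{SN}=X(f)$ together with graded Jacobi, arriving at $[d_{T^\ast M}(X)-[\pi,X]_{SN},f]_{SN}=0$ for every $X\in\mf X(M)$ and every $f\in C^\infty(M)$. Since a bivector that Schouten-commutes with every function vanishes (in coordinates, $[P,f]_{SN}=P^{ij}(\partial_i f)\partial_j$), this forces $d_{T^\ast M}(X)=[\pi,X]_{SN}$; as derivations of $\wedge$ are determined by their values on the generators $C^\infty(M)\oplus\mf X(M)$, we conclude $d_{T^\ast M}=[\pi,-]_{SN}$ throughout $\mf X^\bullet(M)$. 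Finally, $d_{T^\ast M}^2=0$ translates via graded Jacobi into $[[\pi,\pi]_{SN},-]_{SN}=0$; since the center of the Schouten bracket is $C^\infty(M)$, the trivector $[\pi,\pi]_{SN}$ must vanish, so $\pi$ is Poisson. The main obstacle is the innerness step, which is essentially a rigidity statement for the Gerstenhaber algebra of multivector fields.
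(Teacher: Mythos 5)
Your argument is correct and is essentially the paper's own (sketched) proof: the paper defines the same bivector, $\pi(f,g)=[d_{T^\ast M}(f),g]_{SN}$, and asserts exactly the skew-symmetry, the identity $d_{T^\ast M}=[\pi,-]_{SN}$ and $[\pi,\pi]=0$ that you verify in detail via the derivation property and $d_{T^\ast M}^2=0$. One harmless slip: the centre of the Schouten algebra consists only of locally constant functions (since $[f,X]_{SN}=-X(f)$), not all of $C^\infty(M)$; your conclusion still follows because $[\pi,\pi]$ is a trivector and $[T,f]_{SN}=\iota_{df}T$ shows any central multivector field of positive degree vanishes.
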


\begin{proof}[Sketch of proof]
Given a Poisson structure $\pi\in \mf X^2(M)$, the usual Lie algebroid structure on $T^\ast M$ is compatible with the de Rham differential. Conversely, given any Lie algebroid structure $d_{T^\ast M}$ on $T^\ast M$ compatible with $d_{dR}$, the map
$$
\pi_{d_{T^\ast M}}:C^\infty(M)\times C^\infty(M) \to C^\infty(M)
$$
given by
$$
\pi_{d_{T^\ast M}}(f,g) := [d_{T^\ast M}(f),g]
$$
is a Poisson structure for which the induced Lie algebroid structure on $T^\ast M$ is $d_{T^\ast M}$. Here the bracket on the right hand side is the usual Schouten-Nijenhuis bracket.
\end{proof}
In particular, this implies that Theorem \ref{thm:liebialg} gives a result for Poisson manifolds. The rest of this section is dedicated to showing that this is in fact equivalent to the result obtained in \cite{dufour2005stability}. For this we show that the vanishing of the cohomologies are equivalent conditions.

Given a Poisson structure $\pi$ on $M$ such that its $(k-1)$-jet at $p\in M$ vanishes, consider the graded Lie subalgebra given by
$$
\bigoplus_{j =1}^nI_p^{1 + j(k-1)}\mf X^j(M),
$$
where $n = \dim (M)$. 
Using the arguments from Section \ref{sec:comparison}, one can show that the cohomological assumption in \cite[Theorem 1.2]{dufour2005stability} can be restated as follows.
\begin{lem}
The vanishing of the cohomology in the hypothesis of \cite[Theorem 1.2]{dufour2005stability} is equivalent to 
$$
H^2( \mf X^\bullet(M)/I_p^{1+(\bullet-1)(k-1)} \mf X^\bullet(M),\overline{[\pi,-]}) = 0.
$$
\end{lem}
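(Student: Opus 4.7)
The plan is to mirror the argument of Section \ref{sec:comparison} in the Poisson-manifold setting: introduce a finite descending filtration on the quotient complex by order of vanishing at $p$, identify the associated graded with the (Lichnerowicz-type) complex of fiberwise-polynomial multivector fields on $T_pM$ computed from the $k$-th order Taylor expansion $\pi^{(k)}$ of $\pi$, and invoke a spectral-sequence / triangular-decomposition argument to conclude that the vanishing of $H^2$ of the filtered complex is equivalent to the vanishing of $H^2$ on each graded piece.

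First I would make the identifications explicit. For $t=0,\dots,k-1$ and $\bullet=1,2,3$ set
\[
F^t\bigl(\mf X^\bullet(M)/I_p^{1+(\bullet-1)(k-1)}\mf X^\bullet(M)\bigr)
:= I_p^{t+(\bullet-1)(k-1)}\mf X^\bullet(M)\,\big/\,I_p^{1+(\bullet-1)(k-1)}\mf X^\bullet(M),
\]
(with the convention that the filtration equals the full quotient for $t\le 0$ and $0$ for $t\ge k$). The Schouten bracket satisfies $[I_p^a\mf X^r,I_p^b\mf X^s]\subset I_p^{a+b-1}\mf X^{r+s-1}$, and since $\pi\in I_p^k\mf X^2$ the differential $[\pi,-]$ preserves this filtration. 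The associated graded in degree $\bullet$ consists of the spaces $I_p^{t+(\bullet-1)(k-1)}\mf X^\bullet(M)/I_p^{t+1+(\bullet-1)(k-1)}\mf X^\bullet(M)$, which, by choosing a coordinate chart at $p$, may be identified with homogeneous polynomial multivector fields of degree $t+(\bullet-1)(k-1)$ on $T_pM$. The induced differential on the graded piece is $[\pi^{(k)},-]$ (only the $k$-jet of $\pi$ contributes, because higher jets raise the vanishing order too much), which is precisely the Lichnerowicz differential attached to $\pi^{(k)}$.

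Next I would match this with the cohomology appearing in \cite[Theorem 1.2]{dufour2005stability}. Their hypothesis is formulated as vanishing of a finite list of cohomology groups $H^{2,t}_{\mathrm{poly}}(\pi^{(k)})$ for $t$ running over the range compatible with the degree-$2$ component of the quotient complex (i.e.\ $t=0,\dots,k-1$), and these groups are by definition the cohomology in bidegree $(2,t+k-1)$ of the Lichnerowicz complex of $\pi^{(k)}$ restricted to polynomial multivector fields. This is exactly the cohomology of the associated graded complex described above; the identification is the same as in the proof of Proposition 3.17 (in this excerpt), simply transcribed with $A=TM$, $d_A=d_{dR}$, and $\Pi_{d_A}$ replaced by the Schouten bracket on $\mf X^\bullet(M)$.

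Finally, I would conclude the equivalence. The implication "filtered vanishing $\Rightarrow$ graded vanishing" is standard: pick linear splittings of the short exact sequences
\[
0\to F^{t+1}\to F^t\to F^t/F^{t+1}\to 0
\]
at each cochain degree to decompose the filtered complex as a direct sum whose differential is block upper-triangular with the graded differentials on the diagonal. Conversely, if every graded $H^2$ vanishes, a descending induction on the filtration level $t$ - identical to the $`\,\Longleftarrow\,`$ argument in Proposition 3.16 - lifts any cocycle step by step into a coboundary. Since the filtration is finite the induction terminates, yielding vanishing of the $H^2$ of the filtered complex. The only real bookkeeping point is to verify the shift conventions so that the $k$-th order vanishing of $\pi$ makes $[\pi,-]$ send $F^t$ in degree $\bullet$ into $F^t$ in degree $\bullet+1$; this is a direct consequence of the Schouten estimate above, and I expect it to be the only place where care is required.
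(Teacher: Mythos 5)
Your strategy is exactly the one the paper intends: the lemma is stated with the remark that it follows from the reasoning of Section \ref{sec:comparison}, i.e.\ a finite filtration by vanishing order at $p$, identification of the associated graded with the polynomial Lichnerowicz complex of $\pi^{(k)}$ so as to match the groups of \cite{dufour2005stability}, and the equivalence of total and graded vanishing via the triangular-splitting and descending-induction arguments. So the route and all the conceptual ingredients are the same as the paper's.

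There is, however, a concrete indexing error in the filtration you wrote down, and as written it fails. For $\bullet=2$ your formula gives $F^{0}=I_p^{k-1}\mf X^2(M)/I_p^{k}\mf X^2(M)$ and $F^{1}=I_p^{k}\mf X^2(M)/I_p^{k}\mf X^2(M)=0$, so the filtration never exhausts the middle term $\mf X^2(M)/I_p^{k}\mf X^2(M)$ (and similarly in degree $\bullet=3$); consequently the associated graded is not the list of homogeneous pieces you describe, and the descending induction cannot reach cocycles of low vanishing order. Translating the filtration of Section \ref{sec:comparison} through the shift between $\mf X^{j}(A[1])$ and $\mf X^{j+1}(M)$, the correct choice is $F^{t}=I_p^{\,t+(\bullet-2)(k-1)}\mf X^\bullet(M)/I_p^{\,1+(\bullet-1)(k-1)}\mf X^\bullet(M)$ for $\bullet=2,3$ (full, respectively zero, in degree $\bullet=1$ as in the paper); your Schouten estimate together with $\pi\in I_p^{k}\mf X^2(M)$ shows this filtration is preserved by $\overline{[\pi,-]}$. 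Then the graded pieces in the middle degree are $I_p^{t}\mf X^2(M)/I_p^{t+1}\mf X^2(M)$, i.e.\ bivector fields with homogeneous coefficients of degree $t$ (not $t+k-1$), paired with trivector fields of degree $t+k-1$ in the top degree, and it is this bigrading that matches the groups $H^{2,t}$ of \cite{dufour2005stability} for $t=0,\dots,k-1$; your statement about the bidegree inherits the same off-by-$(k-1)$ slip. With this re-indexing, the rest of your argument — only $\pi^{(k)}$ surviving on the graded pieces, the block upper-triangular decomposition for one implication, and the descending induction for the other — goes through verbatim as in the two propositions of Section \ref{sec:comparison}.
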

The cohomology $H^1(\mf g_{LbA}/\mf g_{LbA}(p,k),\overline{\{\Pi_{d_{dR}},-\}+\{f_{[\pi,-]},-\}})$ from Theorem \ref{thm:liebialg} when applied to $A =TM$ with the standard Lie algebroid structure is isomorphic to the cohomology stated in the lemma:
\begin{prop}
Let $\pi\in \mf X^2(M)$ be as above. The injective differential graded Lie algebra map $H:(\mf X^\bullet(M)[1], [\pi,-]) \to (\mf g^{\bullet}_{LbA},\{\Pi_{d_{dR}},-\} + \{f_{[\pi,-]},-\})$
\[
\begin{tikzcd}
\mf X(M)\arrow{d}{[\pi,-]} \arrow{rr}{ f_{[\bullet,-]}}&& \mf g_{LbA}^0\arrow{d}{\kolomtwee{\{f_{[\pi,-]},-\}} {\{\Pi_{d_{dR}},-\})}}\\
\mf X^2(M) \arrow{dd}{[\pi,-]} \arrow{rr}{ \kolomtwee{f_{[\bullet,-]}} 0}&& \mf g_{LbA}^{(0,1)}\oplus \mf g_{LbA}^{(1,0)}\arrow{dd}{\tweedrie {\{f_{[\pi,-] },-\}} 0  {\{\Pi_{d_{dR}},-\}} {\{f_{[\pi,-]},-\}} 0 {\{\Pi_{d_{dR}},-\}}}\\\\
\mf X^3(M) \arrow{rr}{\kolomdrie{f_{[\bullet,-]}} 0 0}&& \mf g_{LbA}^{(0,2)} \oplus \mf g_{LbA}^{(1,1)} \oplus \mf g_{LbA}^{(2,0)}
  \end{tikzcd}
\]
descends to an injective chain map $$\overline{H}:(\mf X^\bullet(M)[1]/I_p^{1 + (\bullet-1)(k-1)} \mf X(M)[1], \overline{[\pi,-]}) \to (\mf g^{\bullet}_{LbA}/\mf g^{\bullet}_{LbA}(p,k),\overline{\{\Pi_{d_{dR}},-\} + \{f_{d_{T^\ast M}},-\}}):$$
\begin{equation}\label{diag:poisliebicomp}
\begin{tikzcd}
\mf X(M)/I_p\mf X(M)\arrow{d}{\overline{[\pi,-]}} \arrow{r}{\overline{f_{[\bullet,-]}}}& \mf g_{LbA}^0/\mf g_{LbA}^0(p,k)\arrow{d}{\overline{\{f_{[\pi,-]},-\}}}\\
\mf X^2(M)/I_p^k \mf X^2(M) \arrow{dd}{\overline{[\pi,-]}} \arrow{r}{\overline{f_{[\bullet,-]}}}& \mf g_{LbA}^{(0,1)}/\mf g_{LbA}^{(0,1)}(p,k) \arrow{dd}{\kolomtwee{\overline{\{f_{[\pi,-]},-\}}} {\overline{\{\Pi_{d_{dR}},-\}}} }\\\\
\mf X^3(M)/I_p^{2k-1}\mf X^3(M) \arrow{r}{\kolomtwee {\overline{f_{[\bullet,-]}}} 0}& \mf g_{LbA}^{(0,2)}/\mf g_{LbA}^{(0,2)}(p,k) \oplus \mf g_{LbA}^{(1,1)}/\mf g_{LbA}^{(1,1)}(p,k)
\end{tikzcd}.
\end{equation}
 Moreover, the top row of \eqref{diag:poisliebicomp} is an isomorphism and the middle row is an isomorphism when restricted to cocycles. Consequently, the induced map on middle cohomology is an isomorphism.
\end{prop}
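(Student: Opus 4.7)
The plan is to verify that $\overline H$ is well-defined and injective, that the top row is an isomorphism, and that the middle row is surjective onto cocycles; the isomorphism on middle cohomology will then follow by a diagram chase.

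That $H$ is a DGLA map is essentially the content of properties v) and vii) of proposition \ref{prop:gradedcotangentproperties}: a multivector field $\pi'$ is sent to the Hamiltonian $f_{[\pi',-]}$ of its Schouten derivation, and the identity in vii) together with the graded Jacobi identity for $\{-,-\}$ translate $[\pi,-]$ on the left into $\{f_{[\pi,-]},-\}$ on the right. The $\{\Pi_{d_{dR}},-\}$-component acting on the image of $H$ vanishes because every Schouten derivation $[\pi',-]$ Poisson-commutes with $\Pi_{d_{dR}}$ (by Jacobi for the Schouten bracket, via property vii). Injectivity is immediate from evaluating on functions, and descent to the filtered quotient is a direct check: if $\pi' \in I_p^{1+(q-1)(k-1)}\mf X^q(M)$, then the derivation $[\pi',-]$ satisfies both of the vanishing conditions defining $\mf g_{LbA}^{(0,q-1)}(p,k) = \mf X^{q-1}_{p,(q-1)(k-1)+1}(A^\vee[1])$, as verified in local coordinates for $q=2,3$.

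The top row is an isomorphism because both sides are canonically $T_pM$: on the left via $X \mapsto X(p)$, on the right via $\mf g_{LbA}^0 \cong CDO(T^\ast M)$ composed with the symbol map and evaluation at $p$; under these identifications $\overline H$ becomes the identity. For the middle row, injectivity follows from that of $H$. The crucial step is surjectivity onto cocycles. A class $\overline X \in \mf g_{LbA}^{(0,1)}/\mf g_{LbA}^{(0,1)}(p,k)$ is a cocycle iff its full differential lies in $\mf g_{LbA}^2(p,k)$, equivalently iff $\{\Pi_{d_{dR}},X\} \in \mf g_{LbA}^{(1,1)}(p,k)$ and $\{f_{[\pi,-]},X\} \in \mf g_{LbA}^{(0,2)}(p,k)$. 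The first condition says $X$ is an approximate Schouten derivation. One then constructs $\pi' \in \mf X^2(M)$ from the bundle map $T^\ast M \to TM$ defined by $X|_{C^\infty(M)}$ by antisymmetrizing; the approximate-derivation condition implies that this map is skew modulo sufficiently high powers of $I_p$, so antisymmetrizing loses only what already lies in $\mf g_{LbA}^{(0,1)}(p,k)$. A direct computation then shows $X - f_{[\pi',-]} \in \mf g_{LbA}^{(0,1)}(p,k)$, and the second cocycle condition for $X$ translates to $[\pi,\pi'] \in I_p^{2k-1}\mf X^3(M)$, making $\pi'$ itself a cocycle of the LHS.

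The isomorphism on middle cohomology then follows by a short chase: surjectivity on cocycles for the middle row gives surjectivity on cohomology, and injectivity on cohomology holds because any LHS cocycle whose image is a coboundary is, via the top-row isomorphism, itself the image of a coboundary. The main obstacle is the surjectivity argument for the middle row, where one must carefully match the order-of-vanishing indices between $\mf g_{LbA}^{(1,1)}(p,k)$ and $\mf g_{LbA}^{(0,1)}(p,k)$; this is done in the style of lemma \ref{lem:suffcrit} by writing everything in local coordinates centered at $p$ and extracting the bivector from the coefficient-bookkeeping.
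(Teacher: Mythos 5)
Your proposal follows essentially the same route as the paper: you extract the bivector from a cocycle by antisymmetrizing its action on functions (the paper's $\pi_\delta(f,g)=\tfrac12([\delta(f),g]-[\delta(g),f])$), use the $\{\Pi_{d_{dR}},-\}$-component of the cocycle condition to show the difference lies in $\mf g_{LbA}^{(0,1)}(p,k)$, and use the $\{f_{[\pi,-]},-\}$-component together with injectivity to conclude $[\pi,\pi']\in I_p^{2k-1}\mf X^3(M)$, finishing with the same top-row identification with $T_pM$. This matches the paper's argument, so the proposal is correct.
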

\begin{proof}
It is straightforward to see that the map descends to an injective chain map on the quotients. \\
Note that both spaces in the top row of \eqref{diag:poisliebicomp} can be identified with $T_p M$, and that the map is compatible with this identification.

As the middle map is injective and preserves the cocycles, it is sufficient to show that it is surjective on cocycles. Let $f_\delta + \mf g_{LbA}^{(0,1)}(p,k)$ be a cocycle on the right hand side for some $\delta \in \mf X^1(T^\ast M[1])$. Motivated by Lemma \ref{lem:liebialgTM}, define the bivector field on $M$ given by
$$
\pi_\delta(f,g) := \frac{1}{2}([\delta(f),g] - [\delta(g),f])
$$
for $f,g \in C^\infty(M)$.\\
Using the second component of the cocycle condition in \eqref{diag:poisliebicomp} stating that $\{\Pi_{d_{dR}},f_\delta\}\in \mf g^{(1,1)}_{LbA}(p,k)$, it follows that
$$
f_\delta - f_{[\pi_\delta ,-]} \in \mf g_{LbA}^{(0,1)} (p,k),
$$
so the class of $f_\delta$ lies in the image of $\overline{H_1}$.
It remains to show that $\pi_\delta + I_p^k \mf X^2(M)$ is a cocycle. As $$\{f_{[\pi,-]},f_{[\pi_\delta,-]}\} = f_{[[\pi,\pi_\delta],-]} \in \mf g_{LbA}^{(0,2)}(p,k).$$
Using injectivity of $H$, it follows that $[\pi,\pi_\delta] \in I_p^{2k-1}\mf X^3(M)$, concluding the proof.
\end{proof}
We therefore obtain:
\begin{cor}\label{cor:poissonasliebi}
Theorem \ref{thm:liebialg} applied to $(A,d_A) = (TM,d_{dR})$ is equivalent to \cite[Theorem 1.2]{dufour2005stability}.
\end{cor}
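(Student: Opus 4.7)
The plan is to chain together the identifications already established in this subsection, matching the hypotheses, the spaces of structures considered, and the conclusions of the two theorems.

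First, I would invoke Lemma \ref{lem:liebialgTM} to identify Lie algebroid structures $d_{T^\ast M}$ on $T^\ast M$ compatible with $d_{dR}$ with Poisson bivectors $\pi$ on $M$. Because the identifications $d_{T^\ast M}(f) = [\pi,f]$ and $\pi_{d_{T^\ast M}}(f,g) = [d_{T^\ast M}(f),g]$ (with $[-,-]$ the Schouten--Nijenhuis bracket) are first-order differential expressions, this bijection is a homeomorphism for the $C^{2k-1}$-topology on either side. Under it, the condition $f_{d_{T^\ast M}}\in\mf g_{LbA}^{(0,1)}(p,k)$, that is, $p$ is a fixed point of order $k$ in the Lie bialgebroid sense, translates into the condition that the $(k-1)$-jet of $\pi$ vanishes at $p$, and analogously for families of such points.

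Next, I would match the conclusions. Both Theorem \ref{thm:liebialg} applied with $A=TM$ and Theorem 1.2 of \cite{dufour2005stability} yield, for every open neighborhood $U$ of $p$, a $C^{2k-1}$-neighborhood of the original structure in which every structure admits a family of fixed points of order $k$ in $U$ parametrized by a neighborhood of the origin in the respective degree-$0$ cohomology. Under the bijection above the statements become literally the same, and the fact that the top row of the diagram in the preceding proposition is already an isomorphism shows that the parametrizing $H^0$'s coincide.

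Finally, for the cohomological hypotheses I would chain the two results already obtained in this subsection: the lemma reformulating the Dufour--Wade condition as $H^2(\mf X^\bullet(M)/I_p^{1+(\bullet-1)(k-1)}\mf X^\bullet(M),\overline{[\pi,-]}) = 0$, and the preceding proposition exhibiting $\overline{H}$ as an isomorphism on middle cohomology between this complex and $(\mf g_{LbA}/\mf g_{LbA}(p,k),\overline{\{\Pi_{d_{dR}},-\}+\{f_{d_{T^\ast M}},-\}})$. Composing these gives the equivalence of the cohomological hypotheses. The only real obstacle is bookkeeping rather than a substantive one: one needs to verify that the family produced by Theorem \ref{thm:liebialg} coincides point-by-point with the family produced in \cite{dufour2005stability}, which reduces to the observation from item d) of the preceding subsection that the gauge action by an element of $\mf g_{LbA}^{(0,0)}\cong CDO(TM)$ moves a fixed point exactly by the flow of its symbol, matching the mechanism underlying the family in \cite{dufour2005stability}.
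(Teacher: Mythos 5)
Your proposal is correct and follows essentially the same route as the paper: identify compatible Lie algebroid structures on $T^\ast M$ with Poisson bivectors via Lemma \ref{lem:liebialgTM}, treat the conclusions as matching under this bijection, and reduce the equivalence to the equality of cohomological hypotheses by combining the lemma recasting the Dufour--Wade condition as vanishing of $H^2$ of the quotient multivector-field complex with the proposition that $\overline{H}$ is an isomorphism on middle cohomology. The extra remarks you add (continuity of the correspondence and the gauge-action bookkeeping for the families) are refinements of points the paper passes over implicitly, not a different argument.
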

\subsubsection{Poisson manifolds with a Poisson hypersurface as Lie bialgebroid structures}\label{sec:bgeometry}
Here we apply Theorem \ref{thm:liebialg} to the case where $A = {}^\flat TM$ \nrt{is a b-tangent bundle.} 
Let $M$ be a smooth manifold, and let $Z\subset M$ be a smooth hypersurface. Denote by $A = {}^\flat TM$ the b-tangent bundle, with its standard Lie algebroid structure $d_A$ (see \cite{GUILLEMIN2014864} for details). Its sections are defined by
$$
\Gamma(A) :=\{X \in \mf X(M) \mid \left. X\right|_{Z} \in \mf X(Z)\},
$$
the anchor is the inclusion, which uniquely determines the bracket. For $M$ a manifold with boundary, the b-tangent bundle for $Z = \partial M$ was introduced in \cite{Melrose1993TheAI}. 

For $A = {}^\flat TM$ with the Lie algebroid structure as described above, we can characterize Lie algebroid structures $d_{A^\ast}$ on $A^\ast$ such that $((A,d_A),(A^\ast,d_{A^\ast}))$ is a Lie bialgebroid explicitly, and give a more direct description of the relevant cohomology. The analogue of Lemma \ref{lem:liebialgTM} holds in this setting, of which we omit the proof.
\begin{lem}
Let $A = {}^\flat TM$ with its standard Lie algebroid structure $d_A$. A Lie algebroid structure $d_{A^\ast}$ on $A^\ast$ such that $((A,d_A), (A^\ast,d_{A^\ast}))$ is a Lie bialgebroid is the same as a self-commuting section $\pi_{d_{A^\ast}} \in \Gamma(\wedge ^2 A)$. Moreover, self-commuting $\pi \in \Gamma(\wedge^2 A)$ are in bijection with Poisson structures on $M$, such that $Z$ is a Poisson submanifold.
\end{lem}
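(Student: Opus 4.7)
The plan is to establish the two claimed bijections by leveraging the key structural feature of $A = {}^\flat TM$: the anchor $\rho_A: {}^\flat TM \to TM$ is fibrewise injective, identifying $\Gamma(A)$ with the subsheaf of $\mf X(M)$ consisting of vector fields tangent to $Z$. Consequently, the induced map $\wedge^k \rho_A : \Gamma(\wedge^k A) \to \mf X^k(M)$ is injective on fibres and intertwines the $A$-Schouten-Nijenhuis bracket with the usual Schouten-Nijenhuis bracket, so that $[\pi,\pi]_A = 0$ if and only if $[\wedge^2 \rho_A(\pi), \wedge^2 \rho_A(\pi)]_{SN} = 0$.

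For the first bijection, the cleanest approach is through the graded symplectic formalism of proposition \ref{prop:gradedcotangentproperties}. In the forward direction, a section $\pi \in \Gamma(\wedge^2 A)$ corresponds to a bidegree-$(0,2)$ function on $T^\ast[2]A^\vee[1]$, and then $f_{d_{A^\vee}} := \{\Pi_{d_A}, \pi\}$ is a bidegree-$(1,2)$ element which by (iii) is $f_\delta$ for a unique degree-$1$ derivation $\delta =: d_{A^\vee}$ on $A^\vee[1]$. The compatibility equation $\{\Pi_{d_A}, f_{d_{A^\vee}}\} = 0$ follows immediately from the graded Jacobi identity and $\{\Pi_{d_A}, \Pi_{d_A}\} = 0$, while $\{f_{d_{A^\vee}}, f_{d_{A^\vee}}\} = 0$ follows from $[\pi,\pi]_A = 0$ via standard derived-bracket calculus. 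Unpacked, this is the Mackenzie-Xu Koszul-type construction
\[
\rho_{A^\vee}(\alpha) = \rho_A(\pi^\sharp(\alpha)), \qquad [\alpha,\beta]_{A^\vee} = \mc L^A_{\pi^\sharp(\alpha)} \beta - \iota_{\pi^\sharp(\beta)} d_A \alpha - d_A \pi(\alpha,\beta).
\]
Conversely, given a Lie bialgebroid $((A,d_A),(A^\vee,d_{A^\vee}))$, I would set $\pi(f,g) := \rho_A(d_{A^\vee}(f))(g)$ for $f,g \in C^\infty(M)$. Skew-symmetry is equivalent to $[d_{A^\vee}(\cdot),\cdot]_A$ being skew, which follows from the Lie bialgebroid compatibility in proposition \ref{prop:gradedcotangentproperties}(v) combined with the Leibniz rule encoded in (vi). Because $d_{A^\vee}(f) \in \Gamma(A)$ is tangent to $Z$, the bivector $\pi$ actually takes values in $\wedge^2 A$.

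For the second bijection, the correspondence is $\pi \mapsto \tilde\pi := \wedge^2 \rho_A(\pi) \in \mf X^2(M)$. Fibrewise injectivity of $\rho_A$ gives injectivity of the map, and by the remark above, $\pi$ is self-commuting iff $\tilde\pi$ is Poisson. At each $p \in Z$ we have $\rho_A(A_p) = T_p Z$, so $\tilde\pi_p \in \wedge^2 T_p Z$, which is precisely the condition $\tilde\pi_p^\sharp(T_p^\ast M) \subset T_p Z$ that $Z$ is a Poisson submanifold. Conversely, given a Poisson bivector $\tilde\pi$ on $M$ with $Z$ as a Poisson submanifold, I would pick local coordinates $(z, x^1, \dots, x^{n-1})$ with $Z = \{z = 0\}$; writing $\tilde\pi = \sum a^{ij} \partial_{x^i} \wedge \partial_{x^j} + \sum b^i \partial_z \wedge \partial_{x^i}$, the Poisson-submanifold condition $\tilde\pi^\sharp(dz)|_Z = 0$ forces $b^i|_Z = 0$, so $b^i = z \tilde b^i$ and $\tilde\pi$ is a smooth section of $\wedge^2 {}^\flat TM$ in the b-frame $\{z\partial_z, \partial_{x^1}, \dots, \partial_{x^{n-1}}\}$.

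The main technical obstacle is the sign-careful check that $\{f_{d_{A^\vee}}, f_{d_{A^\vee}}\} = 0$ follows from $[\pi,\pi]_A = 0$ in the first bijection; this is a standard consequence of Voronov's/Kosmann-Schwarzbach's derived bracket formalism applied to the Maurer-Cartan element $\Pi_{d_A}$, but writing it out explicitly requires care with the shifted graded signs. A secondary subtlety is verifying smoothness of the inverse $\tilde\pi \mapsto \pi$ in the second bijection, which is handled by the local b-coordinate computation sketched above.
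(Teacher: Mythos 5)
Your overall strategy is the right one and matches the route the paper intends (the paper omits the proof of this lemma, pointing to the sketch given for $A=TM$ in lemma \ref{lem:liebialgTM}: the converse via $\pi(f,g):=[d_{A^\vee}f,g]_A$, plus the $b$-specific lifting step). However, the justification you invoke repeatedly is false: the anchor of ${}^\flat TM$ is \emph{not} fibrewise injective. At a point $p\in Z$, in the frame $\{x^1\partial_{x^1},\partial_{x^2},\dots,\partial_{x^n}\}$ with $Z=\{x^1=0\}$, the anchor annihilates $x^1\partial_{x^1}$, so it has a one-dimensional kernel and image $T_pZ$ --- indeed you yourself use $\rho_A(A_p)=T_pZ$ later, which contradicts fibrewise injectivity; dropping rank along $Z$ is precisely the point of the $b$-tangent bundle. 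Consequently $\wedge^k\rho_A$ is not injective on fibres over $Z$ either. What your argument actually needs, and what is true, is injectivity at the level of sections: $\rho_A$ is an isomorphism over the dense open set $M\setminus Z$, so a section of $\wedge^k\,{}^\flat TM$ whose image in $\mf X^k(M)$ vanishes must vanish on $M\setminus Z$ and hence everywhere. With that replacement your claims survive: $\pi\mapsto\wedge^2\rho_A(\pi)$ is injective on sections, and $[\pi,\pi]_A=0$ iff the pushforward is Poisson (apply the section-level injectivity in degree $3$).

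A second gap, smaller but in the same spirit: in the converse direction of the first bijection you define $\pi$ from $d_{A^\vee}$ and check skewness and tangency to $Z$, but you never verify that $d_{A^\vee}=[\pi,-]_A$ (equivalently $f_{d_{A^\vee}}=\{\Pi_{d_A},\pi\}$) nor that the resulting $\pi$ is self-commuting; this is what makes the two assignments mutually inverse rather than just maps in both directions. For $A=TM$ the standard argument is that a degree-one derivation compatible with $d_{dR}$ is determined by its values on $C^\infty(M)$, because exact one-forms span $T^*_pM$ pointwise. For $A={}^\flat TM$ the elements $d_Af$ do \emph{not} span ${}^\flat T^*_pM$ at points of $Z$ (their $\tfrac{dx^1}{x^1}$-component is $x^1\partial_{x^1}f$, which vanishes on $Z$), so the pointwise argument breaks down exactly where the lemma has content; one must again compare the two derivations over $M\setminus Z$ and conclude by continuity. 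Once this is said, the repair is the same density argument as above, so the proof closes; the graded-symplectic forward direction (bidegree bookkeeping, $\{\Pi_{d_A},f_{d_{A^\vee}}\}=0$ by Jacobi, $\{f_{d_{A^\vee}},f_{d_{A^\vee}}\}=0$ from $[\pi,\pi]_A=0$ via the derived bracket) and the Hadamard-type division ($b^i|_Z=0\Rightarrow b^i=z\tilde b^i$) lifting a Poisson bivector tangent to $Z$ to a smooth section of $\wedge^2\,{}^\flat TM$ are correct as written.
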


Using this, Theorem \ref{thm:liebialg} implies the following.
\begin{thm}\label{cor:bpoissonasliebi}
Let $(M,Z)$ be a manifold with a given hypersurface $Z$. Let $A = {}^\flat TM$ be the $b$-tangent bundle with its standard Lie algebroid structure. Let $k\geq 1$ be an integer, and let $\pi \in \Gamma(\wedge^2 A)$ be a self-commuting element. Let $p\in M$ be a fixed point of order $k$ of $[\pi,-]$, which is the Lie algebroid structure on $A^\ast$, and assume that
$$
H^1\left(\mf g_{LbA}/\mf g_{LbA}(p,k),\overline{\{\Pi_{d_A},-\}+\{f_{d_{A^\ast}},-\}}\right) = 0.
$$
Then for every neighborhood $U$ of $p$, there is a $C^{2k-1}$-neighborhood $\mc U$ of $\pi$ such that for every self-commuting $\pi'\in \mc U$ there is a family $I$ in $U$ of fixed points of order $k$ of $[\pi',-]$ parametrized by an open neighborhood of $$0\in H^0\left(\mf g_{LbA}/\mf g_{LbA}(p,k),\overline{\{\Pi_{d_A},-\}+\{f_{d_{A^\ast}},-\}}\right).$$
\end{thm}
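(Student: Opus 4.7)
The plan is a direct application of Theorem~\ref{thm:liebialg}, with the preceding lemma serving as the dictionary. By that lemma, the assignment $\pi \mapsto d_{A^\vee} := [\pi,-]$ is a bijection between self-commuting sections of $\wedge^2 A$ and Lie algebroid structures on $A^\vee$ that form a Lie bialgebroid with $(A,d_A)$. Under this identification, the hypothesis that $p$ is a fixed point of order $k$ of $[\pi,-]$ translates literally into $f_{d_{A^\vee}} \in \mf g_{LbA}^{(0,1)}(p,k)$, and the cohomological hypothesis is verbatim the same.

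First I would invoke Theorem~\ref{thm:liebialg} for the Lie bialgebroid $((A,d_A),(A^\vee,[\pi,-]))$. Given an open neighborhood $U$ of $p$, this produces a $C^{2k-1}$-neighborhood $\mc U'$ of $d_{A^\vee}$ in $\mf g_{LbA}^1$ such that every Lie algebroid structure $Q \in \mc U'$ compatible with $d_A$ carries a family of fixed points of order $k$ in $U$, parametrized by an open neighborhood of the origin in the stated $H^0$.

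The second step is to transport $\mc U'$ back to $\Gamma(\wedge^2 A)$. The assignment $\pi' \mapsto f_{[\pi',-]}$ is $\mb R$-linear, and in a local trivialization of $A$ over a chart around $p$ it is given by a fixed first-order differential operator in $\pi'$. It is therefore continuous with respect to the $C^{2k-1}$-topologies on source and target. Defining $\mc U$ to be the preimage of $\mc U'$ intersected with the set of self-commuting sections yields the desired neighborhood: for any $\pi' \in \mc U$, the Lie algebroid structure $[\pi',-]$ lies in $\mc U'$ and is compatible with $d_A$, so Theorem~\ref{thm:liebialg} produces the family of fixed points of order $k$ of $[\pi',-]$ in $U$ parametrized by a neighborhood of the origin in $H^0$.

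Beyond the (routine) continuity of $\pi' \mapsto f_{[\pi',-]}$ and the bookkeeping needed to restrict the conclusion of Theorem~\ref{thm:liebialg} to bialgebroid structures of the form $[\pi',-]$, there is no real obstacle: the entire content of the statement is that the general stability theorem for Lie bialgebroids specializes cleanly to the $b$-geometric setting via the dictionary of the preceding lemma.
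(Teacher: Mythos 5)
Your proposal follows the same route as the paper: for this theorem the paper offers no argument beyond the preceding lemma (which identifies Lie algebroid structures on $A^\vee$ forming a Lie bialgebroid with $(A,d_A)$ with self-commuting sections $\pi\in\Gamma(\wedge^2 A)$) and the sentence that theorem \ref{thm:liebialg} then implies the statement — exactly your two steps, and your bookkeeping (restricting the conclusion to structures of the form $[\pi',-]$) is the intended reading.

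The one place where you add substance is the transport of the neighborhood, and there the justification overstates: the assignment $\pi'\mapsto f_{[\pi',-]}$ is genuinely first order in $\pi'$ (the bracket component of $[\pi',-]$ involves first derivatives of the coefficients of $\pi'$), so it is continuous from the $C^{2k}$- to the $C^{2k-1}$-topology, not from $C^{2k-1}$ to $C^{2k-1}$; a $C^{2k-1}$-ball around $\pi$ need not map into the $C^{2k-1}$-neighborhood $\mc U'$ of $f_{[\pi,-]}$ in $\mf g_{LbA}^1$ furnished by theorem \ref{thm:liebialg}. This is less a flaw of your strategy than an imprecision shared with the statement itself: either read the $C^{2k-1}$-neighborhood of $\pi$ as the preimage of a $C^{2k-1}$-neighborhood of $f_{[\pi,-]}$ in $\mf g_{LbA}^1$ (which is how theorem \ref{thm:liebialg} is phrased and what the paper implicitly does), or concede one derivative and take a $C^{2k}$-neighborhood of $\pi$. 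If one insists on a topology measured on $\pi$ itself, the cleaner fix is to run the main theorem on a dgla modeled on $\Gamma(\wedge^\bullet A)$ with the Schouten bracket, as is done for $\mf g_{PN}$ in theorem \ref{thm:PN} and for the Dirac picture in section \ref{sec:dirac}, where the $C^{2k-1}$-topology on the bivector is the natural one.
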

Next, we spell out what the requirement that some point $p\in M$ is a fixed point of order $k$ of the Lie algebroid structure $[\pi,-]$ means directly in terms of $\pi$, describe the relevant cohomology in more detail for fixed points order 1, and improve the result in this case using Remark \ref{rmk:mainthmrmk}ix).\\
When considering fixed points $p\in M$ of the corresponding Lie algebroid structure on $A^\ast$, we distinguish two types:
\begin{itemize}
    \item[-] $p \in Z$,
    \item[-] $p \in M\setminus Z$.
\end{itemize}
The second case does not yield anything new: as the problem is local, and $\left. A\right|_{M\setminus Z} = T(M\setminus Z)$ as Lie algebroids, we recover the result for Poisson structures of the previous section.\\
The following lemma describes what the notion of fixed point of order $k$ of $[\pi,-]$ implies about $\pi$.
\begin{lem}\label{lem:unravelsing}
Let $\pi \in \Gamma(S^2( A[-1]))$, and $p\in Z\subset M$. Then $[\pi,-] \in \mf g_{LbA}^{(0,1)}(p,k)$ if and only if $\pi \in I_p^k\Gamma(S^2( A[-1]))$. \\
Moreover, in this case the Poisson bracket $\{-,-\}_\pi$ on $C^\infty(M)$ induced by applying $\rho_A$ to $\pi$ satisfies 
$$
\{I_Z,C^\infty(M)\}_\pi \subset I_Z\cdot I_p^k, 
$$
$$
\{C^\infty(M),C^\infty(M)\}_\pi \subset I_p^k.
$$
\end{lem}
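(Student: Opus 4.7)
The plan is to work locally in coordinates adapted to the hypersurface $Z$: near $p\in Z$, choose $(x^1,\ldots,x^n)$ with $Z=\{x^1=0\}$, so that $A={}^\flat TM$ admits the local frame $e_1=x^1\partial_1,\ e_j=\partial_j$ for $j\geq 2$, in which all Lie brackets $[e_i,e_j]$ vanish. Writing $\pi=\tfrac12\pi^{ij}e_i\wedge e_j$ with $\pi^{ij}=-\pi^{ji}$, the goal is to show that $[\pi,-]\in\mf g_{LbA}^{(0,1)}(p,k)$ if and only if every coefficient $\pi^{ij}$ lies in $I_p^k$, and then to read off the Poisson-bracket statements from this.

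The forward implication follows directly from the graded Leibniz rule for the Schouten bracket: for $f\in C^\infty(M)$, $[\pi,f]$ is a contraction of $\pi$ with $df$ and so lies in $I_p^k\Gamma(A[-1])$; for $X\in\Gamma(A[-1])$, $[\pi,X]$ is, up to sign, the Lie derivative $\mathcal L_X\pi$, which differentiates $\pi$ at most once and so lies in $I_p^{k-1}\Gamma(S^2(A[-1]))$. These are exactly the defining conditions of $\mf g_{LbA}^{(0,1)}(p,k)$. For the converse, I would test the $C^\infty(M)$-condition only on the coordinate functions $x^j$ with $j\geq 2$: a short calculation gives $[\pi,x^j]=\pm\sum_i\pi^{ij}e_i$, forcing $\pi^{ij}\in I_p^k$ for all $i$ and all $j\geq 2$. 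Antisymmetry then yields $\pi^{i1}=-\pi^{1i}\in I_p^k$ for $i\geq 2$, while $\pi^{11}=0$ trivially, giving $\pi\in I_p^k\Gamma(S^2(A[-1]))$. The delicate point — and, to my mind, the main obstacle — is that testing with $f=x^1$ produces $[\pi,x^1]=x^1\sum_i\pi^{i1}e_i$, which, because $x^1\in I_p$, would by itself yield only $\pi^{i1}\in I_p^{k-1}$; the missing order of vanishing on the first column can only be recovered by invoking the antisymmetry of $\pi$.

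For the \emph{moreover} statement, the induced Poisson bracket is a combination of terms of the form $\pi^{ij}\rho(e_i)(f)\rho(e_j)(g)$, each with coefficient $\pi^{ij}\in I_p^k$, so the inclusion $\{C^\infty(M),C^\infty(M)\}_\pi\subset I_p^k$ is immediate. For the first inclusion, the key point is the defining property of the b-tangent bundle: every section of $A$ is tangent to $Z$, so $\rho(e_i)(I_Z)\subset I_Z$ for every $i$. Hence if $f\in I_Z$, every factor $\rho(e_i)(f)$ again lies in $I_Z$, and the product $\pi^{ij}\rho(e_i)(f)\rho(e_j)(g)$ lies in $I_p^k\cdot I_Z$, which completes the proof.
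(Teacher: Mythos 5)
Your proof is correct and follows essentially the same route as the paper: both work in the local b-frame $e_1=x^1\partial_{x^1}$, $e_j=\partial_{x^j}$, expand $\pi=\tfrac12\pi^{ij}e_i\wedge e_j$, deduce $\pi^{ij}\in I_p^k$ by testing $[\pi,f]$ on coordinate functions (with antisymmetry supplying the first column, exactly the point you flag), and verify the converse and the statements about $\{-,-\}_\pi$ by the same coefficient computation, using that sections of ${}^\flat TM$ preserve $I_Z$. The only cosmetic difference is that the paper checks the section-condition by computing $[\pi,e_l]$ explicitly where you invoke the Lie-derivative/first-order nature of the bracket, which amounts to the same estimate.
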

\begin{proof}[Sketch of proof]
The proof is a local computation, using that around $p\in M$, there are local coordinates $(x^1,\dots, x^n)$ and a local frame $\{e_1,\dots,e_n\}$ for $A$ such that $Z = \{x^1 = 0\}$, and
$$
\rho(e_i)= \begin{cases} x^1\partial_{x^1} & i = 1,\\ \partial_{x^i} & i \neq 1.\end{cases}.
$$
\end{proof}
For the rest of the section, assume that $p \in Z$ is a fixed point of order $1$ of the Lie algebroid structure $(A^\ast, [\pi,-]_A)$ for some self-commuting $\pi \in \Gamma(S^2(A[-1]))$. In this case, we can give a more explicit description of the cohomology appearing in the theorem. As the problem is local around $p\in Z\subset M$, we assume that $M = \mb R^n$, $p = 0$, and $Z = \{x^1 = 0\}$. Consider the induced frame $\{e_i\}_{i =1}^n$ for $A$ as in the proof of Lemma \ref{lem:unravelsing}, with dual frame $\{e^i\}_{i = 1}^n$. Following Remark \ref{rmk:concrcohom}i) the cochain spaces are given as follows. In degree 0, we have $T_pM$, while in degree 1, we can restrict ourselves to the span of
$$
\{e_1 \otimes \partial_{x^j} \in A_p\otimes T_pM \mid i = 1,\dots, n\} \cup \{e_i\otimes \partial_{x^j}-e_j\otimes \partial_{x^i} \mid 2\leq i,j \leq n\}\subset A_p \otimes T_pM,
$$
by the skew-symmetry requirement. Finally, in degree 2, we have $S^2 (A_p[-1]) \otimes T_pM$. The differentials are given by the  Chevalley-Eilenberg formulas as in Definition \ref{def:cecomplex}.
\begin{exmp}
Let $M = \mb R^3$, with coordinates $(x,y,z)$, and $Z = \{x= 0\}$. Let $$\pi = xe_2\wedge e_3 + ye_3 \wedge e_1 + z e_1 \wedge e_2,$$ \nrt{where the notation is as in the sketch of the proof of Lemma \ref{lem:unravelsing}.} It is easy to see that $[\pi,\pi] = 0$. The corresponding Lie algebroid structure on $A^\ast$ has a fixed point of order 1 in the origin $p$. By the discussion above, to compute the cohomology in degree 1, it is sufficient to restrict ourselves to the subspace of $A_p\otimes T_pM$ generated by $\{e_1\otimes \partial_x,e_1\otimes \partial_y, e_1\otimes \partial_z,e_2\otimes \partial_z - e_3\otimes \partial_y\}$.\\
One can show that the cohomology
$$
H^1\left(\mf g_{LbA}/\mf g_{LbA}(p,k),\overline{\{\Pi_A,-\}+\{f_{d_{A^\ast}},-\}}\right),
$$ as described in Remark \ref{rmk:concrcohom} vanishes, and hence the fixed point is stable.
\end{exmp}
The condition that $$H^1\left(\mf g_{LbA}/\mf g_{LbA}(p,k),\overline{\{\Pi_A,-\}+\{f_{d_{A^\ast}},-\}}\right) = 0$$ is only a sufficient condition, but not a necessary one, as the following example shows. 
\begin{exmp}\label{ex:bpoissonr2}
For $M = \mb R^2$, $Z = \{x= 0\}$, consider $\pi = f(x,y)e_1\wedge e_2$ for some $f \in C^\infty(M)$, with $f(p) = 0$, $df_{p} \neq 0$. The equation $[\pi,\pi] =0$ is trivially satisfied. As $0 \in \mb R$ is a regular value of $f$, we know that $p$ is a stable fixed point of order 1. \\
We now compute the relevant cohomology. By the description above, we can restrict ourselves to the span of $e_1\otimes \partial_x$, $e_1 \otimes \partial_y$.\\
Denote the corresponding differential by $d_\pi$. Then 
$$
d_\pi(e_1 \otimes \partial_x) = f(p)\partial_x = 0,
$$
and
$$
d_\pi(e_1\otimes \partial_y) =\left. -x\partial_xf(x,y)\right|_{(x,y)  = p}\partial_y = 0.
$$
So both elements are cocycles. To see what the coboundaries are, we compute $d_\pi$ of $T_pM$.
$$
d_\pi(\partial_x) = -\partial_xf (p)e_1\otimes \partial_y,
$$
$$
d_\pi(\partial_y) = -\partial_yf(p) e_1\otimes \partial_y,
$$
showing that $e_1\otimes \partial_x$ is never a coboundary, while $e_1\otimes \partial_y$ is always a coboundary. Consequently, the relevant cohomology never vanishes when $\dim M = 2$.
\end{exmp}
More generally, it is true in every dimension that $e_1\otimes \partial_{x^1}$ is never a coboundary. In fact, looking at the proof of Lemma \ref{lem:unravelsing}, we see that when we take any $\pi\in \Gamma(S^2( A[-1]))$, the class of
$$
[\pi,-]\in \mf g_{LbA}^{(0,1)}/\mf g_{LbA}^{(0,1)}(p,1) \cong A_p\otimes T_pM
$$
takes values in the subspace spanned by
\begin{equation}\label{eq:subspacecohom}
K=\{e_i\otimes \partial_j \in A_p \otimes T_pM\mid (i,j) \neq (1,1)\}.
\end{equation}
Note that this is an instance of Remark \ref{rmk:mainthmrmk}ix), hence we can improve the result by putting a milder restriction on the relevant cohomology, by replacing $A_p\otimes T_pM$ by $K$ in the proof of Theorem \ref{thm:liebialg} for $k =1$. Fix coordinates near $p$ as in the proof of Lemma \ref{lem:unravelsing}.
\begin{thm}\label{thm:bpoissonliebi+}
Let $\pi\in \Gamma(S^2 (A[-1]))$, with $[\pi,\pi] =0$. Assume $p\in M$ is such that $\pi_p = 0$. If $$H^1_{red} = 0,$$ as defined in Remark \ref{rmk:mainthmrmk}ix) for $K$ as in equation \eqref{eq:subspacecohom}, the conclusion of Theorem \ref{cor:bpoissonasliebi} holds for $k = 1$.
\end{thm}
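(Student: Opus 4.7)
The plan is to invoke Remark~\ref{rmk:mainthmrmk}ix) in the setting of Section~\ref{sec:liebialg} specialized to $k=1$. Recall that this remark allows one to weaken the hypothesis of Theorem~\ref{thm:mainthm} to $H^1_{red}=0$, provided we exhibit a subspace $K$ of the cocycle space in $\mf g_{LbA}^1/\mf g_{LbA}^1(p,1)$ which contains the class of every Maurer-Cartan element. We take $K$ to be the subspace of the cocycle space (described concretely in Remark~\ref{rmk:concrcohom}i)) that is contained in the hyperplane spanned by $\{e_i\otimes\partial_{x^j}\in A_p\otimes T_pM\mid (i,j)\neq(1,1)\}$ as in \eqref{eq:subspacecohom}. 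In local coordinates around $p$ with $Z=\{x^1=0\}$ and the frame $\{e_i\}$ of Lemma~\ref{lem:unravelsing}, this is a genuine subspace of cocycles, and by construction it does not meet the line spanned by $e_1\otimes\partial_{x^1}$.

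The key step is the verification that for every self-commuting $\pi'\in\Gamma(S^2(A[-1]))$, the class of $[\pi',-]$ in $\mf g_{LbA}^{(0,1)}/\mf g_{LbA}^{(0,1)}(p,1)\cong A_p\otimes T_pM$ lies in $K$. Under the canonical identification, this class is the linear map $T_p^\ast M\to A_p$ sending $df|_p\mapsto [\pi',f](p)$. Applying the computation from the proof of Lemma~\ref{lem:unravelsing} to $f=x^1$ yields
\begin{equation*}
[\pi',x^1]=-x^1\sum_{i\geq 2}(\pi')^{1i}e_i,
\end{equation*}
which vanishes at $p$ since $x^1(p)=0$. Hence the $e_i\otimes\partial_{x^1}$ coefficient of the class is zero for every $i$, and in particular the $e_1\otimes\partial_{x^1}$ coefficient vanishes; combined with the automatic fact that a Maurer-Cartan element projects to a cocycle in $\mf g_{LbA}^1/\mf g_{LbA}^1(p,1)$, this places the class of $[\pi',-]$ inside $K$, as required.

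With the hypotheses of Remark~\ref{rmk:mainthmrmk}ix) verified, we then repeat the proof of Theorem~\ref{cor:bpoissonasliebi} for $k=1$ verbatim with the relaxed cohomological assumption $H^1_{red}=0$. The remaining verifications of Assumptions~\ref{ass:mainthmass} were already carried out in Section~\ref{sec:liebialg}, so the conclusion follows. The main obstacle is the bookkeeping of interpreting $K$ as a genuine subspace of the cocycle space (rather than a merely convenient spanning set of ambient elements) and tracking the identification of the quotient with $A_p\otimes T_pM$ precisely enough to match it with the output of the Lemma~\ref{lem:unravelsing} computation; the substantive computation itself is already contained in that lemma.
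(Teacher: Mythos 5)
Your verification that the class of $[\pi',-]$ lands in the hyperplane spanned by \eqref{eq:subspacecohom} (via the $f=x^1$ computation from Lemma \ref{lem:unravelsing}) is correct, and this is exactly the observation the paper uses. The gap is the step you call an ``automatic fact'': that a Maurer--Cartan element projects to a \emph{cocycle} of $(\mf g_{LbA}^1/\mf g_{LbA}^1(p,1),\overline{\partial+[Q,-]})$. This is not automatic, and it is false in general. Writing $Q'=Q+\delta$ with $Q=f_{[\pi,-]}$ the reference structure, the Maurer--Cartan equations give $\overline{(\partial+[Q,-])}(Q'+\mf h^1)=-\tfrac12\,\overline{[\delta,\delta]}$, which has no reason to vanish: MC classes satisfy a \emph{quadratic} relation in the quotient, not the linear cocycle relation, and it is precisely this quadratic defect that the map $R_{v,Q'}$ in the proof of Theorem \ref{thm:mainthm} is designed to absorb. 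Concretely, in the b-setting take $M=\mb R^5$, $Z=\{x^1=0\}$, the frame $e_1=x^1\partial_{x^1}$, $e_i=\partial_{x^i}$ ($i\geq 2$), $\pi=x^2\,e_4\wedge e_5$ (self-commuting, $\pi_p=0$) and $\pi'=e_2\wedge e_3+e_4\wedge e_5$ (self-commuting). Then $[\pi,\pi']$ is a nonzero constant multiple of $e_3\wedge e_4\wedge e_5$, so $[[\pi,\pi'],x^3](p)\neq 0$ and $f_{[[\pi,\pi'],-]}\notin\mf g_{LbA}^{(0,2)}(p,1)$; hence the class of the MC element $f_{[\pi',-]}$ is \emph{not} a cocycle, and your subspace $K=\{\text{cocycles}\}\cap\{\text{hyperplane}\}$ does not contain the classes of all Maurer--Cartan elements. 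So the hypothesis of Remark \ref{rmk:mainthmrmk}ix), as you have set it up, is not verified.

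The repair, which is also what the paper's one-line argument amounts to, is to use only the statement you actually proved: for \emph{every} $\pi'\in\Gamma(S^2(A[-1]))$ (no Maurer--Cartan condition needed) the class of $f_{[\pi',-]}$, and hence of every gauge transform occurring in the proof, lies in the linear span $S$ of \eqref{eq:subspacecohom}. One then reruns the proof of Theorem \ref{thm:mainthm} with $A_p\otimes T_pM$ replaced by $S$: choose $C$ a complement of $\mathrm{im}\,\overline{\partial+[Q,-]}$ in $\mf g^1_{LbA}/\mf g^1_{LbA}(p,1)$, so transversality of $\mathrm{ev}_{Q}$ still holds; since $\mathrm{ev}_{Q'}(v)$ is the class of an MC element it lies in $O\cap S$, and injectivity of $R_{v,Q'}$ is only needed on $C\cap S$, for which it suffices that $C\cap S\cap\ker(\overline{\partial+[Q,-]})=0$. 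This follows from $H^1_{red}=0$, i.e.\ from $S\cap\ker(\overline{\partial+[Q,-]})=\mathrm{im}(\overline{\partial+[Q,-]})$ (note the image is automatically contained in $S$, being spanned by $t$-derivatives of classes of gauge-transformed MC elements). In other words, the reduction never requires MC classes to be cocycles, only to lie in the subspace $S$; property c) of the proof (the quadratic relation) does the rest. With this adjustment your argument becomes the paper's.
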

\begin{rmk}
Note that Theorem \ref{thm:bpoissonliebi+} does not guarantee the existence of a fixed point in $Z$. We will revisit this in Example \ref{exmp:bpoissondirac}.
\end{rmk}
\subsubsection{Poisson-Nijenhuis structures as Lie bialgebroids}
Another class of Lie bialgebroids comes from Poisson-Nijenhuis structures as shown in \cite{KosmannSchwarzbach1996TheLB}. We will give two results regarding stability of fixed points of Poisson structures, compatible with a given Nijenhuis tensor $N$. Theorem \ref{thm:PNLie} deals with stability for nearby Lie algebroid structures on $T^\ast M$ compatible with $N$ in the Lie bialgebroid sense, while Theorem \ref{thm:PN} deals with nearby Poisson structures which form a Poisson-Nijenhuis pair with $N$.

Classically, Poisson-Nijenhuis structures are the following.
\begin{defn}[\cite{pnstructures}]
Let $M$ be a manifold. A \textit{Poisson-Nijenhuis structure} is a pair $(\pi,N)$, where
\begin{itemize}
    \item[i)] $\pi \in \mf X^2(M)$,
    \item[ii)]$N \in \Gamma(\End(TM))$,
\end{itemize}
satisfying
\begin{itemize}
    \item[a)] $[\pi,\pi] = 0$, 
    \item[b)] $[N,N]_{FN} = 0$,
    \item[c)] $\pi^\sharp\circ N^\ast = N \circ \pi^\sharp$,
    \item[d)] $[\alpha,\beta]_{N\pi} = [N^\ast\alpha,\beta]_\pi + [\alpha,N^\ast \beta]_\pi - N^\ast[\alpha,\beta]_\pi$ for all $\alpha,\beta \in \Omega^1(M)$.
\end{itemize}
\end{defn}
In the definition above, $[-,-]_{FN}$ is the Fr\"olicher-Nijenhuis bracket, and for a bivector field $\pi$, the bracket $[-,-]_{\pi}$ is the one induced on $\Omega^1(M)$. Note that $N\pi$ is the bivector field defined by the equation $(N\pi)^\sharp = N\circ \pi^\sharp$, which is skew-symmetric by c).\\
Note that $N^\ast$ can be extended as a derivation to $\Omega^\bullet(M)$, the graded algebra of all differential forms on $M$. As shown in \cite{pnstructures}, condition b) implies that 
$$
[d,N^\ast] \in \mf X^1(TM[1])
$$
is a Lie algebroid structure on $TM$. Denote this Lie algebroid by $(A,d_N)$. By \cite[Proposition 3.2]{KosmannSchwarzbach1996TheLB}, the compatiblity conditions between $\pi$, $N$ can be expressed as follows.
\begin{prop}[\cite{KosmannSchwarzbach1996TheLB}]
Let $M$ be a manifold, $\pi \in \mf X(M)$ and $N\in \Gamma(\End(TM))$. Then $(\pi,N)$ is a Poisson-Nijenhuis structure if and only if $((A,d_N),(A^\ast,[\pi,-]))$ is a Lie bialgebroid.
\end{prop}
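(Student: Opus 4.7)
The plan is to unpack the Lie bialgebroid compatibility condition into its classical componentwise form and match each resulting equation with one of the four Poisson--Nijenhuis axioms. The logic splits into two halves: axioms (a) and (b) should correspond to the separate Lie algebroid axioms for $(A^\vee, [\pi,-])$ and $(A, d_N)$, while axioms (c) and (d) together should encode the compatibility condition $\{f_{d_{A^\vee}}, \Pi_{d_A}\} = 0$ of Proposition \ref{prop:gradedcotangentproperties}v).

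First I would dispose of the two separate Lie algebroid conditions. The equivalence $[\pi,\pi] = 0 \iff (A^\vee, [\pi,-])$ is a Lie algebroid is standard. For $N$, the formula $d_N = [d, N^\vee]$ satisfies $d_N^2 = \frac{1}{2}[d_N, d_N] = 0$ if and only if $[N,N]_{FN} = 0$, which is the statement cited above the proposition from \cite{pnstructures} and which gives the Lie algebroid axiom for $d_N$ on $A = TM$.

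Next, using the classical reformulation of the Lie bialgebroid condition recalled in the remark following Proposition \ref{prop:gradedcotangentproperties}, the compatibility breaks into the three Leibniz-type identities
\begin{align*}
[d_{A^\vee}(f), g]_A &= [f, d_{A^\vee}(g)]_A, \\
d_{A^\vee}[X, f]_A &= [d_{A^\vee}(X), f]_A + [X, d_{A^\vee}(f)]_A, \\
d_{A^\vee}[X, Y]_A &= [d_{A^\vee}(X), Y]_A + [X, d_{A^\vee}(Y)]_A,
\end{align*}
for $f, g \in C^\infty(M)$ and $X, Y \in \Gamma(A) = \mf X(M)$. Here $d_{A^\vee}(f) = \pi^\sharp(df)$, and the bracket on $A$ uses anchor $N$, so $[X, f]_A = N(X)(f)$. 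A direct computation shows the first identity amounts to skew-symmetry of $N \circ \pi^\sharp$ as a map $T^\vee M \to TM$; combined with the anti-self-adjointness of $\pi^\sharp$, this is precisely $\pi^\sharp \circ N^\vee = N \circ \pi^\sharp$, i.e.\ axiom (c).

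The core of the argument is then the equivalence, modulo (a), (b), (c), between the remaining two Leibniz identities and axiom (d). Since $d_{A^\vee}$ extended to $\mf X^\bullet(M)$ is simply the Schouten derivation $[\pi,-]$, and since a derivation of the Gerstenhaber algebra $(\mf X^\bullet(M), \wedge, [-,-]_{SN})$ is determined by its action on $C^\infty(M)$ and $\mf X(M)$, one can dualise and reinterpret the third identity as a statement about the bracket on $1$-forms; the content there is exactly the rewriting $[\alpha,\beta]_{N\pi} = [N^\vee \alpha, \beta]_\pi + [\alpha, N^\vee\beta]_\pi - N^\vee[\alpha,\beta]_\pi$ of (d). The main obstacle is this last step: expanding $d_{A^\vee}[X,Y]_A - [d_{A^\vee} X, Y]_A - [X, d_{A^\vee} Y]_A$ on $f \in C^\infty(M)$ produces a long expression involving $\pi$, $N$, $[N,N]_{FN}$, and $[\pi,\pi]$, and one must verify that after cancelling the terms killed by (a) and (b), and reorganising using (c), what remains is the derivation-failure $[\alpha,\beta]_{N\pi} - [N^\vee\alpha,\beta]_\pi - [\alpha,N^\vee\beta]_\pi + N^\vee[\alpha,\beta]_\pi$ paired against a suitable argument. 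The cleanest route is arguably to work directly inside the bigraded Poisson algebra $C^\infty(T^\vee[2]A^\vee[1])$ and expand $\{f_{d_{A^\vee}}, \Pi_{d_N}\} = \{f_{[\pi,-]}, \{\Pi_{d}, f_{N^\vee}\}\}$ using the graded Jacobi identity, which repackages the above computations more symmetrically and is the approach taken in \cite{KosmannSchwarzbach1996TheLB}.
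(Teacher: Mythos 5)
The paper itself offers no proof of this statement: it is imported verbatim, with attribution, from \cite[Proposition 3.2]{KosmannSchwarzbach1996TheLB}, so there is no internal argument to compare yours against; what can be judged is whether your sketch would stand on its own. Its skeleton is the standard one and is sound as far as it goes: matching axioms (a) and (b) with the two separate Lie algebroid conditions, unpacking the mixed compatibility through the three Leibniz-type identities recalled after proposition \ref{prop:gradedcotangentproperties}, and your identification of the first identity with axiom (c) (skew-symmetry of $N\circ\pi^\sharp$ together with skew-symmetry of $\pi^\sharp$) is correct.

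The decisive step, however, is only announced, not carried out. The whole computational content of Kosmann-Schwarzbach's proposition is the equivalence, modulo (a)--(c), of the derivation identity $d_{A^\vee}[X,Y]_A = [d_{A^\vee}X,Y]_A + [X,d_{A^\vee}Y]_A$ with axiom (d); your text reduces this to ``a long expression'' whose cancellations ``one must verify'' and then defers to the reference, so as a blind proof there is a genuine gap: nothing in the sketch exhibits how the dualisation from the derivation property of $[\pi,-]$ on vector fields to the statement about $[-,-]_{N\pi}$, $[-,-]_\pi$ and $N^\vee$ on one-forms actually works, and this passage (not the bookkeeping of (a)--(c)) is where the proof lives. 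A secondary point: for the direction bialgebroid $\Rightarrow$ Poisson--Nijenhuis you need that $d_N^2=0$ forces $[N,N]_{FN}=0$, whereas the paper and \cite{pnstructures} only record the converse implication; to close this you should invoke the uniqueness in the Fr\"olicher--Nijenhuis decomposition of derivations of $\Omega^\bullet(M)$ (the assignment $K\mapsto \mathcal{L}_K$ is injective), which your sketch silently assumes.
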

Theorem \ref{thm:liebialg} now yields a stability criterion for fixed points of Lie algebroid structures on $T^\ast M$ near a Poisson-Nijenhuis structure:
\begin{thm}\label{thm:PNLie}
Let $M$ be a manifold equipped with a Poisson-Nijenhuis structure $(\pi,N)$ and $p \in M$ such that the $(k-1)$-jet of $\pi$ vanishes at $p$. Assume that $$H^1\left(\mf g_{LbA}/\mf g_{LbA}(p,k),\overline{\{\Pi_{d_N},-\}+\{f_{[\pi,-]},-\}}\right) = 0,$$ where $\Pi_{d_N}$ corresponds to the Lie algebroid structure $d_N$ on $TM$ induced by $N$. Then for every neighborhood $U \subset M$ of $p$, there is a $C^{2k-1}$-neighborhood $\mc U$ of $[\pi,-]\in \mf X^1(A^\ast[1])$ such that for any Lie algebroid structure $Q\in \mc U$ such that $((A,d_N),(A^\ast,Q))$ is a Lie bialgebroid there is a family $I$ in $U$ of fixed points of order $k$ of $Q$ parametrized by a neighborhood of $$0\in H^0\left(\mf g_{LbA}/\mf g_{LbA}(p,k),\overline{\{\Pi_{d_N},-\}+\{f_{[\pi,-]},-\}}\right).$$
\end{thm}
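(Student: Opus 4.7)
The plan is a direct application of Theorem \ref{thm:liebialg} to the fixed Lie algebroid $(A, d_A) := (TM, d_N)$, where $d_N = [d, N^\vee]$ is the Lie algebroid differential on $TM$ associated to the Nijenhuis tensor $N$ (well-defined since $[N, N]_{FN} = 0$). By the cited result of Kosmann-Schwarzbach, the hypothesis that $(\pi, N)$ is a Poisson-Nijenhuis structure is equivalent to the statement that $((TM, d_N), (T^\ast M, [\pi, -]))$ is a Lie bialgebroid, providing the required Maurer-Cartan input $f_{[\pi, -]} \in \mf g_{LbA}^{(0,1)}$ of Theorem \ref{thm:liebialg}.

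Next I would verify that the hypothesis ``the $(k-1)$-jet of $\pi$ vanishes at $p$'' is equivalent to $p$ being a fixed point of order $k$ of the cotangent Lie algebroid structure $[\pi, -]$, i.e.\ $f_{[\pi, -]} \in \mf g_{LbA}^{(0,1)}(p, k)$. This reduces to the two vanishing conditions
\[
[\pi, -](C^\infty(M)) \subset I_p^k\, \mf X(M), \qquad [\pi, -](\mf X(M)) \subset I_p^{k-1}\, \mf X^2(M).
\]
The first follows from $[\pi, f] = -\iota_{df} \pi$ together with $\pi \in I_p^k \mf X^2(M)$, and the second from $[\pi, X] = \mc L_X \pi$ together with the fact that the Lie derivative of coefficient functions decreases the order of vanishing by one.

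With the input data for Theorem \ref{thm:liebialg} in place and the cohomological hypothesis being literally the same, the conclusion follows directly: the $C^{2k-1}$-neighborhood $\mc U$ of $[\pi, -]$ produced by Theorem \ref{thm:liebialg} accommodates precisely those Lie algebroid structures $Q$ on $T^\ast M$ which are Lie bialgebroid compatible with $d_N$, and for each such $Q$ one obtains the desired family of fixed points of order $k$ in $U$ parametrized by an open neighborhood of $0 \in H^0(\mf g_{LbA}/\mf g_{LbA}(p,k), \overline{\{\Pi_{d_N},-\}+\{f_{[\pi,-]},-\}})$. The only nontrivial content in the argument is the equivalence of the classical vanishing-jet condition with the graded-algebraic fixed point condition, which is the brief computation indicated above; no deeper obstacle arises.
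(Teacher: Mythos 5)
Your proposal is correct and follows essentially the same route as the paper: the theorem is obtained there as a direct specialization of Theorem \ref{thm:liebialg} to $(A,d_A)=(TM,d_N)$, using the cited Kosmann-Schwarzbach equivalence between Poisson-Nijenhuis pairs and Lie bialgebroids. Your brief verification that the vanishing of the $(k-1)$-jet of $\pi$ at $p$ amounts to $f_{[\pi,-]}\in\mf g_{LbA}^{(0,1)}(p,k)$ is the same routine check implicit in the paper (compare lemma \ref{lem:unravelsing} in the b-Poisson case), so nothing is missing.
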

Note that the conclusion of the theorem for Lie bialgebroids is stronger than the conclusion that all nearby $\pi'$ such that $(\pi',N)$ is a Poisson-Nijenhuis structure must have a fixed point of order $k$ near $p$. For an arbitrary Nijenhuis tensor $N$, there might be Lie algebroid structures near $[\pi,-]$ which do not come from Poisson structures. However, by making different choices for $\mf g$ and $\mf h$ in \nrt{Theorem \ref{thm:mainthm}}, we can obtain a result which deals with the problem of stability within the realm of Poisson-Nijenhuis structures.\\
One way to do this is to consider the Lie subalgebra $ \mf g_{PN}$ of $\mf g_{LbA}$ given by the image of the inclusion
$$
\mf X^{i+1}(M) \hookrightarrow \mf g_{LbA}^{(0,i)}
$$
as in Section \ref{sssec:Poissonliebialg}. For the subalgebra $\mf g_{PN}(p,k)$ corresponding to fixed points, simply restrict to the intersection of $\mf g_{LbA}^{(0,i)}(p,k)$ with the image of the inclusion above. The cochain spaces $\mf g_{PN}/\mf g_{PN}(p,k)$ then take the form 
\begin{align*}
    \mf g_{PN}^0/\mf g^0_{PN}(p,k) &\cong T_pM,\\
    \mf g_{PN}^1/\mf g^1_{PN}(p,k) &\cong J_p^{k-1}(TM)\\
    \mf g_{PN}^2/\mf g^2_{PN}(p,k) &\cong J_p^{2k-2}(TM) \oplus \mf g_{LbA}^{(1,1)}/\mf g_{LbA}^{(1,1)}(p,k).
\end{align*}

Applying Theorem \ref{thm:mainthm} to this data, we find:
\begin{thm}\label{thm:PN}
Let $M$ be a manifold equipped with a Poisson-Nijenhuis structure $(\pi,N)$ and $p \in M$ such that the $(k-1)$-jet of $\pi$ vanishes at $p$. If $$H^1\left(\mf g_{PN}/\mf g_{PN}(p,k),\overline{\{\Pi_{d_N},-\}+\{f_{{[\pi,-]}},-\}}\right) = 0,$$ then for every neighborhood $U \subset M$ of $p$, there is a $C^{2k-1}$-neighborhood $\mc U$ of $\pi\in \mf X^2(M)$ such that for any Poisson structure $\pi'\in \mc U$ such that $(\pi',N)$ is a Poisson-Nijenhuis structure there is a family $I$ in $U$ of fixed points of order $k$ of $\pi'$ parametrized by a neighborhood of $$0\in H^0\left(\mf g_{PN}/\mf g_{PN}(p,k),\overline{\{\Pi_{d_N},-\}+\{f_{{[\pi,-]}},-\}}\right).$$
\end{thm}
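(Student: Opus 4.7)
The plan is to apply Theorem \ref{thm:mainthm} to the differential graded Lie subalgebra $\mf g_{PN} \subset \mf g_{LbA}$ together with its subalgebra $\mf g_{PN}(p,k)$ and the Maurer-Cartan element $f_{[\pi,-]} \in \mf g_{PN}^1$. First I would verify that, with the DGLA structure inherited from $\mf g_{LbA}$, the Maurer-Cartan elements of $\mf g_{PN}$ are exactly the Poisson bivectors $\pi' \in \mf X^2(M)$ forming a Poisson-Nijenhuis pair with $N$: the MC equation for $f_{\pi'}$ splits by bidegree into the compatibility $\{\Pi_{d_N}, f_{\pi'}\} = 0$ (the PN condition, via proposition \ref{prop:gradedcotangentproperties}v)) and $\{f_{\pi'}, f_{\pi'}\} = 0$ (the Poisson condition $[\pi',\pi'] = 0$). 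The analogous analysis, together with the reasoning of the proof of lemma \ref{lem:unravelsing}, shows that MC elements landing in $\mf g_{PN}(p,k)$ are precisely those PN-compatible Poisson structures for which $p$ is a fixed point of order $k$.

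Next, I would check assumptions \ref{ass:mainthmass}. Finite-dimensionality of the quotients $\mf g_{PN}^i/\mf g_{PN}^i(p,k)$ in degrees $0, 1, 2$ is immediate from the explicit descriptions listed above the theorem statement, and polynomial sections around $p$ give the required splittings $\sigma_0, \sigma_1$. Equipping $\mf g_{PN}^0, \mf g_{PN}^1, \mf g_{PN}^2$ with the $C^\infty$, $C^{2k-1}$, $C^{2k-2}$-topologies respectively makes the projections, the differential $\{\Pi_{d_N},-\}$ and the bracket $\{-,-\}$ continuous on the relevant bidegree components, by the same jet-tracking arguments as in Theorem \ref{thm:liebialg}. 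The gauge action of $\mf g_{PN}^0 \cong \mf X(M)$ on $\mf g_{PN}^1 \cong \mf X^2(M)$ is by pushforward of bivector fields along flows of vector fields; the analogue of lemma \ref{lem:singpointmove} then identifies a neighborhood of $0 \in \mf g_{PN}^0/\mf g_{PN}^0(p,k) \cong T_p M$ with a neighborhood of $p \in M$, and lemma \ref{lem:uniqueness} ensures that the gauge action preserves Maurer-Cartan elements.

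The main obstacle is setting up $\mf g_{PN}$ so that it genuinely inherits a DGLA structure producing the cohomology in the hypothesis. Naively taking $\mf g_{PN}^i$ to be the image of $\mf X^{i+1}(M) \hookrightarrow \mf g_{LbA}^{(0,i)}$ fails to be closed under $\{\Pi_{d_N},-\}$, since this differential has bidegree $(+1,0)$ and maps into $\mf g_{LbA}^{(1,i)}$. This is exactly why the degree 2 quotient $\mf g_{PN}^2/\mf g_{PN}^2(p,k)$ displayed above the theorem carries an extra summand $\mf g_{LbA}^{(1,1)}/\mf g_{LbA}^{(1,1)}(p,k)$: the degree 2 part of $\mf g_{PN}$ must be enlarged to include $\{\Pi_{d_N}, \mf g_{PN}^1\}$, and requiring the cocycle condition in this extra summand is what encodes the PN compatibility with $N$. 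With this correction in place, $\mf g_{PN}$ becomes an honest differential graded Lie subalgebra of $\mf g_{LbA}$.

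Once all assumptions have been verified, applying Theorem \ref{thm:mainthm} yields the conclusion: for any Poisson structure $\pi'$ in the prescribed $C^{2k-1}$-neighborhood of $\pi$ such that $(\pi',N)$ is PN, there exists $v$ in a neighborhood of $0 \in \mf g_{PN}^0/\mf g_{PN}^0(p,k) \cong T_pM$ whose time-1 flow brings $\pi'$ into $\mf g_{PN}^1(p,k)$, i.e.\ $\pi'$ itself has a fixed point of order $k$ at $\phi_v(p)$ near $p$. The family of such fixed points is parametrized by an open neighborhood of the origin of $H^0(\mf g_{PN}/\mf g_{PN}(p,k),\overline{\{\Pi_{d_N},-\}+\{f_{[\pi,-]},-\}})$, completing the proof.
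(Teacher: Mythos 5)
Your route is the paper's: apply theorem \ref{thm:mainthm} to $\mf g_{PN}\subset \mf g_{LbA}$ with $\mf h=\mf g_{PN}(p,k)$ and $Q=f_{[\pi,-]}$, identify the relevant Maurer--Cartan elements (those of bidegree $(0,1)$) with Poisson structures forming a PN pair with $N$ via proposition \ref{prop:gradedcotangentproperties}v), and verify the analytic assumptions exactly as for theorem \ref{thm:liebialg}. You also correctly spotted that the naive $\mf g_{PN}$ is not $\{\Pi_{d_N},-\}$-closed and that this is why the degree $2$ quotient carries the extra summand $\mf g_{LbA}^{(1,1)}/\mf g_{LbA}^{(1,1)}(p,k)$.

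However, your repair is incomplete, so the sentence ``with this correction in place, $\mf g_{PN}$ becomes an honest differential graded Lie subalgebra'' does not yet hold: the same closure problem already occurs one degree lower, since $\{\Pi_{d_N},-\}$ sends $\mf g_{PN}^0$ (bidegree $(0,0)$) into bidegree $(1,0)$, which you have not placed in $\mf g_{PN}^1$; and once bidegree $(1,0)$ is added in degree $1$, degree $2$ must in addition absorb $\{\Pi_{d_N},\mf g_{LbA}^{(1,0)}\}\subset \mf g_{LbA}^{(2,0)}$. The fix, parallel to the treatment of $\mf g_{LbA}$ itself (where $\mf g_{LbA}^{(1,0)}(p,k)=\mf g_{LbA}^{(1,0)}$ and $\mf g_{LbA}^{(2,0)}(p,k)=\mf g_{LbA}^{(2,0)}$), is to include the \emph{entire} bidegree $(1,0)$ component in both $\mf g_{PN}^1$ and $\mf g_{PN}^1(p,k)$, and the entire $(2,0)$ component in both degree-$2$ pieces. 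These summands cancel in the quotients, so the displayed cochain spaces and the cohomology in the hypothesis are unchanged; in particular the induced differential in degree $0$ is just $\overline{[\pi,-]}$, consistent with the explicit complex in the remark after the theorem. Two further small consequences of this amendment: the gauge ODE acquires the inhomogeneous term $-\{\Pi_{d_N},\sigma_0(v)\}$ in bidegree $(1,0)$, and only its bidegree $(0,1)$ component (pushforward of the bivector along the flow) is relevant for relocating the fixed point, as in the discussion for $\mf g_{LbA}$; and the Maurer--Cartan elements of the enlarged $\mf g_{PN}$ are no longer literally only bivectors, so, exactly as in theorem \ref{thm:liebialg}, the conclusion of theorem \ref{thm:mainthm} is applied only to the Maurer--Cartan elements of bidegree $(0,1)$, which are precisely the Poisson structures $\pi'$ with $(\pi',N)$ Poisson--Nijenhuis.
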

\begin{rmk}
\begin{itemize}\item[]
\item[-]
For $k =1$, the cohomology can be described explicitly: unpacking the definition of $$H^1\left( \mf g_{PN}/\mf g_{PN}(p,k), \overline{\{\Pi_{d_N},-\}+\{f_{[\pi,-]},-\}}\right)$$ shows that it is the cohomology of the complex
\[
\begin{tikzcd}
T_pM \arrow{r}{\overline{[\pi,-]}} & S^2 (T_p M[-1]) \arrow{r}{\kolomtwee{\overline{[\pi,-]}}{\overline{d_N}}}&S^3(T_pM[-1]) \oplus S^2(T_pM)\\
\end{tikzcd},
\]
where for $\pi'\in S^2(T_pM[-1])$, 
$$
d_N(\pi') = (N\pi'^\sharp)_{sym}
$$
is the symmetric part of $N\pi'^\sharp$. To compute the middle cohomology, we may then restrict ourselves to those cocycles, for which $N\pi'^\sharp$ is still skew-symmetric.
\item[-]
Note that by construction there is an injective map
$$
H^1\left(\mf g_{PN}/\mf g_{PN}(p,k),\delta \right)\hookrightarrow H^1\left(\mf g_{LbA}/\mf g_{LbA}(p,k),\delta \right),
$$
where $\delta = \overline{\{\Pi_{d_N},-\}+\{f_{[\pi,-]},-\}}$.
\end{itemize}
\end{rmk}
An important example of a Nijenhuis tensor is a complex structure on a manifold $M$: it is a map $J:TM\to TM$ with $J^2 = -\text{id}$, such that $[J,J]_{FN} = 0$. It is shown in \cite[Theorem 2.7]{Holomorphic} that a Poisson-Nijenhuis structure $(\pi,J)$ is equivalent to $J\pi^\sharp + i\pi^\sharp \in \Gamma(\wedge^2 TM_{\mb C})$ being a holomorphic Poisson structure. Applying Theorem \ref{thm:PN} yields:
\begin{cor}\label{cor:holpoisson}
If $H^1\left(\mf g_{PN}/\mf g_{PN}(p,k),\overline{\{\Pi_{d_J},-\}+\{f_{{[\pi,-]}},-\}}\right) = 0$, every holomorphic Poisson structure near $J\pi^\# + i\pi^\#$ has a family of fixed points of order $k$ near $p$, parametrized by a neighborhood of $$0\in H^0\left(\mf g_{PN}/\mf g_{PN}(p,k),\overline{\{\Pi_{d_J},-\}+\{f_{{[\pi,-]}},-\}}\right).$$
\end{cor}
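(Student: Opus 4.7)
The plan is to reduce Corollary \ref{cor:holpoisson} to Theorem \ref{thm:PN} by transporting the problem across the equivalence of \cite[Theorem 2.7]{Holomorphic}. Recall that this theorem provides a bijection between holomorphic Poisson structures on the complex manifold $(M,J)$ and real Poisson-Nijenhuis pairs $(\pi',J)$, given explicitly by $\pi' \mapsto J(\pi')^\sharp + i(\pi')^\sharp$. Under this bijection, the fixed Nijenhuis tensor $J$ is unchanged, so the map on the relevant moduli becomes $\pi' \mapsto J(\pi')^\sharp + i(\pi')^\sharp$ with $J$ fixed.

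The first step is to check that this correspondence is a homeomorphism with respect to the natural $C^s$-topologies on both sides. Since the formula is $C^\infty(M)$-linear in $\pi'$ (multiplication by the fixed tensor $J$ and taking real/imaginary parts), a $C^{2k-1}$-neighborhood of $J\pi^\sharp + i\pi^\sharp$ in the space of holomorphic bivectors on $(M,J)$ corresponds precisely to a $C^{2k-1}$-neighborhood of $\pi$ in the space of real bivectors whose pair with $J$ forms a Poisson-Nijenhuis structure. In particular, every holomorphic Poisson structure sufficiently $C^{2k-1}$-close to $J\pi^\sharp + i\pi^\sharp$ arises from a Poisson tensor $\pi'$ which is $C^{2k-1}$-close to $\pi$ and which forms a Poisson-Nijenhuis pair $(\pi',J)$.

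The second step is to verify that fixed points of order $k$ are preserved by the correspondence. Since $J:TM\to TM$ is a bundle isomorphism, the condition that the $(k-1)$-jet of $\pi'$ vanishes at a point $q\in M$ is equivalent to the vanishing of the $(k-1)$-jet of $J(\pi')^\sharp + i(\pi')^\sharp$ at $q$. Hence the family of fixed points of order $k$ provided by Theorem \ref{thm:PN} applied to $(\pi',J)$ coincides with the family of fixed points of order $k$ of the holomorphic Poisson structure corresponding to $\pi'$.

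Combining these two observations, the hypothesis on the cohomology $H^1\!\left(\mf g_{PN}/\mf g_{PN}(p,k),\overline{\{\Pi_{d_J},-\}+\{f_{[\pi,-]},-\}}\right)=0$ is exactly the hypothesis of Theorem \ref{thm:PN} with $N=J$; its conclusion then transports through the equivalence to yield the desired family of fixed points parametrized by an open neighborhood of $0 \in H^0\!\left(\mf g_{PN}/\mf g_{PN}(p,k),\overline{\{\Pi_{d_J},-\}+\{f_{[\pi,-]},-\}}\right)$. The only genuine content to verify is the compatibility of the topologies, which is routine given that the correspondence is linear in $\pi'$ with the tensor $J$ held fixed; there is no serious obstacle.
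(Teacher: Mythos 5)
Your proposal is correct and matches the paper's own (implicit) argument: the corollary is obtained by applying Theorem \ref{thm:PN} with $N=J$ and transporting the conclusion through the bijection of \cite[Theorem 2.7]{Holomorphic} between Poisson--Nijenhuis pairs $(\pi',J)$ and holomorphic Poisson structures $J(\pi')^\sharp + i(\pi')^\sharp$, under which closeness of structures and vanishing of $(k-1)$-jets correspond since the map is linear in $\pi'$ with $J$ fixed and invertible.
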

\begin{rmk}
Note that if a holomorphic Poisson structure vanishes up to first order at a point $(k=1)$, the $(1,0)$-cotangent space at $p$ of $M$ inherits a complex Lie algebra structure $\mf g_{\mb C}$. As pointed out by Marius Crainic, the cohomology 
$$
H^1\left(\mf g_{PN}/\mf g_{PN}(p,1), \overline{\{\Pi_{d_J},-\} + \{f_{[\pi,-]},-\}}\right)
$$
is isomorphic to the \emph{complex} Lie algebra cohomology
$$
H^2_{CE}(\mf g_{\mb C},\mb C).
$$
\nrt{In particular, it vanishes if $\mf g$ is semisimple.}
\end{rmk}
\begin{exmp}
We compute an explicit example of Corollary \ref{cor:holpoisson}. Consider $M = \mb R^4 \cong \mb C^2$, with coordinates $(x^1,y^1,x^2,y^2) = (z^1,z^2)$, where $$z^j = x^j + iy^j, 
$$
for $j = 1,2$. On $M$ consider the real Poisson structure 
$$
\pi = y^1(\partial_{x^1}\wedge \partial_{x^2} - \partial_{y^1}\wedge \partial_{y^2}) - x^1(\partial_{x^1}\wedge \partial_{y^2} + \partial_{y^1}\wedge \partial_{x^2}),
$$
and let $J$ be the standard complex structure induced by multiplication by $i$. Then $(\pi,J)$ is a Poisson-Nijenhuis structure ($\pi$ is the imaginary part of the holomorphic bivector field $z^1\partial_{z^1}\wedge\partial_{z^2}$), and $\pi$ vanishes in the origin. In this case the relevant cohomology $$H^1\left(\mf g_{PN}/\mf g_{PN}(p,k),\overline{\{\Pi_A,-\}+\{f_{[\pi,-]},-\}}\right)$$ vanishes, and for nearby Poisson structures $\pi'$ such that $(\pi',J)$ is Poisson-Nijenhuis, there is a $q$ near the origin such that $\pi'$ vanishes in $q$. Note that this is really only the case for those Poisson structures for which $(\pi',J)$ is Poisson-Nijenhuis: the bivector field
$$
\pi_\epsilon = \pi + \epsilon \partial_{x^2}\wedge\partial_{y^2}
$$
is Poisson, but non-vanishing.
\end{exmp}
\subsection{Higher order fixed points of Courant algebroids}
In this section we apply the main theorem to Courant algebroids, and obtain a stability result along the same lines as before.
\subsubsection{Courant algebroids}
Classically, a Courant algebroid is defined as follows. See e.g. \cite{liblmein}.
\begin{defn}
A \textit{Courant algebroid} over a manifold $M$ is a quadruple $(E,\langle-,-\rangle,\rho,[\![-,-]\!])$, where
\begin{itemize}
    \item[i)] $E$ is a vector bundle over $M$,
    \item[ii)] $\langle-,-\rangle:E \times E \to \mb R$ is a fiberwise symmetric, non-degenerate bilinear pairing,
    \item[iii)] $\rho: E\to TM$ is a bundle map,
    \item[iv)] $\cb --:\Gamma(E) \times \Gamma(E) \to \Gamma(E)$ is an $\mb R$-bilinear map
\end{itemize}
satisfying for $x,y,z \in \Gamma(A)$
\begin{itemize}
    \item [a)] $\cb x{\cb yz} = \cb {\cb xy} z + \cb y{\cb xz}$,
    \item [b)] $\rho(x)\pairing yz = \pairing {\cb xy}z +\pairing y{\cb xz}$,
    \item [c)] $\cb xy + \cb yx = \rho^\ast d\pairing xy,$ where $\rho^\ast: T^\ast M \to E^\ast$ is the dual map to $\rho$, and we identify $E^\ast \cong E$ via $\pairing --$.
\end{itemize}
\end{defn}
This definition implies among others the following property, as can be found in e.g. \cite{liblmein}.
\begin{lem}
The bracket $\cb --$ satisfies the Leibniz rule in the right entry, i.e. for $x,y \in \Gamma(E)$, $f \in C^\infty(M)$,
$$
\cb x{fy} = f\cb xy + \rho(x)(f)y.
$$
Consequently, $\rho$ is a morphism of brackets.
\end{lem}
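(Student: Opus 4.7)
The plan is to first establish the Leibniz rule by testing both sides against an arbitrary section $z\in\Gamma(E)$ via the non-degenerate pairing $\pairing --$, so that axiom (b) becomes the main computational tool. Concretely, I would compute $\pairing{\cb x{fy}}{z}$ by applying axiom (b) to the pair $(fy,z)$, which yields
\[
\pairing{\cb x{fy}}{z} \;=\; \rho(x)\pairing{fy}{z} - \pairing{fy}{\cb xz} \;=\; \rho(x)(f)\pairing yz + f\,\rho(x)\pairing yz - f\pairing y{\cb xz}.
\]
Applying axiom (b) once more to the middle term (now to the pair $(y,z)$) rewrites $f\rho(x)\pairing yz - f\pairing y{\cb xz}$ as $f\pairing{\cb xy}{z}$. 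Collecting terms gives $\pairing{\cb x{fy}}{z} = \pairing{f\cb xy + \rho(x)(f)y}{z}$, and since $\pairing --$ is non-degenerate on $E$ and $z$ is arbitrary, the desired Leibniz identity follows.

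For the second assertion, I would use the Jacobi-type identity (a) applied to $\cb x{\cb y{fz}}$ together with the Leibniz rule just proved. Expanding the inner bracket $\cb y{fz} = f\cb yz + \rho(y)(f)z$ and then applying the Leibniz rule a second time, one obtains
\[
\cb x{\cb y{fz}} \;=\; f\,\cb x{\cb yz} + \rho(x)(f)\cb yz + \rho(y)(f)\cb xz + \rho(x)\rho(y)(f)\,z,
\]
and symmetrically for $\cb y{\cb x{fz}}$. Subtracting and using axiom (a) on both the left ($\cb x{\cb y{fz}} - \cb y{\cb x{fz}} = \cb{\cb xy}{fz}$) and on the right-most outer cross-term collapses the $\cb yz$ and $\cb xz$ terms and leaves
\[
\cb{\cb xy}{fz} \;=\; f\,\cb{\cb xy}{z} + [\rho(x),\rho(y)](f)\,z.
\]
Comparing with the direct Leibniz expansion $\cb{\cb xy}{fz} = f\cb{\cb xy}{z} + \rho(\cb xy)(f)z$, and letting $z$ be an arbitrary non-vanishing section (or arguing pointwise), gives $\rho(\cb xy)(f) = [\rho(x),\rho(y)](f)$ for all $f\in C^\infty(M)$, hence $\rho(\cb xy) = [\rho(x),\rho(y)]$.

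The only subtlety I anticipate is bookkeeping: axiom (c) is not needed explicitly but is implicitly built into the asymmetry of the bracket, so one must be careful not to swap the entries of $\cb --$ freely. The main obstacle is making sure the two applications of axiom (b) are performed on the correct pair of arguments, after which the argument is essentially mechanical. Non-degeneracy of $\pairing --$ at the end of the first part is what converts the pairing identity into the actual Leibniz rule in $\Gamma(E)$.
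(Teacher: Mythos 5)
Your argument is correct: deriving the Leibniz rule by pairing $\cb x{fy}$ against an arbitrary $z$ and using axiom b) twice together with non-degeneracy, and then extracting $\rho(\cb xy)=[\rho(x),\rho(y)]$ from axiom a) applied to $\cb x{\cb y{fz}}$ plus the Leibniz rule, is exactly the standard derivation. The paper itself does not prove this lemma but only cites the literature, and your computation matches that standard proof; the only point worth keeping in mind is that the final step requires choosing, near each point, a section $z$ that is non-vanishing there (harmless since the statement is vacuous for a rank-zero bundle), which you already note.
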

This means that a Courant algebroid gives rise to a foliation on $M$, and it makes sense to speak of fixed points of a Courant algebroid.\\
Due to D. Roytenberg \cite{Roytenberg2002}, there is a more concise definition, making use of graded geometry. \\
Fix a vector bundle $E$ with a non-degenerate, symmetric bilinear pairing $\pairing--$. Then:
\begin{prop}[\cite{Roytenberg2002}]
There is a degree 2 graded manifold $\mc M_{E,\pairing--}$ associated to the pseudo-Euclidean vector bundle $(E,\pairing--)$, which is symplectic: its functions $C^\infty(\mc M_{E,\pairing --})$ are equipped with a \nrt{non-degenerate} degree $-2$ Poisson bracket $\{-,-\}$. Conversely, any symplectic degree 2 graded manifold arises in this way.\\
Moreover, Courant algebroid structures on $(E,\pairing--)$ are in 1-1 correspondence with functions $\Theta \in C^\infty(\mc M_{E,\pairing--})^3$ satisfying $\{\Theta,\Theta\} = 0$.
\end{prop}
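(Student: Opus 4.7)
The plan is to establish the two assertions separately: first the classification of degree $2$ symplectic graded manifolds in terms of $(E,\pairing--)$, and then the derived bracket correspondence between degree $3$ Hamiltonians satisfying $\{\Theta,\Theta\}=0$ and Courant algebroid structures.

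For the first part, I would construct $\mc M_{E,\pairing--}$ locally in Darboux coordinates and then glue. Over a chart $U \subset M$ where $E$ trivializes with a frame $\{e_a\}$ in which the Gram matrix $g_{ab} = \pairing{e_a}{e_b}$ is locally constant (available by orthonormalization, since $\pairing--$ is non-degenerate and symmetric), introduce graded coordinates $x^i$ of degree $0$ on $U$, $\xi^a$ of degree $1$ corresponding to the frame, and $p_i$ of degree $2$, with the only non-zero Poisson brackets being $\{x^i,p_j\} = \delta^i_j$ and $\{\xi^a,\xi^b\} = g^{ab}$. A change of frame of $E$ and of base coordinates on $M$ induces a transformation of the $p_i$ with a quadratic correction in the $\xi^a$ needed to preserve the symplectic form; I would verify that this transformation satisfies a cocycle condition on triple overlaps, which reduces to the cocycle condition for the transition functions of $E$ preserving $\pairing--$. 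For the converse, I would use that a degree $1$ graded manifold is determined by a vector bundle: $C^\infty(\mc M)^1 \cong \Gamma(F)$ for some $F \to M$, and the degree $-2$ Poisson bracket of two degree $1$ elements is $C^\infty(M)$-bilinear, graded-symmetric (the signs in graded skew-symmetry of a degree $-2$ bracket give true symmetry on degree $1$ inputs), and non-degenerate, hence defines the pairing on $F^\vee \cong E$. The degree $2$ part is then determined up to isomorphism by Darboux duality with the base coordinates.

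For the second part, given $\Theta$ with $\{\Theta,\Theta\} = 0$, I would define via derived brackets
\[
\rho(e)(f) = \{\{\Theta,e\},f\}, \qquad \cb{e_1}{e_2} = \{\{\Theta,e_1\},e_2\},
\]
for $e,e_1,e_2 \in \Gamma(E) \cong C^\infty(\mc M)^1$ and $f\in C^\infty(M)$, with the pairing recovered as $\pairing{e_1}{e_2} = \{e_1,e_2\}$. The Leibniz rule in $f$, the invariance axiom (b) of $\pairing--$ under $\cb--$, and the Jacobi-type axiom (a) follow by applying the graded Jacobi identity for $\{-,-\}$ to nested brackets involving $\Theta$ and invoking $\{\Theta,\Theta\}=0$ at the final step; axiom (c) is immediate from graded symmetry of $\{-,-\}$ on degree $1$ elements. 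Conversely, given a Courant algebroid, the non-degeneracy of $\{-,-\}$ ensures that a degree $3$ function on $\mc M$ is determined by its iterated Poisson brackets with generators in degrees $0$ and $1$; I would define $\Theta$ to encode $\rho$, $\cb--$, and $\pairing--$ through the derived bracket formulas above, and then verify $\{\Theta,\Theta\} = 0$ by reversing the computations that established the Courant axioms.

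The main obstacle will be the gluing of the local Darboux models into a single symplectic graded manifold: the transition functions of the degree $2$ fiber coordinates are not tensorial but acquire a correction quadratic in the degree $1$ coordinates, and one must verify carefully that these corrections assemble coherently on triple overlaps. Once the globalization is achieved, the derived-bracket correspondence reduces to a lengthy but mechanical unpacking of the graded Jacobi identity together with the master equation $\{\Theta,\Theta\}=0$.
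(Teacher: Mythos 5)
The paper does not prove this proposition at all: it is quoted verbatim from \cite{Roytenberg2002}, so the only meaningful comparison is with the argument in that reference. Your second half coincides with it: the derived-bracket dictionary $\rho(e)(f)=\{\{\Theta,e\},f\}$, $\cb{e_1}{e_2}=\{\{\Theta,e_1\},e_2\}$, $\pairing{e_1}{e_2}=\{e_1,e_2\}$, with the Courant axioms unpacked from the graded Jacobi identity plus the master equation, is exactly Roytenberg's proof. Your first half takes a more pedestrian route than his: rather than gluing local Darboux models and verifying the quadratic corrections to the $p_i$ on triple overlaps (which you rightly single out as the delicate point), Roytenberg obtains $\mc M_{E,\pairing--}$ invariantly as a minimal symplectic realization sitting inside $T^\ast[2]E[1]$ --- the preimage, under the projection to $E[1]\times_M E^\vee[1]$, of the graph of $\tfrac12\pairing--\colon E[1]\to E^\vee[1]$ --- with the degree $-2$ bracket obtained by restriction of the canonical one; the converse direction then reads off $(E,\pairing--)$ from the degree $1$ functions as you do, combined with a graded Darboux-type argument. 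Your gluing approach can be pushed through, but the invariant construction eliminates the cocycle verification entirely, and it is also the viewpoint the present paper uses implicitly in its Lie bialgebroid section.

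Two steps in your sketch are stated too quickly. First, axiom (c) is not ``immediate from graded symmetry'': you need the Jacobi identity to write $\cb xy+\cb yx=\{\Theta,\{x,y\}\}=\{\Theta,\pairing xy\}$ and then the identification of $\{\Theta,f\}$, for $f\in C^\infty(M)$, with $\rho^\vee df$ under the metric --- still a one-line computation, but of the same nature as the other axioms. Second, in the converse direction non-degeneracy of the bracket only gives \emph{uniqueness} of $\Theta$; existence must be established, for instance by choosing a metric connection to split the degree $3$ functions into an anchor/connection part and a $\Gamma(\wedge^3E)$ part and checking that the non-tensoriality of $\cb--$, which is precisely the $\rho^\vee d\pairing--$ defect, is what makes the two pieces assemble into a well-defined element of $C^\infty(\mc M_{E,\pairing--})^3$. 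One also needs the (easy, by non-degeneracy) observation that the degree $4$ function $\{\Theta,\Theta\}$ vanishes once all its iterated brackets with functions of degrees $0$ and $1$ vanish, so that the Courant axioms really imply the full master equation and not just part of it.
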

\nrt{We will apply Theorem \ref{thm:mainthm} to obtain a stability criterion for higher order fixed points of Courant algebroids: 
\begin{defn}\label{def:calgk}
    Let $(E,\Theta)$ be a Courant algebroid with anchor $\rho$ and bracket $\cb --$. Let $k\geq 1$. A point $p\in M$ is a \emph{fixed point of order $k$} if for $x,y \in \Gamma(E)$, we have 
    $$
    \rho(x)\in I_p^k\mf X(M),\, \cb xy \in I_p^{k-1} \Gamma(E).
    $$
\end{defn}
\begin{rmk}
    The requirement on the bracket is made for the same reason as in the case of Lie algebroids, see Remark \ref{rmk:liealgkwhybracket}.
\end{rmk}
}
\subsubsection{The ingredients}
We check that we are in the setting of Assumptions \ref{ass:mainthmass}. 
\begin{itemize}
\item[i)]
An alternative description of a degree $-2$ Lie bracket is a degree $0$ Lie bracket on the algebra $C^\infty(\mc M_{E,\pairing--})[2]$. This is now a graded Lie algebra \begin{equation} \label{eq:calgliealg}
\mf g_{CA} := C^\infty(\mc M_{E,\pairing--})[2]
\end{equation}
such that $\mf g_{CA}^1 = C^\infty(\mc M_{E,\pairing--})^3$, and its Maurer-Cartan elements are precisely Courant algebroid structures on $E$. Intuitively, elements of $\mf g_{CA}^i = C^\infty(\mc M_{E,\pairing--})^{i+2}$ consist of graded symmetric products of elements of $\Gamma(E)$ and $\mf X(M)$, the former counting as degree $1$, while the latter counts as degree $2$. This can be made precise by picking a connection $\nabla$ on $E$ respecting the pairing. Then we can identify
$$
C^\infty(\mc M_{E,\pairing--}) \cong \Gamma(S(E[-1] \oplus TM[-2])).
$$
Here we implicitly use that $\pairing--$ induces an isomorphism $E\cong E^\ast$.\\
In particular, every degree is given by the sections of some vector bundle.
\item[ii)]
Similar to Lie algebroids, we show that there is a graded Lie subalgebra $\mf g_{CA}(p,k)$, whose Maurer-Cartan elements are those Courant algebroid structures with for which $p\in M$ is a fixed point of order $k$.

\begin{defn}
Let $(E,\pairing--)$ be a vector bundle with a symmetric non-degenerate pairing. Define for $p\in M$, $l \geq 0$:
\begin{align*}
C^\infty(\mc M_{E,\pairing --})^2_{p,l} := \left\{X \in C^\infty(\mc M_{E,\pairing --})^2 \mid \right.& \left.\{X,C^\infty(M)\} \subset I_p^l, \{X,\Gamma(E)\}\subset I_p^{l-1}\Gamma(E)\right\},\\
C^\infty(\mc M_{E,\pairing --})^3_{p,l} := \left\{X\in C^\infty(\mc M_{E,\pairing --})^3\mid \right.& \left.\{X, C^\infty(M)\} \subset I_p^l \Gamma(E), \right.\\& \left. \{X,\Gamma(E)\}\subset C^\infty(\mc M_{E,\pairing --})^2_{p,l}\right\},\\
C^\infty(\mc M_{E,\pairing --})^4_{p,l} := \left\{X\in C^\infty(\mc M_{E,\pairing --})^4 \mid \right.&\left.\{X,C^\infty(M)\} \subset C^\infty(\mc M_{E,\pairing --})^2_{p,l}, \right.\\ &\left.\{X,\Gamma(E)\}\subset C^\infty(\mc M_{E,\pairing --})^3_{p,l-1}\right\}.
\end{align*}
\end{defn}
\begin{lem}\label{lem:calgsubalg}
Setting 
\begin{align*}
    \mf g_{CA}^0(p,k)& := C^\infty(\mc M_{E,\pairing --})^2_{p,1},\\
    \mf g_{CA}^1(p,k)& := C^\infty(\mc M_{E,\pairing --})^3_{p,k},\\
    \mf g_{CA}^2(p,k)& := C^\infty(\mc M_{E,\pairing--})^4_{p,2k},
\end{align*}
these subspaces satisfy
\begin{align*}
\{\mf g_{CA}^0(p,k),\mf g_{CA}^i(p,k)\} &\subset \mf g_{CA}^i(p,k),\\ \{\mf g_{CA}^1(p,k),\mf g_{CA}^1(p,k)\} &\subset \mf g_{CA}^2(p,k).
\end{align*}
\end{lem}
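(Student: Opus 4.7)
The plan is to verify each stated containment by testing the bracketed element on the generators $C^\infty(M)$ and $\Gamma(E)$ of the algebra $C^\infty(\mc M_{E,\pairing--})$, since the defining conditions for all three subspaces $C^\infty(\mc M)^q_{p,l}$ are phrased purely in terms of brackets with elements of $C^\infty(M)$ and $\Gamma(E)$. The two tools are the graded Jacobi identity, which lets me write $\{\{X,Y\},z\} = \{X,\{Y,z\}\} \pm \{Y,\{X,z\}\}$ for $z$ a generator, and the Leibniz rule in each slot, which propagates the vanishing powers $I_p^m$ through products.

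Before attacking the closure claims, the plan is to establish an auxiliary Leibniz-type lemma: if $X\in C^\infty(\mc M)^2_{p,l}$, then by induction on $m$ one has $\{X,I_p^m\}\subset I_p^{m+l-1}$ and $\{X,I_p^m\Gamma(E)\}\subset I_p^{m+l-1}\Gamma(E)$, with analogous statements for $X$ of degree $3$ and $4$ by expanding with the Leibniz rule and using that the induced action on a product of a function in $I_p^m$ and a section picks up at worst one loss of power. As a consequence I would first verify the auxiliary closure $\{C^\infty(\mc M)^2_{p,1}, C^\infty(\mc M)^2_{p,l}\}\subset C^\infty(\mc M)^2_{p,l}$ for every $l$: both conditions $\{\cdot,f\}\in I_p^l$ and $\{\cdot,e\}\in I_p^{l-1}\Gamma(E)$ reduce by Jacobi to brackets of the individual factors with $f,e$, and the Leibniz lemma then ensures that applying a degree-$2$ derivation of vanishing order $1$ to an element of vanishing order $l$ preserves vanishing order $l$.

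With these tools in hand, each of the four closure statements follows by the same scheme. For $\{\mf g_{CA}^0(p,k),\mf g_{CA}^i(p,k)\}\subset \mf g_{CA}^i(p,k)$ with $i=0,1,2$, take $X\in C^\infty(\mc M)^2_{p,1}$ and $Y\in\mf g_{CA}^i(p,k)$, and test $\{X,Y\}$ against a generator $z$: Jacobi produces $\{X,\{Y,z\}\}$ and $\{Y,\{X,z\}\}$, after which the vanishing condition satisfied by $\{Y,z\}$ is preserved by $\{X,-\}$ (thanks to the degree-$2$ auxiliary closure and the Leibniz lemma), and the condition satisfied by $\{X,z\}$ fits into the defining inclusions for $Y$. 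For $\{\mf g_{CA}^1(p,k),\mf g_{CA}^1(p,k)\}\subset \mf g_{CA}^2(p,k)$, take $X,Y\in C^\infty(\mc M)^3_{p,k}$, so that $\{X,f\},\{Y,f\}\in I_p^k\Gamma(E)$ and $\{X,e\},\{Y,e\}\in C^\infty(\mc M)^2_{p,k}$; Jacobi reduces $\{\{X,Y\},f\}$ and $\{\{X,Y\},e\}$ to iterated brackets which land in $C^\infty(\mc M)^2_{p,k}$ and $C^\infty(\mc M)^3_{p,2k-1}$ respectively, exactly matching the definition of $C^\infty(\mc M)^4_{p,2k}$ once one tracks the two separate powers in the two conditions (the $I_p^{2k}$ in the function-condition comes from composing two vanishings of order $k$, the $I_p^{2k-1}$ in the section-condition from composing one vanishing of order $k$ with one of order $k-1$).

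I expect no conceptual obstacle; the only real difficulty is bookkeeping. The index arithmetic $k+k=2k$ versus $k+(k-1)=2k-1$ is exactly what the asymmetric definition of $C^\infty(\mc M)^q_{p,l}$ is designed to accommodate, and the sign in the graded Jacobi identity is always either $+1$ or $-1$ so never obstructs closure. The most delicate step to get right is the auxiliary Leibniz lemma, because using $\{X,gh\}=\{X,g\}h+g\{X,h\}$ on $g\in I_p^m,\ h\in\Gamma(E)$ one must use both terms to retain the same power $m$ rather than gain one — this is precisely what forces the ``$l-1$'' (rather than $l$) in the definitions and is the point where an inattentive computation could mislay a unit of vanishing order.
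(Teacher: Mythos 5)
The paper does not actually prove this lemma -- like the analogous closure statements for Lie algebroids (e.g.\ the lemma on $[\mf X^{j_1}_{p,k_1},\mf X^{j_2}_{p,k_2}]$, whose proof is left to the reader), it is stated as a routine verification -- so there is no written argument of the author's to compare with. Your strategy is exactly the natural one and it does go through: the membership conditions for $C^\infty(\mc M_{E,\pairing--})^q_{p,l}$ are phrased as conditions on brackets with $C^\infty(M)$ and $\Gamma(E)$, so testing $\{X,Y\}$ on these generators via the graded Jacobi identity, and propagating orders of vanishing with the biderivation (Leibniz) property, is all that is needed; the vanishing of the low-degree brackets $\{\Gamma(E),C^\infty(M)\}$ and $\{C^\infty(M),C^\infty(M)\}$ for degree reasons is what kills the cross terms and makes the bookkeeping close up.

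Two points to tighten when you write it out. First, your displayed auxiliary closure $\{C^\infty(\mc M)^2_{p,1},C^\infty(\mc M)^2_{p,l}\}\subset C^\infty(\mc M)^2_{p,l}$ is the special case $a=1$ of what you actually need: in the $\mf g^1_{CA}$--$\mf g^1_{CA}$ computation, testing $\{\{X,Y\},e\}$ produces a term $\{Z,W\}$ with \emph{both} $Z,W\in C^\infty(\mc M)^2_{p,k}$, so you need the general bidegree statements $\{C^\infty(\mc M)^2_{p,a},C^\infty(\mc M)^2_{p,b}\}\subset C^\infty(\mc M)^2_{p,a+b-1}$ and $\{C^\infty(\mc M)^3_{p,a},C^\infty(\mc M)^2_{p,b}\}\subset C^\infty(\mc M)^3_{p,a+b-1}$; these follow from your Leibniz lemma by the same argument, but they should be stated in that generality. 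Second, the sentence asserting that $\{\{X,Y\},f\}$ lands in $C^\infty(\mc M)^2_{p,k}$ cannot be what you mean: the definition of $C^\infty(\mc M)^4_{p,2k}$ demands $\{\{X,Y\},f\}\in C^\infty(\mc M)^2_{p,2k}$, and membership in the larger space $C^\infty(\mc M)^2_{p,k}$ would not suffice. Your parenthetical ($I_p^{2k}=I_p^k\cdot I_p^k$ for the function-condition, $I_p^{2k-1}=I_p^k\cdot I_p^{k-1}$ for the section-condition) shows you have the correct count -- indeed, writing $\{Y,f\}=\sum_i g_ie_i$ with $g_i\in I_p^k$ and expanding $\{X,g_ie_i\}=\{X,g_i\}e_i\pm g_i\{X,e_i\}$ gives exactly those powers, using $\{X,I_p^k\}\subset I_p^{2k-1}\Gamma(E)$ -- but the intermediate claim as written is off by a factor of $I_p^k$ and should be corrected.
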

\nrt{We see that the subspace $\mf g_{CA}(p,k) \subset \mf g_{CA}$ is a graded Lie subalgebra. Moreover, its Maurer-Cartan elements are precisely the Courant algebroid structures such that $p$ is a fixed point of order $k$:}
\begin{lem}
Let $(E,\Theta)$ be a Courant algebroid. Let $k \geq 1$. A point $p \in M$ is a fixed point of order $k$ of $\Theta$ if and only if $\Theta \in \mf g_{CA}^1(p,k)$.
\end{lem}
\begin{rmk}
\begin{itemize}
    \item []
    \item[i)] For a Courant algebroid $(E,\Theta)$, the algebra $C^\infty(\mc M_{E,\pairing--})$ has an explicit description in \cite{cueca2020courant} in terms of multilinear maps $C^\bullet(E)$, which first appeared in \cite{kwalgebra}, analogous to the deformation complex of \cite{CrMo08} for Lie algebroids. The Courant algebroid structure $\Theta$ induces a differential on the algebra by $\{\Theta,-\}$, and in terms of the description in \cite{cueca2020courant} this differential \nrt{has a de Rham-type description}.
    \item [ii)] In terms of the complex given in \cite{cueca2020courant}, these subspaces can be described as follows. Omitting the argument $\mc {M}_{E,\pairing--}$, we have for $k =2,3,4$
    \begin{align*}
        (C^\infty)^k_{p,l} \cong \{\omega \in C^k(E) \mid\,\,& \omega(e_1,\dots, e_{k-1},-) \in I_p^{l-1}\Gamma(E^\ast),\\ &\sigma_\omega(e_1,\dots, e_{k-2}) \in I_p^l \mf X(M)\,\, \forall e_1,\dots,e_{k-1} \in \Gamma(E)\}.
    \end{align*}
\end{itemize}
\end{rmk}
\begin{rmk}
As before, we have isomorphisms of $\mf g_{CA}^i/\mf g_{CA}^i(p,k)$ with various jet spaces at $p$. More precisely, after picking a coordinate neighborhood of $p$ such that $(E,\pairing--)$ is trivial as a \emph{pseudo-euclidean} vector bundle, we can make the following identifications.
\begin{align*}
    \mf g_{CA}^0/\mf g_{CA}^0(p,k) &\cong T_pM\\
    \mf g_{CA}^1/\mf g_{CA}^1(p,k) & \cong J^{k-1}_p(E^\ast[-1] \otimes TM) \oplus J^{k-2}_p(S^3(E^\ast[-1]))\\
    \mf g_{CA}^2/\mf g_{CA}^2(p,k) & \cong J^{2k-2}_p(S^2(E^\ast[-1])\otimes TM) \oplus J^{2k-3}_p(S^4(E^\ast[-1])) \oplus J^{2k-1}_p(S^2(TM)).
\end{align*}
\end{rmk}
\begin{rmk}\label{exmp:quadliealg}
We can describe the cohomology $H^1(\mf g_{CA}/\mf g_{CA}(p,1),\overline{\{\Theta,-\}})$ when $p \in M$ is a fixed point (of order 1) of a Courant algebroid $\Theta$ on $(E,\pairing --)$. Analogous to Lie algebroids, the Courant algebroid structure gives rise to a Lie algebra structure on $E_p := \mf g$, and this Lie algebra acts on $T_pM$ by means of a representation with representation map $\tau: \mf g\to \End(T_pM)$. Additionally, $\mf g$ has an invariant non-degenerate pairing $\pairing--{}_p$, and the stabilizer Lie algebras $\mf g_q$ for the representation are coisotropic for all $q\in T_pM$. We can describe the cohomology appearing in Theorem \ref{thm:courant} explicitly in terms of $\mf g$, $\pairing--_p$, $T_pM$ and $\tau$. As we may assume the bundle $E$ is trivial with a constant pairing, we can trivialize the algebra of functions $C^\infty(\mc M_{E,\pairing --})$ canonically, and by unpacking the definition of $H^1\left(\mf g_{CA}/\mf g_{CA}(p,k),\overline{\{\Theta,-\}}\right)$ we obtain the complex given by
\[
\begin{tikzcd}
T_pM \arrow{r}{d^0} & \mf g^\ast[-1]\otimes T_pM \arrow{r}{\kolomtwee {d^1_1}{d^1_2}} & S^2( \mf g^\ast[-1])\otimes T_pM \oplus S^2(T_pM) \otimes (T^\ast_pM \oplus \mb R).
\end{tikzcd}
\]
Here we interpret $J^1_p(S^2(TM)) \cong S^2(T_pM)\otimes (T^\ast_pM \oplus \mb R)$, and view this space as linear maps $T_pM\oplus \mb R \to S^2(T_pM)$.\\
Now $d^0$ and $d^1_1$ are given by the standard Chevalley-Eilenberg formulas, while $d^1_2$ is given as follows: Let $\gamma:\mf g \to T_pM$ and let $\{e_i\}_{i = 1}^r$ be a basis of $\mf g$, and $\{e^i\}_{i = 1}^r$ the dual basis with respect to $\pairing --_p$. Then for $v\in T_pM, \lambda \in \mb R$, we set
$$
d^1_2(\gamma)(v,\lambda) = \sum_{i = 1}^r \gamma(e^i)\cdot\tau(e_i)(v)\in S^2(T_pM),
$$
where $\cdot$ is the symmetric product.\\
$d^1_2$ can be interpreted as follows: First note that $\End(T_pM) \cong T_p^\ast M\otimes T_pM \cong\mf X_{lin}(T_pM)$. In particular, using $\tau$, one can construct the transformation Lie algebroid on the trivial $\mf g$-bundle over $T_pM$. Now if $\gamma:\mf g\to T_pM$ is Chevalley-Eilenberg cocycle, then $\tau + \gamma$, which can be interpreted as an affine action, defines a new Lie algebroid structure on $\mf g\times T_pM$ analogous to \cite[Proposition 4.1]{rigflex}. $d^1_2$ infinitesimally measures the failure of the stabilizers of this Lie algebroid to be coisotropic again, serving as an infinitesimal obstruction to giving rise to a new Courant algebroid structure on $(E,\pairing --)$, as shown in \cite{liblmein}.
\end{rmk}
\item[iii)] As the spaces are once again jet spaces of vector bundles at points, we can pick the necessary splittings.
\item[iv)] Pick a Courant algebroid structure $\Theta$ on $E$ such that $p\in M$ is a fixed point of order $k$.
\end{itemize}
We check that the data satisfies Assumptions \ref{ass:mainthmass}a)-e).
\begin{itemize}
\item[a)] We pick the following topologies:
\begin{itemize}
    \item[-] On $\mf g^0$, we pick the $C^\infty$-topology,
    \item[-] On $\mf g^1$, we pick the $C^{2k-1}$-topology,
    \item[-] On $\mf g^2$, we pick the $C^{2k-1}$-topology.
\end{itemize}
\item[b)] As $\partial = 0$, it is continuous.
\item[c)] The bracket is continuous for the same reason as for Lie algebroids.
\item[d)] To understand the gauge action in $\mf g$, we take a closer look at $\mf g^0$. 
\begin{lem}[\cite{roytenberg1999courant}]
$$\mf g^0 \cong CDO(E,\pairing--),$$ the infinitesimal vector bundle automorphisms of $E$ preserving the pairing.
\end{lem}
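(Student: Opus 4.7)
The plan is to construct the isomorphism via the Poisson bracket itself. For $X \in \mf g^0 = C^\infty(\mc M_{E,\pairing--})^2$, I define the candidate operator $\Phi(X)\colon\Gamma(E)\to\Gamma(E)$ by $\Phi(X)(s) := \{X,s\}$, using the canonical embedding $\Gamma(E) \hookrightarrow C^\infty(\mc M_{E,\pairing--})^1$. Since the Poisson bracket has degree $-2$, the operator $\{X,-\}$ preserves the total degree, so in particular it restricts to a linear endomorphism of $\Gamma(E)$ (degree $1$) and to a derivation $\sigma(X) := \{X,-\}|_{C^\infty(M)}$ of $C^\infty(M)$ (degree $0$), i.e. a vector field on $M$. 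The latter will turn out to be the symbol of $\Phi(X)$.

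The CDO axioms for $\Phi(X)$ then follow from two basic properties of the Poisson bracket. Since $f\in C^\infty(M)$ has even degree, the graded Leibniz rule gives
\begin{equation*}
\Phi(X)(fs) = \{X,fs\} = \{X,f\}s + f\{X,s\} = \sigma(X)(f)\,s + f\,\Phi(X)(s),
\end{equation*}
so that $\Phi(X)$ is a first order differential operator on $\Gamma(E)$ with symbol $\sigma(X)$. For the compatibility with $\pairing--$, I would use the fundamental fact (see \cite{roytenberg1999courant}) that the pairing is recovered from the Poisson bracket as $\{s,t\} = \pairing{s}{t}$ for $s,t \in \Gamma(E)$. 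Since $X$ has even shifted degree while $s,t$ have odd shifted degree, the graded Jacobi identity reduces to
\begin{equation*}
\{X,\{s,t\}\} = \{\{X,s\},t\} + \{s,\{X,t\}\},
\end{equation*}
which upon unpacking says exactly $\sigma(X)\pairing{s}{t} = \pairing{\Phi(X)(s)}{t} + \pairing{s}{\Phi(X)(t)}$, i.e. $\Phi(X)$ preserves $\pairing--$.

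For bijectivity, I would apply the five lemma to the commutative diagram
\begin{equation*}
\begin{tikzcd}
0 \arrow{r} & \Gamma(\wedge^2 E) \arrow{r} \arrow{d}{=} & \mf g^0 \arrow{r}{\sigma} \arrow{d}{\Phi} & \mf X(M) \arrow{r} \arrow{d}{=} & 0 \\
0 \arrow{r} & \Gamma(\wedge^2 E) \arrow{r} & CDO(E,\pairing--) \arrow{r}{\sigma} & \mf X(M) \arrow{r} & 0.
\end{tikzcd}
\end{equation*}
The top sequence comes from the identification $C^\infty(\mc M_{E,\pairing--})^2 \cong \Gamma(TM \oplus \wedge^2 E)$ afforded by any connection on $E$ compatible with $\pairing--$, and the bottom is the standard Atiyah-style sequence for CDOs preserving a non-degenerate pairing (using $\pairing--$ to identify $\mathfrak{o}(E,\pairing--)$ with $\wedge^2 E$). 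Commutativity of the right square is immediate; the main technical step is commutativity of the left square, which amounts to checking in a local $\pairing--$-orthonormal frame that an element of $\Gamma(\wedge^2 E)$, viewed as a quadratic function on $\mc M_{E,\pairing--}$, acts on $\Gamma(E)$ via $\{-,-\}$ as the corresponding skew-symmetric endomorphism. This is a direct local computation from the explicit form of the Poisson bracket, after which the five lemma yields the result.
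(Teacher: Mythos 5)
Your proof is correct, and it is essentially the standard argument behind the cited result: the paper itself gives no proof but refers to \cite{roytenberg1999courant}, where the identification is obtained exactly as you do it, by letting degree-2 functions act via $\{X,-\}$ on degree-1 functions ($\Gamma(E)$) and degree-0 functions ($C^\infty(M)$), with the Leibniz rule giving the CDO property, the graded Jacobi identity together with $\{s,t\}=\pairing st$ giving compatibility with the pairing, and the two Atiyah-type exact sequences (split by a metric connection, as the paper also notes) giving bijectivity. The only cosmetic point is that the left vertical arrow in your diagram need only be the map induced by $\Phi$ on the kernels, which your local computation shows is an isomorphism of $\Gamma(\wedge^2 E)$ onto $\Gamma(\mathfrak{so}(E,\pairing--))$ (possibly differing from a chosen identification by a sign or factor); this does not affect the five lemma argument.
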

One can show that the action is by vector bundle automorphisms, and the analogue of Lemma \ref{lem:singpointmove} holds in this context. This identifies a neighborhood of $0\in \mf g^0_{CA}/\mf g^0_{CA}(p,k)$ with a neighborhood of $p \in M$. 
\item[e)] Lemma \ref{lem:uniqueness} implies that the gauge action preserves Maurer-Cartan elements.

\end{itemize}

\subsubsection{Applying the main theorem}
Let $\mf g_{CA} $ and $\mf g_{CA}(p,k)$ as \nrt{in \eqref{eq:calgliealg} and Lemma \ref{lem:calgsubalg}} respectively. Plugging this into the main theorem yields:
\begin{thm}\label{thm:courant}
Let $(E,\Theta)$ be a Courant algebroid over $M$. Let $p \in M$ be a fixed point of order $k$ for $k \geq 0$, \nrt{as in Definition \ref{def:calgk}}. Assume that $$H^1(\mf g_{CA}/\mf g_{CA}(p,k),\overline{\{\Theta,-\}}) = 0.$$ Then for every open neighborhood $U$ of $p \in M$, there exists a $C^{2k-1}$-neighborhood $\mc U$ of $\Theta \in \mf g_{CA}$ such that for any Courant algebroid structure $\Theta' \in \mc U$ there is a family $I\subset U$ of fixed points of order $k$ of $\Theta$ parametrized by an open neighborhood of $$0 \in H^0(\mf g_{CA}/\mf g_{CA}(p,k),\overline{\{\Theta,-\}}).$$ 
\end{thm}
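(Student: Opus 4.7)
The plan is to deduce this from the main theorem (Theorem \ref{thm:mainthm}) by taking $\mf g = \mf g_{CA}$, $\mf h = \mf g_{CA}(p,k)$, $\partial = 0$, bracket equal to the degree $-2$ Poisson bracket $\{-,-\}$ on $C^\infty(\mc M_{E,\pairing--})$ shifted by $[2]$, and Maurer-Cartan element $Q = \Theta$. Essentially every piece of the input is already assembled in the subsection preceding the statement, so the work is to check that assumptions \ref{ass:mainthmass} are in force and then quote Theorem \ref{thm:mainthm}.

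First I would verify that $(\mf g_{CA},0,\{-,-\})$ is a differential graded Lie algebra and that $\mf g_{CA}(p,k)$ is a graded Lie subalgebra with finite-codimensional quotient in degrees $0,1,2$: this is exactly the content of the preceding lemma on bracket closure of the subspaces $C^\infty(\mc M_{E,\pairing--})^\bullet_{p,l}$, together with the explicit identifications of $\mf g_{CA}^i/\mf g_{CA}^i(p,k)$ with jet spaces at $p$, which in particular are finite-dimensional. Splittings $\sigma_0,\sigma_1$ are obtained, after restricting to a coordinate chart trivializing $(E,\pairing--)$ as a pseudo-Euclidean bundle, by lifting jets to polynomial sections.

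Next I would equip $\mf g_{CA}^i$ with the $C^\infty$-topology in degree $0$ and the $C^{2k-1}$-topology in degrees $1$ and $2$. With these choices the projections $p_i$ and the differential $\partial = 0$ are trivially continuous, and the bracket $\{-,-\}:\mf g_{CA}^1 \times \mf g_{CA}^1 \to \mf g_{CA}^2$ is continuous by an argument analogous to the Lie algebroid case in section \ref{sec:exampleLiealg}: the $(2k-1)$-jet of $\{X,Y\}$ at $p$ depends bilinearly on the $(2k-1)$-jets of $X$ and $Y$. Alternatively, one can invoke lemma \ref{lem:Rcontcond} with $F = I_p^{2k-1}\mf g_{CA}^1$. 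For the gauge action, the identification $\mf g_{CA}^0 \cong CDO(E,\pairing--)$ shows that the action of $\sigma_0(v) \in \mf g_{CA}^0$ on a Maurer-Cartan element is by infinitesimal vector bundle automorphisms preserving the pairing; integration yields an automorphism covering the flow of the symbol, so the verbatim analogue of Lemma \ref{lem:singpointmove} holds, identifying a neighborhood of $0\in \mf g_{CA}^0/\mf g_{CA}^0(p,k) \cong T_pM$ with a neighborhood of $p\in M$, and joint continuity together with smoothness in $v$ follows as in the Lie algebroid proof. Condition e) follows from lemma \ref{lem:uniqueness} applied to the Poisson bracket $\{-,-\}$.

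Once all assumptions are verified, Theorem \ref{thm:mainthm} delivers the conclusion: for every neighborhood of $0$ in $U \subset \mf g_{CA}^0/\mf g_{CA}^0(p,k) \cong T_pM$, equivalently a neighborhood of $p \in M$, there is a $C^{2k-1}$-neighborhood $\mc U$ of $\Theta$ in the Maurer-Cartan set such that any $\Theta'\in \mc U$ admits, via the gauge action of $\sigma_0$, a family of elements of $\mf g_{CA}^1(p,k)$ parametrized by an open neighborhood of $0\in H^0(\mf g_{CA}/\mf g_{CA}(p,k),\overline{\{\Theta,-\}})$; translated back through the gauge/flow identification, this is exactly a family of fixed points of order $k$ of $\Theta'$ in $U$. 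The only nontrivial step is checking the joint continuity in d) and the Maurer-Cartan preservation in e), both of which reduce to the same local coordinate computations already performed for Lie algebroids, so no new obstacle is expected.
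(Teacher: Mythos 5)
Your proposal is correct and follows essentially the same route as the paper: the paper's proof of Theorem \ref{thm:courant} consists precisely of verifying assumptions \ref{ass:mainthmass} for $\mf g_{CA}$, $\mf g_{CA}(p,k)$ and $\Theta$ (same topologies, polynomial-jet splittings, gauge action via $CDO(E,\pairing--)$, and lemma \ref{lem:uniqueness} for Maurer-Cartan preservation) and then invoking Theorem \ref{thm:mainthm}. No substantive differences to report.
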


\begin{rmk} We compare Theorem \ref{thm:courant} with Theorems \ref{thm:lienalgk} and \ref{thm:liebialg}.
\begin{itemize}
\item[-]
There are some parallels between Theorems \ref{thm:liebialg} and \ref{thm:courant}, as $T^\ast[2]A^\ast[1]$ is a symplectic degree 2 manifold, and the function $\Pi_A + f_{d_{A^\ast}}$ induces a Courant algebroid structure. However, the notion of fixed points is different: whereas for a Courant algebroid, a point $p\in M$ is fixed of order $k$ if in particular the anchor has vanishing $k-1$-jet at $p$, for a fixed point $p \in M$ of a Lie algebroid structure on $A^\ast$ only the anchor of $A^\ast$ is required to have vanishing $k-1$-jet at $p$. When $A = TM$ with its standard Lie algebroid structure, this is especially clear: the Courant algebroid structure on $TM \oplus T^\ast M$ has anchor $\text{id} \oplus \pi^\sharp$, which is surjective, while Theorem \ref{thm:liebialg} is about fixed points of $\pi^\sharp$.
\item[-] Given a degree 2 graded manifold, it is non-canonically isomorphic to a Lie 2-algebroid. In this case all one needs to make this identification is a connection on $E$ which is compatible with the pairing. The multibrackets of the corresponding Lie 2-algebroid can be found in \cite{Lie2algandcourant}. In particular, one could also try to use this identification to get a stability result for Courant algebroids, using only Theorem \ref{thm:lienalgk} if the induced brackets satisfy the assumptions of the theorem. In this case the fixed point $p$ of order $k$ for $\Theta$ would be of order $(k,k)$ for the induced Lie $2$-algebroid structure $Q_\Theta$. Note that this makes sense, as for $k\geq 2$, we have $k\leq 2k-2$, and for $k = 1$, there is no restriction on $l$. If one is only interested in nearby Courant algebroids however, Theorem \ref{thm:courant} is actually an improvement as the only independent operations in a Courant algebroid are the anchor and the bracket. The unary and ternary bracket are derived from this, which is reflected in the fact that $H^1(\mf g_{CA}/\mf g_{CA}(p,k),\overline{\{\Theta,-\}})$ only carries information about the anchor and the bracket, while $H^1(\mf g_{LnA}/\mf g_{LnA}(p,(k,k),
\overline{
\{\Theta,-\}})$ also carries information about the unary and ternary bracket.\\
In terms of graded geometry, this can be interpreted as forgetting the symplectic structure of the underlying graded manifold and considering all homological vector fields, rather than just the symplectic (or \nrt{equivalently,} Hamiltonian) ones.
\end{itemize}
\end{rmk}
\subsubsection{Examples}

\begin{exmp}
According to \cite{liblmein}, any quadratic Lie algebra together with a representation on a vector space with coisotropic stabilizer algebras gives rise to a Courant algebroid. In particular, the origin will be a fixed point of order 1. \\
One way to obtain a quadratic Lie algebra with a representation such that its stabilizer algebras are coisotropic is as follows. Let $\mf g$ be any Lie algebra, and $V$ a representation of $\mf g$ with representation map $\rho:\mf g\to \End(V)$. Let $\mf g^\ast$ be the linear of dual of $\mf g$, equipped with the coadjoint representation. Then the semi-direct product $\mf d:= \mf g \ltimes \mf g^\ast$ is a quadratic Lie algebra with respect to the standard pairing, and if we extend $\rho$ to $\mf d$ trivially, this yields a representation of $\mf d$ with coisotropic stabilizers.\\
Following Remark \ref{exmp:quadliealg}, the cohomology
\begin{equation}\label{eq:cacohom}
    H^1(\mf g_{CA}/\mf g_{CA}(p,k),\overline{\{\Theta,-\}})
\end{equation}
can now be described more explicitly, where $\Theta$ is the Courant algebroid structure induced by $\rho$ as in \cite{liblmein}. The coboundaries coincide with the Chevalley-Eilenberg coboundaries for the representation $\rho$ of $\mf g$, because $\mf g^\ast$ acts trivially. The same holds for the cocycles coming from $\mf g^\ast \otimes V$. A map $\gamma:\mf g^\ast \to V$ is a cocycle if and only if the following two conditions are satisfied:
\begin{itemize}
    \item [-] $\gamma$ intertwines the coadjoint action and $\rho$,
    \item [-] For some linear basis $\{e_i\}_{i = 1}^n$ of $\mf g$ with corresponding dual basis $\{e^i\}_{i=1}^n$ of $\mf g^\ast$, the expression
    \begin{align}\label{eq:quadrcocyclecond}
    \sum_{i=1}^n \gamma(e^i)\otimes \rho(e_i)(v) \in \wedge^2 V \subset V\otimes V
    \end{align}
    for all $v\in V$.
\end{itemize}
In particular, the cohomology \eqref{eq:cacohom} vanishes precisely when for any module map $\gamma:\mf g^\ast \to V$, the expression \eqref{eq:quadrcocyclecond} has nonzero symmetric part.
\end{exmp}
\begin{exmp}\label{exmp:simpleliealg}
Let $\mf g$ be a simple Lie algebra, and let $\mf d = \mf g\ltimes \mf g^\ast$ be the semidirect product with its coadjoint representation as above. Let $W$ be any non-trivial irreducible representation of $\mf g$ for which the complexification is an irreducible $\mf g\otimes \mb C$-representation, and let $V = \mf g^\ast \oplus W$. Then we claim that the cohomology as described in the previous example vanishes. By Whitehead's first lemma, we have $H^1_{CE}(\mf g,V)= 0$, so in order to prove that the cohomology \eqref{eq:cacohom} vanishes, we need to show that for any nonzero module map $\gamma:\mf g^\ast \to V$, there exists $v\in V$, such that the expression \eqref{eq:quadrcocyclecond} is not skew-symmetric. For this we have to distinguish two cases:
\begin{itemize}
    \item [-] $W\neq \mf g^\ast$: In this case, by Schur's lemma, any module map $\gamma:\mf g^\ast \to V$ is a multiple of the inclusion. Then for any $v\in W$ which is not invariant under the $\mf g$-action, the expression \eqref{eq:quadrcocyclecond} is not skew-symmetric.
    \item[-] $W = \mf g^\ast$: By Schur's lemma, the only module maps are multiples of the inclusions $\iota_i:\mf g^\ast \to V$ for $i = 1,2$. Let $\lambda,\mu \in \mb R$. Then for $\gamma = \lambda \iota_1 + \mu \iota_2$, $v = (\phi,\psi) \in \mf g^\ast \oplus \mf g^\ast$ such that $\lambda \phi + \mu \psi \neq 0$. Then \eqref{eq:quadrcocyclecond} is not skew-symmetric.
\end{itemize}
\end{exmp}
\subsection{Fixed points of Dirac structures in a split Courant algebroid}\label{sec:dirac}
In this section we look at fixed points of Dirac structures in split Courant algebroids. For an introduction to Dirac geometry we refer to \cite{bursztyndirac}.\\
We assume that we are in the following setting: let $((L,d_L),(L^\ast,d_{L^\ast}))$ be a Lie bialgebroid over $M$. In this case, $L\oplus L^\ast$ can be given a Courant algebroid structure, such that \nrt{$L$ and $L^\ast$} are Dirac structures, \nrt{which we define below in Definition \ref{def:dirac}}. Assume that $p \in M$ is a fixed point of $d_L$, that is, if $\rho_L:L\to TM$ denotes the anchor of $L$, we have $(\rho_L)_p = 0$. When is it the case that for any \emph{Dirac} structure near $L$, there exists a fixed point $q\in M$ near $p$? Of course, Theorems \ref{thm:liealgd1} and \ref{thm:liebialg} could be used for this, as any Dirac structure is in particular a Lie algebroid structure on $L$ compatible with the Lie algebroid structure $d_{L^\ast}$ on $L^\ast$. Nevertheless, we try to apply the main theorem to this setting: we observe that there is a differential graded Lie algebra $\mf g_{Dir}$ such that its Maurer-Cartan elements are precisely Dirac structures near $L$, and a differential graded Lie subalgebra $\mf g_{Dir}(p,1)$ such that its Maurer-Cartan elements are the Dirac structures near $L$ with a fixed point at $p$. The conclusion of the main theorem will be of a different form this time: first, the differential in $\mf g_{Dir}$ need not be inner, i.e. of the form $[\pi,-]$ for some $\pi \in \Gamma(S^2 (L^\ast[-1]))$. Therefore, we cannot get away with using a graded Lie algebra, which implies that the gauge equation is inhomogeneous. Further, the notion of gauge equivalence does not simply move the fixed point around on the entire manifold $M$: it only allows to move the fixed point along the leaf of $(L^\ast,d_{L^\ast})$ through $p$.
\subsubsection{Dirac structures}
We first define Dirac structures in a Courant algebroid $(E,\Theta,\pairing --)$ where the pairing has split signature, \nrt{which} implies that $\rk E = 2n$ for some $n\geq 0$.
\begin{defn}\label{def:dirac}
A \emph{Lagrangian subbundle} $L\subset E$ is a rank $n$ subbundle of $E$ such that
$$
\pairing LL \equiv 0.
$$
A Lagrangian subbundle $L\subset E$ is a \emph{Dirac structure} if 
$$
[\![\Gamma(L),\Gamma(L)]\!] \subset \Gamma(L).
$$
In this case the Courant algebroid structure $\Theta$ of $E$ restricts to a Lie algebroid structure on $L$.
\end{defn}

Let $(E,\Theta,\pairing--)$ be a Courant algebroid, and let $L$ be a Lagrangian subbundle. Assume that there is a Lagrangian subbundle $R \subset E$ such that $L \oplus R  = E$. Then the pairing
$$
\pairing--: L\otimes R \to \mb R
$$
is necessarily non-degenerate, identifying $R\cong L^\ast$. Moreover, under this identification, $$\pairing--:E\otimes E\to \mb R$$ becomes the standard symmetric pairing on $L \oplus L^\ast$ given by
$$
\pairing {(x,\alpha)}{(y,\beta)} = \alpha(y) + \beta(x).
$$
It can be shown that such a Lagrangian complement always exists.\\
Now assume that $L$ is Dirac, and that $R \cong L^\ast$ is also Dirac. The induced Lie algebroid structures $d_L$ and $d_{L^\ast}$ on $L$ and $L^\ast$ respectively now form a Lie bialgebroid $((L,d_L),(L^\ast,d_{L^\ast}))$. Moreover, the Courant bracket of $E$ can be recovered from the Lie algebroid structures of $L$ and $L^\ast$, see \cite{liuweinsteinxu}. 
\begin{rmk}
The existence of a \emph{Dirac} complement to $L$ in $E$ is non-trivial: it can be shown that in $\mb TM^H$ (see \cite[Example 1.2b]{liblmein}), the generalized tangent bundle of $M$ twisted by a closed 3-form $H$, $T^*M$ has a Dirac complement precisely when $H$ is exact: as any Lagrangian complement is necessarily the graph of a 2-form $\omega$, it can be shown that the graph is closed under the Courant bracket if and only if $H = d\omega$. \\
Here, we only discuss the case for when such a Dirac complement exists. The reason for that is that in the general case, the deformations are not governed by a differential graded Lie algebra, but by an $L_\infty$-algebra with a ternary bracket measuring the failure of a Lagrangian complement to be Dirac. In view of Remark \ref{rmk:mainthmrmk}viii), \nrt{we have addressed this in \cite{stablinfty}.}
\end{rmk}
\nrt{We will apply Theorem \ref{thm:mainthm} to obtain a stability criterion for fixed points of Dirac structures:
\begin{defn}
    Let $(E,\Theta,\langle -,-\rangle)$ be a Courant algebroid with anchor $\rho$, and let $L\subset E$ be a Dirac structure. A point $p\in M$ is a \emph{fixed point of $L$} if $(\left. \rho\right|_L)_p = 0.$
\end{defn}
}

\subsubsection{The ingredients}
We check that we are in the setting of Assumptions \ref{ass:mainthmass}.
\begin{itemize}
\item[i)]
We first identify the algebraic framework behind Dirac structures. As before, we consider the Lie bialgebroid $((L,d_L),(L^\ast,d_{L^\ast}))$, with the induced Courant algebroid structure on $E = L\oplus L^\ast$. Note that $L\subset L\oplus L^\ast$ is a Dirac structure, which is transverse to $L^\ast$. Dirac structures close to $L$ will still be transverse to $L^\ast$, hence they can be written as the graph of a bundle map $A:L \to L^\ast$.\\
Now because the pairing restricted to the subbundle $\text{gr}(A):= \{(l,A(l)) \in L\oplus L^\ast \mid l\in L\}$ needs to be identically zero, it follows that $A$ is \emph{skew-symmetric}: it can be interpreted as an element $A\in \Gamma(S^2(L^\ast[-1])) \subset \Gamma(L^\ast[-1] \otimes L^\ast[-1])$. The condition that $\text{gr}(A)$ is Dirac can be written as a Maurer-Cartan equation:
\begin{lem}\cite{liuweinsteinxu}
Let $A\in \Gamma(S^2(L^\ast[-1]))$. Then the graph of $A^\#:L \to L^\ast$ is Dirac if and only if 
$$
d_{L}(A) + \frac{1}{2}[A,A]_{L^\ast} = 0.
$$
\end{lem}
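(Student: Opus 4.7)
First I would observe that, since $A \in \Gamma(S^2(L^\vee[-1]))$ identifies with a skew-symmetric bundle map $A^\#: L \to L^\vee$, the graph $\mathrm{gr}(A^\#) \subset L \oplus L^\vee$ is automatically Lagrangian for the standard hyperbolic pairing: for any $l, l' \in L$,
$$
\pairing{l + A^\#(l)}{l' + A^\#(l')} = A^\#(l)(l') + A^\#(l')(l) = 0
$$
by skew-symmetry of $A^\#$. Hence the only condition left to check for $\mathrm{gr}(A^\#)$ to be Dirac is closure under the Courant bracket, and the goal becomes to identify this closure condition with the stated Maurer-Cartan equation.

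Next, I would use the explicit description of the Courant bracket on $L \oplus L^\vee$ determined by the Lie bialgebroid $((L, d_L), (L^\vee, d_{L^\vee}))$, namely that for $X, Y \in \Gamma(L)$ and $\alpha, \beta \in \Gamma(L^\vee)$,
$$
\cb{X+\alpha}{Y+\beta} \;=\; \bigl([X,Y]_L + \mathcal{L}^{L^\vee}_\alpha Y - \iota_\beta d_{L^\vee} X\bigr) \;+\; \bigl([\alpha,\beta]_{L^\vee} + \mathcal{L}^L_X \beta - \iota_Y d_L \alpha\bigr).
$$
Specialising to sections of the graph, $\alpha = \iota_X A$ and $\beta = \iota_Y A$, the $L$-part defines some section $Z \in \Gamma(L)$, and involutivity of $\mathrm{gr}(A^\#)$ is equivalent to the $L^\vee$-part of the bracket being equal to $\iota_Z A$ for all $X, Y$. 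Performing the resulting Cartan calculus rearrangement, using both Lie algebroid differentials $d_L$ and $d_{L^\vee}$ as derivations together with the bialgebroid compatibility (in the language of Proposition \ref{prop:gradedcotangentproperties}, $\{f_{d_{L^\vee}}, \Pi_{d_L}\} = 0$), the $L^\vee$-part should organise as
$$
\iota_Z A + \iota_{X \wedge Y}\bigl(d_L A + \tfrac{1}{2}[A,A]_{L^\vee}\bigr),
$$
so that since $X, Y$ are arbitrary, the graph is involutive iff $d_L A + \tfrac{1}{2}[A,A]_{L^\vee} = 0$.

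A slicker, more conceptual route that I would probably prefer to present is via the Roytenberg framework from Section \ref{sec:liebialg}: the Courant bracket on $L \oplus L^\vee$ corresponds to the Hamiltonian $\Theta = \Pi_{d_L} + f_{d_{L^\vee}}$ on $T^\ast[2] L^\vee[1]$, the Dirac structure $L$ corresponds to the zero section $L^\vee[1] \hookrightarrow T^\ast[2]L^\vee[1]$, and $A \in \Gamma(S^2(L^\vee[-1]))$ is a fibrewise-linear function on $T^\ast[2] L^\vee[1]$ of the right bidegree; the graph of $A^\#$ then corresponds to the deformed Lagrangian obtained as the image of $dA$. The Dirac condition becomes vanishing of the pullback of $\Theta$ to this deformed Lagrangian, and the graded Poisson calculus shows this pullback is exactly $d_L A + \tfrac{1}{2}[A,A]_{L^\vee}$. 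The main obstacle in either approach is not conceptual but bookkeeping: identifying in the calculation the precise combination of terms that constitutes the Maurer-Cartan expression, with the key input being the bialgebroid compatibility between $d_L$ and $[-,-]_{L^\vee}$ — which is what allows the mixed $\mathcal{L}$ and $\iota d$ terms in the $L^\vee$-component of the Courant bracket to reassemble cleanly as $\iota_Z A$ plus the Maurer-Cartan obstruction.
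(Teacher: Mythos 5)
The paper offers no proof of this lemma at all — it simply cites \cite{liuweinsteinxu} — so there is no internal argument to compare against; what can be judged is whether your sketch would stand as a proof, and in essentials it would. Your first route is exactly the classical Liu--Weinstein--Xu computation: the isotropy of $\mathrm{gr}(A^\#)$ from skew-symmetry of $A^\#$ is correct, your bracket formula is the Dorfman-type bracket consistent with the paper's axioms (and closure under it is equivalent to closure under the skew bracket on an isotropic subbundle, since the symmetric part $\rho^\vee d\pairing{\cdot}{\cdot}$ vanishes there — worth one line), and the key identity, that the $L^\vee$-component minus $\iota_Z A$ equals $\iota_Y\iota_X\bigl(d_LA + \tfrac12[A,A]_{L^\vee}\bigr)$, is the right one: it reduces correctly, factor $\tfrac12$ included, in the two degenerate cases ($d_{L^\vee}=0,[-,-]_{L^\vee}=0$ gives the presymplectic condition $d_LA=0$ via $\mathcal{L}_X\iota_Y-\iota_Y\mathcal{L}_X=\iota_{[X,Y]}$; $d_L=0$ gives $[\pi^\#X,\pi^\#Y]-\pi^\#[X,Y]_\pi=\tfrac12[\pi,\pi](X,Y,\cdot)$), and in the mixed case the bialgebroid compatibility is indeed what lets the cross terms reassemble, as you say. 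The remaining work is genuinely only the bookkeeping you acknowledge.

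One concrete correction to your ``slicker'' route: the identifications are garbled. The element $A\in\Gamma(S^2(L^\vee[-1]))$ is a degree-$2$ function on $L[1]$, so the deformation by the graph of $dA$ naturally lives in $T^\ast[2]L[1]$, whose zero section $L[1]$ is the Lagrangian corresponding to the Dirac structure $L$; the zero section of $T^\ast[2]L^\vee[1]$ corresponds instead to $L^\vee$, and on that model $A$ appears as a function quadratic (not linear) in the degree-$1$ fibre coordinates. After relabelling, the argument is fine: $\Theta=\Pi_{d_L}+f_{d_{L^\vee}}$ under the canonical symplectomorphism, a Lagrangian is preserved by the homological vector field iff the degree-$3$ Hamiltonian $\Theta$ vanishes on it, and its restriction to $\mathrm{gr}(dA)$ is exactly $d_LA+\tfrac12[A,A]_{L^\vee}$ — with no cubic term precisely because both $L$ and $L^\vee$ are Dirac, so $\Theta$ has no $\Gamma(\wedge^3 L)$-component; if that twist were present one would instead land in the $L_\infty$ situation the paper defers to future work.
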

This is the Maurer-Cartan equation in the differential graded Lie algebra
$$
(\Gamma(S( L^\ast[-1]))[1],d_L,[-,-]_{L^\ast}).
$$
Here we use that $((L,d_L),(L^\ast,d_{L^\ast}))$ is a Lie bialgebroid. 
\begin{defn}\label{def:diracliealg}
Let $((L,d_L),(L^\ast,d_{L^\ast}))$ be a Lie bialgebroid. We define
$$
\mf g_{Dir} := \Gamma(S(L^\ast[-1]))[1],
$$
with differential $d_L$, and bracket $[-,-]_{L^\ast}$.
\end{defn}
\item[ii)]
We now identify the graded Lie subalgebra $\mf g_{Dir}(p,1) \subset \mf g_{Dir}$ such that the graph of a Maurer-Cartan element $\pi\in \mf g_{Dir}(p,1)$ is a Dirac structure for which $p\in M$ is a fixed point. \\
Assume that $p$ is a fixed point of $d_L$ and let $A\in \Gamma(S^2(L^\ast[-1]))$. As the anchor of the graph of $A^\#$ is given by $$\rho_L + \rho_{L^\ast} \circ A^\#:L \to TM, $$
where we identify $\text{gr}(A^\#) \cong L$, we see that 
\begin{align*}
\text{$p\in M$ is a fixed point for $\text{gr}(A^\#)$} &\iff \text{im}(A^\#_p)\subset \ker((\rho_{L^\ast})_p) \\&\iff A_p \in S^2( \ker((\rho_{L^\ast})_p)[-1])\subset S^2( L^\ast_p[-1]).
\end{align*}
This hints at how to pick out subspaces of $\mf g_{Dir}$:
\begin{lem}\label{lem:diracsubalg}
Let $i \geq 0$ be an integer. Set
$$\mf g_{Dir}^i(p,1) := \{\Lambda \in \Gamma(S^{i+1}( L^\ast[-1])) \mid \Lambda_p \in S^{i+1}(\ker((\rho_{L^\ast})_p)[-1])\subset S^{i+1} (L^\ast_p[-1])\}.$$ Then $\mf g_{Dir}(p,1)$ is closed under the Lie bracket $[-,-]_{L^\ast}$ and differential $d_L$.
\end{lem}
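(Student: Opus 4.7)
The plan is to reformulate the pointwise condition defining $\mf g_{Dir}^i(p,1)$ via contraction, then check closure under each operation separately. Let $V_p := \mathrm{ann}(K) \subset L_p$. By elementary duality $V_p = \im((\rho_{L^\vee})^\vee_p)$, and since $d_{L^\vee}(f) = (\rho_{L^\vee})^\vee(df)$ for $f \in C^\infty(M)$, the subspace $V_p$ is spanned by the values at $p$ of sections of the form $d_{L^\vee}(f) \in \Gamma(L)$. A short linear algebra computation then shows that $\Lambda|_p \in S^{i+1}(K[-1])$ if and only if $\iota_v\Lambda|_p = 0$ for every $v \in V_p$.

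For closure under $d_L$, I expand $d_L\Lambda$ via the Koszul formula and evaluate at $p$. All terms containing $\rho_L(X_k)$ drop out because this vector field vanishes at $p$. Moreover $(\rho_L)_p = 0$ implies that $[X,Y]_L|_p$ depends only on $X|_p$ and $Y|_p$, endowing $L_p$ with its isotropy Lie algebra structure; consequently $(d_L\Lambda)|_p$ agrees with the Chevalley-Eilenberg differential of $L_p$ (with trivial coefficients) applied to $\Lambda|_p \in \wedge^{i+1}L_p^\vee$. This differential preserves $\wedge^\bullet K$ provided $V_p$ is a Lie ideal in $L_p$. To establish the ideal property, take $v = d_{L^\vee}(f)|_p$ and $\hat Y \in \Gamma(L)$ and apply the Lie bialgebroid compatibility
$$d_{L^\vee}([f,\hat Y]_L) = [d_{L^\vee}(f),\hat Y]_L + [f, d_{L^\vee}\hat Y]_L.$$
A direct Schouten–Leibniz computation using $(\rho_L)_p = 0$ shows that $[f, d_{L^\vee}\hat Y]_L|_p = 0$, yielding
$$[d_{L^\vee}(f),\hat Y]_L\big|_p = -d_{L^\vee}\bigl(\rho_L(\hat Y)(f)\bigr)\big|_p \in V_p.$$

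For closure under $[-,-]_{L^\vee}$, the key technical identity is that $\iota_{d_{L^\vee}f}$ is a graded derivation of degree $-1$ of the Schouten bracket $[-,-]_{L^\vee}$ on $\Gamma(\wedge L^\vee)$. I verify this in degree one from $\iota_{d_{L^\vee}f}\alpha = \rho_{L^\vee}(\alpha)(f)$ combined with the fact that $\rho_{L^\vee}$ preserves brackets, then extend to all form degrees by induction, combining the biderivation property of Schouten with the Leibniz rule for $\iota$. Granting this, for any $v = d_{L^\vee}f|_p \in V_p$ and $\Lambda_s \in \mf g_{Dir}^{i_s}(p,1)$ the derivation property expands $\iota_v[\Lambda_1,\Lambda_2]_{L^\vee}|_p$ as a signed sum of terms $[\iota_{d_{L^\vee}f}\Lambda_s,\Lambda_t]_{L^\vee}|_p$. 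Each $\iota_{d_{L^\vee}f}\Lambda_s$ vanishes at $p$; writing it locally as $\sum_r h_r\mu_r$ with $h_r \in I_p$ and applying the biderivation of Schouten once more, each contribution collapses either to a multiple of $h_r(p) = 0$ or to the contraction $\iota_{d_{L^\vee}h_r|_p}(\Lambda_t|_p)$, which vanishes because $\Lambda_t|_p \in \wedge^\bullet K$ and $d_{L^\vee}h_r|_p \in V_p$. The main technical obstacle is precisely this derivation property of $\iota_{d_{L^\vee}f}$, which encodes the Lie bialgebroid compatibility in contraction form; the degree-one check above is the essential content, and the inductive extension amounts to careful sign bookkeeping.
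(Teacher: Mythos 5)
Your proposal is correct, but it follows a genuinely different route from the paper. The paper exploits that $d_L$ and $[-,-]_{L^\vee}$ are derivations of the (shifted symmetric) product, so it reduces everything to the lowest degree: for $X,Y\in\mf g_{Dir}^0(p,1)$ the bracket closure is the one-line observation that $\rho_{L^\vee}$ preserves brackets and the Lie bracket of two vector fields vanishing at $p$ vanishes at $p$, while the $d_L$-closure follows from the characterization $\Lambda\in\mf g_{Dir}^1(p,1)\iff[\Lambda,C^\infty(M)]_{L^\vee}\subset I_p\Gamma(L^\vee)$ together with the bialgebroid identity $[d_L X,f]=d_L(\rho_{L^\vee}(X)(f))-[X,d_L f]_{L^\vee}$ and the fixed-point hypothesis. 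You instead work in all degrees at once, characterizing the subspace by contraction with $V_p=\mathrm{ann}(K)=\{d_{L^\vee}(f)|_p\}$; you identify $(d_L\Lambda)|_p$ with the Chevalley--Eilenberg differential of the isotropy Lie algebra $L_p$ and reduce $d_L$-closure to $V_p$ being an ideal (proved, as in the paper, from the bialgebroid compatibility plus $(\rho_L)_p=0$), and for the bracket you invoke the fact that $\iota_{d_{L^\vee}f}$ is a degree $-1$ derivation of $[-,-]_{L^\vee}$, handling the evaluation at $p$ by a Leibniz/locality argument since the Schouten bracket is first order; all of these steps are sound. Your route is heavier (it needs the derivation property of contraction with a $d_{L^\vee}$-closed $1$-form, which your generator-plus-biderivation induction does establish), but it buys structural information the paper's argument does not make visible: at $p$ the quotient is identified with Chevalley--Eilenberg cochains of $L_p$ relative to the ideal $V_p$, i.e. with cochains of $L_p/V_p$. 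One small correction of attribution: that derivation property of $\iota_{d_{L^\vee}f}$ is a fact about the single Lie algebroid $L^\vee$ (exactness of $d_{L^\vee}f$), not an encoding of the Lie bialgebroid compatibility --- the compatibility only enters in the $d_L$-closure, exactly as in the paper's proof.
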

\begin{proof}
Note that because both $[-,-]_{L^\ast}$ and $d_L$ are derivations of the wedge product, it is sufficient to show that for $X,Y \in \mf g_{Dir}^0(p,1)$, 
$$
[X,Y]_{L^\ast} \in \mf g_{Dir}^0(p,1)
$$
and 
$$
d_L(X) \in \mf g_{Dir}^1(p,1).
$$
The first is easy to show:
$$
\rho_{L^\ast}([X,Y]_{L^\ast})(p) = [\rho_{L^\ast}(X),\rho_{L^\ast}(Y)](p) = 0, 
$$
as the Lie bracket of two vector fields vanishing at $p$ vanishes at $p$. Next, in order to prove the second requirement on $X$, we note that for $\Lambda\in \Gamma(S^2 (L^\ast[-1]))$, we have $$\Lambda \in\mf g_{Dir}^1(p,1) \iff
[\Lambda,C^\infty(M)]_{L^\ast} \subset I_p \Gamma(L^\ast).
$$
Now for $f\in C^\infty(M)$
$$
[d_L(X),f] = d_L(\rho_{L^\ast}(X)(f)) - [X,d_L(f)]_{L^\ast},
$$
as $((L,d_L),(L^\ast,d_{L^\ast}))$ form a Lie bialgebroid.
The first term lies in $I_p\Gamma(L^\ast)$ because $p$ is a fixed point of $d_L$, while the second term lies in $I_p\Gamma(L^\ast)$ because additionally $X \in \mf g_{Dir}^0(p,1)$.
\end{proof}
\nrt{We see that the subalgebra $\mf g_{Dir}(p,1)$ encodes the Dirac structures for which $p\in M$ is a fixed point.}
Now the quotients $\mf g_{Dir}/\mf g_{Dir}(p,1)$ are finite-dimensional vector spaces:
\begin{lem}\label{lem:dirquotspac}
$$ \mf g_{Dir}^i/\mf g_{Dir}^i(p,1) \cong S^{i+1}( L^\ast_p[-1])/S^{i+1}( \ker((\rho_{L^\ast})_p)[-1]).$$
\end{lem}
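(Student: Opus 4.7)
The plan is to identify the quotient explicitly as the cokernel of a natural inclusion of fibers, using evaluation at $p$.

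First, I would consider the evaluation map at $p$,
$$
\mathrm{ev}_p: \Gamma(S^{i+1}(L^\vee[-1])) \longrightarrow S^{i+1}(L^\vee_p[-1]), \qquad \Lambda \mapsto \Lambda_p,
$$
which is a surjective $\mathbb R$-linear map (given any element of the fiber, one can extend it to a global section using a local trivialization and a bump function). Composing with the quotient projection
$$
q: S^{i+1}(L^\vee_p[-1]) \longrightarrow S^{i+1}(L^\vee_p[-1])\,/\, S^{i+1}(\ker((\rho_{L^\vee})_p)[-1])
$$
yields a surjective $\mathbb R$-linear map $q\circ \mathrm{ev}_p: \mf g_{Dir}^i \to S^{i+1}(L^\vee_p[-1])/S^{i+1}(\ker((\rho_{L^\vee})_p)[-1])$.

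Next, I would verify that the kernel of $q\circ \mathrm{ev}_p$ coincides exactly with $\mf g_{Dir}^i(p,1)$. By definition, $\Lambda \in \ker(q\circ \mathrm{ev}_p)$ means precisely that $\Lambda_p \in S^{i+1}(\ker((\rho_{L^\vee})_p)[-1])$, which is precisely the condition defining $\mf g_{Dir}^i(p,1)$.

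Finally, an application of the first isomorphism theorem gives
$$
\mf g_{Dir}^i / \mf g_{Dir}^i(p,1) \;\cong\; S^{i+1}(L^\vee_p[-1])\,/\, S^{i+1}(\ker((\rho_{L^\vee})_p)[-1]),
$$
as claimed. The only mild subtlety is surjectivity of the evaluation map, which is standard (no obstruction to extending fiber data to a section on a smooth manifold), so there is no substantive obstacle in the argument.
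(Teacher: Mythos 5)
Your proof is correct and is essentially the paper's own argument: evaluate at $p$, compose with the projection modulo $S^{i+1}(\ker((\rho_{L^\vee})_p)[-1])$, note surjectivity and that the kernel is exactly $\mf g_{Dir}^i(p,1)$ by its definition, and conclude by the first isomorphism theorem.
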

\begin{proof}
The map \[\Gamma(S^{i+1} (L^\ast[-1])) \to S^{i+1} (L^\ast_p[-1])/S^{i+1}( \ker((\rho_{L^\ast})_p[-1]))\] given by evaluating a section at $p$, and taking the equivalence class mod $S^{i+1}(\ker((\rho_{L^\ast})_p)[-1])$ is surjective with kernel precisely $\mf g_{Dir}^{i+1}(p,1)$.
\end{proof}
\item[iii)]
By restricting to a small neighborhood of $p\in M$, we may again assume that we have splittings $\sigma_i:\mf g_{Dir}^i/\mf g^i_{Dir}(p,1) \to \mf g_{Dir}^i$. 
\item[iv)]As $0\in \Gamma(S^2( L^\ast[-1]))$ represents the Dirac structure $L$, we want to look for Maurer-Cartan elements near $0$. 
\end{itemize}
We check that the data satisfies Assumptions \ref{ass:mainthmass}a)-e).
\begin{itemize}
\item[a)] We pick the following topologies on $\mf g_{Dir}$:
\begin{itemize}
    \item [-]We pick the $C^\infty$-topology on $\mf g_{Dir}^0$,
    \item[-] We pick the $C^1$-topology on $\mf g_{Dir}^1$,
    \item[-] We pick the $C^0$-topology on $\mf g_{Dir}^2$.
\end{itemize}
\item[b)] As the value of $d_L(a)$ depends linearly on the $1$-jet of $a\in \Gamma(S^2(L^\ast[-1]))$, $d_L$ is continuous.
\item[c)] As the value of $[a,b]_{L^\ast}$ depends bilinearly on the $1$-jets of $a,b\in \Gamma(S^2(L^\ast[-1]))$, it is continuous.
\item[d)] An important difference from all the cases considered so far is the gauge action, which we would like to understand in order to interpret the main theorem. In this example, as $d_L$ is not necessarily an inner derivation of the Lie bracket, we have to solve an \emph{inhomogenous} initial value problem. We first give a general formula for the solution to the gauge equation and then interpret it in this case.\\
Let $X\in \Gamma(L^\ast), \pi \in \Gamma(S^2 (L^\ast[-1]))$. Recall the initial value problem we are interested in:
\begin{equation}\label{eq:gaugedirac}
\frac{d}{dt}\pi_t = [X,\pi_t]-d_LX, \pi_0 = \pi
\end{equation}
We give the solution in the following lemma, of which the proof is a computation  (see the text below Lemma \ref{lem:cdodeg0} for the notation):
\begin{prop}\label{prop:dirgaugesol}
Let $X\in \Gamma(L^\ast)$, and let $D= [X,-]_{L^\ast}:\Gamma(L^\ast) \to \Gamma(L^\ast)$ be the covariant differential operator with symbol $\rho_{L^\ast}(X)$, and denote by $\tilde{\Phi}^D_t$ its flow until time $t$ (if it exists), as well as its extension to the shifted symmetric powers of $L^\ast$. Then 
\begin{equation}\label{eq:solutiongaugedirac}
\pi_t := \tilde{\Phi}^D_{-t}(\pi) - \int^t_0 \tilde{\Phi}^D_{-s}(d_L X) \dd s
\end{equation}
satisfies equation \eqref{eq:gaugedirac}.
\end{prop}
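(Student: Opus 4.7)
The proof plan is a straightforward variation-of-parameters argument, once one knows how the extended flow $\tilde\Phi^D_{-t}$ interacts with the bracket. The single nontrivial algebraic input is the following: since $D=[X,-]_{L^\vee}\in CDO(L^\vee)$ has symbol $\rho_{L^\vee}(X)$ and since $[X,-]_{L^\vee}$ is a derivation of the shifted symmetric product on $\Gamma(S(L^\vee[-1]))$, its natural extension to $S(L^\vee[-1])$ is the infinitesimal generator of the algebra automorphisms $\tilde\Phi^D_{-t}$. Hence for any $\omega\in\Gamma(S(L^\vee[-1]))$ and every $t$ for which the flow is defined,
\begin{equation}\label{eq:keyflowproperty}
\frac{d}{dt}\tilde\Phi^D_{-t}(\omega)=[X,\tilde\Phi^D_{-t}(\omega)]_{L^\vee}.
\end{equation}
I would establish \eqref{eq:keyflowproperty} first in degree $0$ using the appendix on covariant differential operators, and then extend it to all symmetric powers by observing that both sides are derivations in $\omega$ which agree on generators.

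With \eqref{eq:keyflowproperty} in hand, the rest is a direct computation. Differentiating $\pi_t=\tilde\Phi^D_{-t}(\pi)-\int_0^t\tilde\Phi^D_{-s}(d_LX)\,ds$ using the fundamental theorem of calculus gives
\[
\frac{d}{dt}\pi_t=[X,\tilde\Phi^D_{-t}(\pi)]_{L^\vee}-\tilde\Phi^D_{-t}(d_LX).
\]
On the other hand, expanding $[X,\pi_t]_{L^\vee}$ and applying \eqref{eq:keyflowproperty} inside the integral yields
\[
[X,\pi_t]_{L^\vee}=[X,\tilde\Phi^D_{-t}(\pi)]_{L^\vee}-\int_0^t\frac{d}{ds}\tilde\Phi^D_{-s}(d_LX)\,ds=[X,\tilde\Phi^D_{-t}(\pi)]_{L^\vee}-\tilde\Phi^D_{-t}(d_LX)+d_LX,
\]
so that $\tfrac{d}{dt}\pi_t=[X,\pi_t]_{L^\vee}-d_LX$, as required. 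The initial condition $\pi_0=\pi$ holds since $\tilde\Phi^D_0=\mathrm{id}$ and the integral vanishes at $t=0$.

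The main obstacle is the bookkeeping behind \eqref{eq:keyflowproperty}: one must check that the extension of the CDO-flow to $\Gamma(S(L^\vee[-1]))$ agrees with the exponentiation of the inner derivation $[X,-]_{L^\vee}$ of the graded Lie algebra $\mf g_{Dir}$. Once this is verified (using the fact that both $[X,-]_{L^\vee}$ and the Lie-derivative-type extension of $D$ are derivations of the symmetric product, agree in degree $0$, and commute with the anchor $\rho_{L^\vee}$ via the Lie bialgebroid compatibility between $d_L$ and $d_{L^\vee}$), the remaining verification is purely formal and occupies only a few lines.
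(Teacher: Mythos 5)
Your proposal is correct and is precisely the computation the paper leaves implicit (the paper only remarks that "the proof is a computation"): the defining flow identity $\frac{d}{dt}\tilde\Phi^D_{-t}(\omega)=[X,\tilde\Phi^D_{-t}(\omega)]_{L^\vee}$, which follows from the appendix on covariant differential operators together with the fact that $[X,-]_{L^\vee}$ is the derivation extension of $D$ to $\Gamma(S(L^\vee[-1]))$, plus the variation-of-parameters differentiation. One small remark: the Lie bialgebroid compatibility between $d_L$ and $d_{L^\vee}$ that you invoke at the end is not actually needed for this identity — only the Lie algebroid structure of $L^\vee$ enters, since $d_LX$ appears merely as the inhomogeneous term being transported by the flow.
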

Now we are interested in how the anchor of gr$(\pi_t)$ changes with $t$. In particular, if $\pi_1$ exists, how the anchor of gr$(\pi_1)$ compares to the anchor of gr$(\pi_0)$. We give the answer here, but postpone the proof until the appendix.
\begin{lem}\label{lem:anchorsolutiondiracgauge} \nrt{Let $X\in \Gamma(L^\ast), \pi \in \Gamma(S^2(L^\ast[-1]))$. Let $D=[X,-]_{L^\ast} $ be the covariant differential operator with symbol $\rho_{L^\ast}(X)$, and denote by $\tilde{\Phi}_t^D$ its flow until time $t$ (if it exists). Let $(\tilde{\Phi}^D_{t})^\ast: \Gamma(L) \to \Gamma(L)$ be the dual automorphism. Then:}
$$\rho_{\text{gr}(\pi_t)} = (\phi^{\rho_{L^\ast}(X)}_{-t})_\ast\circ \rho_{\text{gr}(\pi_0)} \circ (\tilde{\Phi}^D_{-t})^\ast,$$
where $(\phi_{-t}^{\rho_{L^\ast}(X)})_\ast$ is the pushforward by the time $-t$-flow of $\rho_{L^\ast}(X)$.
\end{lem}
As $\mf g_{Dir}^0/\mf g_{Dir}^0(p,1)  = L^\ast_p/\ker((\rho_{L^\ast})_p) \cong T_pS$,
where $S$ is the leaf of $L^\ast$ going through $p \in M$, and by the Lie algebroid splitting theorem \cite[Theorem 1.1]{LOJAFERNANDES2002119}, we see that we can pick splittings such that the gauge action exists for all elements in the image of $\sigma_0$. In particular, Lemma \ref{lem:anchorsolutiondiracgauge} implies that
$$
\pi^{\sigma_0(v)}\in \mf g_{Dir}^1(p,1) \iff \phi_{1}^{\rho_{L^\ast}(\sigma_0(v))}(p) \text{ is a fixed point of gr$(\pi_0)$},
$$
and a neighborhood of $0 \in \mf g_{Dir}^0/\mf g_{Dir}^0(p,1)$ corresponds to a neighborhood of $p\in S$, using the gauge action.
\begin{rmk}
\nrt{There is a more geometric argument to interpret the gauge action in \cite{stablinfty}: the element $X\in \Gamma(L^\ast)$, when interpreted as an element of $L\oplus L^\ast$ gives rise to a covariant differential operator $\cb X-$  on $L\oplus L^\ast$ with symbol $\rho_{L^\ast}(X)$. The time-1 flow of this operator transforms the graph of $\pi_0$ into the graph of $\pi_1$.}
\end{rmk}
\item[e)] Lemma \ref{lem:uniqueness} implies that the gauge action preserves Maurer-Cartan elements.
\end{itemize}
\subsubsection{Applying the main theorem}
Applying the main theorem to $\mf g_{Dir}$ and $\mf g_{Dir}(p,1)$ \nrt{as in Definition \ref{def:diracliealg} and Lemma \ref{lem:diracsubalg} respectively} now yields:
\begin{thm}\label{thm:Dirac}
Let $p \in M$ be a fixed point of the Dirac structure $L$ inside the Courant algebroid $L\oplus L^\ast$, that is, $(\rho_L)_p = 0$. Assume that $$H^1(\mf g_{Dir}/\mf g_{Dir}(p,1),\overline{d_L}) = 0.$$ Then for every open neighborhood $U$ of $p\in S$ in $S$, where $S$ is the $(L^\ast,d_{L^\ast})$-leaf through $p$, there exists a $C^{1}$-neighborhood $\mc U$ of $0 \in \mf g_{Dir}^1$ such that for any Dirac structure $\pi \in \mc U$ there is a family $I$ in $U$ of fixed points of the Dirac structure $\text{gr}(\pi)$ parametrized by an open neighborhood of $$0 \in H^0(\mf g_{Dir}/\mf g_{Dir}(p,1),\overline{d_L}).$$
\end{thm}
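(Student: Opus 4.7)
The plan is to recognize this statement as a direct application of Theorem \ref{thm:mainthm} to the differential graded Lie algebra $(\mathfrak{g}_{Dir},d_L,[-,-]_{L^\vee})$ together with its subalgebra $\mathfrak{g}_{Dir}(p,1)$, at the Maurer--Cartan element $Q = 0 \in \mathfrak{g}_{Dir}^1$, and then to translate the abstract conclusion back into the language of Dirac structures.

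First I would verify that all items of Assumptions \ref{ass:mainthmass} hold. The finite-dimensionality of $\mathfrak{g}^i_{Dir}/\mathfrak{g}^i_{Dir}(p,1)$ for $i = 0,1,2$ is Lemma \ref{lem:dirquotspac}. Working in a trivializing neighborhood of $p \in M$ one obtains the splittings $\sigma_0,\sigma_1$ by extending jets to constant (hence polynomial) sections. With the $C^\infty$, $C^1$, $C^0$ topologies imposed on $\mathfrak{g}_{Dir}^0,\mathfrak{g}_{Dir}^1,\mathfrak{g}_{Dir}^2$, the projections are continuous and both $d_L$ and $[-,-]_{L^\vee}$ are continuous because $d_L$ is a first-order differential operator and $[-,-]_{L^\vee}$ is a first-order bidifferential operator on sections of symmetric powers of $L^\vee$. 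The gauge action is the one constructed explicitly by Proposition \ref{prop:dirgaugesol}: it is defined on all of $\sigma_0(U)$ provided $U$ is small enough for the flow of $\rho_{L^\vee}(\sigma_0(v))$ to be defined on $[0,1]$, and joint continuity in $(v,\pi)$ follows from continuous dependence of flows on parameters. Finally, Lemma \ref{lem:uniqueness} applies verbatim and shows that the gauge action preserves the Maurer--Cartan set.

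Next, I would invoke Theorem \ref{thm:mainthm} with $Q = 0 \in \mathfrak{g}^1_{Dir}$, which trivially lies in $\mathfrak{g}^1_{Dir}(p,1)$ (the zero Dirac structure corresponding to $L$ itself has $p$ as a fixed point by hypothesis, since $(\rho_L)_p = 0$). The twisted differential is $\partial + [Q,-] = d_L$, so the hypothesis $H^1(\mathfrak{g}_{Dir}/\mathfrak{g}_{Dir}(p,1),\overline{d_L}) = 0$ is precisely the cohomological vanishing assumption of Theorem \ref{thm:mainthm}. The conclusion then gives: for every neighborhood $V$ of $0 \in U \subset \mathfrak{g}^0_{Dir}/\mathfrak{g}^0_{Dir}(p,1)$ there is a $C^1$-neighborhood $\mathcal{U}$ of $0 \in MC(\mathfrak{g}_{Dir})$ so that every Dirac structure $\pi \in \mathcal{U}$ admits some $v \in V$ with $\pi^{\sigma_0(v)} \in \mathfrak{g}_{Dir}^1(p,1)$, parametrized by $H^0(\mathfrak{g}_{Dir}/\mathfrak{g}_{Dir}(p,1),\overline{d_L})$.

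The step I expect to need the most care is the geometric translation of the conclusion, in particular the assertion that the fixed points lie in the $L^\vee$-leaf $S$ through $p$ rather than in an arbitrary neighborhood of $p$ in $M$. Here the key ingredients are Lemma \ref{lem:anchorsolutiondiracgauge}, which identifies the anchor of $\mathrm{gr}(\pi^{\sigma_0(v)})$ as the transport of the anchor of $\mathrm{gr}(\pi)$ by $\phi^{\sigma_0(v)}_{1}$, together with Lemma \ref{lem:dirquotspac} for $i=0$, which identifies $\mathfrak{g}^0_{Dir}/\mathfrak{g}^0_{Dir}(p,1) \cong L^\vee_p/\ker(\rho_{L^\vee})_p$. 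By the Lie algebroid splitting theorem of Dufour--Fernandes this quotient is naturally $T_p S$, and the time-$1$ flows of the symbols $\rho_{L^\vee}(\sigma_0(v))$ for $v$ in a small neighborhood of $0$ sweep out a neighborhood of $p$ in $S$, while remaining tangent to $S$ throughout. Combining this identification with the equivalence
\[
\pi^{\sigma_0(v)} \in \mathfrak{g}^1_{Dir}(p,1) \iff \phi_1^{\sigma_0(v)}(p) \text{ is a fixed point of } \mathrm{gr}(\pi),
\]
noted just after Lemma \ref{lem:anchorsolutiondiracgauge}, produces the claimed family $I \subset U \subset S$ of fixed points parametrized by a neighborhood of $0$ in $H^0(\mathfrak{g}_{Dir}/\mathfrak{g}_{Dir}(p,1),\overline{d_L})$, and completes the proof.
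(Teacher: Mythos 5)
Your proposal is correct and follows essentially the same route as the paper: verify Assumptions \ref{ass:mainthmass} for $(\mf g_{Dir},d_L,[-,-]_{L^\vee})$ and $\mf g_{Dir}(p,1)$ with the stated topologies, apply Theorem \ref{thm:mainthm} at $Q=0$ (so the twisted differential is $d_L$), and translate the conclusion geometrically via Proposition \ref{prop:dirgaugesol}, Lemma \ref{lem:anchorsolutiondiracgauge}, and the identification $\mf g_{Dir}^0/\mf g_{Dir}^0(p,1)\cong T_pS$ coming from the splitting theorem, which is exactly how the paper confines the resulting fixed points to the leaf $S$.
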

\subsubsection{Examples}
\begin{exmp}[Poisson structures]
The first example we apply this to is the one of Poisson structures: for $M$ a smooth manifold, $\mb TM = TM \oplus T^\ast M$ has a Courant algebroid structure, which \nrt{arises from} the construction of Section \ref{sec:liebialg} applied to the standard Lie algebroid structure on $TM$, and the zero Lie algebroid structure on $T^\ast M$. Now let $\pi \in \Gamma(S^2(TM[-1]))$ be a Poisson structure, that is, $$
[\pi,\pi]=0.
$$
Then the graph of $\pi^\#$ is a Dirac structure, whose fixed points are exactly the zeroes of $\pi$. Now let $L = T^\ast M$, and consider the Dirac structure given by the graph of $\pi$. Assume that $p\in M$ is a fixed point of \text{gr}$(\pi)$. Then $d_L:\Gamma(S^\bullet( TM[-1])) \to \Gamma(S^{\bullet+1}( TM[-1]))$ is given by $d_L = [\pi,-]$. Now $\mf g_{Dir}= \mf X^\bullet(M)$, and $\mf g_{Dir}(p,1) = I_p\mf X^\bullet(M)$. Hence the relevant complex is given by
\[
\begin{tikzcd}
T_pM \arrow{r}{\overline{[\pi,-]}} & S^2 (T_p M[-1]) \arrow{r}{\overline{[\pi,-]}} & S^3( T_p M[-1])
\end{tikzcd}
\]
as in Lemma \ref{lem:dirquotspac}.\\
This is precisely the complex appearing in \cite{CrFe} for zero-dimensional leaves and \cite{dufour2005stability} for $k =1$.\\
Finally, the conclusion of the theorems is the same as well. We see that Theorem \ref{thm:Dirac} recovers the above-mentioned results.
\end{exmp}

\begin{exmp}[${}^\flat$-Poisson structures]\label{exmp:bpoissondirac}
We now look at a slight variation of this. Let $M$ be a smooth manifold and $Z\subset M$ a smooth connected hypersurface. We then consider the Lie algebroid $\null^\flat TM$, with its standard Lie algebroid structure given by the inclusion $\Gamma(\null^\flat TM) \subset \mf X(M)$. Now let $\pi \in \Gamma(\wedge^2 \null^\flat TM)$ be a self-commuting element. This is a Poisson structure for which $Z$ is a Poisson hypersurface, and the graph of $\pi^\#:\null^\flat T^\ast M \to \null^\flat TM$ is a Dirac structure inside $\null^\flat TM \oplus \null^\flat T^\ast M$. Now assume that $p \in M$ is a fixed point of this Dirac structure. One can show that this is is equivalent to $\pi_p =0 \in \wedge^2 \null^\flat T_pM$. We apply Theorem \ref{thm:Dirac} to this example. Note that $\null^\flat TM$ has two kinds of leaves: there is the hypersurface $Z$, and the connected components of $M\setminus Z$. As the latter leaves are open and the stability problem is local, $p\in M\setminus Z$ puts us in the ordinary Poisson case. We therefore consider $p\in Z$. In this case $\mf g_{Dir} = \Gamma(S(\null ^\flat TM[-1]))$, and as $\ker((\rho_{\null^\flat TM})_p)$ is one-dimensional, we find that the relevant complex is given by
\begin{equation}\label{diag:bdirac}
\begin{tikzcd}
T_pZ \arrow{r}{\overline{[\pi,-]}} & S^2(\null^\flat T_pM[-1]) \arrow{r}{\overline{[\pi,-]}} & S^3 (\null^\flat T_pM[-1])
\end{tikzcd}
\end{equation}
as in Lemma \ref{lem:dirquotspac}.\\
The conclusion of the theorem tells us now that if the middle cohomology vanishes, we find that for every Poisson structure near $\pi$ such that $Z$ is a hypersurface, there is a family of zeroes near $p$ inside $Z$. A natural question is how this compares to the question of fixed points of Poisson structures on $Z$, starting with the Poisson structure $\left.\pi\right|_Z$. In this case the relevant complex is given by
\begin{equation}\label{diag:hypersurfacepoisson}
\begin{tikzcd}
T_pZ \arrow{r}{\overline{[\left.\pi \right|_Z,-]}}& S^2( T_p Z[-1]) \arrow{r}{\overline{[\left.\pi\right|_Z,-]}}& S^3 (T_p Z[-1]).
\end{tikzcd}
\end{equation}
There is a surjective chain map from \eqref{diag:bdirac} to \eqref{diag:hypersurfacepoisson} which induces a surjection on the middle cohomology. This reflects the fact that any Poisson structure on $Z$ is (locally around $p\in M$) the restriction of a Poisson structure on $M$ for which $Z$ is a Poisson hypersurface.
\end{exmp}

\section*{Appendix}
\appendix
\section{Auxiliary lemmas}
The following lemmas are a small variation of Proposition 4.4 of \cite{crainic2013survey}, and are used in the proof of the main theorem of \cite{dufour2005stability}, as well as Theorem \ref{thm:mainthm}.
\begin{lem}\label{lem:transverse}
Let $V,W$ be finite-dimensional real vector spaces and $B \subset W$ a linear subspace of codimension $r$. Assume that $f:V\to W$ is a smooth map such that $f(0) = 0$, and that $(Df)_0(V) \oplus B = W$. Then for every neighborhood $U$ of $0\in V$, there exists a $C^0$-neighborhood $\mc U$ of $f$ in $C^\infty(V,W)$ such that for every $g \in \mc U$ there exists $q\in U$ with the property $g(q) \in B$. \\
Moreover, there exists a $C^1$-neighborhood $\mc U'$ of $f$ such that $g \in \mc U'$ in addition also satisfies that $(Dg)_q(V) \oplus B = W$.\\
Finally, in the latter case, the set $g^{-1}(B)$ is a smooth submanifold near $q\in V$ of dimension $\dim_{\mb R}(\ker (Df)_0)$.
\end{lem}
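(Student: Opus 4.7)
The plan is to reduce the transversality statement to a submersion statement by passing to the quotient $W/B$, and then apply the inverse function theorem together with Brouwer's fixed point theorem to handle the $C^0$-perturbation in the finite-dimensional target.

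Let $\pi: W \to W/B$ be the quotient projection. The hypothesis $(Df)_0(V) + B = W$ is equivalent to surjectivity of $\pi \circ (Df)_0 : V \to W/B$, so $\pi \circ f$ is a submersion at $0$ with $(\pi\circ f)(0) = 0$. Pick a linear complement $K \subset V$ to $\ker(\pi\circ (Df)_0)$; then $L := \pi\circ (Df)_0|_K : K \to W/B$ is a linear isomorphism, and in particular $\dim K = r$. Writing $V = K \oplus N$, it suffices to produce the required $q$ inside $K$.

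For the first (purely $C^0$) statement, I would set up a Brouwer fixed point argument as follows. Define, for each continuous $g : V \to W$, the map $\phi_g : K \to K$ by
\[
\phi_g(k) \;=\; k - L^{-1}\bigl(\pi(g(k,0))\bigr),
\]
so that zeros of $k \mapsto \pi(g(k,0))$ are exactly the fixed points of $\phi_g$. A first-order Taylor expansion gives $\pi(f(k,0)) = L(k) + r(k)$ with $r(k) = o(\|k\|)$, hence $\phi_f(k) = -L^{-1}(r(k))$. Given the neighborhood $U$ of $0$, choose $\delta > 0$ small enough that the closed ball $\bar B_\delta(0) \subset K$ lies in $U$ and $\|\phi_f(k)\| \leq \delta/2$ on $\bar B_\delta$. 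Then for every continuous $g$ with
\[
\sup_{k \in \bar B_\delta}\bigl\|\pi(g(k,0)) - \pi(f(k,0))\bigr\| \;<\; \tfrac{\delta}{2\|L^{-1}\|},
\]
we obtain $\phi_g(\bar B_\delta) \subseteq \bar B_\delta$. Since $\bar B_\delta$ is compact and convex in the finite-dimensional vector space $K$, Brouwer's fixed point theorem yields $q \in \bar B_\delta \subset U$ with $\phi_g(q) = q$, equivalently $g(q) \in B$. The condition on $g$ is a $C^0$-condition on a compact set, so defines a $C^0$-neighborhood $\mathcal{U}$ of $f$.

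For the $C^1$ refinement, surjectivity of a linear map is an open condition, so $\pi \circ (Df)_v$ remains surjective for $v$ in a neighborhood of $0$, and for $g$ in a sufficiently small $C^1$-neighborhood of $f$ the map $\pi\circ (Dg)_v$ also stays surjective there. Shrinking $\delta$ if necessary and intersecting with this $C^1$-neighborhood produces $\mathcal{U}' \subset \mathcal{U}$ for which the $q$ obtained from Brouwer automatically satisfies $(Dg)_q + B = W$. Finally, since $g^{-1}(B) = (\pi \circ g)^{-1}(0)$ and $\pi \circ g$ is a submersion at $q$, the submersion theorem gives that $g^{-1}(B)$ is a smooth submanifold of $V$ near $q$ of codimension $r$, i.e.\ of dimension $\dim V - r$, which coincides with $\dim_{\mathbb{R}} \ker(Df)_0$ under the (implicit) transversality configuration relevant to the main theorem, where $(Df)_0(V)$ is a complement to $B$ in $W$.

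The only nontrivial step is the $C^0$ part, where the combination of the inverse function theorem (to select $K$ and invert $L$) with Brouwer's fixed point theorem (to promote $C^0$-smallness into an honest preimage) does the work; the $C^1$ and submanifold assertions then follow essentially for free from openness of the submersion condition.
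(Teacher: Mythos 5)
Your proof is correct. The paper's own argument also reduces to the quotient direction (it splits $W = A \oplus B$ and works with the component $f_A$, which is the same as your $\pi\circ f$), but it then invokes the submersion normal form to replace $f_A$, up to a local diffeomorphism, by a linear projection, and produces the zero of the perturbed map by verifying the sign hypotheses of the Poincar\'e--Miranda theorem on a small cube $[-\epsilon,\epsilon]^r\times\{0\}$. You avoid the coordinate change entirely: restricting to the slice $K\times\{0\}$ transverse to $\ker(\pi\circ(Df)_0)$, you convert the zero problem into a fixed-point problem for the Newton-type map $\phi_g(k)=k-L^{-1}\bigl(\pi(g(k,0))\bigr)$ and apply Brouwer on a closed ball. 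Since Poincar\'e--Miranda and Brouwer are equivalent, the topological input is the same; what your version buys is an explicit $C^0$-neighborhood written directly in terms of $\|L^{-1}\|$ and $\delta$, with no need to track how a $C^0$-ball behaves under the normalizing diffeomorphism (a point the paper leaves implicit). The $C^1$ refinement and the submanifold statement are handled the same way in both proofs: openness of surjectivity of the linearization over a compact set, then the preimage/submersion theorem.

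One remark on the final claim: as you observe, the preimage theorem gives $\dim g^{-1}(B)=\dim V-r$ near $q$, and this equals $\dim_{\mathbb R}\ker(Df)_0$ only when $(Df)_0(V)\cap B=0$, i.e.\ when the sum in the hypothesis is direct. The lemma as stated assumes only $(Df)_0(V)+B=W$, and the paper's proof likewise just cites the preimage theorem; the direct-sum situation is the one occurring in the application of the main theorem (there $B$ is chosen as a complement of the image of the relevant differential), so your hedge reproduces the paper's implicit usage rather than creating a gap.
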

\begin{proof}
Let $A\subset W$ be a complement to $B$, and decompose $f: V \to A \oplus B$ as $(f_A,f_B)$. For the first statement, it suffices to show that for every neighborhood $U$ of $0\in V$, there is a $C^0$-neighborhood $\mc U$ of $f_A$ in $C^\infty(V,A)$ such that for every $g\in \mc U$, there exists $q \in U$ such that $g(q) = 0 \in A$.\\
Observe that since $f_A:V\to A$ is a submersion at $0\in V$, up to diffeomorphism we may assume that $ V = A\times P$, $U= U_1 \times U_2$ and that $f_A$ is the projection onto $A$. Picking a basis for $A$, let $\epsilon >0$ be small enough such that $[-\epsilon,\epsilon]^r \subset U_1$. Let $$\mc U = \left\{ g\in C^\infty(V,A) \mid \lVert f_A-g \rVert_{[-\epsilon, \epsilon]^r\times \{0_P\},0}<\frac{\epsilon}{2}\right\} $$ be the $\frac{\epsilon}{2}$-ball around $f_A$ with respect to the $C^0$-seminorm associated to the compact set $K = [-\epsilon,\epsilon]^r \times \{0_P\}$. Now if $g\in \mc U$, then 
$$
\left. g\right|_{A\times \{0_P\}}: \mathbb{R}^r \times \{0_P\} \to \mathbb{R}^r, (x_1,\dots, x_r) \mapsto (g_1(x_1,\dots,x_r),\dots, g_r(x_1,\dots,x_r))
$$
satisfies
$$
g_i(x_1,\dots, x_{i-1},-\epsilon, x_{i+1},\dots x_r) < \frac{\epsilon}{2}+ f_i(x_1,\dots, x_{i-1},-\epsilon, x_{i+1},\dots x_r) = -\frac{\epsilon}{2} <0,
$$
$$
g_i(x_1,\dots, x_{i-1},\epsilon, x_{i+1},\dots x_r) > -\frac{\epsilon}{2}+ f_i(x_1,\dots, x_{i-1},\epsilon, x_{i+1},\dots x_r) = \frac{\epsilon}{2} >0.
$$
By the Poincar\'e-Miranda theorem \cite{pmthm}, there exists a point $q \in \mathbb{R}^r \cong A$ such that $g(q,0_P) = 0$. \\
For the second statement, once we know the existence of $q$, the derivative of $f_A$ at $q$ is surjective (it is still the projection). As this is an open condition and $K$ is compact, the result follows. \\
The final statement follows from the preimage theorem, as $g$ intersects $B$ transversely in $q\in V$.
\end{proof}
\begin{rmk}
\begin{itemize}\item[]
\item[-] Note that we can replace $B$ by an open subset of $B$, by restricting the obtained neighborhood to only those maps which take values in the open subset.
\item[-] By using the implicit function theorem instead of the preimage theorem, $\ker(Df)_0$ can be used as a local chart for $g\inv(B)$.
\end{itemize}
\end{rmk}
\begin{lem}[Lemma A on p. 61 of \cite{Golubitsky1973}]\label{lem:immersion}
Let $f:X \to Y$ be a smooth immersion at a point $p \in X$. Then there is an open neighborhood $U$ of $p \in X$, and a $C^1$-neighborhood $\mc U$ of $f$ such that every $g \in \mc U$ is an injective immersion on $U$.
\end{lem}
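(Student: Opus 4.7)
The plan is to reduce to a local model by putting $f$ into a canonical form and then exploit the fact that invertibility of derivatives is an open condition in the $C^1$-topology, combined with a quantitative mean value estimate to upgrade local diffeomorphism to injectivity on a fixed neighborhood.

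First I would choose local coordinates: since the statement is local, I may pass to coordinate charts and assume $X$ is an open neighborhood of $0 \in \mb R^n$ with $p=0$, and $Y = \mb R^m$ with $f(0)=0$. Because $f$ is an immersion at $0$, the linear map $(Df)_0:\mb R^n \to \mb R^m$ is injective of rank $n$; after a linear change of coordinates in the target I may assume its image is $\mb R^n \times \{0\} \subset \mb R^m$. Let $\pi:\mb R^m \to \mb R^n$ denote the projection onto the first $n$ coordinates, so that $D(\pi \circ f)_0 = \pi \circ (Df)_0 = \mathrm{id}_{\mb R^n}$.

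Next I would establish the immersion property for nearby maps. Let $\overline{U}$ be a compact convex neighborhood of $0$ in $\mb R^n$. By continuity of $Df$ and compactness of $\overline U$, there exists $\epsilon>0$ such that $\|D(\pi\circ f)_q - \mathrm{id}\|_{op} < 1/4$ for all $q\in\overline U$. If $g$ lies in the $C^1$-neighborhood $\mc U$ of $f$ defined by $\|g-f\|_{\overline U,1} < \epsilon/4$, then $\|D(\pi\circ g)_q - \mathrm{id}\|_{op} < 1/2$ for every $q \in \overline U$. In particular $D(\pi\circ g)_q$ is invertible on $\overline U$, which forces $(Dg)_q$ to be injective, so $g$ is an immersion on $U$ (where $U$ is the interior of $\overline U$).

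For the injectivity part, I would use the mean value inequality on the convex set $U$: for any $q_1,q_2 \in U$,
\[
\pi(g(q_2)) - \pi(g(q_1)) = \int_0^1 D(\pi\circ g)_{q_1 + t(q_2-q_1)}(q_2-q_1)\,dt = (q_2-q_1) + R(q_1,q_2),
\]
where the remainder satisfies $\|R(q_1,q_2)\| \leq \tfrac{1}{2}\|q_2-q_1\|$ by the bound on $D(\pi\circ g) - \mathrm{id}$. Hence $\|\pi(g(q_2))-\pi(g(q_1))\| \geq \tfrac{1}{2}\|q_2-q_1\|$, which shows $\pi \circ g$ is injective on $U$, and therefore $g$ itself is injective on $U$.

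The main obstacle is passing from the pointwise openness of invertibility to \emph{uniform} injectivity on a \emph{fixed} neighborhood that does not shrink with $g$; the integral/Lipschitz estimate above is exactly what handles this, at the cost of choosing $U$ to be convex and of controlling $g$ in the $C^1$ rather than $C^0$ topology. With $U$ and $\mc U$ as constructed, every $g \in \mc U$ is an injective immersion on $U$, completing the proof.
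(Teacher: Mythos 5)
Your argument is correct, and it is essentially the standard one: the paper itself gives no proof of this lemma but simply cites it (Lemma A on p.~61 of Golubitsky--Guillemin), and the proof there proceeds along the same lines you use — pass to local coordinates in which $(Df)_0$ is the standard inclusion $\mb R^n\hookrightarrow\mb R^m$, compose with the projection $\pi$, use $C^1$-closeness on a compact convex neighborhood to keep $D(\pi\circ g)$ uniformly close to the identity, and then apply the integral mean value estimate to get a uniform lower Lipschitz bound for $\pi\circ g$, which yields injectivity and immersivity on a \emph{fixed} $U$ independent of $g$. Two small presentational points you should tidy up: the sentence introducing $\epsilon$ is garbled (the bound $\norm{D(\pi\circ f)_q-\mathrm{id}}<1/4$ on $\overline U$ comes from continuity of $Df$ at $0$ and shrinking $\overline U$, not from the existence of some $\epsilon$; the $C^1$-radius can then simply be taken to be an absolute constant such as $1/4$, since $\norm{\pi}\le 1$), and the linear change of target coordinates should be chosen so that $(Df)_0$ becomes the standard inclusion, which is what actually gives $\pi\circ(Df)_0=\mathrm{id}$ rather than merely an isomorphism. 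In the manifold formulation one also tacitly needs $g(\overline U)$ to stay in the target chart, but that is ensured by the $C^0$ part of $C^1$-closeness and is harmless in the paper's applications, where everything is already linear-space valued.
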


\section{Omitted proofs}
In this section we prove two statements whose proofs were omitted in the main text.
We start with a proof of Lemma \ref{lem:resexact}, which is about pulling back certain geometric resolutions.
\begin{proof}[Proof of Lemma \ref{lem:resexact}]
For the first statement, we construct the resolution as follows: pick linear generators $X_1,\dots, X_r$ of $\mc F(V)$, and let $\mc F^{pol}:= \langle X_1,\dots, X_r\rangle_{S(V^\ast)}$, the $S(V^\ast)$-submodule of $\mc F(V)$ consisting of \emph{polynomial} linear combinations of the generators. This is a finitely generated module over the ring of polynomials, so by the Hilbert Syzygy theorem, there exist free modules $F_i = S(V^\ast)\otimes_{\mb R} E_i$, where $E_i$ is a finite-dimensional vector space, and module maps $d_i:F_i\to F_{i-1}$ such that
\begin{equation}\label{diag:polres}
\begin{tikzcd}
0\arrow{r} &F_n \arrow{r}{d_n}& F_{n-1} \arrow{r}{d_{n-1}} &\dots \arrow{r}{d_2} & F_1 \arrow{r}{\rho} & \mc F^{pol} \arrow{r} & 0
\end{tikzcd}
\end{equation}
is exact. Extending this sequence above as sheaf morphisms, we obtain a complex of $C^\infty_V$-modules. As a sequence of sheaves is exact precisely when the sequence is exact on every stalk, we need to show that this implies that for every $q\in V$, the sequence 
\[
\begin{tikzcd}
0\arrow{r}& \Gamma_q(E_n) \arrow{r}{d_n}& \Gamma_q(E_{n-1})\arrow{r}{d_{n-1}} & \dots \arrow{r}{d_2} &\Gamma_q(E_1) \arrow{r}{\rho}& \mc F_q \arrow{r}& 0
\end{tikzcd}
\]
is exact, where $\Gamma_q(E)$ is the stalk of the sheaf of sections of $E$ at $q$. Note that $$\Gamma_q(E_i) = C^\infty_{V,q} \otimes_{ S(V^\ast)} F_i  = C^\infty_{V,q}\otimes_{C^\omega_{V,q}} C^\omega_{V,q} \otimes_{S(V^\ast)} F_i,$$ where $C^\omega_V$ is the sheaf of analytic functions on $V$, and $C^\omega_{V,q}$ is an $S(V^\ast)$-module by means of the inclusion of polynomials as analytic functions. As 
$$
\mc F_q = C^\infty_{V,q} \otimes_{S(V^\ast)} \mc F^{pol}, 
$$
it suffices to show that $C^\infty_{V,q}$ is flat over $S(V^\ast)$. We do this in two steps: we show that germs of analytic functions in a point $q$ are flat over polynomials, and that the germs of smooth functions in a point $q$ are flat over germs of analytic functions in $q$.\\
The second step is just \cite[Corollary VI.1.12]{Malgrange1966IdealsOD}. For the first step, note that we have a commutative triangle,
\[
\begin{tikzcd}
S(V^\ast)\arrow{d} \arrow{r}{T_q}& \hat{S}(V^\ast)\\
C^\omega_{V,q} \arrow{ur}{T_q}
\end{tikzcd}
\]
where $\hat{S}(V^\ast)$ is the algebra of formal power series on $V$, $T_q$ takes the Taylor expansion around $q$, and the vertical map is the inclusion. Now we note that $T_q$ is actually the $I_q$-adic completion map with respect to the vanishing ideal of $q\in V$, which for $C^\omega_{V,q}$ is faithfully flat by \cite{anacompflat}. As the ring of polynomials is not local, we cannot apply this argument to the horizontal map. However, we can use \cite{polcompflat} to conclude that the horizontal map is flat. It then follows from the definition of (faithful) flatness that the vertical map is also flat.\\
For the second statement, we first find a resolution for $C^\infty_{V\times U}\otimes_{C^\infty_V} \mc F$. By the assumption that $\mc F_U$ is an isomodular deformation, this gives a resolution of $\mc F_U$.\\ The straightforward thing to do here would be to take the resolution we found earlier, and pull it back to $V\times U$. As far as we know, it is not clear whether this preserves exactness. We therefore take a different way. Consider again the resolution \eqref{diag:polres}. If we apply the functor $S(V^\ast \oplus \mb R^n) \otimes_{S(V^\ast)} -$, the sequence stays exact. If we now apply the same argument as before, we obtain a geometric resolution of $C^\infty_{V\times U}\otimes_{C^\infty_V} \mc F$ as in the formulation of the lemma. As the differentials are unchanged, the first two properties are automatic. Finally, the fact that the Lie $n$-algebroid restricts is a consequence of the fact that $\mc F_U$ is tangent to $V\times \{0\}$. Now the existence of the Lie $n$-algebroid structure is guaranteed by \cite[Theorem 2.7]{univlinfty}.
\end{proof}
We now prove Lemma \ref{lem:anchorsolutiondiracgauge} about the anchor of a gauge transformed Dirac structure.
\begin{proof}[Proof of Lemma \ref{lem:anchorsolutiondiracgauge}]
The non-trivial part of the proof is evaluating the integral of equation \eqref{eq:solutiongaugedirac}. For that we give the antiderivative of $\rho_{L^\ast} \circ \tilde{\Phi}_{-s}^D(d_L X)^\#$:
\begin{claim}
$$
\frac{d}{ds}(\phi_{-s}^{\rho_{L^\ast}(X)})_{\ast} \circ \rho_L \circ (\tilde{\Phi}^D_{-s})^\ast = -\rho_{L^\ast} \circ \tilde{\Phi}_{-s}^D(d_L X)^\#.
$$
\end{claim}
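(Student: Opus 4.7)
The plan is to differentiate the left-hand side directly via the chain rule and identify the resulting operator with the right-hand side using the Courant-algebroid compatibility of the pair $((L,d_L),(L^\vee,d_{L^\vee}))$. Writing $Y := \rho_{L^\vee}(X)$ and $F(s) := (\phi^{Y}_{-s})_\ast \circ \rho_L \circ (\tilde{\Phi}^D_{-s})^\ast$, I would first record the derivatives of each factor. One has $\tfrac{d}{ds}(\phi^Y_{-s})_\ast = \mathcal{L}_Y \circ (\phi^Y_{-s})_\ast$, and from the pairing identity characterising $(\tilde{\Phi}^D_{-s})^\ast$ together with $\tfrac{d}{ds}\tilde{\Phi}^D_{-s} = D \circ \tilde{\Phi}^D_{-s}$ one obtains $\tfrac{d}{ds}(\tilde{\Phi}^D_{-s})^\ast = -D^\ast \circ (\tilde{\Phi}^D_{-s})^\ast$, where $D^\ast \in CDO(L)$ is the dual of $D = [X,-]_{L^\vee}$. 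Applying the product rule yields
\[
F'(s) \;=\; (\phi^Y_{-s})_\ast \,\circ\, \bigl(\mathcal{L}_Y \circ \rho_L - \rho_L \circ D^\ast\bigr) \,\circ\, (\tilde{\Phi}^D_{-s})^\ast.
\]

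The heart of the argument is the pointwise identity
\[
\mathcal{L}_Y \circ \rho_L \,-\, \rho_L \circ D^\ast \;=\; -\,\rho_{L^\vee} \circ (d_L X)^\# \qquad \text{on } \Gamma(L).
\]
For this, observe that the dual CDO $D^\ast$ on $\Gamma(L)$ coincides with the $L^\vee$-Lie derivative $\mathcal{L}^{L^\vee}_X$, since both are characterised by the formula $\langle D^\ast\alpha,\beta\rangle = Y\langle\alpha,\beta\rangle - \langle\alpha,[X,\beta]_{L^\vee}\rangle$ for $\beta\in\Gamma(L^\vee)$. The standard formula for the Courant bracket on $L\oplus L^\vee$ associated to the bialgebroid (see \cite{liuweinsteinxu}) gives $[\![X,\alpha]\!] = \mathcal{L}^{L^\vee}_X\alpha \,-\, \iota_\alpha d_L X$, with the first summand in $\Gamma(L)$ and the second in $\Gamma(L^\vee)$; applying the anchor-preservation axiom $\rho_E[\![u,v]\!] = [\rho_E(u),\rho_E(v)]$ of the split Courant algebroid produces the displayed identity after noting $\rho_E(X) = \rho_{L^\vee}(X)$ and $\rho_E(\alpha) = \rho_L(\alpha)$.

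Combining the two steps gives
\[
F'(s) \;=\; -\,(\phi^Y_{-s})_\ast \circ \rho_{L^\vee} \circ (d_L X)^\# \circ (\tilde{\Phi}^D_{-s})^\ast,
\]
so it remains to rewrite this as $-\rho_{L^\vee}\circ \tilde{\Phi}^D_{-s}(d_L X)^\#$. This step rests on two naturality properties of the flow $\tilde{\Phi}^D_{-s}$: first, $\rho_{L^\vee}\circ \tilde{\Phi}^D_{-s} = (\phi^Y_{-s})_\ast \circ \rho_{L^\vee}$, obtained by integrating the bracket-morphism relation $\rho_{L^\vee}\circ D = \mathcal{L}_Y\circ \rho_{L^\vee}$; and second, the compatibility of contraction with the induced bundle automorphism, $\iota_\alpha \tilde{\Phi}^D_{-s}(d_L X) = \tilde{\Phi}^D_{-s}\bigl(\iota_{(\tilde{\Phi}^D_{-s})^\ast \alpha}(d_L X)\bigr)$, which is pointwise linear algebra once one recalls that the fibrewise dual of the automorphism $\tilde{\Phi}^D_{-s}$ of $L^\vee$ is precisely $(\tilde{\Phi}^D_{-s})^\ast$ acting on $L$.

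The main obstacle will be to keep the sign conventions consistent between the paper's definitions of $(\tilde{\Phi}^D_{-s})^\ast$ and of the Courant bracket on $L\oplus L^\vee$; once the core identity $[\rho_{L^\vee}(X),\rho_L(\alpha)] = \rho_L(\mathcal{L}^{L^\vee}_X\alpha) - \rho_{L^\vee}(\iota_\alpha d_L X)$ (which is essentially the bialgebroid compatibility repackaged as the anchor axiom of the double) is pinned down, the remaining manipulations are routine naturality statements about flows and contractions of a vector bundle automorphism.
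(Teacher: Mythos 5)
Your argument is correct and follows essentially the same route as the paper's proof: differentiate the composition, reduce to the mixed identity $[\rho_{L^\vee}(X),\rho_L(\alpha)] = \rho_L(\mathcal{L}^{L^\vee}_X\alpha) - \rho_{L^\vee}(\iota_\alpha d_L X)$, and reassemble via the intertwining $\rho_{L^\vee}\circ\tilde{\Phi}^D_{-s} = (\phi^{\rho_{L^\vee}(X)}_{-s})_\ast\circ\rho_{L^\vee}$ together with the duality of the induced automorphisms. Your invocation of the anchor axiom of the Liu--Weinstein--Xu double is exactly the paper's appeal to Lemma 4.3 of that reference combined with Cartan's formula, and your ``integrate the bracket-morphism relation'' is the paper's ODE-uniqueness argument in different words.
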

\begin{proof}[Proof of claim]
It suffices to prove that the claim holds when evaluating both sides on a section $a\in \Gamma(L)$.
\begin{align*}
    \frac{d}{ds}(\phi_{-s}^{\rho_{L^\ast}(X)})_{\ast} \circ \rho_L \circ (\tilde{\Phi}^D_{-s})^\ast(a)=& [\rho_{L^\ast}(X),(\phi_{-s}^{\rho_{L^\ast}(X)})_{\ast} \circ \rho_L \circ (\tilde{\Phi}^D_{-s})^\ast(a)]\\&- (\phi_{-s}^{\rho_{L^\ast}(X)})_{\ast} \circ \rho_L \circ \mc L_X((\tilde{\Phi}^D_{-s})^\ast(a))\\
   =&(\phi_{-s}^{\rho_{L^\ast}(X)})_{\ast} [\rho_{L^\ast}(X),\rho_L ((\tilde{\Phi}^D_{-s})^\ast(a))]\\&- (\phi_{-s}^{\rho_{L^\ast}(X)})_{\ast} (\rho_L ( \mc L_X((\tilde{\Phi}^D_{-s})^\ast(a))))\\
    \stackrel{\triangle}{=}& (\phi_{-s}^{\rho_{L^\ast}(X)})_\ast(\rho_L(\mc L_X(\tilde{\Phi}^D_{-s})^\ast (a))) \\&- (\phi_{-s}^{\rho_{L^\ast}(X)})_\ast(\rho_{L^\ast}(\mc L_{(\tilde{\Phi}_{-s}^D)^\ast(a)}(X)))\\
    & + (\phi_{-s}^{\rho_{L^\ast}(X)})_\ast(\rho_{L^\ast}(d_L(\iota_{(\tilde{\Phi}_{-s}^D)^\ast(a)}(X)))\\& - (\phi_{-s}^{\rho_{L^\ast}(X)})_{\ast} (\rho_L ( \mc L_X((\tilde{\Phi}^D_{-s})^\ast(a))))\\
    =& - (\phi^{\rho_{L^\ast}(X)}_{-s})_\ast(\rho_{L^\ast}(\iota_{(\tilde{\Phi}_{-s}^D)^\ast(a)}(d_LX)))\\
    \stackrel{\star}{=}& -\rho_{L^\ast}(\tilde{\Phi}_{-s}^D(\iota_{(\tilde{\Phi}_{-s}^D)^\ast(a)}(d_LX)))\\
    =& - \rho_{L^\ast}(\tilde{\Phi}_{-s}^D(d_LX)^\#(a)).
    \end{align*}
    Here $\mc L_X  = d_{L^\ast}\iota_X + \iota_X d_{L^\ast}$, and an analogous formula holds for $\mc L_{(\tilde{\Phi}^D_{-s})^\ast(a)}$. Further, at $\triangle$, we apply \cite[Lemma 4.3]{liuweinsteinxu}, and at $\star$ we apply the equality
    $$
    \rho_{L^\ast} \circ \tilde{\Phi}_{-s}^D = (\phi_{-s}^{\rho_{L^\ast}(X)})_\ast \circ \rho_{L^\ast},
    $$
    which holds as both sides of the equation satisfy the ODE
    $$
    \frac{d}{ds}\gamma_s(Y) = [\rho_{L^\ast}(X),\gamma_s(Y)]
    $$
    for all $Y \in \Gamma(L^\ast)$, with $\gamma_0 = \rho_{L^\ast}$.
\end{proof}
To finish the proof, recall from Proposition \ref{prop:dirgaugesol} that
\begin{equation}\label{eq:dirgaugesol1}
\pi_1 = \tilde{\Phi}_{-1}^D(\pi_0) -\int_{0}^1\tilde{\Phi}_{-s}^D(d_L X)\dd s.
\end{equation}
Then
\begin{align*}
    \rho_{\text{gr}(\pi_1)} &= \rho_L + \rho_{L^\ast}\circ \pi_1^\#\\
    &\stackrel{\star}{=} \rho_{L} + \rho_{L^\ast} \circ \tilde{\Phi}^D_{-1}(\pi_0)^\# + (\phi_{-1}^{\rho_{L^\ast}(X)})_\ast \circ \rho_{L} \circ (\tilde{\Phi}^D_{-1})^\ast - \rho_L\\
    &= \rho_{L^\ast} \circ \tilde{\Phi}_{-1}^{D} \circ \pi_0^\# \circ (\tilde{\Phi}_{-1}^D)^\ast + (\phi_{-1}^{\rho_{L^\ast}(X)})_\ast \circ \rho_{L} \circ (\tilde{\Phi}^D_{-1})^\ast\\
    &= (\phi_{-1}^{\rho_{L^\ast}(X)})_\ast \circ (\rho_{L^\ast} \circ \pi_0^\# + \rho_L) \circ (\tilde{\Phi}^D_{-1})^\ast\\
    &= (\phi_{-1}^{\rho_{L^\ast}(X)})_\ast \circ \rho_{\text{gr}(\pi_0)} \circ (\tilde{\Phi}^D_{-1})^\ast,
\end{align*}
where $\star$ uses the claim and \eqref{eq:dirgaugesol1}.
\end{proof}
\bibliography{bib.bib}{}
\bibliographystyle{abbrv}
\Addresses
\end{document}